\def\thm@space@setup{%
 \thm@preskip=\parskip \thm@postskip=0pt
}
\def\th@remark{%
  \thm@headfont{\itshape}%
  \normalfont 
  \thm@preskip\parskip \thm@postskip=0pt
}
\renewcommand{\PrintDOI}[1]{%
  \href{http://dx.doi.org/#1}{{\tt DOI:#1}}%
}
\renewcommand{\eprint}[1]{#1}
\numberwithin{equation}{section}
\newtheorem{Theorem}{Theorem}[section]
\newtheorem*{Theorem*}{Theorem}
\newtheorem{Def}[Theorem]{Definition}
\newtheorem{Lem}[Theorem]{Lemma}
\newtheorem{Prop}[Theorem]{Proposition}
\newtheorem{Cor}[Theorem]{Corollary}
\newtheorem{Rem}[Theorem]{Remark}
\newcommand{\C}{\mathbb{C}}
\newcommand{\R}{\mathbb{R}}
\newcommand{\Ker}{\mathrm{Ker}}
\newcommand{\mcK}{\mathcal{K}}
\newcommand{\mcB}{\mathcal{B}}
\newcommand{\id}{\mathrm{id}}
\newcommand{\Hsp}{\mathcal{H}}
\newcommand{\Gsp}{\mathcal{G}}
\newcommand{\G}{\mathbb{G}}
\newcommand{\Ll}{\mathbb{L}}
\newcommand{\Hh}{\mathbb{H}}
\newcommand{\X}{\mathbb{X}}
\newcommand{\Y}{\mathbb{Y}}
\newcommand{\opp}{\mathrm{op}}
\newcommand{\msD}{\mathscr{D}}
\newcommand{\msN}{\mathscr{N}}
\newcommand{\msG}{\mathscr{G}}
\newcommand{\Rep}{\mathrm{Rep}}
\newcommand{\ext}{\mathrm{ext}}
\newcommand{\Ind}{\mathrm{Ind}}
\newcommand{\barboxtimes}{\overline{\boxtimes}}
\newcommand{\mcX}{\mathcal{X}}
\newcommand{\red}{\mathrm{red}}
\newcommand{\triv}{\mathrm{triv}}
\newcommand{\Res}{\mathrm{Res}}
\newcommand{\vp}{\varphi}
\newcommand{\ov}{\overline}
\newcommand{\oon}{\operatorname}
\newcommand{\mc}{\mathcal}
\newcommand{\mscr}{\mathscr}
\newcommand{\eps}{\varepsilon}
\newcommand{\wh}{\widehat}
\newcommand{\Mult}{\operatorname{M}}
\newcommand{\TensRep}{\otop}
\newcommand{\braidnew}[2]{{}^{#1}{\fdiagovrdiag}^{#2}}
\newcommand{\HH}{\mathbb{H}}
\newcommand{\GG}{\mathbb{G}}
\newcommand{\mf}{\mathfrak}
\newcommand{\ww}{\mathrm{W}}
\newcommand{\WW}{{\mathds{V}\!\!\text{\reflectbox{$\mathds{V}$}}}}
\newcommand{\Ww}{\mathds{W}}
\newcommand{\wW}{\text{\reflectbox{$\Ww$}}\:\!}
\title{The standard construction for cocycle twisted and braided tensor product W$^*$-algebras}
\author{K. De Commer and J. Krajczok}
\address{Vrije Universiteit Brussel}
\email{kenny.de.commer@vub.be}
\email{jacek.krajczok@vub.be}
\subjclass[2020]{Primary 46L67, Secondary 46L55, 46L06} 
\keywords{Cocycle twist, locally compact quantum group, standard form, braided tensor product}
\begin{document}

\begin{abstract}
Given a locally compact quantum group $\G$ and a (generalized) dual unitary $2$-cocycle $\hat{\Omega}$, any W$^*$-algebra $A$ with a $\G$-action can be twisted into a new W$^*$-algebra $A_{\hat{\Omega}}$ with an action by the cocycle twist $\G_{\hat{\Omega}}$ of $\G$. We show how, in general, the standard space $L^2(A_{\hat{\Omega}})$, with its standard $\G_{\hat{\Omega}}$-representation, can be seen as a twist  of $L^2(A)$ with its standard $\G$-representation. We then apply this general result in the special case of (generalized) Drinfeld doubles.
\end{abstract}

\maketitle

\section*{Introduction}

A \emph{locally compact quantum group} $\G = (M,\Delta)$  \cites{KV00,KV03} consists of a W$^*$-algebra $M$, together with a faithful, normal, unital $*$-homomorphism $\Delta\colon M \rightarrow M\bar{\otimes} M$, the \emph{coproduct}, satisfying \emph{coassociativity},
\begin{equation}\label{EqCoasso}
(\Delta\otimes \id)\Delta = (\id\otimes \Delta)\Delta,
\end{equation}
and admitting normal, semi-finite, faithful (nsf) weights $\varphi,\psi\colon M^+ \rightarrow [0,\infty]$ satisfying the left, resp.\ right invariance condition
\[
\vp((\omega\otimes \id)\Delta(x)) = \vp(x),\quad \psi((\id\otimes \omega)\Delta(x)) = \psi(x),\qquad \forall x \in M^+,\, \textrm{normal states }\omega\textrm{ on }M.
\]
We write $M = L^{\infty}(\G)$ to guide intuition, and abbreviate LCQG = locally compact quantum group. 

Such a  LCQG $\G$ admits a W$^*$-category $\Rep(\G)$ of \emph{unitary (left) $\G$-representations}, i.e.~couples $(\Hsp,U)$ with
\begin{equation}\label{EqUnitRepDef}
\Hsp \textrm{ Hilbert space},\quad U \in M\bar{\otimes} \mcB(\Hsp) \textrm{ unitary \emph{corepresentation}:}\qquad(\Delta\otimes \id)U = U_{[13]}U_{[23]},
\end{equation}
where we use the leg numbering notation $U_{[23]} = 1\otimes U$ etc.\ (the brackets are used to shield off the leg numbering from other indices). Morphisms are given by bounded intertwiners, 
\[
\mathrm{Mor}((\Hsp,U),(\Hsp',U')) = \{T \in \mathcal{B}(\Hsp,\Hsp')\mid (1\otimes T)U = U'(1\otimes T)\}.
\]
Alternatively,  $\G$-representations may be viewed as non-degenerate contractive Hilbert space representations of the universal Banach algebra envelope $\hat{M}_u$ of the predual $M_*$ (see \cite{Kus01}), where $M_*$ is endowed with the convolution product $\omega*\chi = (\omega\otimes \chi)\Delta$ and the smallest norm  making
\begin{equation}\label{EqPredualRepIntro}
\hat{\pi}_U(\omega) := (\omega \otimes \id)(U)
\end{equation}
contractive for all unitary $\G$-representations. One shows that $\hat{M}_u$ is in fact a C$^*$-algebra (recall that a Banach algebra can have at most one C$^*$-algebra structure), and the unique bounded extension of \eqref{EqPredualRepIntro} to $\hat{M}_u$ gives a (functorial) one-to-one correspondence $(\Hsp,U) \mapsto (\Hsp,\hat{\pi}_U)$ with non-degenerate representations of $\hat{M}_u$ on $\Hsp$ (recall that non-degenerate contractive Hilbert space representations of a C$^*$-algebra are automatically $*$-preserving). This (alternative) construction gives the same C$^*$-algebra $\hat{M}_u$ as in \cite{Kus01}. Again, we write $\hat{M}_u = C^*(\G)$ (`universal group C$^*$-algebra') when this is useful for intuition.

One has that $\Rep(\G)$ is a tensor W$^*$-category by the tensor structure
\begin{equation}\label{EqTensProd1}
(\Hsp,U) \otimes_{\G} (\Hsp',U') := (\Hsp\otimes \Hsp',U \TensRep U'),\qquad U \TensRep U' := U_{[12]}U_{[13]}'. 
\end{equation}
Alternatively, this tensor product is implemented by the coproduct $\hat{\Delta}_u\colon \hat{M}_u\rightarrow \Mult(\hat{M}_u\otimes \hat{M}_u)$ into the multiplier C$^*$-algebra of the minimal tensor product, which is uniquely determined by the fact that
\[
\hat{\pi}_{U\TensRep U'} = (\hat{\pi}_U \otimes \hat{\pi}_{U'})\hat{\Delta}_u^{\opp} =: (\hat{\pi}_U*\hat{\pi}_{U'}), \qquad \forall (\Hsp,U),(\Hsp',U') \in \Rep(\G),
\]
where $\hat{\Delta}_u^{\opp}$ is the coproduct with the legs flipped.\footnote{The definitions above depend on conventions, which are unfortunately not uniform throughout the literature. We follow here the conventions as in \cite{DCKr24} for consistency.}

There is also a natural notion of a \emph{(left) $\G$-action} on a W$^*$-algebra $A$, defined as a (left) coaction by $(M,\Delta)$: i.e.\ a faithful, unital, normal $*$-homomorphism $\gamma\colon A \rightarrow M\bar{\otimes}A$ satisfying the coaction property 
\[
(\Delta\otimes \id)\gamma = (\id\otimes \gamma)\gamma.
\]
We refer to $(A,\gamma)$ as a \emph{$\G$-W$^*$-algebra}. By \cite{Vae01}*{Definition 3.5}, any such $\G$-W$^*$-algebra $(A,\gamma)$ admits a canonical unitary representation $U_{\gamma}$ on its standard Hilbert space $L^2(A)$ implementing the $\G$-action,
\[
\gamma(a) = U_{\gamma}^*(1\otimes a)U_{\gamma},\qquad \forall a \in A. 
\]

In this paper, we prove a structural result for standard implementations in the presence of a \emph{quasi-triangular structure} on $\G$. This consists of a unitary $\wh{\mcX}_u\in \Mult(\hat{M}_u\otimes \hat{M}_u)$ which is a \emph{skew bicharacter}, i.e.
\begin{equation}\label{EqSkewBich}
(\hat{\Delta}_u\otimes \id)\wh{\mcX}_u = \wh{\mcX}_{u[13]}\wh{\mcX}_{u[23]},\quad (\id\otimes \hat{\Delta}_u)\wh{\mcX}_u = \wh{\mcX}_{u [13]}\wh{\mcX}_{u [12]},
\end{equation}
which moreover satisfies the compatibility property
\begin{equation}\label{EqCocTwist}
\wh{\mcX}_u\hat{\Delta}_u(x)\wh{\mcX}_u^* = \hat{\Delta}^{\opp}_u(x),\qquad \forall x\in \hat{M}_u.
\end{equation}
For example, the Drinfeld double of a locally compact quantum group \cite{BV05} has a natural quasi-triangular structure. A quasi-triangular structure turns $\Rep(\G)$ into a \emph{unitary braided} tensor W$^*$-category, with braiding 
\[
\braidnew{\Hsp}{\Hsp'}\colon \Hsp \otimes \Hsp'\rightarrow \Hsp'\otimes \Hsp,\qquad  \braidnew{\Hsp}{\Hsp'}:= \wh{\mcX}_{U',U}\circ \Sigma_{\Hsp,\Hsp'},\qquad \wh{\mcX}_{U',U} := (\hat{\pi}_{U'} \otimes \hat{\pi}_{U})(\wh{\mcX}_u),
\]
where $\Sigma_{\Hsp,\Hsp'}$ is the usual flip map $\Hsp \otimes \Hsp'\rightarrow \Hsp'\otimes \Hsp$ (\cite{DCKr24}*{Proposition 3.4}). When it is convenient and $U,U'$ are clear from the context, we will also write $\wh{\mcX}_{\Hsp',\Hsp}=\wh{\mcX}_{U',U}$.

On the other hand, as shown in \cite{DCKr24}, a quasi-triangular structure allows to take the \emph{braided tensor product} $(A_1\barboxtimes A_2,\gamma_1\bowtie \gamma_2)$ of two $\G$-W$^*$-algebras $(A_1,\gamma_1)$ and $(A_2,\gamma_2)$. This is a W$^*$-algebraic version of the C$^*$-algebraic results in  \cites{NV10,MRW14,MRW16} (see also \cites{Kas15,BMRS19}, the W$^*$-algebraic results in \cite{Hou07} for the case $\G = \R$ and \cite{Moa18} for the case $\G$ \emph{discrete}, and e.g.\  \cite{Maj95} for an exposition in the purely algebraic realm). This braided tensor product can be concretely implemented as follows: viewing $A_i \subseteq \mcB(L^2(A_i))$ via the standard representation and putting
\begin{equation}\begin{split}\label{EqStandardRepBraid}
\pi_{\boxtimes,1}(a_1) &:=(\braidnew{L^2(A_2)}{L^2(A_1)}) (1\otimes a_1)(\braidnew{L^2(A_2)}{L^2(A_1)})^*=
\wh{\mc{X}}_{L^2(A_1),L^2(A_2)} (a_1\otimes 1)
\wh{\mc{X}}_{L^2(A_1),L^2(A_2)}^*
,\\
\pi_{\boxtimes,2}(a_2) &:= 
1\otimes a_2
\end{split}\end{equation}
for $a_i \in A_i$, define $A_1\barboxtimes A_2 \subseteq \mcB(L^2(A_1)\otimes L^2(A_2))$ via
\begin{equation}\label{EqIdentification}
A_1\barboxtimes A_2 = \overline{\operatorname{span}}^{\sigma\textrm{-weak}}\{\pi_{\boxtimes,1}(a_1)\pi_{\boxtimes,2}(a_2)\mid a_i\in A_i\}.
\end{equation}
Then \cite{DCKr24}*{Theorem 4.9} shows that $A_1\ov\boxtimes A_2$ is a von Neumann algebra,  and by \cite{DCKr24}*{Proposition 6.3} there is a well-defined action $\gamma_1\bowtie\gamma_2$ of $\G$ on $A_1\barboxtimes A_2$, uniquely defined via 
\begin{equation}\label{EqIdentification2}
(\gamma_1 \bowtie \gamma_2)(\pi_{\boxtimes,i}(a_i)) = (\id\otimes \pi_{\boxtimes,i})\gamma_i(a_i),\qquad \forall a_i \in A_i. 
\end{equation}
The main result of this paper is the following. 

\begin{Theorem*}
Let $(\G,\wh{\mcX}_u)$ be a quasi-triangular quantum group, and let $(A_1,\gamma_1),(A_2,\gamma_2)$ be two $\G$-W$^*$-algebras. Then there is a canonical unitary intertwiner of unitary $\G$-representations 
\[
I_\boxtimes \colon L^2(A_1 \barboxtimes A_2) \cong L^2(A_1) \otimes L^2(A_2), \qquad U_{\gamma_1\bowtie \gamma_2}\cong U_{\gamma_1} \TensRep U_{\gamma_2}.
\]
This unitary intertwines the standard representation of $A_1\ov{\boxtimes} A_2$ with the identity representation \eqref{EqIdentification}, and satisfies 
\[
I_\boxtimes J_{A_1\barboxtimes A_2}I^*_\boxtimes  =\wh{\mcX}_{L^2(A_1),L^2(A_2)}(J_{A_1}\otimes J_{A_2}),
\]
where $J_A\colon L^2(A) \rightarrow L^2(A)$ denotes the modular conjugation of a W$^*$-algebra $A$.
\end{Theorem*}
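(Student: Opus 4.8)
The key point is that, by \eqref{EqStandardRepBraid}--\eqref{EqIdentification}, the algebra $A_1\barboxtimes A_2$ is \emph{already} represented concretely on $\Hsp:=L^2(A_1)\otimes L^2(A_2)$, so the plan is to show that this representation is a \emph{standard} one, with modular conjugation $J:=\wh{\mcX}_{L^2(A_1),L^2(A_2)}(J_{A_1}\otimes J_{A_2})$, and that the canonical implementation of $\gamma_1\bowtie\gamma_2$ on $\Hsp$ is $U_{\gamma_1}\TensRep U_{\gamma_2}$. Granting these, uniqueness of the standard form produces the required unitary $I_\boxtimes\colon L^2(A_1\barboxtimes A_2)\to\Hsp$ intertwining the standard representations; it then automatically intertwines the modular conjugations, and by the second point also the standard implementations, which is exactly the assertion of the theorem.

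I would obtain both points by realising $(A_1\barboxtimes A_2,\gamma_1\bowtie\gamma_2)$ as a cocycle twist and appealing to the general result on standard forms under cocycle twisting proved in this paper. Regard $A_1\otimes A_2$ as a $\G\times\G$-W$^*$-algebra under the action $\Gamma$ obtained from $\gamma_1\otimes\gamma_2$ by placing the two copies of $M$ adjacently; its standard space is $L^2(A_1)\otimes L^2(A_2)$, with modular conjugation $J_{A_1}\otimes J_{A_2}$ and canonical implementation $U_{\gamma_1,[13]}\,U_{\gamma_2,[24]}$. Let $\hat\Omega$ be the generalized dual unitary $2$-cocycle on $\G\times\G$ built from $\wh{\mcX}_u$, obtained by inserting $\wh{\mcX}_u$ into $\Mult(\hat{M}_u^{\otimes4})=\Mult(C^*(\G\times\G)\otimes C^*(\G\times\G))$ in the two legs coupling the two factors: the skew-bicharacter property \eqref{EqSkewBich} is precisely what turns $\hat\Omega$ into a (generalized) $2$-cocycle, while the quasi-cocommutativity \eqref{EqCocTwist} is precisely what lets the restricted $\G$-action $\gamma_1\bowtie\gamma_2$ on the twist be defined (cf.\ \cite{DCKr24}*{Proposition 6.3}). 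Comparing the defining formulas \eqref{EqStandardRepBraid}, \eqref{EqIdentification} and \eqref{EqIdentification2} with the construction of the cocycle twist then identifies $(A_1\otimes A_2)_{\hat\Omega}$, with its restricted $\G$-action, with $(A_1\barboxtimes A_2,\gamma_1\bowtie\gamma_2)$. The general theorem yields a canonical unitary $L^2((A_1\otimes A_2)_{\hat\Omega})\cong L^2(A_1)\otimes L^2(A_2)$ intertwining the standard representations, under which the modular conjugation becomes $w\,(J_{A_1}\otimes J_{A_2})$ for the unitary $w$ that the theorem supplies, and the standard implementation becomes the $\hat\Omega$-twist of $U_{\gamma_1,[13]}\,U_{\gamma_2,[24]}$. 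Evaluating $w$ through $U_{\gamma_1},U_{\gamma_2}$ gives $w=\wh{\mcX}_{L^2(A_1),L^2(A_2)}$, and restricting the corepresentation along $\G$ gives $U_{\gamma_1}\TensRep U_{\gamma_2}$ by \eqref{EqTensProd1}; this is the theorem.

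A more hands-on alternative is to verify Haagerup's standard-form axioms for $(A_1\barboxtimes A_2,\Hsp,J,\mathfrak{P})$ directly. That $J$ is an antiunitary involution reduces to the relation $(J_{A_1}\otimes J_{A_2})\,\wh{\mcX}_{L^2(A_1),L^2(A_2)}\,(J_{A_1}\otimes J_{A_2})=\wh{\mcX}_{L^2(A_1),L^2(A_2)}^{*}$, which one deduces from the compatibility of each $J_{A_i}$ with the canonical implementation $U_{\gamma_i}$ together with the invariance of $\wh{\mcX}_u$ under the universal unitary antipode of $\hat{M}_u$ (itself a consequence of \eqref{EqSkewBich} and \eqref{EqCocTwist}). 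A short computation using this relation gives $J\,\pi_{\boxtimes,1}(A_1)\,J=J_{A_1}A_1J_{A_1}\otimes1$ and $J\,\pi_{\boxtimes,2}(A_2)\,J=\wh{\mcX}_{L^2(A_1),L^2(A_2)}(1\otimes J_{A_2}A_2J_{A_2})\wh{\mcX}_{L^2(A_1),L^2(A_2)}^{*}$; using the bicharacter relations, one then checks that both of these commute with $\pi_{\boxtimes,1}(A_1)$ and $\pi_{\boxtimes,2}(A_2)$, whence $J(A_1\barboxtimes A_2)J\subseteq(A_1\barboxtimes A_2)'$, the reverse inclusion following from the analogous computation of the commutant (recall $A_i'=J_{A_i}A_iJ_{A_i}$). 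One then exhibits the self-dual cone $\mathfrak{P}$ --- reducing first, by a cutting-down argument, to the case of $\sigma$-finite $A_1,A_2$, where $\mathfrak{P}$ is the closure of $\{x\,JxJ\,\zeta_0\colon x\in A_1\barboxtimes A_2\}$ for an appropriate cyclic vector $\zeta_0$ --- and checks the remaining axioms. I expect the commutant equality $J(A_1\barboxtimes A_2)J=(A_1\barboxtimes A_2)'$ to be the main obstacle, as this is where the quasi-triangularity of $\wh{\mcX}_u$ is genuinely used.

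For the statement about implementations, $U_{\gamma_1}\TensRep U_{\gamma_2}$ is a unitary $\G$-corepresentation on $\Hsp$ by \eqref{EqTensProd1}. It implements $\gamma_1\bowtie\gamma_2$: on $\pi_{\boxtimes,2}(A_2)=1\otimes A_2$ this is immediate from $U_{\gamma_2}$ implementing $\gamma_2$ and produces $(\id\otimes\pi_{\boxtimes,2})\gamma_2$ as in \eqref{EqIdentification2}; on $\pi_{\boxtimes,1}(A_1)$ one rewrites $\pi_{\boxtimes,1}(a_1)=(\braidnew{L^2(A_2)}{L^2(A_1)})(1\otimes a_1)(\braidnew{L^2(A_2)}{L^2(A_1)})^{*}$ and uses that the braiding $\braidnew{L^2(A_2)}{L^2(A_1)}$ is a morphism of $\G$-representations together with $U_{\gamma_1}$ implementing $\gamma_1$, again recovering \eqref{EqIdentification2}. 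Finally, $U_{\gamma_1}\TensRep U_{\gamma_2}$ is the \emph{canonical} implementation in the sense of \cite{Vae01}*{Definition 3.5}: the defining compatibility with the standard cone is inherited from the corresponding property of $U_{\gamma_1}$ and $U_{\gamma_2}$ together with the compatibility of $\wh{\mcX}_{L^2(A_1),L^2(A_2)}$ with the cone established above. Uniqueness of the canonical implementation then gives $U_{\gamma_1\bowtie\gamma_2}=U_{\gamma_1}\TensRep U_{\gamma_2}$, completing the plan.
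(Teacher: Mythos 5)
Your overall strategy is the one the paper actually follows: view $A_1\bar{\otimes}A_2$ as a $\G\times\G$-W$^*$-algebra, twist by the dual $2$-cocycle $\hat{\Omega}=\wh{\mcX}_{[32]}$, identify the twist with $A_1\barboxtimes A_2$, apply the general cocycle-twist results on standard forms, and finally restrict along the embedding of $\G$ into the generalized Drinfeld double provided by quasi-triangularity (this is exactly the chain Theorem \ref{TheoIndL} $\to$ Theorem \ref{TheoModConjGen} $\to$ Theorem \ref{CorIdent} $\to$ Theorem \ref{TheoInductionBraided} $\to$ Theorem \ref{TheoMainResultDone}). However, the two steps you dispose of as formal are precisely where the real work lies, and as written they are genuine gaps. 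First, the identification of the twist $\Ind_{\X}(A_1\bar{\otimes}A_2)$ with $A_1\barboxtimes A_2$ is not obtained by ``comparing the defining formulas'': the easy part is that the (conjugated) generators $\pi_{\boxtimes,1}(a_1),\pi_{\boxtimes,2}(a_2)$ land inside the cotensor product, but the substantial claim is that they generate \emph{all} of it. In the paper this is Theorem \ref{CorIdent}, proved via Lemmas \ref{lemma12}--\ref{lemma14}: one shows the generated subalgebra is invariant under the $\Hh$-action and then uses involutivity of induction (Theorem \ref{TheoInductInvol}) to force equality. Without this, your canonical unitary would a priori only relate $L^2$ of a possibly proper subalgebra to $L^2(A_1)\otimes L^2(A_2)$.

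Second, the ``general result on standard forms under cocycle twisting'' that produces the Hilbert space $L^2(A_1)\otimes L^2(A_2)$ (rather than the abstract $\Ind_{\X}(L^2(A_1\bar{\otimes}A_2))$) and the modular conjugation $w(J_{A_1}\otimes J_{A_2})$ is only available when the linking quantum groupoid has trivial(isable) universal linking C$^*$-algebra, i.e.\ when $\hat{\Omega}$ admits a universal lift in the strong sense of Definition \ref{DefUnivLift2Coc}; this is what feeds Proposition \ref{PropIdentStandard} and Theorem \ref{TheoModConjGen}. You treat this as automatic by ``inserting $\wh{\mcX}_u$ into $\Mult(\hat{M}_u^{\otimes 4})=\Mult(C^*(\G\times\G)\otimes C^*(\G\times\G))$'', but note that $C^*(\G\times\G)=C^*(\G)\otimes_{\max}C^*(\G)$, and the issue is not the mere existence of a unitary $\wh{\mcX}_{u[32]}$ lifting $\hat{\Omega}$: one must know that twisting the amplified universal coproduct by it reproduces $C^*(\msG)$, equivalently that $C_0^u$ of the generalized Drinfeld double is the maximal tensor product with the twisted coproduct (Proposition \ref{prop3}). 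This is Theorem \ref{TheoUnivLift}, which the paper flags as non-obvious even for coamenable duals and proves in Section \ref{SecProofTech}. Relatedly, identifying the unitary $w$ as $\wh{\mcX}_{L^2(A_1),L^2(A_2)}$ requires the coboundary $X_{\hat{\Omega},u}$ of Lemma \ref{LemUniqueUnitCob} and its computation in Lemma \ref{LemProjCobPair}, not just ``evaluating through $U_{\gamma_1},U_{\gamma_2}$''. Finally, both your hands-on fallback (Haagerup axioms) and your uniqueness argument for the canonical implementation lean on compatibility of $J$ with a standard positive cone in $L^2(A_1)\otimes L^2(A_2)$ that you never construct; the paper explicitly states that it does not know how to describe this cone, so that route is not available as written.
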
 
At the moment, it is not clear for us how to characterise the image of the standard positive cone in $L^2(A_1\barboxtimes A_2)$ under $I_\boxtimes$ (for a description of the positive cone in the case of ordinary tensor product, see \cite{MT84}).

The above main theorem will be a consequence of quite general results on \emph{(generalized) cocycle twists}, which are dealt with in the first four sections:
\begin{itemize}
\item In the \emph{first section}, we introduce the notion of a \emph{linking quantum groupoid}, which is a convenient technical device to deal with generalized  unitary $2$-cocycles (i.e.~ \emph{Galois objects}).
\item  In the \emph{second section}, we study representations and actions of linking quantum groupoids and their duals. 
\end{itemize}
The details for the results in these two sections are not spelled out - many of them can be found in \cite{DC09} (see also \cite{BC17}), and in general the techniques required in this setting offer nothing new except for a slight adaptation of the results in  e.g.\ \cites{KV00,Kus01,KV03,Vae01}.
\begin{itemize}
\item In the \emph{third section}, we use the formalism of the first two sections to show how to induce actions on W$^*$-algebras and representations on Hilbert spaces across cocycle twists, and we then show our main abstract result which roughly states that `Induction commutes with the standard construction', see Theorem \ref{TheoIndL}. 
\item In the \emph{fourth section}, we specialize these results to the case of (genuine) $2$-cocycle twists, and single out the case of \emph{universally continuous $2$-cocycles} where one does not need to change Hilbert spaces when inducing. 
\end{itemize}
All of the above results depend on robust techniques, with results guided by a very general abstract formalism. However: 
\begin{itemize}
\item In the \emph{fifth section} we specialize even further to the setting of generalized Drinfeld double, and prove our main concrete result on the standard representation of braided tensor products, see Theorem \ref{TheoInductionBraided}. Here we need some more delicate machinery to lift certain dual unitary $2$-cocycles to the appropriate universal level. In particular, we will show that the universal C$^*$-algebra of $C_0$-functions on a generalized Drinfeld double is given by the maximal tensor product of the universal C$^*$-algebras of $C_0$-functions on its components, which as far as we know is a new result -- see Proposition \ref{prop3}. We then prove the theorem on quasi-triangular quantum group actions alluded to above.
\end{itemize}

\emph{Conventions}

We denote by $[S]$ the norm closure of the linear span of a subset $S$ of a normed space. We write $[S]^{\sigma\textrm{-weak}}$ for the $\sigma$-weak closure of the linear span of $S$, if $S$ is a subset of a W$^*$-algebra. The symbol $\odot$ stands for the algebraic tensor product of vector spaces. We will use leg numbering notation with flips, so that e.g.~$(a\otimes b\otimes c)_{[412]}=b\otimes c \otimes 1\otimes a$.

We denote by $\Sigma\colon \Hsp \otimes \Gsp \rightarrow \Gsp \otimes \Hsp$ the flip map between tensor products of Hilbert spaces. We denote by $\mcB(\Hsp)$ the W$^*$-algebra of all bounded operators on $\Hsp$, and by $\mcK(\Hsp)$ the C$^*$-algebra of all compact operators. 

For dual operator spaces $X\subseteq \mc{B}(\mc{H}),Y\subseteq\mc{B}(\mc{G})$, we denote by $X\bar\otimes Y$ the normal spatial tensor product and by $X\bar{\otimes}_{\mathcal{F}}Y$ the normal Fubini tensor product \cite{EffrosRuan}*{Section 7.2}. It is straightforward to check, using \cite{EffrosRuan}*{Theorem 7.2.4}, that if  $X\subseteq A$, $Y\subseteq B$ are (possibly off-diagonal) corners in von Neumann algebras $A\subseteq \mc{B}(\mc{H}),B\subseteq \mc{B}(\mc{G})$, then $X\bar\otimes Y=X\bar\otimes_{\mathcal{F}} Y\subseteq A\bar\otimes B$. Furthermore, $X\bar\otimes Y$ is a corner in $A\bar\otimes B$. Every dual operator space we use is of this form.

In particular, any Hilbert space equipped with the column or row operator space structure is a corner in a von Neumann algebra \cite{EffrosRuan}*{Section 3.4}. By default, we equip a Hilbert space $\mc{H}$ with its column operator space structure $\mc{H}=\mc{B}(\C,\mc{H})$, and its dual $\mc{H}^*$ with the row operator space structure $\mc{H}^*=\mc{B}(\mc{H},\C)$. We also identify $\mc{H}^*$ with its complex conjugate $\ov{\mc{H}}$ via $\xi^*\mapsto \ov{\xi}$. For any von Neumann algebra $A$ we have
\[
A\bar\otimes \mc{H}=A\bar\otimes \mc{B}(\C,\mc{H})\subseteq \mc{B}(L^2(A),L^2(A)\otimes \mc{H}),\quad
A\bar\otimes \ov{\mc{H}}=A\bar\otimes \mc{B}(\mc{H},\C)\subseteq \mc{B}(L^2(A)\otimes \mc{H},L^2(A))
\]
and we identify $\xi\in A\bar\otimes \mc{H}$, $\eta\in A\bar\otimes \ov{\mc{H}}$ with maps $\xi\colon L^2(A)\rightarrow L^2(A)\otimes \mc{H}$, $\eta\colon L^2(A)\otimes\mc{H}\rightarrow L^2(A)$.

If $A$ is a C$^*$-algebra, we denote by $\Mult(A)$ its multiplier C$^*$-algebra. If $A,B$ are two C$^*$-algebras, we denote by $A\otimes B$ their minimal tensor product, and by $A\otimes_{\max} B$ their maximal tensor product. If $\pi_A$ is a non-degenerate $*$-representation of $A$ on $\Hsp$, resp.\ $\pi_B$ of $B$ on $\Gsp$, we write 
\[
\pi_A\otimes \pi_B\colon A \otimes B \rightarrow\mcB(\Hsp \otimes \Gsp),
\]
while if $\pi_A$ and $\pi_B$ are non-degenerate $*$-representations of $A,B$ on the \emph{same} Hilbert space $\Hsp$ such that $\pi_A$ and $\pi_B$ elementwise commute, we put
\[
\pi_A\times \pi_B\colon A\otimes_{\max}B \rightarrow \mcB(\Hsp),\qquad a\otimes b \mapsto \pi_A(a)\pi_B(b). 
\]
When $A,B,C,D$ are C$^*$-algebras equipped with non-degenerate $*$-homomorphisms $\pi_A\colon A \rightarrow \Mult(C), \pi_B\colon B \rightarrow \Mult(D)$, we put $\pi\otimes_{\max} \pi'$ for the non-degenerate $*$-homomorphism 
\[
\pi\otimes_{\max} \pi'\colon A\otimes_{\max} B \rightarrow \Mult(C\otimes_{\max}D),\qquad a\otimes b \mapsto \pi_A(a)\otimes \pi_B(b),  
\]
the latter interpreted in the obvious way. When $\varepsilon_A\colon A \rightarrow \C$, resp. $\varepsilon_B\colon  B \rightarrow \C$ is a non-zero character, we usually abuse notation and write 
\[
\varepsilon_A \times \pi_B = \varepsilon_A \otimes_{\max} \pi_B\colon A\otimes_{\max}B \rightarrow \Mult(D),\qquad \pi_A \times \varepsilon_B = \pi_A \otimes_{\max} \varepsilon_B\colon A\otimes_{\max}B \rightarrow \Mult(C). 
\]
If $A$ is a C$^*$-algebra, we denote by $\Rep(A)$ the W$^*$-category of non-degenerate $*$-representations of $A$ on Hilbert spaces, together with bounded intertwiners as morphisms.

We refer to \cites{Tak02,Tak03,Haa75} for the basics of W$^*$-algebra (= von Neumann algebra) theory. If $A$ is a W$^*$-algebra, we denote by $A^+$ its positive cone, by $A_*$ its predual and by $A_*^+$ the positive cone of its predual. We write $L^2(A)$ for its standard Hilbert space with modular conjugation $J_A$. We write $\pi_A$ for the standard representation of $A$ on $L^2(A)$, and $\rho_A$ for the standard right representation, so 
\[
\rho_A(a) = J_A\pi_A(a)^*J_A,\qquad \forall a\in A. 
\]
We drop the notation $\pi_A$ when there is no ambiguity. If $\vp$ is an nsf weight for $A$, we denote by $\Lambda_{\vp}$ the associated GNS-map into $L^2(A)$ with domain
\[
\msN_{\vp} = \msD(\Lambda_{\vp}) = \{a\in A \mid \vp(a^*a)<\infty\}.
\]

We mainly follow the conventions of \cites{KV00,KV03} for locally compact quantum groups. In the following, we fix a locally compact quantum group $\G = (M,\Delta)$. We index notation with $M$ or $\G$ whenever this is useful. We further fix a left invariant nsf weight $\varphi$ on $M$, and then normalize the right Haar weight $\psi$ on $M$ so that $\psi = \vp \circ R$, where $R\colon M \rightarrow M$ is the \emph{unitary antipode} of $\G$. We write the standard Hilbert space of $M$ as $L^2(\G) = L^2(M)$. 

The \emph{left regular unitary $\G$-representation} is given by the unitary $\ww$, uniquely determined by 
\[
(\omega\otimes \id)(\ww^*)\Lambda_{\vp}(x) = \Lambda_{\vp}((\omega\otimes \id)\Delta(x)),\qquad \forall x\in \msN_{\vp},\, \omega\in \mcB(L^2(\G))_*.
\]
Whenever there is a risk of confusion, we will write $\ww=\ww_{\GG}$. We write $\hat{\pi}_{\red}$ for the associated non-degenerate representation of $\hat{M}_u$ on $L^2(\G)$ (the \emph{reducing map}) and put 
\[
\hat{M}_u = C^*(\G) = C_0^u(\hat{\G}),\qquad \hat{M}_{\red} = C^*_{\red}(\G) = C_0(\hat{\G}) := \hat{\pi}_{\red}(\hat{M}_u),\qquad \hat{M} = W^*(\G) = L^{\infty}(\hat{\G}) = \hat{M}_{\red}''.
\]
Then $\hat{M}$ defines the \emph{Pontryagin dual} LCQG 
\[
\hat{\G} = (\hat{M},\hat{\Delta}),\qquad \hat{\Delta}(x) = \Sigma \ww(x\otimes 1)\ww^*\Sigma,\qquad \forall x\in \hat{M}.
\]
Having fixed $\vp = \vp_{M}$, one can canonically construct a left invariant nsf weight $\hat{\vp}= \vp_{\hat{M}}$ on $\hat{M}$, together with a GNS-map $\Lambda_{\hat{\vp}}\colon \msN_{\hat{\vp}} \rightarrow L^2(\G)$, see \cite{KV03}. This construction is involutive, and we may hence without ambiguity identify $L^2(\G) \cong L^2(\hat{\G})$ in this way. We then have that 
\[
\ww_{\hat{\G}}=\hat{\ww} =\Sigma \ww^*\Sigma,
\]
and the unitary antipodes of $\G$ and $\hat{\G}$ are implemented by 
\begin{equation}\label{EqFormUnitaryAntipo}
R(x) = J_{\hat{M}}x^*J_{\hat{M}},\qquad \hat{R}(y) = J_My^*J_M,\qquad \forall x\in M,y\in \hat{M}.
\end{equation}
We also recall the formulas
\begin{equation}\label{EqModConjMultUn}
(J_{\hat{M}}\otimes J_M)\ww (J_{\hat{M}}\otimes J_M) = \ww^*,\qquad (J_M\otimes J_{\hat{M}})\hat{\ww}(J_{M}\otimes J_{\hat{M}}) = \hat{\ww}^*.
\end{equation}

The \emph{scaling constant} $\nu_{\G}$ is the unique positive number governing the skew-commutation between the unitary modular one-parametergroups associated to $\vp$ and $\hat{\vp}$, i.e.\
\[
\nabla_{\hat{\vp}}^{it}\nabla_{\vp}^{is} = \nu_{\G}^{ist} \nabla_{\vp}^{is}\nabla_{\hat{\vp}}^{it},\qquad \forall s,t\in \R. 
\]
The self-adjoint unitary
\begin{equation}\label{EqFundSymm}
u_{\G} = \nu_{\G}^{i/8}J_MJ_{\hat{M}} = \nu_{\G}^{-i/8}J_{\hat{M}}J_M
\end{equation}
is referred to as the \emph{fundamental symmetry}. 

\section{Monoidal equivalence and cocycle twists}

\subsection{W\texorpdfstring{$^*$}{*}-Morita equivalence of W\texorpdfstring{$^*$}{*}-algebras}

We briefly recall Morita equivalence theory for W$^*$-algebras. As to have our notation compatible already with structures as they will appear later on, we will use `dual' notation, so e.g.\ $\hat{M}$ will at the moment denote  just an arbitrary W$^*$-algebra.

\begin{Def}[\cite{Rie74}]
Let $\hat{P},\hat{M}$ be two W$^*$-algebras. We call $\hat{P}$ and $\hat{M}$ \emph{W$^*$-Morita equivalent} if there exists a W$^*$-algebra $\hat{Q}$ with self-adjoint projection $e\in\hat{Q}$ such that 
\[
\hat{P} \cong e\hat{Q}e,\qquad \hat{M} \cong e^{\perp}\hat{Q}e^{\perp},
\]
and such that both $e$ and $e^{\perp}$ are \emph{full}, i.e.\ $\hat{Q}e\hat{Q}$ and $\hat{Q}e^{\perp}\hat{Q}$ are $\sigma$-weakly dense in $\hat{Q}$. 

We  call $(\hat{Q},e)$ a \emph{linking W$^*$-algebra}, or more specifically a linking W$^*$-algebra between $\hat{M}$ and $\hat{P}$. 
\end{Def}

We often simply identify $\hat{P} = e\hat{Q}e$ and $\hat{M} = e^{\perp}\hat{Q}e^{\perp}$, and then identify $\hat{Q}$ with the space of $2$-by-$2$-matrices
\[
\hat{Q} = \begin{pmatrix} \hat{P} & \hat{N} \\ \hat{O} & \hat{M} \end{pmatrix},\qquad \hat{N} = e\hat{Q}e^{\perp},\quad \hat{O} = e^{\perp}\hat{Q}e. 
\]
The fullness condition is  equivalent to the condition that 
\[
[\hat{N}\hat{O}]^{\sigma\textrm{-weak}} = \hat{P},\qquad [\hat{O}\hat{N}]^{\sigma\textrm{-weak}} = \hat{M}. 
\]
It is also convenient to label $e_1 := e, e_2:=e^{\perp}$ and to put 
\[
\hat{Q} = \begin{pmatrix} \hat{Q}_{11} & \hat{Q}_{12} \\ \hat{Q}_{21} & \hat{Q}_{22}\end{pmatrix}.
\]
Then we can identify $L^2(\hat{Q}) = \bigoplus_{i,j=1}^2 L^2(\hat{Q}_{ij})$ (orthogonal direct sum), where we write
\[
L^2(\hat{Q}_{ij}) = \pi_{\hat{Q}}(e_i)\rho_{\hat{Q}}(e_j) L^2(\hat{Q}), 
\]
and the standard representation $\pi_{\hat{Q}}$ and anti-representation $\rho_{\hat{Q}}$ split as
\[\begin{split}
\hat{\pi}_{ij}^k\colon \hat{Q}_{ij} \rightarrow \mcB(L^2(\hat{Q}_{jk}),L^2(\hat{Q}_{ik})),&\quad x\mapsto \pi_{\hat{Q}}(x)_{\mid L^2(\hat{Q}_{jk})}, \\
\hat{\rho}_{ij}^k\colon \hat{Q}_{ij} \rightarrow \mcB(L^2(\hat{Q}_{ki}),L^2(\hat{Q}_{kj})),&\quad x\mapsto \rho_{\hat{Q}}(x)_{\mid L^2(\hat{Q}_{ik})}. 
\end{split}\]
We also write
\[
\hat{J}_{ij}\colon L^2(\hat{Q}_{ij})\rightarrow L^2(\hat{Q}_{ji}),\qquad \xi \mapsto J_{\hat{Q}}\xi.
\]
Then $(L^2(\hat{Q}_{ii}),\hat{\pi}_{ii}^i,\hat{J}_{ii})$ can be identified with the standard representation and modular conjugation of $\hat{Q}_{ii}$ (and with positive cone the projection of the positive cone of $L^2(\hat{Q})$).

An alternative viewpoint for W$^*$-Morita equivalence is to have a \emph{W$^*$-Morita correspondence}, that is, a Hilbert space $\Hsp$ with a faithful, normal, unital $*$-representation $\hat{\pi}$ of $\hat{P}$, resp.~anti-$*$-representation $\hat{\rho}$ of $\hat{M}$, such that 
\[
\hat{\pi}(\hat{P}) = \hat{\rho}(\hat{M})'. 
\]
For example, if $\hat{Q}$ is a linking W$^*$-algebra, then fullness of $e_1,e_2$ implies that $\Hsp = L^2(\hat{Q}_{12})$ is a W$^*$-Morita equivalence between $\hat{M}$ and $\hat{P}$ via $\hat{\pi}_{11}^2$ and $\hat{\rho}_{22}^1$. Conversely, from a W$^*$-Morita correspondence $\Hsp$ we construct a linking W$^*$-algebra 
\[
\hat{Q} :=  \begin{pmatrix} \hat{P} & \hat{N} \\ \hat{O} & \hat{M} \end{pmatrix} \subseteq \mcB\begin{pmatrix} \Hsp \\L^2(\hat{M})\end{pmatrix},
\]
where we identify $\hat{P}\cong \hat{\pi}(\hat{P})$ and
\[
\hat{N} = \{x\colon  L^2(\hat{M}) \rightarrow \Hsp\mid x\rho_{\hat{M}}(y) = \hat{\rho}(y)x\textrm{ for all }y\in \hat{M}\},\qquad \hat{O} = \{x^*\mid x\in \hat{N}\}.
\]

Moreover, we have a convenient model for $L^2(\hat{Q})$ through the unique unitary such that
\begin{equation}\label{EqStandardMorita}
 \begin{pmatrix} L^2(\hat{P}) & L^2(\hat{N}) \\ L^2(\hat{O}) & L^2(\hat{M}) \end{pmatrix}
 \cong 
\begin{pmatrix} \hat{N} \bar{\otimes}_{\hat{M}}\overline{\Hsp}  & \Hsp \\ \overline{\Hsp} & L^2(\hat{M}) \end{pmatrix},
\qquad  
 \begin{pmatrix} \pi_{\hat{Q}}(x)\hat{J}_{12}\pi_{\hat{Q}}(w)\xi & \pi_{\hat{Q}}(y)\eta \\ \hat{J}_{12} \pi_{\hat{Q}}(z) \zeta & \vartheta\end{pmatrix}
 \mapsto
 \begin{pmatrix} x\otimes \overline{w\xi} & y\eta \\ \overline{z\zeta} & \vartheta\end{pmatrix},
\end{equation}
where $x,y,w,z\in \hat{N}$ and $\xi,\eta,\zeta,\vartheta\in L^2(\hat{M})$, and where $\hat{N} \bar{\otimes}_{\hat{M}}\overline{\Hsp}$ is the separation-completion of the algebraic tensor product $\hat{N} \odot \overline{\Hsp}$ under the scalar product 
\[
\langle x\otimes \overline{\xi} , y\otimes \overline{\eta}\rangle = \langle \eta , \hat{\rho}(x^*y)\xi\rangle,\qquad \forall \xi,\eta\in \Hsp,x,y\in \hat{N}. 
\] 
In this model, the left regular representation of $\hat{Q}$ and its modular conjugation are determined by 
\begin{equation}\label{EqMoritaLeftAction}
\pi_{\hat{Q}}\begin{pmatrix} a & b \\ c & d \end{pmatrix} \begin{pmatrix} x\otimes \overline{\xi} & \eta \\ \overline{\zeta} & \vartheta\end{pmatrix} = \begin{pmatrix} ax\otimes \overline{\xi} + b\otimes \overline{\zeta} & a\eta + b\vartheta \\ \overline{\hat{\rho}(cx)^*\xi} + \overline{\hat{\rho}(d)^*\zeta} & c\eta + d\vartheta\end{pmatrix}
\end{equation}
when $a\in \hat{P},b,x\in\hat{N},c\in\hat{O},d\in \hat{M}, \xi,\eta,\zeta\in \Hsp, \vartheta\in L^2(\hat{M})$ and
\begin{equation}\label{EqMoritaModCon}
J_{\hat{Q}}\begin{pmatrix} x\otimes \overline{y\xi} & \eta \\ \overline{\zeta} & \vartheta\end{pmatrix} = \begin{pmatrix} y\otimes \overline{xJ_{\hat{M}}\xi} & \zeta \\ \overline{\eta} & J_{\hat{M}}\vartheta\end{pmatrix}
\end{equation}
when $x,y\in\hat{N}, \eta,\zeta\in \Hsp, \xi,\vartheta\in L^2(\hat{M})$.

\subsection{W\texorpdfstring{$^*$}{*}-Morita equivalence of W\texorpdfstring{$^*$}{*}-bialgebras and linking quantum groupoids}

A \emph{W$^*$-bialgebra} is a W$^*$-algebra $\hat{M}$, equipped with a coassociative, faithful, unital, normal $*$-homomorphism $\hat{\Delta}\colon \hat{M} \rightarrow \hat{M}\bar{\otimes}\hat{M}$.

The following notion is introduced in \cite{DC11a}. 

\begin{Def}\label{def2}
Let $(\hat{M},\Delta_{\hat{M}})$ and $(\hat{P},\Delta_{\hat{P}})$ be W$^*$-bialgebras. We say that $(\hat{M},\Delta_{\hat{M}})$ and $(\hat{P},\Delta_{\hat{P}})$ are \emph{comonoidally W$^*$-Morita equivalent} if there exists a linking W$^*$-algebra $(\hat{Q},e)$ with 
\[
\hat{Q} = \begin{pmatrix} \hat{P} & \hat{N} \\ \hat{O} & \hat{M}\end{pmatrix} = \begin{pmatrix} \hat{Q}_{11} & \hat{Q}_{12} \\ \hat{Q}_{21} & \hat{Q}_{22}\end{pmatrix},
\] 
together with a coassociative, faithful, (non-unital) normal $*$-homomorphism $\Delta_{\hat{Q}}\colon \hat{Q} \rightarrow \hat{Q}\bar{\otimes}\hat{Q}$ such that 
\[
\Delta_{\hat{Q}}(e) = e\otimes e,\quad \Delta_{\hat{Q}}(e^{\perp}) = e^{\perp}\otimes e^{\perp},\qquad (\Delta_{\hat{Q}})_{\mid \hat{M}} = \Delta_{\hat{M}},\quad (\Delta_{\hat{Q}})_{\mid \hat{P}} = \Delta_{\hat{P}}. 
\]
We say that $(\hat{Q},\Delta_{\hat{Q}})$ is a concrete implementation of the comonoidal W$^*$-Morita equivalence.
\end{Def}

In the above setting, $\Delta_{\hat{Q}}$ restricts to the off-diagonal corners as  
\[
\Delta_{\hat{N}} = \hat{\Delta}_{12}\colon\hat{N}\rightarrow  \hat{N}\bar{\otimes}\hat{N},\qquad \Delta_{\hat{O}} = \hat{\Delta}_{21}\colon \hat{O}\rightarrow \hat{O}\bar{\otimes}\hat{O}.
\] 
As explained in the introduction, the above tensor products can be seen as appropriate off-diagonal corners of $\hat{Q}\bar{\otimes}\hat{Q}$ or as abstract tensor products of dual operator spaces.

\begin{Theorem}[{\cite{DC11a}*{Theorem 0.7}}]
Assume $(\hat{M},\Delta_{\hat{M}})$ and $(\hat{P},\Delta_{\hat{P}})$ are comonoidally W$^*$-Morita equivalent W$^*$-bialgebras.  Then $(\hat{M},\Delta_{\hat{M}})$ is a LCQG if and only if $(\hat{P},\Delta_{\hat{P}})$ is a LCQG.  
\end{Theorem}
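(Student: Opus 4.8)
\emph{Sketch of a proof.} Swapping $e$ and $e^\perp$ turns a concrete implementation $(\hat Q,\Delta_{\hat Q})$ of the comonoidal W$^*$-Morita equivalence for the pair $(\hat M,\hat P)$ into one for the pair $(\hat P,\hat M)$, so the relation is symmetric and it suffices to prove one implication: if $(\hat M,\Delta_{\hat M})$ is a LCQG, then so is $(\hat P,\Delta_{\hat P})$. The plan is to use the linking quantum groupoid $(\hat Q,\Delta_{\hat Q})$ as a bridge. First I would manufacture left and right invariant n.s.f.\ weights on the whole of $\hat Q$ out of the Haar weights of $\hat M=\hat Q_{22}$, and then show that restricting them to the corner $\hat P=e_1\hat Q e_1$ yields left, resp.\ right, invariant n.s.f.\ weights for $\Delta_{\hat P}$ -- which is exactly the assertion that $(\hat P,\Delta_{\hat P})$ is a LCQG.

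For the left invariant weight, fix a left Haar weight $\hat\vp$ on $\hat M$ with GNS space $L^2(\hat M)$ and realise the Morita equivalence concretely on $\Hsp := L^2(\hat Q_{12})$, which carries a faithful normal unital $*$-representation of $\hat P$ and an anti-representation $\hat\rho$ of $\hat M$ with $\hat P=\hat\rho(\hat M)'$. The restriction $\hat\Delta_{12}\colon \hat N\to \hat N\bar\otimes\hat N$ of $\Delta_{\hat Q}$, together with $\Delta_{\hat M}$ and the Morita data, is implemented by a Galois-type unitary $V$, which I expect to satisfy a pentagon identity pairing it with the multiplicative unitary of $\hat M$. Using this intertwining, one propagates $\hat\vp$ -- which lives on $\hat Q_{22}$ -- across the off-diagonal corners into an n.s.f.\ weight $\Phi$ on $\hat Q$ restricting to $\hat\vp$ on $\hat M$; here the fullness conditions $[\hat N\hat O]^{\sigma\textrm{-weak}}=\hat P$ and $[\hat O\hat N]^{\sigma\textrm{-weak}}=\hat M$ are what make the propagation possible. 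The relation defining $V$ then forces $(\omega\otimes\id)\Delta_{\hat Q}$ to preserve $\Phi$, and since $\Delta_{\hat Q}(e_1)=e_1\otimes e_1$ the restriction $\hat\vp_{\hat P}:=\Phi_{\mid\hat P}$ is left invariant for $\Delta_{\hat P}$. Running the same argument with the right Haar weight $\hat\psi$ -- or, more cleanly, using the unitary antipode of $\hat Q$, which swaps left and right invariance and restricts to the unitary antipodes on the diagonal corners -- yields a right invariant n.s.f.\ weight $\hat\psi_{\hat P}$ on $\hat P$. Normality and faithfulness of $\hat\vp_{\hat P},\hat\psi_{\hat P}$ follow from those of $\hat\vp,\hat\psi$ and from faithfulness of $\Delta_{\hat Q}$, and the invariance identities in the definition of a LCQG for $\hat P$ reduce through $V$ to the corresponding identities for $\hat M$.

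The hard part is \emph{semifiniteness} of the transported weights -- equivalently, that $\hat Q$ genuinely admits an n.s.f.\ invariant weight rather than a weight defined on too small a cone. This forces one to match the modular automorphism groups and Connes cocycle derivatives across the four corners $\hat Q_{ij}$, so that the modular data of $\hat\vp$ on $\hat Q_{22}$ propagates consistently to $\hat Q_{11}$, and to exhibit enough square-integrable elements in $\hat N$ -- hence, by fullness, in $\hat P$ -- to make $\msN_{\hat\vp_{\hat P}}$ dense. This modular bookkeeping is where the bulk of the work lies; an alternative packaging is to recognise $(\hat Q,\Delta_{\hat Q})$ as a measured quantum groupoid over the finite base $\C e_1\oplus\C e_2$ in the sense of Lesieur--Enock and invoke the Haar-weight theory available there, but verifying its hypotheses comes down to the same analysis. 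We refer to \cite{DC11a} (see also \cite{DC09,BC17}) for the details.
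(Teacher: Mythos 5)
The paper does not actually prove this statement: it is quoted verbatim from \cite{DC11a}*{Theorem 0.7} and used as a black box, so there is no internal proof to compare against. Judged on its own terms, your sketch correctly identifies the architecture of the proof in that reference: the symmetry of the relation obtained by swapping $e$ and $e^{\perp}$ (the reflection of Remark \ref{rem1}), the reduction to transporting the Haar weights of $\hat{M}=\hat{Q}_{22}$ across the linking W$^*$-algebra to $\hat{P}=\hat{Q}_{11}$, the role of a Galois-type unitary implementing $\hat{\Delta}_{12}$, the use of the unitary antipode of $\hat{Q}$ to convert left into right invariance, and the alternative packaging via measured quantum groupoids over the base $\C^2$.

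As a proof, however, the proposal has a genuine gap exactly where you say the ``bulk of the work lies''. The phrase ``one propagates $\hat{\vp}$ across the off-diagonal corners into an nsf weight $\Phi$ on $\hat{Q}$'' is essentially the whole content of the theorem: fullness of $e_1,e_2$ gives $\sigma$-weak density of $\hat{N}\hat{O}$ in $\hat{P}$, but it does not by itself produce a single nonzero square-integrable element for the would-be weight on $\hat{P}$, nor does it show that the candidate weight on the corner (in dual form, the weight $(\id\otimes\vp_M)\alpha_{\X}$ on the Galois object $(N,\alpha_{\X})$ of Section \ref{SecBiGal}) is semifinite. Establishing this integrability, and matching the modular data across the four corners so that the resulting weight is normal, faithful and invariant, is the substantial theorem of \cite{DC11a} rather than bookkeeping, and your sketch defers it wholesale to that reference. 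So what you have written is an accurate roadmap of the cited proof rather than a proof; since the paper under review likewise only cites the result, this is arguably an appropriate level of detail, but the semifiniteness step is the theorem, not a verification.
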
 

In the situation of the above theorem, we write $\hat{M} = W^*(\G)$ and $\hat{P} = W^*(\Hh)$, and we then call $\G$ and $\Hh$ \emph{monoidally equivalent} (see Theorem \ref{TheoIndRep} for the reason of this nomenclature). We then write  also
\[
\hat{Q} = W^*(\msG) =  \begin{pmatrix} W^*(\Hh) & W^*(\X) \\ W^*(\Y) & W^*(\G)\end{pmatrix},\qquad \hat{\msG} = (\hat{Q},e,\Delta_{\hat{Q}}). 
\]

\begin{Def}\label{DefLinkingQuantumGroupoid}
We refer to the symbol $\msG$ as above a \emph{linking quantum groupoid} between $\G$ and $\Hh$. 
\end{Def}

See Section \ref{SecDualLQG} for more information on the precise structure be attached to $\msG$ so that the notation is consistent with the one for LCQGs.

\begin{Rem}\label{rem1}
Let $\msG$ be a linking quantum groupoid corresponding to $\hat{\msG}=(\hat{Q},e,\Delta_{\hat{Q}})$. Then the triple $(\hat{Q},e^{\perp},\Delta_{\hat{Q}})$ (with the role of $e$ and $e^{\perp}$ swapped) also defines a linking quantum groupoid. It will be called the \emph{reflection} of $\msG$. Reflecting is a useful technical tool when we want to exchange the roles of $\G$ and $\Hh$.
\end{Rem}

One can interpret $\msG$ as a quantum groupoid with two (classical) objects having $\G$ and $\Hh$ as their isotropy. As discussed at the end of \cite{DC11a}*{Section 5}, one can view $(W^*(\msG),\Delta_{\hat{Q}})$ more concretely as an instance of a \emph{measured quantum groupoid} in the sense of \cite{Les07}. However, as they are quantum groupoids of a simple type, they can be handled in a manner similar to quantum groups themselves, with the necessary generalisation only concerning the manipulation of the algebraic structure, and less so the analytic structure.  

One such change concerns the multiplicative unitaries, which will in this setting be multiplicative \emph{partial isometries}: endowing $\hat{Q}$ with the weight\footnote{There is in general no canonical normalisation of $\vp_{\hat{P}}$ in terms of $\vp_{\hat{M}}$, so one simply makes a choice here.} $\vp_{\hat{Q}} = \vp_{\hat{P}}\oplus \vp_{\hat{M}}$, we construct the partial isometry
\begin{equation}\label{EqMultIsoLinkQGr}
\ww_{\hat{Q}}^*\colon L^2(\hat{Q}) \otimes L^2(\hat{Q}) \rightarrow L^2(\hat{Q}) \otimes L^2(\hat{Q}),\quad \Lambda_{\hat{Q}}(x) \otimes \Lambda_{\hat{Q}}(y) \mapsto (\Lambda_{\hat{Q}}\otimes \Lambda_{\hat{Q}})(\Delta_{\hat{Q}}(y)(x\otimes 1)),\quad\; \forall x,y \in \msN_{\vp_{\hat{Q}}}.  
\end{equation}
We can split $L^2(\hat{Q}) = L^2(\msG) = L^2(\Hh)\oplus L^2(\Y) \oplus L^2(\X) \oplus L^2(\G)$, where 
\[
L^2(\Hh) = L^2(\hat{P}),\quad L^2(\Y) = L^2(\hat{O}),\quad L^2(\X) = L^2(\hat{N}),\quad L^2(\G) = L^2(\hat{M}).
\]
If we abbreviate
\[
\G_{11} = \Hh,\quad \G_{12} = \X,\quad \G_{21} = \Y,\quad \G_{22}= \G,
\]
then we can state that there exist elements 
\[
\hat{\ww}_{ij} \in W^*(\G_{ji}) \bar{\otimes} \mcB(L^2(\G_{ij}))
\]
such that $\ww_{\hat{Q}}$ splits into unitaries
\begin{equation}\label{EqDecompDualW}
\hat{\ww}_{ij}^k = (\hat{\pi}_{ji}^k\otimes \id)\hat{W}_{ij}\colon L^2(\G_{ik})\otimes L^2(\G_{ij}) \rightarrow L^2(\G_{jk})\otimes L^2(\G_{ij}),\qquad \xi \mapsto \ww_{\hat{Q}}\xi. 
\end{equation}
See \cite{DC09}*{Chapter 11} for a detailed discussion, where also the contents of the next section are elaborated on. 

\subsection{Dual of a linking quantum groupoid}\label{SecDualLQG}

\begin{Def}\label{DefLQGr}
Let $\msG$ be a linking quantum groupoid. Its associated \emph{function W$^*$-algebra} is
\[
Q= L^{\infty}(\msG) := [(\omega\otimes \id)\ww_{\hat{Q}}\mid \omega \in \hat{Q}_*]^{\sigma\textrm{-weak}} \subseteq \mcB(L^2(\msG)). 
\]
\end{Def}
One shows that $Q$ is indeed a W$^*$-algebra. The decomposition of \eqref{EqDecompDualW} then shows that $L^{\infty}(\msG)$ is naturally a direct sum W$^*$-algebra 
\[
L^{\infty}(\msG) = L^{\infty}(\Hh)\oplus L^{\infty}(\Y)\oplus L^{\infty}(\X)\oplus L^{\infty}(\G) \subseteq \mcB(L^2(\Hh)\oplus L^2(\Y) \oplus L^2(\X) \oplus L^2(\G)).
\]
If we put 
\[
L^{\infty}(\G_{11}) = L^{\infty}(\Hh),\quad L^{\infty}(\G_{21}) = L^{\infty}(\Y),\quad L^{\infty}(\G_{12}) = L^{\infty}(\X),\quad L^{\infty}(\G_{22}) = L^{\infty}(\G),
\]
then $1_{ij} = \pi_{\hat{Q}}(e_i)\rho_{\hat{Q}}(e_j)$ is the unit of $L^{\infty}(\G_{ij})$.

These components of $L^{\infty}(\msG)$ are endowed with faithful, normal, unital $*$-homomorphisms 
\[
\Delta_{ij}^k\colon L^{\infty}(\G_{ij}) \rightarrow L^{\infty}(\G_{ik})\bar{\otimes} L^{\infty}(\G_{kj})
\]
such that
\[
\Sigma \ww_{\hat{Q}}(x\otimes 1)\ww_{\hat{Q}}^*\Sigma = \Delta_{ij}^1(x) + \Delta_{ij}^2(x),\qquad \forall x\in L^{\infty}(\G_{ij}).  
\]
These maps $\Delta_{ij}^k$ satisfy the joint coassociativity condition
\[
(\Delta_{i,k}^{\ell}\otimes \id)\Delta_{i,j}^k=
(\id \otimes \Delta^k_{\ell,j})\Delta_{i,j}^{\ell},\qquad \forall i,j,k,\ell \in \{1,2\}.
\]
If we endow $Q = L^{\infty}(\msG)$ with the coassociative, faithful, (non-unital) normal $*$-homomorphism
\[
\Delta_{Q}(x) = \Delta_{ij}^1(x) + \Delta_{ij}^2(x),\qquad \forall x\in L^{\infty}(\G_{ij}),
\]
then 
\begin{equation}\label{EqDefLQGr}
\msG = (L^{\infty}(\msG),\Delta_Q) = (L^{\infty}(\msG),\Delta_Q,\{1_{ij}\})
\end{equation} 
is again a measured quantum groupoid (we add the units $\{1_{ij}\}$ to the notation if we want to emphasize it as a \emph{linking} quantum groupoid). In fact, $(L^{\infty}(\msG),\Delta_Q)$ is the dual of $(\hat{Q},\Delta_{\hat{Q}})$ in the sense of measured quantum groupoids. We also have on each of the $L^{\infty}(\G_{ij})$ two nsf weights $\vp_{ij}$ and $\psi_{ij}$, satisfying 
\[\begin{split}
\vp_{kj}((\omega_{ik}\otimes \id)\Delta_{ij}^k(x)) &= \vp_{ij}(x),\qquad \forall x\in L^{\infty}(\G_{ij})^+,\, \textrm{normal states }\omega_{ik}\textrm{ on }L^{\infty}(\G_{ik}),\\
 \psi_{ik}((\id\otimes \omega_{kj})\Delta_{ij}^k(x)) &= \psi_{ij}(x),\qquad \forall x\in L^{\infty}(\G_{ij})^+,\, \textrm{normal states }\omega_{kj}\textrm{ on }L^{\infty}(\G_{kj}).
\end{split}\]
These weights are canonically determined once $\vp_{\hat{M}}$ and $\vp_{\hat{P}}$ are fixed, and one can then identify $L^2(\hat{Q})\cong L^2(Q)$ via a concrete GNS-map for the weight $\vp_Q = \bigoplus_{i,j=1}^{2}\vp_{ij}$, in a manner completely similar to the construction in \cite{KV03}. In particular, $\vp_{ii}$ and $\psi_{ii}$ can be made to coincide with the left and right invariant nsf weights on $(Q_{ii},\Delta_{ii}^i)$, and the identification $L^2(\hat{Q}_{ii})\cong L^2(Q_{ii})$ is the usual one from the theory of locally compact quantum groups.  

Clearly $\hat{\ww}^{k}_{ij}\in \mcB(L^2(\G_{ik}),L^2(\G_{jk}))\bar\otimes L^{\infty}(\G_{ij})$.  Using \cite{DC09}*{Lemma 11.1.5.(3)} one can check that
\begin{equation}\label{eq17}
(\id\otimes \Delta^{\ell}_{ij})(\hat{\ww}^{k}_{ij})=
\hat{\ww}^{k}_{\ell j [13]} \hat{\ww}^{k}_{i\ell [12]}
\end{equation}
for all indices $i,j,k,\ell \in \{1,2\}$.

As for locally compact quantum groups, the modular conjugation of $Q$  allows to construct a \emph{unitary antipode} for the quantum groupoid W$^*$-algebra $W^*(\msG)$, via 
\begin{equation}\label{EqUnitaryAntipodeQgr}
\hat{R}\colon W^*(\msG)\rightarrow W^*(\msG),\qquad \hat{R}(x) = J_Qx^*J_Q,\qquad \forall x\in W^*(\msG). 
\end{equation}
With $J_{ij}$ denoting the restriction of $J_Q$ to $L^2(\G_{ij})$, we then have that $\hat{R}$ restricts to anti-multiplicative $*$-compatible maps
\begin{equation}\label{EqUnitAntipodQGr}
\hat{R}_{ij}\colon W^*(\G_{ij})\rightarrow W^*(\G_{ji}),\qquad \hat{\pi}_{ji}^k(\hat{R}_{ij}(x)) = J_{jk}\hat{\pi}_{ij}^k(x)^*J_{ik},
\end{equation}
meaning that for all $i,j,k\in\{1,2\}$ we have
\begin{equation}\label{EqUnitAntipodQGrProp}
\hat{R}_{ik}(xy) = \hat{R}_{jk}(y)\hat{R}_{ij}(x),\quad \hat{R}_{ij}(x)^* = \hat{R}_{ji}(x^*),\qquad\forall x\in  W^*(\G_{ij}),y\in W^*(\G_{jk}). 
\end{equation}
We also have the expected anti-comultiplicativity property
\[
\hat{\Delta}_{ji}\circ \hat{R}_{ij}=  (\hat{R}_{ij}\otimes \hat{R}_{ij})\hat{\Delta}_{ij}^{\opp}. 
\]

Finally, we also have C$^*$-algebras associated to $\msG$, determined via 
\[
C_0(\msG) =  [(\omega\otimes \id)\ww_{\hat{Q}}\mid \omega \in \hat{Q}_*] \subseteq L^{\infty}(\G),
\]
which breaks up into components 
\[
C_0(\G_{ij}) = C_0(\Hh)\oplus C_0(\Y)\oplus C_0(\X)\oplus C_0(\G) \subseteq \mcB(L^2(\Hh)\oplus L^2(\Y) \oplus L^2(\X) \oplus L^2(\G)).
\]

\subsection{Bi-Galois objects}\label{SecBiGal}

Let $\msG = \begin{pmatrix} \Hh & \X \\ \Y & \G\end{pmatrix}$ be a linking quantum groupoid. Then the particular component 
\begin{equation}\label{EqFormAlph}
(L^{\infty}(\X),\alpha_{\X}),\qquad \alpha_{\X}= \Delta_{12}^2\colon L^{\infty}(\X)\rightarrow L^{\infty}(\X)\bar{\otimes} L^{\infty}(\G)
\end{equation}
defines a (right) action of $\G$ on $L^{\infty}(\X)$. It is called the \emph{Galois object} for $\G$ associated to the above quantum groupoid. From $\G$ and $\X$ one can reconstruct the whole quantum groupoid (up to isomorphism) \cite{DC11b}, but we will not need this result here. We only observe that $(N,\alpha_{\X}) = (L^{\infty}(\X),\alpha_{\X})$ is an \emph{ergodic}  right $\G$-action:  with
\[
N^{\alpha_{\X}} = \{x \in N \mid \alpha_{\X}(x) = x\otimes1\},
\]
we have $N^{\alpha_{\X}} = \C$. Moreover, $\alpha_{\X}$ is \emph{integrable}, in that we have semi-finiteness of the normal, faithful weight
\[
\vp_{\X} = \vp_{12} = (\id\otimes \vp_M)\alpha_{\X}\colon N^+ \rightarrow (N^{\alpha_{\X}})_{\ext}^+ = [0,\infty].
\]

If we also consider on $L^{\infty}(\X)$ its associated left $\Hh$-action 
\begin{equation}\label{EqFormGamm}
\gamma_{\X} = \Delta_{12}^1\colon L^{\infty}(\X) \rightarrow L^{\infty}(\Hh)\bar{\otimes} L^{\infty}(\X),
\end{equation}
then $\alpha_{\X}$ and $\gamma_{\X}$ commute:
\[
(\gamma_{\X}\otimes \id)\alpha_{\X} = (\id\otimes \alpha_{\X})\gamma_{\X}.
\] 
We can similarly define on $L^{\infty}(\Y)$ the commuting left and right actions 
\[
\gamma_{\Y} = \Delta_{21}^2\colon L^{\infty}(\Y)\rightarrow L^{\infty}(\G)\bar{\otimes}L^{\infty}(\Y),\quad \alpha_{\Y} = \Delta_{21}^1\colon L^{\infty}(\Y) \rightarrow L^{\infty}(\Y)\bar{\otimes} L^{\infty}(\Hh). 
\]
\begin{Def}
The triple $(L^{\infty}(\X),\alpha_{\X},\gamma_{\X})$ is referred to as the \emph{biGalois object} implementing the monoidal equivalence between $\G$ and $\Hh$. We call $(L^{\infty}(\Y),\alpha_{\Y},\gamma_{\Y})$ the \emph{opposite} biGalois object, implementing the monoidal equivalence between $\Hh$ and $\G$.
\end{Def}

\subsection{\texorpdfstring{$2$}{2}-cocycles and skew bicharacters}\label{SubSecCocBich}

As a particular case, consider a W$^*$-bialgebra $(\hat{M},\Delta_{\hat{M}})$ and let $\hat{\Omega} \in \hat{M} \bar{\otimes} \hat{M}$ be a unitary $2$-cocycle: 
\[
\hat{\Omega}_{[12]}(\Delta_{\hat{M}}\otimes \id)\hat{\Omega}= \hat{\Omega}_{[23]}(\id\otimes \Delta_{\hat{M}})\hat{\Omega}. 
\]
Then we can make on $\hat{P}:= \hat{M}$ a new coassociative coproduct 
\[
\Delta_{\hat{P}}(x) := \hat{\Omega} \Delta_{\hat{M}}(x)\hat{\Omega}^*,\qquad \forall x\in \hat{M}.
\]
The W$^*$-bialgebras $(\hat{M},\Delta_{\hat{M}})$ and $(\hat{P},\Delta_{\hat{P}})$ are comonoidally W$^*$-Morita equivalent via  the trivial linking W$^*$-algebra
$\hat{Q} = \begin{pmatrix} \hat{M} & \hat{M} \\ \hat{M} & \hat{M} \end{pmatrix}$ with coproducts given by 
\begin{equation}\label{EqCoprodCocTwist}
\hat{\Delta}_{11}=\Delta_{\hat{P}},\quad \hat{\Delta}_{22}=\Delta_{\hat{M}},\quad
\hat{\Delta}_{12} = \hat{\Omega}\Delta_{\hat{M}}(\cdot),\quad \hat{\Delta}_{21} = \Delta_{\hat{M}}(\cdot)\hat{\Omega}^*.
\end{equation}
If $\G= (M,\Delta_M)$ is a LCQG, we refer to $\hat{\Omega}$ as a \emph{dual unitary $2$-cocycle} for $\G$, and we write
\begin{equation}
\G_{\hat{\Omega}} = (P,\Delta_P).
\end{equation}

Specializing further, consider W$^*$-bialgebras $(\hat{M}_1,\hat{\Delta}_1),(\hat{M}_2,\hat{\Delta}_2)$ together with a \emph{unitary skew bicharacter}
\begin{equation}\label{EqSkewBichvN}
\wh{\mcX} \in \hat{M}_1\bar{\otimes}\hat{M}_2,\qquad  (\hat{\Delta}_1\otimes \id)\wh{\mcX} = \wh{\mcX}_{[13]}\wh{\mcX}_{[23]},\quad (\id\otimes \hat{\Delta}_2)\wh{\mcX} = \wh{\mcX}_{[13]}\wh{\mcX}_{[12]}.
\end{equation}
Then the tensor product W$^*$-bialgebra $(\hat{M},\Delta_{\hat{M}}) := (\hat{M}_1\bar{\otimes}\hat{M}_2,\hat{\Delta}_{\otimes})$, where 
\[
\hat{\Delta}_{\otimes}(x\otimes y)= \hat{\Delta}_1(x)_{[13]}\hat{\Delta}_2(y)_{[24]},\qquad \forall x\in \hat{M}_1,y\in \hat{M}_2,
\] 
obtains the unitary $2$-cocycle
\begin{equation}\label{EqCocycBichar}
\hat{\Omega} = \wh{\mcX}_{[32]} \in (\hat{M}_1\bar{\otimes}\hat{M}_2)\bar{\otimes}(\hat{M}_1\bar{\otimes}\hat{M}_2). 
\end{equation}
If $\hat{\G}_i = (\hat{M}_i,\hat{\Delta}_i)$ for LCQG $\hat{\G}_i$, one calls 
\begin{equation}
D_{\wh{\mcX}}(\hat{\G}_1,\hat{\G}_2):=(\hat{M},\hat{\Omega}\Delta_{\hat{M}}(-)\hat{\Omega}^*)
\end{equation}
a \emph{generalized Drinfeld double} \cite{BV05}*{Section 8}.  When $\hat{\G}_1 = \G$ and $\hat{\G}_2= \hat{\G}$ with $\wh{\mcX} = \ww$,
\begin{equation}
D(\G) = D_{\ww}(\hat{\G}_1,\hat{\G}_2)
\end{equation}
is the \emph{Drinfeld double} of $\G$. 

\section{Representations and actions of linking quantum groupoids}

Many of the results on locally compact quantum groups extend to linking quantum groupoids. In this section, we give some information on how such extensions look like.

\subsection{Representations of linking quantum groupoids}

We refer to \cite{DC09}*{Section 11 and Section 7.6} for details on the next results.

\begin{Def}\label{def1}
Let $\msG = (Q,\Delta_Q,\{1_{ij}\})$ be a linking quantum groupoid. By a \emph{unitary $\msG$-representation}, we mean a pair of Hilbert spaces $\Hsp_1,\Hsp_2$ together with four unitaries $U_{ij} \in L^{\infty}(\G_{ij})\bar\otimes \mcB(\Hsp_j,\Hsp_i)$
such that 
\[ 
(\Delta_{ij}^k \otimes \id)U_{ij} = U_{ik[13]}U_{kj[23]},\qquad \forall i,j,k\in \{1,2\}.
\]
\end{Def} 
Putting $\Hsp_1,\Hsp_2$ together into their direct sum $\Hsp = \begin{pmatrix}\Hsp_1\\ \Hsp_2\end{pmatrix}$, and denoting by $p_1,p_2$ the projections on the subspaces $\Hsp_1,\Hsp_2$, the above condition is equivalent with having a partial isometry $U \in L^{\infty}(\msG)\bar{\otimes} \mcB(\Hsp)$ with
\[
(\Delta_Q\otimes \id)U = U_{[13]}U_{[23]},\qquad UU^* = \sum_{i,j=1}^{2} 1_{ij}\otimes p_i,\quad U^*U = \sum_{i,j=1}^{2} 1_{ij}\otimes p_j. 
\]
For the convenience of language, we still refer to $U$ as a `unitary $\msG$-representation'. Modifying the usual argument for LCQG (see e.g.\ the proof of \cite{Wor96}*{Theorem 1.6.2}, making use of the Baaj-Skandalis argument of \cite{BS93}*{Proposition 3.6}),
 one shows that such unitary $\msG$-representations $U$ are automatically continuous, i.e.\
\[
U \in \Mult(C_0(\msG) \otimes \mcK(\Hsp)).
\]

We denote by $\Rep(\msG)$ the W$^*$-category of unitary $\msG$-representations. A morphism $T\in \oon{Mor}(U,U')$ in $\Rep(\msG)$ is a bounded linear map satisfying $(1\otimes T)U=U'(1\otimes T)$. Note that $T$ automatically respects the decomposition of $\Hsp,\Hsp'$ into direct sums. The W$^*$-category $\Rep(\msG)$ is monoidal through 
\begin{equation}\label{eq5}
\left(\begin{pmatrix} \Hsp_1 \\ \Hsp_2\end{pmatrix},U\right)\otimes_{\msG}  \left(\begin{pmatrix} \Hsp_1' \\ \Hsp_2'\end{pmatrix},U'\right) = \left(\begin{pmatrix} \Hsp_1 \otimes \Hsp_1' \\ \Hsp_2\otimes \Hsp_2'\end{pmatrix},U \TensRep U'\right), \qquad U \TensRep U' := U_{[12]}U_{[13]}'. 
\end{equation}

Now $Q_*$ still has a convolution product $\omega *\chi = (\omega \otimes \chi)\Delta_Q$, with any unitary $\msG$-representation $U$ leading to a contractive representation 
\[
\hat{\pi}_U\colon Q_*\rightarrow \mcB(\Hsp),\quad \omega \mapsto (\omega\otimes \id)(U). 
\]
We then have the following extension of the results in \cite{Kus01}*{Section 5}: 
\begin{Theorem}
The completion of $Q_*$ with respect to
\[
\|\omega\|_u := \sup\{\|(\omega\otimes \id)(U)\|\mid U \in \Rep(\msG)\}
\]
is a C$^*$-algebra $\hat{Q}_u = C^*(\msG)$. Moreover, any representation of the form $\hat{\pi}_U$ extends uniquely to a non-degenerate $*$-representation of $C^*(\msG)$,  leading to an isomorphism of W$^*$-categories 
\begin{equation}\label{EqIsoCat}
\Rep(\msG) \leftrightarrow \Rep(\hat{Q}_u),\qquad U \mapsto  \hat{\pi}_U.
\end{equation}
\end{Theorem}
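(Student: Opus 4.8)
The plan is to follow the strategy of \cite{Kus01}*{Section 5}, with the multiplicative unitary replaced by the multiplicative partial isometry $\ww_{\hat{Q}}$ and its four components $\hat{\ww}_{ij}^k$. First I would record that $Q_*$ is a Banach algebra under convolution: since $\Delta_Q\colon Q\to Q\bar\otimes Q$ is a normal $*$-homomorphism, its pre-adjoint restricts to a contraction $Q_*\hat\otimes Q_*\to Q_*$, $\omega\otimes\chi\mapsto(\omega\otimes\chi)\Delta_Q$, and coassociativity of $\Delta_Q$ gives associativity. Any $U\in\Rep(\msG)$ is a partial isometry, hence a contraction, so $\hat\pi_U(\omega)=(\omega\otimes\id)(U)$ obeys $\|\hat\pi_U(\omega)\|\le\|\omega\|$, while $(\Delta_Q\otimes\id)U=U_{[13]}U_{[23]}$ gives $\hat\pi_U(\omega*\chi)=\hat\pi_U(\omega)\hat\pi_U(\chi)$. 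Hence $\|\omega\|_u:=\sup_U\|\hat\pi_U(\omega)\|\le\|\omega\|$ is a finite, submultiplicative seminorm; it is moreover a norm, since the regular representation $\ww_{\hat{Q}}$ — equivalently, the family $\{\hat{\ww}_{ij}^k\}$ of \eqref{EqDecompDualW} — already separates the points of $Q_*$ (the groupoid analogue of injectivity of $L^1(\G)\to C_0(\hat{\G})$; cf.\ \cite{DC09}*{Chapter 11}).

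The conceptual core is to exhibit, on a $\|\cdot\|$-dense subalgebra $\mscr{Q}\subseteq Q_*$, an involution $\omega\mapsto\omega^\sharp$ making $(\mscr{Q},*,\sharp)$ a $*$-algebra with the property that $\hat\pi_U(\omega^\sharp)=\hat\pi_U(\omega)^*$ for every $U\in\Rep(\msG)$. This is built exactly as for a LCQG, using the unitary antipode and scaling group of the measured quantum groupoid $\msG=(L^\infty(\msG),\Delta_Q)$ (the function-side analogue of \eqref{EqUnitaryAntipodeQgr}–\eqref{EqUnitAntipodQGrProp}, cf.\ \cite{DC09}*{Chapter 11}), with $\mscr{Q}$ the functionals lying in the appropriate domain; the joint coassociativity of the $\Delta_{ij}^k$ together with the anti-comultiplicativity of the antipode make each corepresentation relation compatible with $\sharp$. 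Granting this, $\|\cdot\|_u|_{\mscr{Q}}$ is a C$^*$-seminorm, as $\|\omega^\sharp*\omega\|_u=\sup_U\|\hat\pi_U(\omega)^*\hat\pi_U(\omega)\|=\|\omega\|_u^2$, so the Hausdorff completion of $\mscr{Q}$ for $\|\cdot\|_u$ is the enveloping C$^*$-algebra of the $*$-algebra $\mscr{Q}$; since $\mscr{Q}$ is $\|\cdot\|$-dense in $Q_*$ and $\|\cdot\|_u\le\|\cdot\|$, this completion coincides with the completion $\hat{Q}_u=C^*(\msG)$ of $Q_*$, which is therefore a C$^*$-algebra.

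For the isomorphism \eqref{EqIsoCat}, every $U\in\Rep(\msG)$ restricts on $\mscr{Q}$ to a $*$-representation that is non-degenerate because $UU^*=\sum_{i,j}1_{ij}\otimes p_i$; it thus extends uniquely to a non-degenerate $*$-representation $\hat\pi_U$ of $\hat{Q}_u$, and a bounded map intertwines $U,U'$ precisely when it intertwines $\hat\pi_U,\hat\pi_{U'}$, so $U\mapsto\hat\pi_U$ is a fully faithful W$^*$-functor. For essential surjectivity I would construct, as in \cite{Kus01} (via the Baaj–Skandalis argument \cite{BS93}*{Proposition 3.6}), a universal $\msG$-corepresentation $\mcV\in\Mult(C_0(\msG)\otimes\hat{Q}_u)$ — a multiplicative partial isometry satisfying $(\id\otimes\hat\pi_U)\mcV=U$ for all $U$ — so that for any non-degenerate $*$-representation $\pi$ of $\hat{Q}_u$ the element $(\id\otimes\pi)\mcV$ is a unitary $\msG$-representation mapping back to $\pi$; here one uses the automatic continuity $U\in\Mult(C_0(\msG)\otimes\mcK(\Hsp))$ established before Definition \ref{def1}.

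The main obstacle is the content of the second and third paragraphs: setting up $\mscr{Q}$ and $\sharp$ with $\hat\pi_U(\omega^\sharp)=\hat\pi_U(\omega)^*$, and building the universal corepresentation $\mcV$, in the partial-isometry setting. These, however, require only the componentwise bookkeeping already available in \cite{DC09}*{Chapter 11}, and are otherwise verbatim adaptations of \cite{Kus01}*{Section 5}: the sole novelty is tracking the indices $i,j,k\in\{1,2\}$, and no new analytic difficulty arises.
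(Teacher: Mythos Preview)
Your proposal is correct and follows precisely the approach the paper itself indicates: the paper does not give a proof of this theorem, but explicitly refers to \cite{DC09}*{Section 11 and Section 7.6} and states that the required techniques ``offer nothing new except for a slight adaptation of the results in e.g.\ \cites{KV00,Kus01,KV03,Vae01}.'' Your outline --- establishing $Q_*$ as a Banach algebra, building the dense $*$-subalgebra with involution via the groupoid antipode and scaling group, and recovering essential surjectivity from a universal corepresentation via the Baaj--Skandalis argument --- is exactly that adaptation of \cite{Kus01}*{Section 5}, and your assessment that the only novelty is the index bookkeeping is spot on.
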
 
The C$^*$-algebra $\hat{Q}_u = C^*(\msG)$ splits again as 
\begin{equation}\label{EqStrongMorita}
\hat{Q}_u = C^*(\msG) = \begin{pmatrix} \hat{P}_u & \hat{N}_u \\
\hat{O}_u & \hat{M}_u\end{pmatrix} = \begin{pmatrix} C^*(\Hh) & C^*(\X) \\ C^*(\Y) & C^*(\G)\end{pmatrix} =  \begin{pmatrix} C^*(\G_{11}) & C^*(\G_{12}) \\ C^*(\G_{21}) & C^*(\G_{22})\end{pmatrix},
\end{equation}
where $C^*(\G)$ and $C^*(\Hh)$ are indeed the universal group C$^*$-algebras for $\G$ and $\Hh$, and with \eqref{EqStrongMorita} establishing a  C$^*$-Morita equivalence between $C^*(\G)$ and $C^*(\Hh)$ \cite{DC09}*{Theorem 7.6.4}. 

Note that if $C^*(\msG)$ is $*$-represented on a Hilbert space $\mc{H}$, then the images of projections $e_i$ establish a decomposition of $\mc{H}$ into a direct sum $\mc{H}=\mc{H}_1\oplus \mc{H}_2$. Furthermore, such a decomposition is preserved by morphisms in $\Rep(C^*(\msG))$.

There is now moreover a unique (degenerate) $*$-homomorphism 
\[
\Delta_{\hat{Q}_u}\colon \hat{Q}_u \rightarrow \Mult(\hat{Q}_u \otimes \hat{Q}_u),\qquad \Delta_{\hat{Q}_u} = \begin{pmatrix} \hat{\Delta}_{u,11} & \hat{\Delta}_{u,12}\\ \hat{\Delta}_{u,21} & \hat{\Delta}_{u,22}\end{pmatrix}
\] 
such that for all $U,U' \in \Rep(\msG)$ we have
\begin{equation}\label{EqTensProd2}
(\omega \otimes \id)(U_{[12]}U_{[13]}') =  (\hat{\pi}_U \otimes \hat{\pi}_{U'})\Delta_{\hat{Q}_u}^{\opp}(\omega),\qquad \forall \omega \in Q_* \subseteq  C^*(\msG).
\end{equation}
Considering the strictly continuous extension of $\Delta_{\hat{Q}_u}$  to $\Mult(C^*(\msG))$, and viewing the diagonal units $1_i \in \Mult(C^*(\G_{ii}))$ as  elements $e_i \in \Mult(C^*(\msG))$, one shows that $\Delta_{\hat{Q}_u}(e_i) = e_i \otimes e_i$.

For non-degenerate $*$-representations $\hat{\pi},\hat{\pi}'\in \Rep(C^*(\msG))$ we then write  
\[
\hat{\pi} * \hat{\pi}'= \textrm{(co-restriction of) }(\hat{\pi} \otimes \hat{\pi}')\Delta_{\hat{Q}_u}^{\opp},
\] 
which is a non-degenerate $*$-representation of $C^*(\msG)$ on $\mc{H}\otimes_{\msG}\mc{H}'$ (see \eqref{eq5}). Then \eqref{EqTensProd2} can equivalently be written as 
\[
\hat{\pi}_{U \TensRep U'} = \hat{\pi}_U*\hat{\pi}_{U'},\qquad 
\forall U,U' \in \Rep(\msG).
\]
In this way, we may naturally view $\Rep(C^*(\msG))$ as a monoidal W$^*$-category (using the opposite coproduct), such that \eqref{EqIsoCat} becomes a monoidal isomorphism.  

Also the other results of \cite{Kus01} have their analogues in this setting. First of all, there is a unique lift of the unitary antipode $R_{\hat{Q}}$ to a $*$-preserving, anti-comultiplicative, anti-multiplicative involution 
\begin{equation}\label{EqUnitaryAntipodeUnivQgr}
R_{\hat{Q}_u}\colon C^*(\msG) \rightarrow C^*(\msG),
\end{equation}
splitting up into maps 
\[
\hat{R}_{u,ij}\colon C^*(\G_{ij})\rightarrow C^*(\G_{ji}). 
\]
Secondly, by applying \eqref{EqIsoCat} with respect to a universal $*$-representation of $C^*(\G)$, we find the existence of a `right half-lifted universal' unitary $\msG$-representation 
\begin{equation}\label{EqHalfUniversalLift}
\Ww_{\hat{Q}_u[21]}^* \in \Mult(C_0(\msG)\otimes C^*(\msG))
\end{equation}
such that, for any unitary $\msG$-representation $U$, we have 
\begin{equation}\label{EqUnivPropUnivLift}
(\id\otimes \hat{\pi}_{U})(\Ww_{\hat{Q}_u[21]}^*) = U.
\end{equation}
Writing 
\[
\pi^{\red}_{\hat{Q}_u} = \hat{\pi}_{\ww_{\hat{Q}[21]}^*}\colon C^*(\msG) \rightarrow W^*(\msG) \subseteq \mcB(L^2(\msG))
\]
for the reducing map, the operator $\Ww_{\hat{Q}_u}$ satisfies 
\begin{equation}\label{EqHalfUniversalRepQgr}
(\id\otimes \pi_{\hat{Q}_u}^{\red})\Delta_{\hat{Q}_u}(x) =
\Ww_{\hat{Q}_u}^*(1\otimes \pi^{\red}_{\hat{Q}_u}(x))\Ww_{\hat{Q}_u},\qquad \forall x\in C^*(\msG). 
\end{equation}

As before, $\Ww_{\hat{Q}_u}$ breaks up into pieces
\begin{equation}\label{EqPiecesUniversal}
\hat{\Ww}_{ij} \in \Mult(C^*(\G_{ji})\otimes C_0(\G_{ij})),
\end{equation}
where the two off-diagonal corners may simply be interpreted as corners of the multiplier C$^*$-algebra of $C^*(\msG)\otimes C_0(\msG)$.

Finally, we also have the trivial unitary $\msG$-representation given by $\Hsp_1 = \Hsp_2 = \C$ and $U_{ij} = 1_{ij}\in L^{\infty}(\G_{ij})$ for $i,j\in \{1,2\}$, which acts as the unit for $\otimes_{\msG}$. The corresponding $*$-representation of $C^*(\msG)$ will give us a non-degenerate $*$-homomorphism 
\begin{equation}\label{EqUnivCounit}
\varepsilon_{\hat{Q}_u}\colon \hat{Q}_u \rightarrow M_2(\C),\qquad \begin{pmatrix} a & b \\ c & d\end{pmatrix} \mapsto \begin{pmatrix} \varepsilon_{\hat{\Hh}}(a) & \varepsilon_{\hat{\X}}(b) \\ \varepsilon_{\hat{\Y}}(c) & \varepsilon_{\hat{\G}}(d)\end{pmatrix} =  \begin{pmatrix} \hat{\varepsilon}_{11}(a) & \hat{\varepsilon}_{12}(b) \\ \hat{\varepsilon}_{21}(c) & \hat{\varepsilon}_{22}(d)\end{pmatrix},
\end{equation}
which we refer to as the \emph{counit} for $\hat{\msG}$.

\subsection{Actions of linking quantum groupoids}

Let us now turn to actions of $\msG$ on W$^*$-algebras. We refer to \cite{DC09}*{Section 7 and Section 8} for details on the next results (see \cite{Cre18} for similar results in the C$^*$-algebra setting).

\begin{Def}
Let $\msG = (Q,\Delta_Q,\{1_{ij}\})$ be a linking quantum groupoid. An \emph{action} of $\msG$ on a W$^*$-algebra $A$ consists of a decomposition $A = A_1\oplus A_2$ as a direct sum W$^*$-algebra, and a (non-unital) faithful, normal $*$-homomorphism $\gamma\colon A \rightarrow Q\bar{\otimes}A$ such that, writing $1_i$ for the unit of $A_i$, we have 
\[
(\Delta_Q\otimes \id)\gamma = (\id\otimes \gamma)\gamma,\qquad \gamma(1_i) = \sum_{j=1}^{2} 1_{ij} \otimes 1_j. 
\]
\end{Def} 
Again, this definition can be given piecewise by asking that we have faithful, unital, normal $*$-homomorphisms
\[
\gamma_i^j\colon A_i \rightarrow L^{\infty}(\G_{ij})\bar{\otimes} A_j, 
\]
satisfying the piecewise coaction property
\[
(\Delta_{ij}^k\otimes \id)\gamma_i^j = (\id\otimes \gamma_k^j)\gamma_i^k,\qquad \forall i,j,k\in \{1,2\}. 
\]

There is the following analogue of the crossed product construction. 
\begin{Def}
Let $\msG$ be a linking quantum groupoid acting on $A = A_1 \oplus A_2$, and put
\[
L^2(\msG) \underset{\C^2}{\otimes} L^2(A)  := \bigoplus_{i,j=1}^{2} L^2(\G_{ij}) \otimes L^2(A_j)\subseteq L^2(\msG)\otimes L^2(A).
\]
Then the \emph{crossed product} W$^*$-algebra $\msG\ltimes A$ is defined as 
\[
[(\pi_{\hat{Q}}(x)\otimes 1)\gamma(a)\mid  x\in \hat{Q},a\in A]^{\sigma\textrm{-weak}} \subseteq \mcB(L^2(\msG) \underset{\C^2}{\otimes} L^2(A)). 
\]
\end{Def}
One shows that $\msG\ltimes A$ is indeed a W$^*$-algebra. Note that $\hat{Q}\otimes 1 \nsubseteq \msG\ltimes A$, although we \emph{do} have a normal, faithful copy of $\hat{Q}$ inside the crossed product via
\[
x \mapsto (\pi_{\hat{Q}}(x)\otimes 1)\gamma(1).
\]
In fact, writing $\hat{p}_i := (\pi_{\hat{Q}}(e_i) \otimes 1)\gamma(1)$, we can split 
\begin{equation}\label{EqSplitMatrixCross}
\msG \ltimes A = \begin{pmatrix} \G_{11}\ltimes A_1 & \G_{12} \ltimes A_2 \\ 
\G_{21}\ltimes A_1 & \G_{22}\ltimes A_2\end{pmatrix},  \qquad \G_{ij}\ltimes A_j  = \hat{p}_i (\msG\ltimes A)\hat{p}_j. 
\end{equation}
The defining $*$-representation of $\msG\ltimes A$ then splits into components 
\[
\pi^{\ltimes,k}_{ij}\colon \G_{ij}\ltimes A_j \rightarrow \mcB(L^2(\G_{jk}),L^2(\G_{ik}))\bar{\otimes} A_k \subseteq \mcB(L^2(\G_{jk})\otimes L^2(A_k),L^2(\G_{ik})\otimes L^2(A_k)),
\]
with both $\pi^{\ltimes,k} = \oplus_{i,j} \pi^{\ltimes,k}_{ij}$ faithful, normal $*$-representations of $\msG \ltimes A$. The faithfulness follows from the (proof of the) next lemma, since a normal $*$-representation of a linking W$^*$-algebra that is faithful on one of the diagonal corners, is faithful on the whole algebra.

\begin{Lem}\label{LemMoritaEquivCrossProd}
The W$^*$-algebras $\G_{ii}\ltimes A_i$'s are canonically isomorphic with the usual crossed products, and the matrix decomposition \eqref{EqSplitMatrixCross} establishes a W$^*$-Morita equivalence between the $\G_{11}\ltimes A_1$ and $\G_{22}\ltimes A_2$ through the linking W$^*$-algebra $\msG\ltimes A$.
\end{Lem}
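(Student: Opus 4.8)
The plan is to verify the two assertions separately, starting with the identification of the diagonal corners. First I would fix $i \in \{1,2\}$ and unwind the definitions: by \eqref{EqSplitMatrixCross} the corner $\G_{ii} \ltimes A_i$ equals $\hat p_i(\msG \ltimes A)\hat p_i$, which by the definition of the crossed product is the $\sigma$-weak closure of the span of $(\pi_{\hat Q}(e_i x e_i)\otimes 1)\gamma_i^i(a_i)$ for $x \in \hat Q$, $a_i \in A_i$. Since $\Delta_{ii}^i = \Delta_{ii}$ is the coproduct of the LCQG $\G_{ii}$ and $\gamma_i^i$ is a genuine $\G_{ii}$-action on $A_i$ (this is part of the piecewise definition of a $\msG$-action), and since $e_i \hat Q e_i = W^*(\G_{ii})$ acts standardly on $L^2(\hat Q_{ii}) = L^2(\G_{ii})$, the algebra $\hat p_i(\msG\ltimes A)\hat p_i$ is literally the crossed product $\G_{ii}\ltimes A_i$ built on $L^2(\G_{ii})\otimes L^2(A_i)$ in the usual sense. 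The only point requiring a small argument is that the ambient Hilbert space $L^2(\G_{ii})\otimes L^2(A_i)$ carries the \emph{standard} representation of $W^*(\G_{ii})$; this is guaranteed by the identification $L^2(\hat Q_{ii}) \cong L^2(Q_{ii})$ discussed after \eqref{EqDefLQGr}.

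For the Morita statement, I would work with the projections $\hat p_1, \hat p_2 \in \msG\ltimes A$ and show both are full. By construction $\hat p_1 + \hat p_2 = \gamma(1)$ is the unit of $\msG\ltimes A$, so it suffices to show $[(\msG\ltimes A)\hat p_1(\msG\ltimes A)]^{\sigma\text{-weak}} = \msG\ltimes A$ and similarly for $\hat p_2$. Here the key input is the fullness of $e_1, e_2$ in the linking W$^*$-algebra $\hat Q$, i.e.\ $[\hat N\hat O]^{\sigma\text{-weak}} = \hat P$ and $[\hat O\hat N]^{\sigma\text{-weak}} = \hat M$. I would transport this to the crossed product as follows: $\hat p_i(\msG\ltimes A)\hat p_j$ contains $(\pi_{\hat Q}(\hat Q_{ij})\otimes 1)\gamma(1)$, and a product of such off-diagonal elements across the two indices, using the fullness relations for $\hat Q$ together with the normality and faithfulness of $\gamma$, recovers a $\sigma$-weakly dense subset of $(\pi_{\hat Q}(\hat Q_{ii})\otimes 1)\gamma(1)$. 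Combined with the fact that $(\pi_{\hat Q}(\hat Q_{ii})\otimes 1)\gamma(1)\cdot \gamma(A_i) = \gamma(A_i)$ generates $\G_{ii}\ltimes A_i$, one gets that the ideal generated by $\hat p_i$ contains the full diagonal corner and hence, multiplying again by off-diagonal pieces, all of $\msG\ltimes A$. Finally I would invoke the standard matrix-unit picture of W$^*$-Morita equivalence: a linking W$^*$-algebra is precisely a von Neumann algebra with a full projection, and we have exhibited $\msG\ltimes A$ together with $\hat p_1$ (equivalently the $2\times 2$ decomposition \eqref{EqSplitMatrixCross}) in exactly that form, with diagonal corners the usual crossed products by the first part.

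The main obstacle I anticipate is the fullness argument: one must be careful that $\gamma$ being merely a normal faithful $*$-homomorphism (not unital on the nose, only $\gamma(1_i) = \sum_j 1_{ij}\otimes 1_j$) is enough to push the $\sigma$-weak density from $\hat Q$ through to the crossed product, and that the closures behave well under the multiplications involved. The cleanest route is probably to reduce to showing that $\hat p_1$ is full in the von Neumann algebra $[(\pi_{\hat Q}(\hat Q)\otimes 1)\gamma(1)]''$ — which follows immediately from fullness of $e_1$ in $\hat Q$ since $x\mapsto (\pi_{\hat Q}(x)\otimes 1)\gamma(1)$ is a normal faithful $*$-isomorphism onto its image — and then observe that $\msG\ltimes A$ is generated by this copy of $\hat Q$ together with $\gamma(A)$, with $\gamma(A_i) = \hat p_i\gamma(A_i)\hat p_i$ not affecting fullness of the $\hat p_i$. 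Everything else is bookkeeping with leg-numbering and the piecewise coaction identities already recorded above.
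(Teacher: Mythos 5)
Your proposal is correct and follows essentially the same route as the paper: fullness of the projections $\hat p_i$ is deduced from fullness of the $e_i$ in $\hat Q$ via the embedded unital copy $x\mapsto(\pi_{\hat Q}(x)\otimes 1)\gamma(1)$ (your ``cleanest route'' is exactly the paper's one-line argument), and the diagonal corners are identified with the usual crossed products by unwinding definitions. The only small imprecision is that the corner $\hat p_i(\msG\ltimes A)\hat p_i$ a priori acts on $\bigoplus_k L^2(\G_{ik})\otimes L^2(A_k)$ rather than on $L^2(\G_{ii})\otimes L^2(A_i)$ alone; one identifies it with the usual crossed product through its component $\pi^{\ltimes,i}_{ii}$, which is the point the paper's phrase ``coincides on the nose'' is addressing.
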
 
\begin{proof}
The fact that $\msG\ltimes A$ is a W$^*$-linking algebra follows immediately from the fact that the $e_i$'s are full projections for $\hat{Q}$. Furthermore, $\pi_{ii}^{\ltimes,i}(\G_{ii}\ltimes A_i)$ coincides on the nose with the usual LCQG crossed product. 
\end{proof}

The W$^*$-algebra $\msG \ltimes A$ comes with a dual right action of 
$\hat{\msG} =  (W^*(\msG),\Delta_{\hat{Q}})$ via 
\[
\hat{\gamma}\colon \msG\ltimes A \rightarrow (\msG\ltimes A)\bar{\otimes}\hat{Q},\qquad 
\gamma(a) \mapsto \gamma(a)\otimes 1,\quad (\pi_{\hat{Q}}(x)\otimes 1)\gamma(1)\mapsto ((\pi_{\hat{Q}}\otimes \pi_{\hat{Q}})(\Delta_{\hat{Q}}(x))_{[13]}(\gamma(1)\otimes 1).
\]
For an action of this particular measured quantum groupoid, the lack of unitality for the associated (right) action is implemented by the requirement that the projections $\hat{p}_i\in \msG\ltimes A$ satisfy 
\[
\hat{\gamma}(\hat{p}_i) =  \hat{p}_i\otimes e_i. 
\]

\subsection{Canonical unitary implementation of an action of a linking quantum groupoid}\label{sec:groupoidimplementation}

The crossed product construction also gives us access to the construction of a canonical unitary implementation of an action of a linking quantum groupoid. This is what is accomplished in the full generality of measured quantum groupoids in \cite{Eno10}, but let us again spell out the details in our simpler case of linking quantum groupoids, where one can directly   mimick the constructions of \cite{Vae01}.

Let $(A,\gamma)$ be a (left) $\msG$-W$^*$-algebra. One argues first that there is an nsf operator valued weight 
\[
(\id\otimes \id\otimes \vp_{\hat{Q}})\circ \hat{\gamma}\colon \msG \ltimes A \rightarrow \gamma(A)_{\ext}^+.
\]
If then $\theta = \theta_1\oplus \theta_2$ is an nsf weight on $A$, we get an induced nsf weight on $\msG \ltimes A$ by 
\[
\widetilde{\theta}\colon z \mapsto \theta ( \gamma^{-1} ((\id\otimes \id\otimes \vp_{\hat{Q}})\hat{\gamma}(z))).
\]
We can then endow $L^2(\msG) \underset{\C^2}{\otimes} L^2(A)$ with a GNS map for $\widetilde{\theta}$ by closing the linear extension of the map
\[
(\pi_{\hat{Q}}(x)\otimes 1)\gamma(a) \mapsto \Lambda_{\hat{Q}}(x)\otimes \Lambda_{\theta}(a),\qquad \forall x\in \msN_{\vp_{\hat{Q}}}e_j,1_ja\in \msN_{\theta},
\]
effectively identifying $L^2(\msG) \underset{\C^2}{\otimes} L^2(A)$ with $L^2(\msG \ltimes A)$ (note that the left hand side elements are zero if  $x\in \hat{Q}_{ij}$ and $a \in A_k$ with $k\neq j$).

We can now view the modular conjugation $J_{\msG\ltimes A}$ as a partial anti-isometry on $L^2(\msG) \otimes L^2(A)$ with source and range projection equal to the projection on $L^2(\msG) \underset{\C^2}{\otimes} L^2(A)$, and splitting up into anti-unitaries 
\[
J^{\ltimes}_{ij}\colon L^2(\G_{ij}) \otimes L^2(A_j) \rightarrow L^2(\G_{ji}) \otimes L^2(A_i).
\] 
Mimicking \cite{Vae01}, we get the partial isometric implementation of $\gamma$ as\footnote{Note that our convention on corepresentations is opposite to \cite{Vae01}, which will result in our formulas being adjoints to the ones in \cite{Vae01}.}
\begin{equation}\label{EqIsometImpl}
U_{\gamma} = (J_{\hat{Q}}\otimes J_A)J_{\msG \ltimes A},
\end{equation}
which is a unitary $\msG$-representation with decomposition into unitaries
\[
U_{\gamma,ij}\colon L^2(\G_{ij}) \otimes L^2(A_j) \rightarrow L^2(\G_{ij})\otimes L^2(A_i),\qquad U_{\gamma,ij} \in L^{\infty}(\G_{ij})\bar{\otimes} \mcB(L^2(A_j),L^2(A_i)).
\]
It implements the action of $\G$ on $A$ via 
\[
\gamma(a) = U_{\gamma}^*(1\otimes a)U_{\gamma},\qquad \forall a \in A. 
\]
Let us note that for $i\in \{1,2\}$, the component $\gamma_i^i$ is an action of $\G_{ii}$ on $A_i$. Furthermore, as the above modular conjugations are compatible with taking projections, the unitary $U_{\gamma,ii}\in L^{\infty}(\G_{ii})\bar\otimes \mcB(L^2(A_i))$ will be equal to $U_{\gamma_i^i}$, the standard implementation of $\gamma^i_i$.

\subsection{Closed linking quantum subgroupoids}

Assume that $(\hat{Q},\Delta_{\hat{Q}})$ and $(\hat{Q}_1,\Delta_{\hat{Q}_1})$ define two linking quantum groupoids
\[
\hat{Q} = W^*(\msG) =  (W^*(\G_{ij}))_{i,j=1}^{2},\qquad \hat{Q}_1 = W^*(\msG_1) = (W^*(\G_{1,ij}))_{i,j=1}^{2}.
\]
We then say that $\msG_1$ is a \emph{quotient linking quantum groupoid} of $\msG$ if we have given a $\Delta$-intertwining faithful, normal, unital $*$-homomorphism
\[
\theta = \bigoplus_{i,j=1}^{2}\theta_{ij}\colon L^{\infty}(\msG_1) \rightarrow L^{\infty}(\msG),\quad \theta_{ij}\colon L^{\infty}(\G_{1,ij}) \rightarrow L^{\infty}(\G_{ij}),\qquad   (\theta_{ik}\otimes \theta_{kj})\Delta_{1,ij}^k = \Delta_{ij}^k \theta_{ij}. 
\]
Borrowing terminology of locally compact quantum groups, we also say that $\hat{\msG}_1$ is a \emph{closed quantum subgroupoid} of $\hat{\msG}$ \cites{Vae05,DKSS12}. 

By means of the above $\theta_{ij}$'s, we get a surjective $*$-homomorphism 
\[
\hat{\theta}\colon C^*(\msG) \rightarrow C^*(\msG_1),\qquad (\id\otimes \omega)(\Ww_{\hat{Q}}) \mapsto (\id\otimes \omega\theta)\Ww_{\hat{Q}_1},\qquad \forall \omega \in Q_*, 
\]
preserving the matrix decomposition: 
\[
\hat{\theta} = \begin{pmatrix} \hat{\theta}_{11}& \hat{\theta}_{12}\\ \hat{\theta}_{21} & \hat{\theta}_{22}\end{pmatrix},\qquad \hat{\theta}_{ij}\colon C^*(\G_{ij})\rightarrow C^*(\G_{1,ij}). 
\]
These maps then also preserve the coproducts: 
\[
(\hat{\theta}_{ij}\otimes \hat{\theta}_{ij})\hat{\Delta}_{u,ij} = \hat{\Delta}_{1,u,ij}\hat{\theta}_{ij}. 
\]

\section{Induction and standard Hilbert space for twisted actions}

\subsection{Induction of actions}

Let $\G$ be a LCQG, and assume that $\G = \G_{22}$ is a corner of a linking quantum groupoid $\msG = \begin{pmatrix} \G_{11} & \G_{12}\\ \G_{21} & \G_{22}\end{pmatrix}= \begin{pmatrix} \Hh & \X \\ \Y & \G\end{pmatrix}$ with associated function W$^*$-algebra 
\[
Q = L^{\infty}(\msG) = L^{\infty}(\Hh)\oplus L^{\infty}(\Y)\oplus L^{\infty}(\X)\oplus L^{\infty}(\G) = P \oplus O\oplus N\oplus M. 
\] 

Let $(A,\gamma)$ be a left $\G$-W$^*$-algebra. Then using \eqref{EqFormAlph}, we define\footnote{Note that $\Ind_{\X}(A)$ can be seen as a von Neumann algebraic version of the \emph{cotensor product} $N\overline{\square}A$, see \cite{HS20}*{Section 2.2}.}
\begin{equation}\label{EqInductionAction}
\Ind_{\X}(A) =  \{z\in N \bar{\otimes}A\mid (\alpha_{\X}\otimes \id)z = (\id\otimes \gamma)z\} \subseteq N\bar{\otimes}A.
\end{equation}
This is a W$^*$-algebra, which by the weak Fubini slice map property for W$^*$-algebras carries a left $\Hh$-action
\[
\Ind_{\X}(\gamma) = (\gamma_{\X}\otimes \id)_{\mid \Ind_{\X}(A)}\colon  \Ind_{\X}(A)  \rightarrow L^{\infty}(\Hh)\bar{\otimes} \Ind_{\X}(A),
\]
with $\gamma_{\X}$ as in \eqref{EqFormGamm}.

\begin{Def}
We call $\Ind_{\X}(A)$ the \emph{induced W$^*$-algebra}, and $\Ind_{\X}(\gamma)$ the \emph{induced $\Hh$-action}. If $\msG$ arises from a dual unitary $2$-cocycle $\hat{\Omega}\in \hat{M}\bar{\otimes}\hat{M}$ as in \eqref{EqCoprodCocTwist}, we also write 
\begin{equation}
\Ind_{\X}(A) = A_{\hat{\Omega}}. 
\end{equation}
\end{Def}

The following is \cite{DC09}*{Theorem 8.2.2}. Note that the reflected groupoid (see Remark \ref{rem1}) is used to define the second induction.

\begin{Theorem}\label{TheoInductInvol}
The above correspondence is involutive: there is a natural identification of $\G$-W$^*$-algebras
\[
 (A,\gamma)\cong (\Ind_{\Y}(\Ind_{\X}(A)),\Ind_{\Y}(\Ind_{\X}(\gamma))), 
\]
provided through co-restricting the normal, unital $*$-homomorphism
\[
A \rightarrow O \bar{\otimes}N \bar{\otimes} A,\quad a \mapsto (\Delta_{22}^1\otimes \id)\gamma(a).
\]
\end{Theorem}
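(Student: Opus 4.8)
The plan is to show that the map
\[
\Phi_A\colon A\longrightarrow O\bar\otimes N\bar\otimes A,\qquad a\longmapsto (\Delta_{22}^1\otimes\id)\gamma(a)
\]
from the statement (manifestly a faithful, normal, unital $*$-homomorphism into $O\bar\otimes N\bar\otimes A$, since $\gamma$ and $\Delta_{22}^1$ are such and $\Delta_{22}^1(M)\subseteq O\bar\otimes N$) has image precisely $\Ind_\Y(\Ind_\X(A))$ and intertwines $\gamma$ with $\Ind_\Y(\Ind_\X(\gamma))$; naturality in $(A,\gamma)$ is then automatic from the formula, using equivariance of morphisms. Here $\Delta_{22}^2=\Delta_M$, and I recall --- using the reflected groupoid (Remark \ref{rem1}) --- that for an $\Hh$-W$^*$-algebra $(B,\beta)$ one has $\Ind_\Y(B)=\{w\in O\bar\otimes B\mid(\alpha_\Y\otimes\id)w=(\id\otimes\beta)w\}$ with $\alpha_\Y=\Delta_{21}^1$, carrying the $\G$-action $\Ind_\Y(\beta)=(\gamma_\Y\otimes\id)_{\mid\Ind_\Y(B)}$ with $\gamma_\Y=\Delta_{21}^2$. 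So three things remain: (i) $\Phi_A(A)\subseteq\Ind_\Y(\Ind_\X(A))$; (ii) $\Phi_A$ is $\G$-equivariant; (iii) $\Phi_A$ is surjective onto $\Ind_\Y(\Ind_\X(A))$.

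For (i) and (ii) the only inputs needed are the joint coassociativity $(\Delta_{ik}^{\ell}\otimes\id)\Delta_{ij}^k=(\id\otimes\Delta_{\ell j}^k)\Delta_{ij}^{\ell}$, the coaction identity $(\Delta_{22}^2\otimes\id)\gamma=(\id\otimes\gamma)\gamma$, and the commutation $(\gamma_\X\otimes\id)\alpha_\X=(\id\otimes\alpha_\X)\gamma_\X$ on $N$. Specialising joint coassociativity (with $i=j=2$) yields the three identities
\[
(\id\otimes\alpha_\X)\Delta_{22}^1=(\Delta_{22}^1\otimes\id)\Delta_{22}^2,\qquad (\alpha_\Y\otimes\id)\Delta_{22}^1=(\id\otimes\gamma_\X)\Delta_{22}^1,\qquad (\gamma_\Y\otimes\id)\Delta_{22}^1=(\id\otimes\Delta_{22}^1)\Delta_{22}^2.
\]
Applying $\Delta_{22}^1$ to the first leg of the coaction identity and using the first relation gives $(\id\otimes\alpha_\X\otimes\id)\Phi_A(a)=(\id\otimes\id\otimes\gamma)\Phi_A(a)$, hence, by the weak Fubini slice-map property, $\Phi_A(a)\in O\bar\otimes\Ind_\X(A)$; the second relation gives $(\alpha_\Y\otimes\id\otimes\id)\Phi_A(a)=(\id\otimes\gamma_\X\otimes\id)\Phi_A(a)=(\id\otimes\Ind_\X(\gamma))\Phi_A(a)$, so indeed $\Phi_A(a)\in\Ind_\Y(\Ind_\X(A))$; and the third relation together with the coaction identity gives $(\gamma_\Y\otimes\id\otimes\id)\Phi_A(a)=(\id\otimes\Phi_A)\gamma(a)$, which is exactly equivariance.

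For (iii) the strategy is to peel off the two cotensor products encoded in $\Ind_\Y\circ\Ind_\X$. The second and first relations above say that $\Delta_{22}^1$ maps $M$ into the cotensor product $C:=\{v\in O\bar\otimes N\mid(\alpha_\Y\otimes\id)v=(\id\otimes\gamma_\X)v\}$ and intertwines $\Delta_{22}^2$ there with the coaction $\id\otimes\alpha_\X$. The key point --- the one non-formal ingredient --- is that $\Delta_{22}^1\colon M\to C$ is moreover \emph{surjective} (injectivity being free from faithfulness of $\Delta_{22}^1$): this Galois-type bijectivity encodes that the groupoid composition $\G_{21}\cdot\G_{12}$ fills up $\G_{22}$, and I would extract it from the structure theory of the linking quantum groupoid, e.g.\ from bijectivity of the relevant leg of the multiplicative partial isometry $\ww_{\hat Q}$. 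Granting this, any $w\in\Ind_\Y(\Ind_\X(A))$ already satisfies $(\alpha_\Y\otimes\id\otimes\id)w=(\id\otimes\gamma_\X\otimes\id)w$, so by the slice-map property its $(O,N)$-legs lie in $C=\Delta_{22}^1(M)$ and $w=(\Delta_{22}^1\otimes\id)z$ for a unique $z\in M\bar\otimes A$; the remaining defining relation $(\id\otimes\alpha_\X\otimes\id)w=(\id\otimes\id\otimes\gamma)w$ becomes, after cancelling the injective $\Delta_{22}^1\otimes\id\otimes\id$ via the first relation, exactly $(\Delta_{22}^2\otimes\id)z=(\id\otimes\gamma)z$. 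Finally, for any coaction one has $\gamma(A)=\{z\in M\bar\otimes A\mid(\Delta_{22}^2\otimes\id)z=(\id\otimes\gamma)z\}$ (the standard characterisation of the image of a coaction, proved with the left regular corepresentation $\ww_M$), so $z=\gamma(a)$ for a unique $a\in A$ and $w=\Phi_A(a)$.

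The hard part is thus the surjectivity of $\Delta_{22}^1$ onto $C$: in the purely algebraic Hopf--Galois picture this is a defining axiom of a Galois object, but here it must be established analytically, and this --- together with the slice-map bookkeeping needed to realise $\Ind_\Y(\Ind_\X(A))$ as an iterated cotensor product and to commute the two cotensor operations past each other --- is exactly where one uses density/fullness of the linking quantum groupoid and the weak Fubini property for W$^*$-algebras. An alternative route, close to the one in \cite{DC09}, sidesteps this entirely: one extends $(A,\gamma)$ to the $\msG$-W$^*$-algebra $\Ind_\X(A)\oplus A$, uses the crossed product $\msG\ltimes(\Ind_\X(A)\oplus A)$ to link $\G_{22}\ltimes A$ with $\G_{11}\ltimes\Ind_\X(A)$ (Lemma \ref{LemMoritaEquivCrossProd}) compatibly with the dual actions, and then recovers $(A,\gamma)$ from $\Ind_\X(A)$ by applying Takesaki--Vaes biduality twice.
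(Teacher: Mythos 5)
The paper does not actually prove this statement: it is quoted from \cite{DC09}*{Theorem 8.2.2}, where the argument goes through the crossed product and biduality --- essentially the ``alternative route'' you mention in your final sentence. Your direct route is therefore genuinely different, and the formal part of it is correct: the three specialisations of joint coassociativity are the right ones, and they do show that $\Phi_A$ is a well-defined, faithful, normal, unital, $\G$-equivariant $*$-homomorphism into $\Ind_{\Y}(\Ind_{\X}(A))$; naturality is indeed automatic. The reduction of surjectivity to the two facts (a) $\Delta_{22}^1(M)=C:=\{v\in O\bar{\otimes}N\mid(\alpha_{\Y}\otimes\id)v=(\id\otimes\gamma_{\X})v\}$ and (b) $\gamma(A)=\{z\in M\bar{\otimes}A\mid(\Delta_{22}^2\otimes\id)z=(\id\otimes\gamma)z\}$ is also logically sound, granted the weak Fubini slice-map property to identify the relevant relative commutant-type subspaces with $C\bar{\otimes}A$ and to invert $\Delta_{22}^1\otimes\id$.

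The gap is that (a) and, to a lesser extent, (b) are exactly where the content lies, and neither is established. Fact (b) is a known theorem for W$^*$-coactions of locally compact quantum groups, but it is nontrivial (its usual proof already uses the crossed product and biduality) and must be cited. Fact (a) is a Galois-descent statement of precisely the same depth as the theorem you are proving: it amounts to identifying $\Ind_{\Y}(N,\gamma_{\X})$ with $(M,\Delta_{22}^2)$, i.e.\ to the involutivity statement for the ``regular'' objects of the linking quantum groupoid. Asserting it with a gesture towards ``bijectivity of the relevant leg of $\ww_{\hat{Q}}$'' defers all of the analytic work; in the purely algebraic Hopf--Galois setting this is Schauenburg's cotensor identity, but in the W$^*$-setting it requires the full structure theory of \cite{DC09}/\cite{DC11a} (fullness of the off-diagonal corners plus the decomposition \eqref{EqDecompDualW} of the multiplicative partial isometry). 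So as written your argument is a correct \emph{reduction}, not a proof: either supply a proof of (a), or fall back on the crossed-product/biduality argument, which establishes the theorem without ever isolating (a).
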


This theorem can be used to give a one-to-one-correspondence between actions of $\G$ and actions of $\msG$. Namely, assume that we are given a $\G$-W$^*$-algebra $(A,\gamma)$, put  
\begin{equation}\label{EqInducedAction}
\Ind_{\msG}(A) = A_1\oplus A_2 := \Ind_{\X}(A)\oplus A.
\end{equation}
 We endow $\Ind_{\msG}(A)$ with an action $\Ind_{\msG}(\gamma)$ of $\msG$ via the normal, unital $*$-homomorphisms 
\[
\gamma_i^j\colon A_i \rightarrow L^{\infty}(\mathbb{G}_{ij})\bar{\otimes} A_j
\]
defined as follows: 
\begin{itemize}
\item we have $\gamma_2^2 = \gamma$,
\item the map $\gamma_2^1$ is given by 
\begin{equation}\label{eq50}
\gamma_2^1 =(\Delta_{22}^1\otimes \id) \gamma\colon A_2 \rightarrow L^{\infty}(\G_{21})\bar{\otimes} A_1,
\end{equation}
\item the map $\gamma_1^2$ is just the inclusion map $\Ind_{\X}(A) \subseteq L^{\infty}(\G_{12})\bar{\otimes} A$, and finally
\item the map $\gamma_1^1$ is
\[
\gamma_1^1 = (\Delta_{12}^1\otimes \id)_{\mid A_1}=\Ind_{\X}(\gamma). 
\]
\end{itemize}

\begin{Prop}
The assignment $(A,\gamma)\mapsto (\Ind_{\msG}(A),\Ind_{\msG}(\gamma))$ establishes a bijection between isomorphism classes of actions of $\G=\G_{22}$ and $\msG$.
\end{Prop}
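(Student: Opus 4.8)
The plan is to exhibit a two-sided inverse to the assignment $(A,\gamma)\mapsto(\Ind_{\msG}(A),\Ind_{\msG}(\gamma))$ by restricting a given $\msG$-action to its $\G_{22}$-corner, and then to invoke the involutivity of the $\X$-induction (Theorem \ref{TheoInductInvol}) to check that the two constructions undo each other up to isomorphism. Concretely, given an $\msG$-W$^*$-algebra $(B,\beta)$ with $B=B_1\oplus B_2$ and components $\beta_i^j\colon B_i\to L^\infty(\G_{ij})\bar\otimes B_j$, I set $\Res_{\G}(B,\beta):=(B_2,\beta_2^2)$, which is a genuine $\G=\G_{22}$-W$^*$-algebra since $\beta_2^2$ satisfies the coaction identity $(\Delta_{22}^2\otimes\id)\beta_2^2=(\id\otimes\beta_2^2)\beta_2^2$ coming from the piecewise coaction property with $i=j=k=2$. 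I then have to show $\Res_{\G}\circ\Ind_{\msG}\cong\id$ and $\Ind_{\msG}\circ\Res_{\G}\cong\id$.

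The first composition is essentially immediate: by construction $\Ind_{\msG}(A)_2=A$ and $\gamma_2^2=\gamma$, so $\Res_{\G}(\Ind_{\msG}(A),\Ind_{\msG}(\gamma))=(A,\gamma)$ on the nose. For the other direction, start from $(B,\beta)$ and write $A:=B_2$, $\gamma:=\beta_2^2$. I must produce an isomorphism $B_1\cong\Ind_{\X}(A)=\{z\in N\bar\otimes A\mid(\alpha_{\X}\otimes\id)z=(\id\otimes\gamma)z\}$ of $\Hh$-W$^*$-algebras that is compatible with all four components $\beta_i^j$. The natural candidate is $\beta_1^2\colon B_1\to L^\infty(\G_{12})\bar\otimes B_2=N\bar\otimes A$: the piecewise coaction property $(\Delta_{12}^k\otimes\id)\beta_1^2=(\id\otimes\beta_k^2)\beta_1^k$ with $k=2$ reads $(\Delta_{12}^2\otimes\id)\beta_1^2=(\id\otimes\beta_2^2)\beta_1^2$, i.e.\ $(\alpha_{\X}\otimes\id)\beta_1^2=(\id\otimes\gamma)\beta_1^2$, so indeed $\beta_1^2$ maps $B_1$ into $\Ind_{\X}(A)$. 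It is faithful and normal because $\beta$ is; the content is surjectivity of $\beta_1^2\colon B_1\to\Ind_{\X}(A)$, and this is exactly where Theorem \ref{TheoInductInvol} (applied to the reflected groupoid) enters — it provides the inverse map $\Ind_{\X}(A)\to B_1$ as a co-restriction of $z\mapsto(\Delta_{12}^1\otimes\id)z$ composed with $(\id\otimes(\beta_1^2)^{-1})$ on $\Ind_\Y(\Ind_\X(A))$, using that $\beta_1^1=(\Delta_{12}^1\otimes\id)_{\mid B_1}$ is forced by the piecewise coaction property with $(i,j,k)=(1,2,1)$. Once $B_1\cong\Ind_\X(A)$ is established, one reads off $\beta_1^1=\Ind_\X(\gamma)$ and $\beta_2^1=(\Delta_{22}^1\otimes\id)\gamma$ from the remaining instances of the piecewise coaction property ($(i,j,k)=(1,1,2)$ and $(2,1,2)$ respectively), which matches $\gamma_1^1$ and $\gamma_2^1$ in the definition of $\Ind_{\msG}(\gamma)$; hence $(B,\beta)\cong(\Ind_{\msG}(A),\Ind_{\msG}(\gamma))$.

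The main obstacle is the surjectivity of $\beta_1^2\colon B_1\to\Ind_\X(A)$, equivalently the claim that $B_1$ is determined by its $\G$-action data through $\X$-induction. This does not follow from formal nonsense about the four $\beta_i^j$'s alone; it requires the genuine content of Theorem \ref{TheoInductInvol}, namely that $\Ind_\Y\circ\Ind_\X$ recovers the original $\G$-W$^*$-algebra, together with the observation that the off-diagonal corner $B_1$ together with the action components $\beta_1^1,\beta_1^2$ is precisely the data of "$(A,\gamma)$ viewed through the biGalois object $\X$". I would spell this out by checking that $z\mapsto(\Delta_{12}^1\otimes\id)\beta_1^2(z)$, for $z\in B_1$, lands in $\Ind_\Y(\Ind_\X(A))$ and agrees under the identification of Theorem \ref{TheoInductInvol} with the inclusion used there, so that $\beta_1^2$ is injective with image all of $\Ind_\X(A)$. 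Everything else — faithfulness, normality, unitality, the matching of $\Hh$-actions, and the verification of the bijection on isomorphism classes (isomorphisms of $\msG$-W$^*$-algebras restrict to isomorphisms of $\G$-W$^*$-algebras and conversely, by functoriality of $\Ind_\X$) — is routine bookkeeping with the piecewise coaction identities.
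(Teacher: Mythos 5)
Your overall architecture is the same as the paper's: restriction to the $22$-corner is the inverse assignment, $\Res_{\G}\circ\Ind_{\msG}=\id$ is immediate, and the whole content is that $\beta_1^2\colon B_1\to\Ind_{\X}(B_2)$ is a well-defined, equivariant isomorphism, with Theorem \ref{TheoInductInvol} as the essential input for surjectivity. Up to and including the observation that $\beta_1^2$ lands in $\Ind_{\X}(B_2)$ and is injective and $\Hh$-equivariant, your argument matches the paper's.

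However, the surjectivity step — which you correctly flag as the crux — is not actually carried out, and what you do write down does not work. First, the map $z\mapsto(\Delta_{12}^1\otimes\id)\beta_1^2(z)$ takes values in $L^{\infty}(\Hh)\bar\otimes L^{\infty}(\X)\bar\otimes B_2$, whereas $\Ind_{\Y}(\Ind_{\X}(B_2))$ sits inside $L^{\infty}(\Y)\bar\otimes L^{\infty}(\X)\bar\otimes B_2$; so it cannot "land in $\Ind_{\Y}(\Ind_{\X}(B_2))$" and cannot be compared with the map $a\mapsto(\Delta_{22}^1\otimes\id)\beta_2^2(a)$ of Theorem \ref{TheoInductInvol}. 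Second, your description of the inverse as "$z\mapsto(\Delta_{12}^1\otimes\id)z$ composed with $(\id\otimes(\beta_1^2)^{-1})$" presupposes the invertibility of $\beta_1^2$, which is what is being proved. The correct mechanism (as in the paper) is genuinely two-staged: (i) the coaction identity $(\Delta_{21}^1\otimes\id)\beta_2^1=(\id\otimes\beta_1^1)\beta_2^1$ shows $\beta_2^1(B_2)\subseteq\Ind_{\Y}(B_1)$, and the identity $(\id\otimes\beta_1^2)\beta_2^1=(\Delta_{22}^1\otimes\id)\beta_2^2$ combined with Theorem \ref{TheoInductInvol} shows that the injective equivariant map $(\id\otimes\beta_1^2)\colon\Ind_{\Y}(B_1)\to\Ind_{\Y}(\Ind_{\X}(B_2))$ is already surjective on the subspace $\beta_2^1(B_2)$, hence is an isomorphism; (ii) one then applies $\Ind_{\X}$ to this isomorphism and uses the involutivity of induction twice more (once for $B_1$ with the reflected groupoid, once for $\Ind_{\X}(B_2)$) to descend to the conclusion that $\beta_1^2$ itself is onto, checking on elements that the resulting composite really is $\beta_1^2$. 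Step (ii) is an additional idea that is absent from your proposal: knowing that $(\id\otimes\beta_1^2)$ is surjective onto $\Ind_{\Y}(\Ind_{\X}(B_2))$ does not by itself yield surjectivity of $\beta_1^2$ onto $\Ind_{\X}(B_2)$, and no mechanism for removing the extra $\Ind_{\Y}$ leg is provided. As written, the proof therefore has a genuine gap at its central point.
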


\begin{proof}
One easily checks that $\Ind_{\msG}(\gamma)$ is an action of $\msG$ on the von Neumann algebra $\Ind_{\msG}(A)$.

Conversely, assume that $\msG$ acts with $\gamma$ on $A=A_1\oplus A_2$. Then $\gamma_1^1$ (resp.~$\gamma_2^2$) establishes an action of $\G_{11}$ on $A_1$ (resp.~$\G_{22}$ on $A_2$). We claim that $\gamma_1^2\colon A_1\rightarrow \Ind_{\X}(A_2)$ is a well defined $\G_{11}$-equivariant isomorphism, where $\G_{11}$ acts on $\Ind_{\X}(A_2)$ via 
\[
\Ind_{\X}(\gamma_2^2)=(\Delta^1_{12}\otimes\id)_{|\Ind_{\X}(A_2)}=\Ind_{\msG}(\gamma_2^2)_1^1.
\]

Indeed, observe first that the action condition implies $\gamma^2_1(A_1)\subseteq\Ind_{\X}(A_2)$, thus we can consider $\gamma^2_1$ as an injective map $A_1\rightarrow \Ind_{\X}(A_2)$. Next, as $(\Delta^1_{12}\otimes \id) \gamma^2_1 = 
(\id\otimes \gamma^2_1) \gamma^1_1$, we have that $\gamma^2_1$ is $\G_{11}$-equivariant. Consequently, after taking induction, we get an injective $\G_{22}$-equivariant map
\begin{equation}\label{eq1}
\Ind_{\Y}(A_1)\ni z \mapsto (\id\otimes \gamma^2_1)z\in 
\Ind_{\Y}(\Ind_{\X}(A_2)).
\end{equation}
 Using Theorem \ref{TheoInductInvol} we have
\[
(\id \otimes \gamma_1^2)(\Ind_{\Y}(A_1)) \supseteq (\id\otimes \gamma_1^2)\gamma_2^1(A_2) = (\Delta_{22}^1\otimes \id)\gamma_2^2(A_2) = \Ind_{\Y}(\Ind_{\X}(A_2)),
\]
hence \eqref{eq1} is an isomorphism. Applying now the induction $\Ind_{\X}$ to \eqref{eq1}, we get a chain of isomorphisms
\begin{equation}\label{eq2}
A_1\cong \Ind_{\X}(\Ind_{\Y}(A_1))\ni z\mapsto
(\id\otimes\id\otimes\gamma^2_1)z\in 
\Ind_{\X}(\Ind_{\Y}(\Ind_{\X}(A_2) ))\cong 
\Ind_{\X}(A_2),
\end{equation}
where the first and the third isomorphisms are given by Theorem \ref{TheoInductInvol}. On elements, \eqref{eq2} is given by
\[\begin{split}
A_1\ni a\mapsto (\Delta^1_{22}\otimes\id)\gamma_1^1(a)&\mapsto 
(\Delta^1_{22}\otimes\id \otimes \id)
(\id\otimes \gamma^2_1)\gamma_1^1(a)=
(\Delta^1_{22}\otimes\id \otimes \id)
(\Delta^1_{12}\otimes \id)\gamma_1^2(a)\\
&=
(\Delta^1_{22}\otimes\id\otimes\id)
\Ind_{\X}(\gamma^2_2)\gamma^2_1(a)\mapsto
\gamma^2_1(a),
\end{split}\]
hence it is equal to $\gamma_1^2$. This proves the claim.

Now, taking the direct sum with the identity, we obtain an isomorphism 
\[
\gamma^2_1\oplus\id\colon A=A_1\oplus A_2\cong \Ind_{\X}(A_2)\oplus A_2 = \Ind_{\msG}(A_2)
\] 
of von Neumann algebras; note that it preserves the direct sum decomposition. Next we check that it is $\msG$-equivariant, where the right hand side is equipped with action $\Ind_{\msG}(\gamma_2^2)$, as explained above. Equivalently, we will check that $\gamma_i^j$ is mapped to $\Ind_{\msG}(\gamma_2^2)_i^j$ for $i,j\in \{1,2\}$.

This claim is trivial for $(i,j)=(2,2)$ as $\Ind_{\msG}(\gamma_2^2)^2_2=\gamma^2_2$, and we have already checked it for $(i,j)=(1,1)$. If $(i,j)=(1,2)$ then $\Ind_{\msG}(\gamma^2_2)_1^2$ is the inclusion map $\Ind_{\X}(A_2)\rightarrow L^{\infty}(\G_{12})\bar\otimes A_2$, hence the claim is also trivial. For $(i,j)=(2,1)$ we have
\[
(\id\otimes \gamma^2_1)\gamma_2^1=
(\Delta^1_{22}\otimes\id)\gamma^2_2=\Ind_{\msG}(\gamma_2^2)^1_2.
\]
\end{proof}

\subsection{Induction of representations} 

Let $\msG$ be a linking quantum groupoid between the LCQGs $\G$ and $\Hh$. 

As a particular case of induction of actions, we can induce unitary left $\G$-representations. Indeed, if $U\in M\bar{\otimes} \mcB(\Hsp)$ is a unitary left $\G$-representation, endow $A := \mcB\begin{pmatrix} \Hsp \\\C\end{pmatrix}= \begin{pmatrix} \mcB(\Hsp) & \Hsp \\ \overline{\Hsp} & \C\end{pmatrix}$ with the left $\G$-action
\[
\gamma_U\colon  A \rightarrow M\bar{\otimes} A,\qquad  \begin{pmatrix} x & \xi \\ \eta^* & \lambda\end{pmatrix} \mapsto \begin{pmatrix} U^*(1\otimes x)U & U^*(1\otimes \xi) \\ (1\otimes \eta^*)U & 1\otimes \lambda\end{pmatrix},
\]
where we identify $\ov{\mc{H}}=\mc{H}^*$, $M\bar\otimes A=\begin{pmatrix} M\bar\otimes\mcB(\Hsp) & M\bar\otimes\Hsp \\ M\bar\otimes\overline{\Hsp} & M\bar\otimes\C\end{pmatrix}$ and identify elements of $M\bar\otimes \mc{H}$, $M\bar\otimes \ov{\mc{H}}$ with operators $L^2(M)\rightarrow L^2(M)\otimes \mc{H}$ and $L^2(M)\otimes \mc{H}\rightarrow L^2(M)$ as explained in the introduction. The induced W$^*$-algebra $\Ind_{\X}(A)$ again splits as a matrix algebra, and the $12$-corner can be identified with\footnote{Similarly as $\Ind_{\X}(A)$, the space $\Ind_{\X}(\Hsp)$ can be seen as an analytic version of cotensor product $N\overline{\square}\mc{H}$.} 
\[
\Ind_{\X}(\Hsp) = \{\xi\in N \bar{\otimes} \Hsp \mid (\alpha_{\X} \otimes \id)\xi = (\id\otimes U^*)\xi_{[13]}\} \subseteq \mcB(L^2(N),L^2(N)\otimes \Hsp). 
\]
This is in fact a Hilbert space via
\[
\langle \xi ,\eta\rangle = \xi^*\eta,\qquad \forall\xi,\eta \in \Ind_{\X}(\mc{H}), 
\]
using that $(N,\alpha_{\X})$ is ergodic. We then obtain an induced unitary $\Hh$-representation $\Ind_{\X}(U)$ on $\Ind_{\X}(\Hsp)$ via 
\[
\Ind_{\X}(U)^*(\eta\otimes \xi) = (\gamma_{\X}\otimes \id)(\xi)(\eta\otimes 1),\qquad \forall \eta\in L^2(\Hh),\xi\in \Ind_{\X}(\Hsp),
\]
where the right hand side is seen as a map $L^2(\X)\rightarrow L^2(\Hh)\otimes L^2(\X)\otimes \mc{H}$.

Indeed, it is straightforward to check that $\Ind_{\X}(U)^*$ is a well defined isometry in $L^{\infty}(\Hh)\bar\otimes \mcB(\Ind_{\X}(\mathcal{H}))$. We also have 
\begin{equation}\label{EqCorepProp}
(\Delta_{\Hh}\otimes\id)(\Ind_{\X}(U)^*)=\Ind_{\X}(U)^*_{[23]}\Ind_{\X}(U)^*_{[13]}.
\end{equation}
Indeed, for $\eta_1,\eta_2\in L^2(\Hh), \xi\in \Ind_{\X}(\mc{H})$
\[\begin{split}
&\quad\;
(\Delta_{\HH}\otimes\id)(\Ind_{\X}(U)^*)(\eta_1\otimes\eta_2\otimes \xi)=
\ww^{*}_{\HH  [12]} 
\Ind_{\X}(U)^*_{[23]}
( \ww_{\HH}(\eta_1\otimes\eta_2)\otimes\xi)\\
&=
\ww^{*}_{\HH [12]} 
(1\otimes (\gamma_{\X}\otimes\id)(\xi))
( \ww_{\HH}(\eta_1\otimes\eta_2)\otimes 1)=
((\Delta_{\HH}\otimes\id)\gamma_{\X}\otimes\id)(\xi)
(\eta_1\otimes\eta_2\otimes 1)\\
&=
((\id\otimes\gamma_{\X})\gamma_{\X}\otimes\id)(\xi)
(\eta_1\otimes\eta_2\otimes 1)
\end{split}\]
and on the other hand
\[\begin{split}
&\quad\;
\Ind_{\X}(U)^*_{[23]} \Ind_{\X}(U)^*_{[13]}
(\eta_1\otimes\eta_2\otimes\xi)=
\Ind_{\X}(U)^*_{[23]} 
(\gamma_{\X}\otimes\id)(\xi)_{[134]}
(\eta_1\otimes\eta_2\otimes 1)\\
&=
( (\id\otimes\gamma_{\X})\gamma_{\X}\otimes \id)(\xi)
(\eta_1\otimes\eta_2\otimes 1),
\end{split}\]
which proves \eqref{EqCorepProp}. Consequently, \cite{BDS13}*{Corollary 4.15} implies that $\Ind_{\X}(U)$ is unitary.
\begin{Def}
We call $(\Ind_{\X}(\Hsp),\Ind_{\X}(U))\in\Rep(\Hh)$ the \emph{induced unitary $\Hh$-representation} from $(\Hsp,U)\in \Rep(\G)$.
\end{Def}

The above induction can also be performed on the level of $\msG$ itself. Namely, starting from $(\Hsp,U) \in \Rep(\G)$, put 
\[
\Ind_{\msG}(\Hsp) = \begin{pmatrix} \Ind_{\X}(\Hsp) \\ \Hsp\end{pmatrix},
\] 
and define on it the unitary $\msG$-representation $\Ind_{\msG}(U)$, for whose components (see Definition \ref{def1}) we use the shorthand notation $U_{ij}$ where  
\begin{equation}
U_{22} = U,\qquad U_{11} = \Ind_{\X}(U)
\end{equation}
 and $U_{12},U_{21}$ are defined via 
\begin{equation}\begin{split}\label{eq44}
U_{12}^*&\colon L^2(\G_{12})\otimes \Ind_{\X}(\Hsp) \rightarrow L^2(\G_{12})\otimes \Hsp,\quad \xi\otimes z \mapsto  z \xi, \\
U_{21}^*&\colon L^2(\G_{21}) \otimes \Hsp \rightarrow L^2(\G_{21})\otimes \Ind_{\X}(\Hsp),\quad \xi \mapsto (\Delta_{22}^1\otimes \id)(U^*)\xi_{[13]}.
\end{split}\end{equation}

\begin{Lem}
$\Ind_{\msG}(U)$ is a unitary representation of $\msG$ on $\Ind_{\msG}(\mc{H})$.
\end{Lem}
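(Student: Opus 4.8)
The plan is to verify directly that the quadruple $(U_{ij})_{i,j\in\{1,2\}}$ meets the requirements of Definition \ref{def1}: each $U_{ij}$ should be a unitary lying in $L^{\infty}(\G_{ij})\bar{\otimes}\mcB(\Hsp_j,\Hsp_i)$ (where $\Hsp_1=\Ind_{\X}(\Hsp)$ and $\Hsp_2=\Hsp$), and one should have $(\Delta_{ij}^k\otimes\id)U_{ij}=U_{ik[13]}U_{kj[23]}$ for all $i,j,k\in\{1,2\}$. Two of these eight identities, and the unitarity of the diagonal pieces, come for free: $(2,2,2)$ is the corepresentation property of $U=U_{22}$, and $U_{11}=\Ind_{\X}(U)$ is already known to be a unitary in $L^{\infty}(\Hh)\bar{\otimes}\mcB(\Ind_{\X}(\Hsp))$ whose corepresentation identity \eqref{EqCorepProp}, read after taking adjoints, is exactly the case $(1,1,1)$.

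Next I would treat the off-diagonal components. From the formulas \eqref{eq44} one reads off that $U_{12}\in L^{\infty}(\G_{12})\bar{\otimes}\mcB(\Hsp_2,\Hsp_1)$ and $U_{21}\in L^{\infty}(\G_{21})\bar{\otimes}\mcB(\Hsp_1,\Hsp_2)$; for instance $U_{12}^*$ intertwines $\rho_{\G_{12}}\otimes 1$ on the two sides since every $z\in\Ind_{\X}(\Hsp)$, viewed inside $N\bar{\otimes}\Hsp$, commutes with $\rho_N\otimes 1$. Moreover $U_{12}^*$ and $U_{21}^*$ are isometries: for $U_{12}^*$ this rests on ergodicity of $\alpha_{\X}$, which gives $z^*w=\langle z,w\rangle 1$ for $z,w\in\Ind_{\X}(\Hsp)$ and hence $\langle z\xi,w\eta\rangle=\langle\xi,\eta\rangle\,z^*w=\langle\xi\otimes z,\eta\otimes w\rangle$; for $U_{21}^*$ one first checks, using the joint coassociativity of the $\Delta_{ij}^k$, that $(\Delta_{22}^1\otimes\id)(U^*)\xi_{[13]}$ indeed lands in $L^2(\G_{21})\bar{\otimes}\Ind_{\X}(\Hsp)$, after which the isometry property follows because $(\Delta_{22}^1\otimes\id)(U^*)$ is unitary ($\Delta_{22}^1$ being a unital $*$-homomorphism and $U$ unitary). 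Hence $U_{12}$ and $U_{21}$ are at least co-isometries.

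The six remaining corepresentation identities --- those with $(i,j,k)$ not constant --- I would verify by evaluating both sides on elementary tensors, exactly in the spirit of the computation preceding \eqref{EqCorepProp}. The only inputs needed are the joint coassociativity $(\Delta_{ik}^{\ell}\otimes\id)\Delta_{ij}^k=(\id\otimes\Delta_{\ell j}^k)\Delta_{ij}^{\ell}$, the commutation $(\gamma_{\X}\otimes\id)\alpha_{\X}=(\id\otimes\alpha_{\X})\gamma_{\X}$, the defining relation $(\alpha_{\X}\otimes\id)\xi=(\id\otimes U^*)\xi_{[13]}$ for $\xi\in\Ind_{\X}(\Hsp)$, and the formulas \eqref{eq44}; for example, up to unwinding the identifications, the case $(2,2,1)$ is built into the definition of $U_{21}^*$, while $(1,1,2)$ is coassociativity of $\Delta_Q$ on the corner $L^{\infty}(\X)$ together with the defining equation of $\Ind_{\X}(\Hsp)$, and the cases $(1,2,1)$, $(1,2,2)$, $(2,1,1)$, $(2,1,2)$ are dispatched in the same way. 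I expect this bookkeeping to be the main obstacle: in contrast with the locally compact quantum group case, the ``multiplicative unitary'' here is a $2\times 2$ array of operators between four distinct Hilbert spaces, the six mixed identities genuinely couple all four components, and one must keep careful track both of the leg numbering and of which Hilbert space occupies which leg.

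Finally I would upgrade $U_{12}$ and $U_{21}$ from co-isometries to unitaries. Applying the unital $*$-homomorphism $\Delta_{11}^2\otimes\id$ to $U_{11}^*U_{11}=1$ and invoking the (now established) case $(1,1,2)$ gives
\[
1=(\Delta_{11}^2\otimes\id)(U_{11}^*U_{11})=U_{21[23]}^*\,(U_{12}^*U_{12})_{[13]}\,U_{21[23]}\le U_{21[23]}^*U_{21[23]}=(U_{21}^*U_{21})_{[23]}\le 1,
\]
where the first inequality uses $U_{12}^*U_{12}\le 1$; this forces $U_{21}^*U_{21}=1$, so $U_{21}$ is unitary, and conjugating the displayed identity back by $U_{21[23]}$ then yields $(U_{12}^*U_{12})_{[13]}=1$, i.e.\ $U_{12}^*U_{12}=1$. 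Together with the co-isometry property this shows $U_{12}$ and $U_{21}$ are unitary, completing the verification of Definition \ref{def1}.
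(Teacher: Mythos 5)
Your proposal is correct and follows essentially the same route as the paper: check that the components are isometries/contractions and that the diagonal ones are unitary, verify the eight identities $(\Delta_{ij}^k\otimes\id)(U_{ij}^*)=U_{kj[23]}^*U_{ik[13]}^*$ by direct computation on elementary tensors using exactly the inputs you list (the paper likewise only writes out one mixed case, $(i,j,k)=(1,1,2)$, and declares the rest analogous), and then exploit the unitarity of $(\Delta_{11}^2\otimes\id)(U_{11})=U_{12[13]}U_{21[23]}$ together with contractivity of $U_{12}$ and $U_{21}$ to force both off-diagonal components to be unitary. The only cosmetic difference is in this last step, where you argue via the operator inequalities $1=U_{21[23]}^*(U_{12}^*U_{12})_{[13]}U_{21[23]}\le (U_{21}^*U_{21})_{[23]}\le 1$ whereas the paper reaches the same conclusion by a norm-contradiction argument; the two are equivalent.
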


\begin{proof}
It is easily checked that $U_{ij}^*$ is an isometry in $L^{\infty}(\G_{ij})\bar\otimes \mc{B}(\mc{H}_i,\mc{H}_j)$ (where $\mc{H}_1=\Ind_{\X}(\mc{H})$, $\mc{H}_2=\mc{H}$) and $U_{ii}$ are unitary. Next we claim that $(\Delta_{ij}^k\otimes\id)(U_{ij})=U_{ik[13]}U_{kj [23]}$ for $i,j,k\in \{1,2\}$, or equivalently
\begin{equation}\label{eq3}
(\Delta_{ij}^k\otimes\id)(U_{ij}^*)=U_{kj [23]}^*U_{ik[13]}^*.
\end{equation}
Let us show this for $i=j=1,k=2$. Take $\eta_1\in L^2(\GG_{12})$, $\eta_2\in L^2(\GG_{21})$, $\xi\in \Ind_{\X}(\mc{H})$, then
\[\begin{split}
&\quad\;
(\Delta^2_{11}\otimes \id)(U^*_{11})(\eta_1\otimes\eta_2\otimes\xi)=
(\Delta^2_{11}\otimes \id)(\Ind_{\X}(U)^*)(\eta_1\otimes\eta_2\otimes\xi)\\
&=
\Sigma_{[12]}
\hat{\ww}^{1}_{12[12]} \Ind_{\X}(U)^*_{[13]}
\hat{\ww}^{1 *}_{12 [12]} (\eta_2\otimes\eta_1\otimes\xi)\\
&=
\Sigma_{[12]}
\hat{\ww}^{1}_{12[12]} 
(\gamma_{\X}\otimes\id)(\xi)_{[134]}
(\hat{\ww}^{1 *}_{12 } (\eta_2\otimes\eta_1))\\
&=
 (\Delta_{11}^2\otimes\id\otimes\id)(\Delta_{12}^1\otimes\id)(\xi)
 (\eta_1\otimes\eta_2)\\
 &=
  (\id\otimes \Delta_{22}^{1} \otimes\id)(\Delta_{12}^2\otimes\id)(\xi)
 (\eta_1\otimes\eta_2)\\
 &= 
  (\id\otimes \Delta_{22}^{1} \otimes\id)(\alpha_{\X}\otimes\id)(\xi)
 (\eta_1\otimes\eta_2)\\
 &=
  (\id\otimes \Delta_{22}^{1} \otimes\id)
  ( (\id\otimes U^*)\xi_{[13]})
 (\eta_1\otimes\eta_2)\\
 &=
   (\Delta_{22}^{1} \otimes\id)
  ( U^*)_{[234]}\xi_{[1 4]}
 (\eta_1\otimes\eta_2)
\end{split}\]
and on the other hand
\[\begin{split}
&\quad\;
U^*_{21 [23]} U^*_{12[13]} (\eta_1\otimes\eta_2\otimes\xi)=
\Sigma_{[12]} U^*_{21 [13]} U^*_{12[23]} (\eta_2\otimes\eta_1\otimes\xi)=
\Sigma_{[12]} U^*_{21 [13]}  (\eta_2\otimes\xi\eta_1)\\
&=
\Sigma_{[12]} 
(\Delta^1_{22}\otimes\id)(U^*)_{[134]} (\eta_2\otimes\xi\eta_1)_{[124]}=
(\Delta^1_{22}\otimes\id)(U^*)_{[234]} (\eta_2\otimes\xi\eta_1)_{[214]},
\end{split}\]
which proves the claim. The remaining equations \eqref{eq3} can be checked in a similar way.

Since $\Delta_{11}^2$ is a unital $*$-homomorphism, the left hand side of
\begin{equation}\label{eq4}
(\Delta_{11}^2\otimes\id)(U_{11})=U_{12 [13]} U_{21 [23]}
\end{equation}
is unitary, in particular it is an isometry. Now, we already know that $U_{12}$ is a contraction. If $U_{21}$ is not an isometry, we can find $\zeta\in L^2(\GG_{21})\otimes \Ind_{\X}(\mc{H})$ such that $\|U_{21} \zeta\|<\|\zeta\|$. For any $0\neq \eta\in L^2(\GG_{12})$ we have
\[
\|\eta\|\, \|\zeta\|=\| (\Delta_{11}^2\otimes\id)(U_{11}) (\eta\otimes\zeta)\| = 
\|U_{12 [13]} U_{21 [23]}(\eta\otimes\zeta)\| \le 
\|\eta\|\,\|U_{21} \zeta\|<\|\eta\| \,\|\zeta\|
\]
which gives a contradiction. Consequently, $U_{21}$ is unitary. Now equation \eqref{eq4} implies that $U_{12}$ is also unitary and consequently $\Ind_{\msG}(U)$ is indeed a unitary representation.
\end{proof}

\begin{Theorem}\label{TheoIndRep}
The assignment 
\begin{equation}\label{EqMonoidEquiv}
\Ind_{\X}\colon \Rep(\G) \rightarrow \Rep(\Hh),\qquad (\Hsp,U) \mapsto (\Ind_{\X}(\Hsp),\Ind_{\X}(U)),\quad 
T\mapsto 1\otimes T
\end{equation}
becomes an equivalence of tensor W$^*$-categories through the unique unitaries determined by 
\begin{equation}\label{EqInterTwinerMonoid}
u_{\Hsp,\Hsp'}\colon \Ind_{\X}(\Hsp)\otimes \Ind_{\X}(\Hsp') \rightarrow \Ind_{\X}(\Hsp\otimes \Hsp'),\qquad \xi\otimes \eta \mapsto \eta_{[13]}\xi_{[12]}.
\end{equation}
\end{Theorem}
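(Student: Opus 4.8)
The plan is to verify, in order, that $\Ind_{\X}$ is a W$^*$-functor, that it is an equivalence of the underlying W$^*$-categories, and that the unitaries $u_{\Hsp,\Hsp'}$ upgrade it to a unitary tensor functor; a tensor functor which is an equivalence is then automatically a tensor equivalence (the quasi-inverse inherits a tensor structure). Functoriality is immediate: for $T\in\mathrm{Mor}((\Hsp,U),(\Hsp',U'))$ the relation $(1\otimes T)U=U'(1\otimes T)$ shows $1\otimes T$ maps $\Ind_{\X}(\Hsp)$ into $\Ind_{\X}(\Hsp')$, ergodicity of $(N,\alpha_{\X})$ guarantees it is a bounded map of the induced Hilbert spaces of the same norm, and a one-line computation with the formula for $\Ind_{\X}(U)^{*}$ shows it intertwines $\Ind_{\X}(U)$ with $\Ind_{\X}(U')$; compatibility with composition and $*$ is clear. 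For the equivalence I would run the same construction on the \emph{reflected} linking quantum groupoid (Remark~\ref{rem1}) to get $\Ind_{\Y}\colon\Rep(\Hh)\to\Rep(\G)$. Feeding $(\Hsp,U)\in\Rep(\G)$ into the $\G$-W$^*$-algebra $(\mcB(\Hsp\oplus\C),\gamma_{U})$ and applying Theorem~\ref{TheoInductInvol}, the natural $\G$-equivariant $*$-isomorphism it provides restricts to a natural unitary $\Ind_{\Y}\Ind_{\X}(\Hsp,U)\cong(\Hsp,U)$ of $\G$-representations (on the off-diagonal corner carrying the induced Hilbert space), and symmetrically $\Ind_{\X}\Ind_{\Y}\cong\id_{\Rep(\Hh)}$; so $\Ind_{\X}$ and $\Ind_{\Y}$ are mutually quasi-inverse, and $\Ind_{\X}$ is an equivalence of W$^*$-categories.

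The next step is the tensor structure. \emph{Well-definedness} of $u_{\Hsp,\Hsp'}$: for $\xi\in\Ind_{\X}(\Hsp)$, $\eta\in\Ind_{\X}(\Hsp')$ one checks that $\eta_{[13]}\xi_{[12]}$ obeys the defining equation of $\Ind_{\X}(\Hsp\otimes\Hsp')$ for the corepresentation $U\TensRep U'=U_{[12]}U_{[13]}'$. Applying $\alpha_{\X}\otimes\id$, using that $\alpha_{\X}$ is a normal $*$-homomorphism (so it distributes over the product) together with the two corepresentation identities $(\alpha_{\X}\otimes\id)\xi=(\id\otimes U^{*})\xi_{[13]}$ and $(\alpha_{\X}\otimes\id)\eta=(\id\otimes (U')^{*})\eta_{[13]}$, the desired equality reduces — after the bookkeeping of inserting the $L^2(\G)$-leg — to the commutation of $\eta_{[14]}$ and $U^{*}_{[23]}$, which holds because they act on disjoint tensor legs; combined with $(U\TensRep U')^{*}=(U')^{*}_{[13]}U^{*}_{[12]}$ this gives the claim. \emph{Isometry}: by ergodicity of $(N,\alpha_{\X})$ the products $\xi^{*}\xi'$ and $\eta^{*}\eta'$ are scalars, and $\langle\eta_{[13]}\xi_{[12]},\eta'_{[13]}\xi'_{[12]}\rangle=\xi_{[12]}^{*}(\eta^{*}\eta')_{[13]}\xi'_{[12]}=\langle\eta,\eta'\rangle\,\xi^{*}\xi'=\langle\xi,\xi'\rangle\langle\eta,\eta'\rangle$, so $u_{\Hsp,\Hsp'}$ extends to an isometry $\Ind_{\X}(\Hsp)\otimes\Ind_{\X}(\Hsp')\to\Ind_{\X}(\Hsp\otimes\Hsp')$. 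Naturality in $T$, and the fact that each $u_{\Hsp,\Hsp'}$ is a morphism $\Ind_{\X}(U)\TensRep\Ind_{\X}(U')\to\Ind_{\X}(U\TensRep U')$ in $\Rep(\Hh)$, follow by a direct computation from \eqref{EqInterTwinerMonoid}, the formula for $\Ind_{\X}(U)^{*}$, multiplicativity of $\gamma_{\X}$, and the commutation $(\gamma_{\X}\otimes\id)\alpha_{\X}=(\id\otimes\alpha_{\X})\gamma_{\X}$ of the two actions on $N$. The coherence conditions are then easy: the associativity square for $u$ holds because both legs send $\xi\otimes\eta\otimes\vartheta$ to $\vartheta_{[14]}\eta_{[13]}\xi_{[12]}$ — the formula for $u$ simply stacks the representation legs in order — and the unit constraint follows from $\Ind_{\X}(\C,\triv)=(\C,\triv)$, once more by ergodicity.

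The one substantial point, and where I expect the main obstacle, is that $u_{\Hsp,\Hsp'}$ is \emph{surjective}, i.e.\ that $\Ind_{\X}$ is a \emph{strong} (not merely lax) monoidal functor; this is exactly where the Galois/fullness property of the corner $\X$ of $\msG$ must enter. The plan is to show $\operatorname{Im}(u_{\Hsp,\Hsp'})^{\perp}=0$ inside $\Ind_{\X}(\Hsp\otimes\Hsp')$. A computation dual to the well-definedness above shows that for $\eta\in\Ind_{\X}(\Hsp')$ and $\zeta\in\Ind_{\X}(\Hsp\otimes\Hsp')$ one has $\eta_{[13]}^{*}\zeta\in\Ind_{\X}(\Hsp)$; hence if $\zeta$ is orthogonal to every $\eta_{[13]}\xi_{[12]}$, then $\langle\xi,\eta_{[13]}^{*}\zeta\rangle=0$ for all $\xi\in\Ind_{\X}(\Hsp)$, so $\eta_{[13]}^{*}\zeta=0$ for all $\eta\in\Ind_{\X}(\Hsp')$. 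It then remains to see $\bigcap_{\eta}\ker(\eta_{[13]}^{*})=0$, i.e.\ that the left $N$-translates of $\Ind_{\X}(\Hsp')$ span $L^2(N)\otimes\Hsp'$ densely; this is the fullness of $\X$, equivalently the Galois property of $(N,\alpha_{\X})$, and is where I would invoke \cites{DC09,DC11a,DC11b}. (Alternatively, one can bypass this by transporting along the equivalence: $\Ind_{\Y}(u_{\Hsp,\Hsp'})$, composed with the analogous isometries for $\Ind_{\Y}$ and with the isomorphism of Theorem~\ref{TheoInductInvol}, is a unitary, and a faithful $*$-functor reflects unitarity, so $u_{\Hsp,\Hsp'}$ is unitary.) Once surjectivity is established, $\Ind_{\X}$ is a unitary tensor functor which is an equivalence of W$^*$-categories, hence a unitary tensor equivalence, with quasi-inverse $\Ind_{\Y}$.
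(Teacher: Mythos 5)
Your proposal is correct in substance, but it takes a genuinely different route from the paper. For the underlying W$^*$-equivalence, the paper does not use Theorem~\ref{TheoInductInvol} at all: it invokes the C$^*$-Morita equivalence between $C^*(\G)$ and $C^*(\msG)$ to know that $\Ind^{C^*}_{\msG}$ is an equivalence, observes that the restriction functor $\Res_{\msG}$ satisfies $\Res_{\msG}\circ\Ind_{\msG}\cong\id$ and is manifestly monoidal with identity coherences, and deduces everything from there. In particular, the surjectivity of $u_{\Hsp,\Hsp'}$ --- which you correctly single out as the one substantial point --- is handled by lifting $u$ to the isometric $\msG$-intertwiner $\widetilde{u}_{\Hsp,\Hsp'}$ of \eqref{EqInterTwinerMonoidAlt}, whose image under the equivalence $\Res_{\msG}$ is the identity; since a fully faithful $*$-functor reflects unitarity, $\widetilde{u}$ and hence $u$ is unitary, with no density argument needed. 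Your primary route (showing $\operatorname{Im}(u_{\Hsp,\Hsp'})^{\perp}=0$) also closes, and in fact the density you say you would import from \cites{DC09,DC11a,DC11b} --- that $[\{\eta v\mid \eta\in\Ind_{\X}(\Hsp'),\,v\in L^2(\X)\}]=L^2(\X)\otimes\Hsp'$ --- is nothing but the surjectivity of $U_{12}^{*}$ from \eqref{eq44}, which is already proved in the lemma immediately preceding the theorem (unitarity of $\Ind_{\msG}(U)$), so no external input is actually required. Your construction of the quasi-inverse via $\Ind_{\Y}$ and Theorem~\ref{TheoInductInvol} applied to $\mcB(\Hsp\oplus\C)$ is legitimate (one must check that the equivariant $*$-isomorphism respects the matrix corners and the corner inner products, which it does), and it buys an explicit quasi-inverse at the von Neumann level; the paper's detour through $C^*(\msG)$ buys the slicker unitarity argument and the comparison with C$^*$-induction that is exploited later (Lemma~\ref{lemma9}). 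The only genuinely unsupported step in your write-up is the parenthetical alternative (b): the assertion that $\Ind_{\Y}(u_{\Hsp,\Hsp'})$ composed with the $\Ind_{\Y}$-coherences and the isomorphism of Theorem~\ref{TheoInductInvol} is unitary is exactly the monoidal compatibility of that natural isomorphism, which is not free and would need the same kind of computation you are trying to avoid; rely on route (a) instead.
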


\begin{proof}
Observe that the C$^*$-algebras $C^*(\G),C^*(\msG)$ are  C$^*$-Morita equivalent \eqref{EqStrongMorita}, hence the C$^*$-algebraic induction gives an equivalence of W$^*$-categories $\Ind^{C^*}_{\msG}\colon \Rep(C^*(\G))\rightarrow \Rep(C^*(\msG))$ (see \cite{RW98}*{Example 3.6, Theorem 3.29}). Consider the restriction functor
\[
\Res^{C^*}_{\msG}\colon \Rep(C^*(\msG)) \rightarrow \Rep(C^*(\G)),\qquad (\Hsp,\hat{\pi}) \mapsto (\Hsp_2,\hat{\pi}_2),\quad T_1\oplus T_2\mapsto T_2
\]
with $\Hsp=\Hsp_1\oplus\Hsp_2$ and $\hat{\pi}_2$ determined through 
\[
\begin{pmatrix} 0 \\ \hat{\pi}_2(x)\xi\end{pmatrix}  = \hat{\pi}\begin{pmatrix} 0 & 0 \\ 0 & x\end{pmatrix}\begin{pmatrix}0 \\ \xi\end{pmatrix},\qquad \forall \xi\in\Hsp_2,x\in C^*(\G).
\] 
 It follows from the concrete description of $\Ind^{C^*}_{\msG}$ that $\Res^{C^*}_{\msG}\circ \Ind^{C^*}_{\msG}\cong\id$, in particular $\Res^{C^*}_{\msG}$ is also an equivalence of W$^*$-categories. 
 
This restriction functor is clearly monoidal, through the identity map
\[
\Hsp_{\Res(\pi \ast\pi')} = \Hsp_{\pi,2}\otimes \Hsp_{\pi',2} = \Hsp_{\Res(\pi)*\Res(\pi')}.
\]

Using the tensor W$^*$-equivalences $\Rep(\msG) \cong \Rep(C^*(\msG))$ and $\Rep(\G)\cong \Rep(C^*(\G))$, we then also find the tensor W$^*$-equivalence 
\[
\Res_{\msG}\colon \Rep(\msG) \rightarrow \Rep(\G),\qquad (\Hsp,U) \mapsto (\Hsp_2,U_{22}),\quad 
T_1\oplus T_2\mapsto T_2
\]
the coherence map again being the identity. 

Now, consider the induction functor
\[
\Ind_{\msG}\colon \Rep(\G)\rightarrow \Rep(\msG),\quad (\Hsp,U)\mapsto (\Ind_{\msG}(\Hsp),\Ind_{\msG}(U)),\quad
T\mapsto (1\otimes T)\oplus T.
\]
Since $\Res_{\msG}\circ \Ind_{\msG}=\id $, we conclude that $\Ind_{\msG}$ is an equivalence of W$^*$-categories. This then already shows that \eqref{EqMonoidEquiv} will be an equivalence of W$^*$-categories. 

To see that it is monoidal through \eqref{EqInterTwinerMonoid}, we note first that $u_{\Hsp,\Hsp'}$ in \eqref{EqInterTwinerMonoid} is clearly an isometric map and it is not difficult to check that $u_{\Hsp,\Hsp'}^*$ is an intertwiner of unitary $\Hh$-representations. Furthermore, $u_{\Hsp,\Hsp'}$ lifts to an isometric intertwiner 
\begin{equation}\label{EqInterTwinerMonoidAlt}
\widetilde{u}_{\Hsp,\Hsp'}\colon \Ind_{\msG}(\Hsp)\otimes_{\msG} \Ind_{\msG}(\Hsp') \rightarrow \Ind_{\msG}(\Hsp\otimes_{\G} \Hsp'),\qquad \begin{pmatrix} \xi\otimes \eta \\ \zeta\end{pmatrix} \mapsto \begin{pmatrix} \eta_{[13]}\xi_{[12]}\\ \zeta\end{pmatrix}
\end{equation}
of unitary $\msG$-representations (see \eqref{EqTensProd2}). Let us check this for index $21$. Take $\zeta=\zeta_1\otimes\zeta_2\in \mc{H}\otimes\mc{H}',\theta\in L^2(\GG_{21})$. We have
\[
\Ind_{\msG}(U\otop U')_{21}^* (1\otimes \widetilde{u}_{\mc{H},\mc{H}'}) (\theta\otimes \zeta)=
\Ind_{\msG}(U\otop U')_{21}^*(\theta\otimes \zeta)
=
(\Delta_{22}^1\otimes\id)(U_{[12]}U'_{[13]})^* 
(\theta\otimes 1 \otimes \zeta)
\]
and on the other hand
\[\begin{split}
&\quad\;
(1\otimes \widetilde{u}_{\mc{H},\mc{H}'})
(\Ind_{\msG}(U)\otop\Ind_{\msG}(U'))_{21}^*  (\theta\otimes \zeta)=
(1\otimes \widetilde{u}_{\mc{H},\mc{H}'})
(\Ind_{\msG}(U)_{21[12] } \Ind_{\msG}(U')_{21[13]})^*  (\theta\otimes \zeta_1\otimes\zeta_2)\\
&=
(1\otimes\widetilde{u}_{\mc{H},\mc{H}'})
\Ind_{\msG}(U')_{21 [14]}^*
(\Delta^1_{22}\otimes \id)(U^*)_{[123]}
(\theta\otimes 1\otimes \zeta_1\otimes\zeta_2)\\
&=
(1\otimes\widetilde{u}_{\mc{H},\mc{H}'})
(\Delta^1_{22}\otimes\id)(U'^*)_{[145]}
(\Delta^1_{22}\otimes \id)(U^*)_{[123]}
(\theta\otimes 1\otimes \zeta_1\otimes 1\otimes \zeta_2)\\
&=
(\Delta^1_{22}\otimes\id)(U'^*)_{[124]}
(\Delta^1_{22}\otimes \id)(U^*)_{[123]}
(\theta\otimes 1\otimes \zeta_1\otimes \zeta_2)\\
&=
(\Delta^1_{22}\otimes \id)( U'^*_{[13]}U^*_{[12]})
(\theta\otimes 1\otimes \zeta_1\otimes \zeta_2).
\end{split}\]
The remaining equations can be checked in a similar way. Since $\tilde{u}_{\Hsp,\Hsp'}$ becomes the identity under $\Res_{\msG}$, we must necessarily have that $\widetilde{u}_{\Hsp,\Hsp'}$ is a unitary, providing the monoidal structure on $\Ind_{\msG}$. It then follows that this claim holds for the $u_{\Hsp,\Hsp'}$ as well. 
\end{proof}

\begin{Rem}
It follows from the proof of Theorem \ref{TheoIndRep} that, after the identifications $\Rep(\G)\cong \Rep(C^*(\G))$ and  $\Rep(\msG)\cong \Rep(C^*(\msG))$, we get a natural isomorphism between induction functors:
\[
\bigl(\,\Ind^{C^*}_{\msG}\colon \Rep(C^*(\G))\rightarrow \Rep(C^*(\msG))\,\bigr)\quad \cong \quad \bigl(\,\Ind_{\msG}\colon \Rep(\G)\rightarrow \Rep(\msG)\,\bigr);
\] 
a conclusion which is not immediate just by looking at the definitions.
\end{Rem}

We will also need a more concrete form of the  natural isomorphism 
\[
\bigl(\,\Ind_{\X}\colon \Rep(\GG)\rightarrow \Rep(\HH) \,\bigr)
\quad \cong \quad 
\bigl(\, \Ind_{\X}^{C^*}\colon \Rep(C^*(\GG))\rightarrow \Rep(C^*(\HH))\,\bigr).
\] 
To make this precise, let us define 
\begin{equation}
\widetilde{\Ind}_{\X}\colon \Rep(C^*(\GG))\rightarrow \Rep(C^*(\HH))
\end{equation}
as the composition of $\Ind_{\X}$ with the isomorphisms $\Rep(C^*(\GG))\cong \Rep(\GG)$ and  $\Rep(\HH)\cong \Rep(C^*(\HH))$, and similarly define 
\begin{equation}
\widetilde{\Ind}_{\msG}\colon \Rep(C^*(\GG))\rightarrow \Rep(C^*(\msG))
\end{equation}
as the composition of $\Ind_{\msG}$ with $\Rep(C^*(\G))\cong \Rep(\GG)$ and $\Rep(\msG)\cong \Rep(C^*(\msG))$.

\begin{Lem}\label{lemma9}
Let $(\mc{H},U)\in \Rep(\G)$ with the associated $*$-representation $(\mc{H},\hat{\pi})\in \Rep(C^*(\G))$, and define $\hat{\pi}_{ij}$ to be the components of $\widetilde{Ind}_{\msG}(\hat{\pi})\colon C^*(\msG)\rightarrow \mcB(Ind_{\msG}(\mc{H}))$. The family of unitaries
\[
\theta_{(\mc{H},\hat{\pi})}\colon 
C^*(\X)\otimes_{C^*(\G)}\mc{H} \ni b\otimes\xi\mapsto \hat{\pi}_{12}(b)\xi\in \Ind_{\X}(\mc{H})
\]
establishes a natural isomorphism 
\[
\Ind_{\X}^{C^*}\cong \widetilde{\Ind}_{\X}.
\]
\end{Lem}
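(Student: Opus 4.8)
The plan is to read everything off the $2\times 2$-matrix structure \eqref{EqStrongMorita} of $C^*(\msG)$ and of the non-degenerate $*$-representation $\widetilde{\Ind}_{\msG}(\hat\pi)$, acting on $\Ind_{\msG}(\mc{H})=\mc{H}_1\oplus\mc{H}_2$ where $\mc{H}_i$ is cut out by the projection $e_i$. The first step I would carry out is to pin down the corners. Since $\widetilde{\Ind}_{\msG}$ is, by definition, $\Ind_{\msG}$ transported through the equivalences $\Rep(C^*(\G))\cong\Rep(\G)$ and $\Rep(\msG)\cong\Rep(C^*(\msG))$, and since $\Res_{\msG}\circ\Ind_{\msG}=\id$, one gets on the nose $\mc{H}_2=\mc{H}$, $\hat\pi_{22}=\hat\pi$, and $\mc{H}_1=\Ind_{\X}(\mc{H})$. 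Moreover, the equivalence $\Rep(\msG)\cong\Rep(C^*(\msG))$ sends a unitary $\msG$-representation $V$ to the representation splitting blockwise as $\hat\pi_{V,ij}(\omega_{ij})=(\omega_{ij}\otimes\id)(V_{ij})$, so its $(1,1)$-corner is precisely the representation of $C^*(\Hh)=C^*(\G_{11})$ attached to $V_{11}$. Applying this with $V=\Ind_{\msG}(U)$, whose $(1,1)$-component is $\Ind_{\X}(U)$ by construction, yields $\hat\pi_{11}=\widetilde{\Ind}_{\X}(\hat\pi)$. This identification of $\hat\pi_{11}$ — the compatibility of the matrix decomposition \eqref{EqStrongMorita} with the equivalences $\Rep(\G_{ij})\cong\Rep(C^*(\G_{ij}))$ — is the one place I expect to need genuine care; everything after it is matrix-unit bookkeeping.

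Granting the first step, I would check that $\theta_{(\mc{H},\hat\pi)}$ is a well-defined unitary intertwiner. Because $\widetilde{\Ind}_{\msG}(\hat\pi)$ is a $*$-representation of the matrix C$^*$-algebra $C^*(\msG)$, for $b,b'\in C^*(\X)=\hat{N}_u$ we have $\hat\pi_{12}(b)^{*}=\hat\pi_{21}(b^{*})$, and by the corner multiplication rule (using $b^{*}\in C^*(\Y)$ and $b^{*}b'\in C^*(\G)$),
\[
\hat\pi_{12}(b)^{*}\hat\pi_{12}(b')=\hat\pi_{22}(b^{*}b')=\hat\pi(b^{*}b').
\]
Since the right $C^*(\G)$-valued inner product on the imprimitivity bimodule $C^*(\X)$ of \eqref{EqStrongMorita} is $\langle b,b'\rangle_{C^*(\G)}=b^{*}b'$, this gives $\langle\hat\pi_{12}(b)\xi,\hat\pi_{12}(b')\xi'\rangle=\langle b\otimes\xi,b'\otimes\xi'\rangle$, so $\theta_{(\mc{H},\hat\pi)}$ descends to the balanced tensor product and is isometric. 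For surjectivity I would invoke fullness of \eqref{EqStrongMorita}, $[C^*(\X)C^*(\Y)]=C^*(\Hh)$, together with non-degeneracy of $\hat\pi_{11}$, to get
\[
\Ind_{\X}(\mc{H})=[\hat\pi_{11}(C^*(\Hh))\Ind_{\X}(\mc{H})]=[\hat\pi_{12}(C^*(\X))\hat\pi_{21}(C^*(\Y))\Ind_{\X}(\mc{H})]\subseteq[\hat\pi_{12}(C^*(\X))\mc{H}],
\]
so $\theta_{(\mc{H},\hat\pi)}$ is onto, hence unitary. Intertwining of the $C^*(\Hh)$-actions is then immediate: the action of $c\in C^*(\Hh)$ on $\Ind_{\X}^{C^*}(\hat\pi)=C^*(\X)\otimes_{C^*(\G)}\mc{H}$ is $c\cdot(b\otimes\xi)=cb\otimes\xi$, and by the corner rule $\theta_{(\mc{H},\hat\pi)}(cb\otimes\xi)=\hat\pi_{12}(cb)\xi=\hat\pi_{11}(c)\hat\pi_{12}(b)\xi=\widetilde{\Ind}_{\X}(\hat\pi)(c)\,\theta_{(\mc{H},\hat\pi)}(b\otimes\xi)$, using $\hat\pi_{11}=\widetilde{\Ind}_{\X}(\hat\pi)$.

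Finally, for naturality I would take a morphism $T\in\oon{Mor}((\mc{H},U),(\mc{H}',U'))$ and use that $\Ind_{\msG}(T)=(1\otimes T)\oplus T$ is a morphism $\Ind_{\msG}(U)\to\Ind_{\msG}(U')$ in $\Rep(\msG)$, hence intertwines $\widetilde{\Ind}_{\msG}(\hat\pi)$ and $\widetilde{\Ind}_{\msG}(\hat\pi')$; writing that relation in block form and reading off the $(1,2)$-entry on $b\in C^*(\X)$ gives $(1\otimes T)\hat\pi_{12}(b)=\hat\pi'_{12}(b)T$. Since $\widetilde{\Ind}_{\X}$ acts on morphisms by $T\mapsto 1\otimes T$ (see \eqref{EqMonoidEquiv}) and $\Ind_{\X}^{C^*}$ by $T\mapsto\id_{C^*(\X)}\otimes T$, this identity is exactly the commutativity of the naturality square for $\theta$. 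Putting the three steps together yields the asserted natural isomorphism $\Ind_{\X}^{C^*}\cong\widetilde{\Ind}_{\X}$.
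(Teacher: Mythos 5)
Your proposal is correct and follows essentially the same route as the paper: surjectivity from non-degeneracy of $\widetilde{\Ind}_{\msG}(\hat{\pi})$ together with fullness $C^*(\Hh)=[C^*(\X)C^*(\Y)]$, the intertwining property from the corner multiplication rule $\hat{\pi}_{12}(ab)=\hat{\pi}_{11}(a)\hat{\pi}_{12}(b)$, and isometry from $\hat{\pi}_{12}(b)^*\hat{\pi}_{12}(b')=\hat{\pi}(b^*b')$ matching the $C^*(\G)$-valued inner product. You merely spell out the steps the paper labels ``straightforward'' (well-definedness, isometry, naturality), including the correct identification $\hat{\pi}_{11}=\widetilde{\Ind}_{\X}(\hat{\pi})$ coming from $\Ind_{\msG}(U)_{11}=\Ind_{\X}(U)$.
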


\begin{proof}
It is straightforward to check that $\theta_{(\mc{H},\hat{\pi})}$ is well-defined and isometric. Let us prove that it is surjective, using that $\widetilde{\Ind}_{\msG}(\hat{\pi})$ is non-degenerate:
\[\begin{split}
\Ind_{\X}(\mc{H})&=
\ov{\operatorname{span}}\{ \hat{\pi}_{11}(a)\xi + \hat{\pi}_{12}(b)\eta\mid a\in C^*(\HH),\xi\in \Ind_{\X}(\mc{H}), b\in C^*(\X), \eta\in \mc{H}\}\\
&=
\ov{\operatorname{span}}\{ \hat{\pi}_{12}(b')\hat{\pi}_{21}(c)\xi + \hat{\pi}_{12}(b)\eta\mid c\in C^*(\Y),\xi\in \Ind_{\X}(\mc{H}), b,b'\in C^*(\X), \eta\in \mc{H}\}\\
&\subseteq 
\ov{\operatorname{span}}\{  \hat{\pi}_{12}(b)\xi\mid b\in C^*(\X),\xi\in \mc{H} \}\subseteq \Ind_{\X}(\mc{H}).
\end{split}\]
Next we check that $\theta_{(\mc{H},\hat{\pi})}$ is intertwiner: for $a\in C^*(\HH),b\in C^*(\X),\xi\in \mc{H}$ we have
\[\begin{split}
&\quad\;
\theta_{(\mc{H},\hat{\pi})} \Ind_{\X}^{C^*}(\hat{\pi})(a) (b\otimes \xi)=
 \hat{\pi}_{12}(ab)\xi=
 \hat{\pi}_{11}(a)\hat{\pi}_{12}(b)\xi=
 \widetilde{\Ind}_{\X}(\hat{\pi})(a) \theta_{(\mc{H},\hat{\pi})}(b\otimes \xi).
\end{split}\]
Finally, it is easy to see that the family $(\theta_{(\mc{H},\hat{\pi})})_{(\mc{H},\hat{\pi})\in \Rep(C^*(\GG))}$ is natural, which ends the proof.
\end{proof}

\subsection{Induction of actions}\label{SubSecStatement}

Let $\msG$ be a linking quantum groupoid between the LCQGs $\G$ and $\Hh$. Let  $(A,\gamma)$ be a left $\G$-action on a W$^*$-algebra $A$, and let $U_{\gamma}$ be the canonical unitary $\G$-representation on $L^2(A)$ implementing $\gamma$.

\begin{Theorem}\label{TheoIndL}
There is a canonical identification of unitary $\Hh$-representations
\[
I\colon (L^2(\Ind_{\X}(A)),U_{\Ind_{\X}(\gamma)}) \cong (\Ind_{\X}(L^2(A)),\Ind_{\X}(U_{\gamma})).
\]
Moreover, the induced left $\Ind_{\X}(A)$-representation on $\Ind_{\X}(L^2(A))$ is given by left multiplication;
\[
I\pi_{\Ind_{\X}(A)}(w)I^* \eta = (\id\otimes \pi_A)(w)\eta,\qquad \forall w\in \Ind_{\X}(A), \eta\in \Ind_{\X}(L^2(A)).  
\]
\end{Theorem}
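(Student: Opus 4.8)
The plan is to run both statements through the canonical unitary implementation of a linking-quantum-groupoid action, applied to the induced $\msG$-W$^*$-algebra. First I would set $B := \Ind_{\msG}(A) = \Ind_{\X}(A)\oplus A$ with its $\msG$-action $\Ind_{\msG}(\gamma)$, and consider the canonical unitary $\msG$-representation $U_{\Ind_{\msG}(\gamma)}$ on $L^2(B) = L^2(\Ind_{\X}(A))\oplus L^2(A)$ from Section~\ref{sec:groupoidimplementation}; its diagonal corners are, by construction, the standard implementations of the component $\G_{ii}$-actions, so $U_{\Ind_{\msG}(\gamma),11} = U_{\Ind_{\X}(\gamma)}$ (on $L^2(\Ind_{\X}(A))$, since $(\Ind_{\msG}\gamma)_1^1 = \Ind_{\X}(\gamma)$) and $U_{\Ind_{\msG}(\gamma),22} = U_\gamma$. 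On the other side, $\Ind_{\msG}(U_\gamma)$ is a unitary $\msG$-representation on $\Ind_{\msG}(L^2(A)) = \Ind_{\X}(L^2(A))\oplus L^2(A)$ with $11$-corner $\Ind_{\X}(U_\gamma)$ and $22$-corner $U_\gamma$. Since the restriction functor $\Res_{\msG}\colon\Rep(\msG)\to\Rep(\G)$ to the $22$-corner is an equivalence of W$^*$-categories with $\Res_{\msG}\circ\Ind_{\msG} = \id$ (proof of Theorem~\ref{TheoIndRep}) and sends both representations above to $U_\gamma$, full faithfulness lifts $\id_{U_\gamma}$ to a morphism $\Ind_{\msG}(U_\gamma)\to U_{\Ind_{\msG}(\gamma)}$, which is invertible because $\Res_{\msG}$ reflects isomorphisms; replacing it by its unitary part (polar decomposition in $\End(\Ind_{\msG}(U_\gamma))$, which does not disturb the $22$-corner, that corner being already the identity) yields a unitary intertwiner of $\msG$-representations of the form $\mathbf{I}\oplus\id_{L^2(A)}$. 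Its $11$-corner $\mathbf{I}\colon\Ind_{\X}(L^2(A))\to L^2(\Ind_{\X}(A))$ intertwines $\Ind_{\X}(U_\gamma)$ with $U_{\Ind_{\X}(\gamma)}$, and $I := \mathbf{I}^{*}$ is the identification of $\Hh$-representations claimed in the first assertion.

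For the second assertion I would first extract an explicit description of $I$. Writing $U_{12} := U_{\Ind_{\msG}(\gamma),12}\in L^{\infty}(\X)\bar\otimes\mcB(L^2(A),L^2(\Ind_{\X}(A)))$, the $(1,2)$-component of the intertwining relation for $\mathbf{I}\oplus\id$ reads $(1\otimes\mathbf{I})\,\Ind_{\msG}(U_\gamma)_{12} = U_{12}$; together with the formula $\Ind_{\msG}(U_\gamma)_{12}^{*}(\xi\otimes z) = z\xi$ from \eqref{eq44} this gives $U_{12}^{*}(\xi\otimes\zeta) = (I\zeta)\xi$ for $\xi\in L^2(\X)$ and $\zeta\in L^2(\Ind_{\X}(A))$, i.e.\ $I\zeta$ is the operator $\xi\mapsto U_{12}^{*}(\xi\otimes\zeta)$ in $\Ind_{\X}(L^2(A))\subseteq\mcB(L^2(\X),L^2(\X)\otimes L^2(A))$. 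Next I would take the $(1,2)$-block of the implementation identity $\Ind_{\msG}(\gamma)(w) = U_{\Ind_{\msG}(\gamma)}^{*}(1\otimes \pi_B(w))\,U_{\Ind_{\msG}(\gamma)}$ for $w\in B_1 = \Ind_{\X}(A)$; since the component $\gamma_1^2$ of $\Ind_{\msG}(\gamma)$ is the plain inclusion $\Ind_{\X}(A)\subseteq N\bar\otimes A$, this reads $w = U_{12}^{*}(1\otimes\pi_{\Ind_{\X}(A)}(w))\,U_{12}$ inside $\mcB(L^2(\X)\otimes L^2(A))$, the left-hand side acting via $\id\otimes\pi_A$. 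Hence $U_{12}^{*}(1\otimes\pi_{\Ind_{\X}(A)}(w)) = w\,U_{12}^{*}$; precomposing both sides with the operator $\xi\mapsto\xi\otimes\zeta$ and reading off, one gets $I\,\pi_{\Ind_{\X}(A)}(w)\,\zeta = (\id\otimes\pi_A)(w)\,I\zeta$, equivalently $I\,\pi_{\Ind_{\X}(A)}(w)\,I^{*}\eta = (\id\otimes\pi_A)(w)\,\eta$ for $\eta\in\Ind_{\X}(L^2(A))$. (One also checks routinely, from $\gamma(a) = U_\gamma^{*}(1\otimes a)U_\gamma$ and the defining relation of $\Ind_{\X}(L^2(A))$, that $w\mapsto(\id\otimes\pi_A)(w)$ indeed preserves $\Ind_{\X}(L^2(A))$ and is a normal $*$-representation, so that the formula makes sense.)

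The main difficulty is organizational rather than analytic: one must keep the various Hilbert spaces straight ($L^2(\X) = L^2(N) = L^2(\G_{12})$ on the one hand, $L^2(\Ind_{\X}(A)) = L^2(B_1)$ and $L^2(A) = L^2(B_2)$ on the other) and handle the leg-numbering carefully when restricting the corepresentation identities, the intertwining relation for $\mathbf{I}\oplus\id$, and the implementation identity to the relevant $2\times2$-blocks. Beyond that, no new input is required: the proof rests entirely on the canonical-implementation machinery for linking quantum groupoids recorded in Section~\ref{sec:groupoidimplementation} and on the equivalence $\Res_{\msG}$ established in the proof of Theorem~\ref{TheoIndRep}.
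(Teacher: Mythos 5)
Your proposal is correct, but it reaches the intertwiner by a different route than the paper. The paper \emph{defines} $I$ explicitly by $\xi\mapsto U_{\Ind_{\msG}(\gamma),12}^*(1\otimes\xi)$ and then verifies by direct leg-numbering computations that it lands in $\Ind_{\X}(L^2(A))$, intertwines $U_{\Ind_{\X}(\gamma)}$ with $\Ind_{\X}(U_\gamma)$, and extends to an intertwiner $\widetilde I=I\oplus\id$ of the $\msG$-representations, invoking the equivalence $\Res_{\msG}$ only as one of two ways to get unitarity; you reverse the logic, using full faithfulness of $\Res_{\msG}$ (from the proof of Theorem \ref{TheoIndRep}) to lift $\id_{U_\gamma}$ to a morphism $\Ind_{\msG}(U_\gamma)\to U_{\Ind_{\msG}(\gamma)}$ and only afterwards extracting the explicit formula from the $(1,2)$-component of the intertwining relation together with \eqref{eq44}. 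Your version buys economy (the verification computations come for free from fullness, since morphisms in $\Rep(\msG)$ automatically respect the corner decomposition and the diagonal corners of $U_{\Ind_{\msG}(\gamma)}$ are the standard implementations) and makes canonicity transparent via uniqueness of the lift; the paper's version produces the concrete formula up front, which is what is actually consumed later (Lemma \ref{LemIndLModConj}, Theorem \ref{TheoModConjGen}) — though your derived description $I\zeta\colon\xi\mapsto U_{\Ind_{\msG}(\gamma),12}^*(\xi\otimes\zeta)$ coincides with it, and in fact your $12$-relation shows any unitary intertwiner of the form $\mathbf{I}\oplus\id$ has this same $11$-corner, so the two constructions agree. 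Two small remarks: the polar decomposition step is superfluous, since $\Res_{\msG}$ is a faithful $*$-functor, so $\Res_{\msG}(T^*T)=\id$ already forces $T^*T=\id$ (and likewise $TT^*=\id$), i.e.\ the lifted morphism is automatically unitary; and your treatment of the second assertion — the $(1,2)$-block of the implementation identity combined with $(\Ind_{\msG}\gamma)_1^2$ being the inclusion — is exactly the paper's argument.
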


\begin{proof}
Consider $\Ind_{\msG}(A) = A_1\oplus A_2 = \Ind_{\X}(A)\oplus A$, with $\msG$-action $\Ind_{\msG}(\gamma)$ and implementation $U_{\Ind_{\msG}(\gamma)}$ as in \eqref{EqInducedAction} and Section \ref{sec:groupoidimplementation}. Define
\[
I\colon L^2(\Ind_{\X}(A)) \rightarrow \Ind_{\X}(L^2(A)) \subseteq L^{\infty}(\X)\bar\otimes L^2(A),\qquad \xi \mapsto U_{\Ind_{\msG}(\gamma),12}^*(1\otimes \xi). 
\]
Clearly $I$ is an isometric map $L^2(\Ind_{\X}(A))\rightarrow L^{\infty}(\X)\bar\otimes L^2(A)$, and it is easy to check that the image of $I$ indeed lands in $\Ind_{\X}(L^2(A))$. The isometry $I$ is an intertwiner between the unitary $\HH$-representations $U_{\Ind_{\X}(\gamma)}$ and $\Ind_{\X}(U_\gamma)$. Indeed, for $\xi\in L^2(\Ind_{\X}(A)),\eta\in L^2(\HH)$ we have
\[\begin{split}
&\quad\;
\Ind_{\X}(U_\gamma)^* (1\otimes I)(\eta\otimes \xi)=
\Ind_{\X}(U_\gamma)^*
U^*_{\Ind_{\msG}(\gamma),12 [23]}(\eta\otimes 1\otimes \xi)=
(\gamma_{\X}\otimes \id)( U^*_{\Ind_{\msG}(\gamma),12 }(1\otimes \xi) )\, (\eta\otimes 1)\\
&=
(\Delta_{12}^1\otimes \id)( U^*_{\Ind_{\msG}(\gamma),12 }(1\otimes \xi) )\, (\eta\otimes 1)=
U^*_{\Ind_{\msG}(\gamma),12 [23]}
U^*_{\Ind_{\msG}(\gamma),11 [13]}(\eta\otimes 1\otimes \xi)
\end{split}\]
and
\[
(1\otimes I)U_{\Ind_{\X}(\gamma)}^*(\eta\otimes \xi)=
U^*_{\Ind_{\msG}(\gamma),12 [23]}
U^*_{\Ind_{\X}(\gamma) [13]}(\eta\otimes 1\otimes \xi)=
U^*_{\Ind_{\msG}(\gamma),12 [23]}
U^*_{\Ind_{\msG}(\gamma),11 [13]}(\eta\otimes 1\otimes \xi)
\]
which proves the claim. Note that we have used $U_{\Ind_{\msG}(\gamma),11}=U_{\Ind_{\X}(\gamma)}$.  Next we define an isometric map
\[
\widetilde{I} = I \oplus \id\colon \begin{pmatrix} L^2(\Ind_{\X}(A)) \\ L^2(A) \end{pmatrix} \rightarrow \begin{pmatrix} \Ind_{\X}(L^2(A)) \\ L^2(A)\end{pmatrix}
\]
and we claim that $\tilde{I}$ is an intertwiner between $U_{\Ind_{\msG}(\gamma)}$ and $\Ind_{\msG}(U_\gamma)$. Let us check that
\begin{equation}\label{eq6}
\Ind_{\msG}(U_\gamma)_{21}(1\otimes I)=
U_{\Ind_{\msG}(\gamma),21}\colon 
L^2(\Y)\otimes L^2(\Ind_{\X}(A))\rightarrow 
L^2(\Y)\otimes L^2(A)
\end{equation}
or equivalently $(1\otimes I) U_{\Ind_{\msG}(\gamma),21}^*=
\Ind_{\msG}(U_\gamma)_{21}^*$. Take $\eta\in L^2(\Y), \zeta\in L^2(A)$: using $U_{\Ind_{\msG}(\gamma),22}=U_\gamma$, we calculate
\[
(1\otimes I)U^*_{\Ind_{\msG}(\gamma),21}(\eta\otimes \zeta)=
U^*_{\Ind_{\msG}(\gamma),12 [23]}
U^*_{\Ind_{\msG}(\gamma),21 [13]}(\eta\otimes 1\otimes\zeta)
\]
and
\[
\Ind_{\msG}(U_\gamma)^*_{21}(\eta\otimes \zeta)=
(\Delta_{22}^1\otimes\id)(U_\gamma^*)(\eta\otimes 1\otimes \zeta)=
(\Delta_{22}^1\otimes\id)(U_{\Ind_{\msG}(\gamma),22}^*)(\eta\otimes 1\otimes \zeta),
\]
which proves that \eqref{eq6} holds. The remaining equations can be proved in a similar way. Equation \eqref{eq6} implies
\[
U_{\Ind_{\msG}(\gamma),21}(1\otimes I^*)=\Ind_{\msG}(U_\gamma)_{21}
\]
which proves that $I^*$ is isometric and consequently $I$ is a unitary. Alternatively, in the proof of Theorem \ref{TheoIndRep} we have argued that $\oon{Res}\colon \Rep(C^*(\msG))\rightarrow \Rep(C^*(\GG))$ is an equivalence of categories. Since it maps the morphism $\tilde{I}$ to the unitary $\id$, it follows that $\tilde{I}$ must be a unitary as well.

Recall that $\Ind_{\msG}(\gamma)_1^2\colon \Ind_{\X}(A)\rightarrow L^{\infty}(\X)\bar\otimes A$ is the inclusion map, thus
\[
U_{\Ind_{\msG}(\gamma),12}^*(1\otimes \pi_{\Ind_{\X}(A)}(w) )
U_{\Ind_{\msG}(\gamma),12}=
(\id\otimes\pi_A)(w)
\]
for $w\in \Ind_{\X}(A)$ and 
\[\begin{split}
&\quad\;
I\pi_{\Ind_{\X}(A)}(w)I^* (I\xi)=
I \pi_{\Ind_{\X}(A)}(w)\xi=
U^*_{\Ind_{\msG}(\gamma),12} (1\otimes \pi_{\Ind_{\X}(A)}(w)\xi)\\
&=
(\id\otimes \pi_A)(w)
U^*_{\Ind_{\msG}(\gamma),12}(1\otimes \xi)=
(\id\otimes\pi_A)(w) I\xi
\end{split}\]
for $w\in \Ind_{\X}(A),\xi\in L^2(\Ind_{\X}(A))$. Since $I$ is surjective, this proves the second claim.
\end{proof} 

The following lemma will be useful to record for calculations in the presence of a unitary $2$-cocycle. 

\begin{Lem}\label{LemIndLModConj}
Let $(A,\gamma)$ be a $\G$-W$^*$-algebra. Under the identification in Theorem \ref{TheoIndL}, the modular conjugation $J_{\Ind_{\X}(A)}$ of $\Ind_{\X}(A)$ is the unique anti-linear map $\Ind_{\X}(L^2(A))\rightarrow \Ind_{\X}(L^2(A))$ such that 
\begin{multline}\label{EqFormModConj}
(\hat{J}_{11}\otimes J_{\Ind_{\X}(A)})
\bigl(
\hat{\pi}_{12}^1(x)_{[1]}
(\Delta_{22}^1\otimes \id)(\gamma(a)U_{\gamma}^*)_{[123]}
((\hat{J}_{12}\otimes J_A)(\hat{\pi}_{12}^2(z)\otimes 1)\gamma(b)\xi)_{[13]}\bigr) \\
=
\Ind_{\X}(U_{\gamma})
\hat{\pi}_{12}^1(z)_{[1]}
(\Delta_{22}^1\otimes \id)(\gamma(b)U_{\gamma}^*)_{[123]}
((\hat{J}_{12}\otimes J_A)(\hat{\pi}_{12}^2(x)\otimes 1)\gamma(a)U_\gamma^* (\hat{J}_{22}\otimes J_A)\xi)_{[13]}
\end{multline}
for $\xi\in L^2(\G)\otimes L^2(A)$, $x,z\in \hat{N}$ and $a,b\in A$. 
\end{Lem}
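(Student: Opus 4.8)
The strategy I would follow is to recover $J_{\Ind_{\X}(A)}$ from the modular conjugation of the crossed-product linking W$^*$-algebra $C := \msG\ltimes\Ind_{\msG}(A)$, by playing two descriptions of $J_C$ against each other: the one provided by the partial-isometric implementation $U_{\Ind_{\msG}(\gamma)} = (J_{\hat Q}\otimes J_{\Ind_{\msG}(A)})J_C$ of Section~\ref{sec:groupoidimplementation}, and the one provided by the Morita model \eqref{EqStandardMorita}--\eqref{EqMoritaModCon} for $C$ regarded as a linking W$^*$-algebra between its diagonal corners $C_{11} = \G_{11}\ltimes A_1 = \Hh\ltimes\Ind_{\X}(A)$ and $C_{22} = \G_{22}\ltimes A_2 = \G\ltimes A$.

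First I would restrict the implementation identity to the $(1,1)$-summand of $L^2(C) = \bigoplus_{i,j}L^2(\G_{ij})\otimes L^2(A_j)$, where the relevant partial (anti-)isometries become genuine (anti-)unitaries. Using $U_{\Ind_{\msG}(\gamma),11} = \Ind_{\X}(U_\gamma)$ (as recorded in the proof of Theorem~\ref{TheoIndL}), that $J_{\Ind_{\msG}(A)}$ restricts to $J_{\Ind_{\X}(A)}$ on $L^2(A_1) = L^2(\Ind_{\X}(A))$, and that $J_{\hat Q}$ restricts to $\hat J_{11}$ on $L^2(\G_{11})$, one obtains
\[ J^{\ltimes}_{11} = (\hat J_{11}\otimes J_{\Ind_{\X}(A)})\,\Ind_{\X}(U_\gamma), \]
with $J^{\ltimes}_{ij}$ the components of $J_C$. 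After the identification $L^2(\Ind_{\X}(A)) = \Ind_{\X}(L^2(A))$ of Theorem~\ref{TheoIndL}, this relation exhibits the factors $\hat J_{11}\otimes J_{\Ind_{\X}(A)}$ and $\Ind_{\X}(U_\gamma)$ that frame \eqref{EqFormModConj}, and reduces the lemma to computing $J^{\ltimes}_{11}$ explicitly on a total family of vectors of $L^2(C_{11})$.

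For that computation I would invoke the Morita model for $C$ with $\hat M := C_{22} = \G\ltimes A$, $\hat N := C_{12} = \G_{12}\ltimes A_2$ and $\Hsp := L^2(C_{12})$. Two inputs are needed and are available: unwinding the crossed-product definitions identifies $C_{12}$ with the $\sigma$-weak closure of $\{(\hat\pi_{12}^2(x)\otimes 1)\gamma(a)\mid x\in\hat N,\,a\in A\}$, viewed as operators from $L^2(\G)\otimes L^2(A)$ to $L^2(\X)\otimes L^2(A)$ (this is where the $\hat\pi_{12}^2$ and $\gamma(a)$ factors enter); and $L^2(C_{22}) = L^2(\G)\otimes L^2(A)$ carries $J^{\ltimes}_{22} = J_{\G\ltimes A} = (\hat J_{22}\otimes J_A)U_\gamma$, the ordinary-crossed-product instance of $U_\gamma = (J_{\hat Q}\otimes J_A)J_{\msG\ltimes A}$ (this is where the $\hat J_{22}$, $J_A$, $U_\gamma$ factors enter). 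Feeding these into the $(1,1)$-corner of \eqref{EqMoritaModCon}, which swaps the roles of its two $C_{12}$-arguments, gives $J^{\ltimes}_{11}$ on the spanning vectors of \eqref{EqStandardMorita}; this swap is the source of the $(x,a)\leftrightarrow(z,b)$ symmetry between the two sides of \eqref{EqFormModConj}. It then remains to rewrite the \emph{other} column action of $C_{12}$ — its action on $L^2(C_{21})$, through which $\pi_C$ also acts in \eqref{EqMoritaModCon} — in terms of $L^2(\G),L^2(\X),L^2(A)$; by the GNS description of $L^2(C)$ together with the off-diagonal components $U_{\Ind_{\msG}(\gamma),12}$ and $U_{\Ind_{\msG}(\gamma),21}$ of the $\msG$-representation (the latter given by $\xi\mapsto(\Delta_{22}^1\otimes\id)(U_\gamma^*)\xi_{[13]}$, cf.~\eqref{eq44}), this produces the factors $\hat\pi_{12}^1(\cdot)$ and $(\Delta_{22}^1\otimes\id)(\gamma(\cdot)U_\gamma^*)$. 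Substituting everything back into the relation of the previous paragraph, using $\gamma(a) = U_\gamma^*(1\otimes a)U_\gamma$ and the compatibilities between the $\hat J_{ij}$, the identity \eqref{EqFormModConj} drops out; uniqueness follows because the vectors appearing there are total in the relevant subspace.

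I expect the main obstacle to be precisely this last piece of bookkeeping — keeping exact track of the four summands of $L^2(C)$, of the balanced tensor product in \eqref{EqStandardMorita}--\eqref{EqMoritaLeftAction}, of the two \emph{halves} $\hat\pi_{12}^1,\hat\pi_{12}^2$ of the standard representation of a linking W$^*$-algebra on an off-diagonal corner, and of the leg-numbering and adjoint conventions — rather than any conceptual difficulty; once the identifications are pinned down, the manipulations are the routine ones for (linking) quantum-group crossed products.
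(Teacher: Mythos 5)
Your proposal is correct and follows essentially the same route as the paper: the paper's proof also applies the Morita standard-form model \eqref{EqStandardMorita}--\eqref{EqMoritaModCon} to the linking W$^*$-algebra $\msG\ltimes\Ind_{\msG}(A)$ between $\Hh\ltimes\Ind_{\X}(A)$ and $\G\ltimes A$, computes the resulting identification \eqref{eq49} on the spanning vectors built from $(\hat{\pi}_{12}^2(x)\otimes 1)\gamma(a)$ using the implementation \eqref{EqIsometImpl} and the off-diagonal components $U_{\Ind_{\msG}(\gamma),12},U_{\Ind_{\msG}(\gamma),21}$, and then identifies the transported Morita conjugation with $\Ind_{\X}(U_\gamma)^*(\hat{J}_{11}\otimes J_{\Ind_{\X}(A)})$, which is exactly your relation $J^{\ltimes}_{11}=(\hat{J}_{11}\otimes J_{\Ind_{\X}(A)})\Ind_{\X}(U_\gamma)$. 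The remaining work in both cases is the same bookkeeping you describe.
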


\begin{proof}
Applying \eqref{EqStandardMorita} to the W$^*$-Morita equivalence \eqref{EqSplitMatrixCross}, we obtain a composition of unitary identifications 
\begin{equation}\begin{split}\label{eq49}
(\G_{12}\ltimes A) \overline{\otimes}_{\G_{22}\ltimes A} \overline{L^2(\G_{12})\otimes L^2(A)} &\cong (\G_{12}\ltimes A) \overline{\otimes}_{\G_{22}\ltimes A} \overline{L^2(\G_{12}\ltimes A)}
\cong L^2(\G_{11}\ltimes \Ind_{\X}(A)) \\
&\cong L^2(\G_{11})\otimes L^2(\Ind_{\X}(A)) \cong L^2(\G_{11}) \otimes \Ind_{\X}( L^2(A)).
\end{split}\end{equation}
Following the precise prescription of the maps in \eqref{eq49}, we see that this identification becomes
\[\begin{split}
&\quad\;
\hat{p}_1(\pi_{\hat{Q}}(x)\otimes 1)
\Ind_{\msG}(\gamma)(a)\hat{p}_2
 \otimes \ov{
 \hat{p}_1(\pi_{\hat{Q}}(z)\otimes 1)
\Ind_{\msG}(\gamma)(b)\hat{p}_2 \xi
}\\
&\mapsto 
\hat{p}_1(\pi_{\hat{Q}}(x)\otimes 1)
\Ind_{\msG}(\gamma)(a)\hat{p}_2
J_{12}^{\ltimes} 
 \hat{p}_1(\pi_{\hat{Q}}(z)\otimes 1)
\Ind_{\msG}(\gamma)(b)\hat{p}_2 \xi\\
&\mapsto 
(1\otimes U^*_{\Ind_{\msG}(\gamma),12})
\bigl(
\hat{p}_1(\pi_{\hat{Q}}(x)\otimes 1)
\Ind_{\msG}(\gamma)(a)\hat{p}_2
J_{12}^{\ltimes} 
 \hat{p}_1(\pi_{\hat{Q}}(z)\otimes 1)
\Ind_{\msG}(\gamma)(b)\hat{p}_2 \xi
\bigr)_{[13]}
\end{split}\]
for $x,z\in \hat{N},a,b\in A, \xi\in L^2(\G)\otimes L^2(A)$ (the first and the third isomorphisms in \eqref{eq49} are formal identities). Let us remark that we consider $L^2(\Ind_{\X}(A))$ as having one leg in this computation. Using \eqref{EqIsometImpl}, and by tracing to which space particular vectors belong to, we can simplify this expression to
\[\begin{split}
&\quad\;
U^*_{\Ind_{\msG}(\gamma),12 [23]}
\pi_{\hat{Q}}(x)_{[1]}
\Ind_{\msG}(\gamma)_2^1 (a)_{[13]}
U^*_{\Ind_{\msG}(\gamma),21[13]}
\bigl(
(\hat{J}_{12}\otimes J_A)
(\pi_{\hat{Q}}(z)\otimes 1)
\gamma(b) \xi
\bigr)_{[13]}\\
&=
\pi_{\hat{Q}}(x)_{[1]}
(\id\otimes \Ind_{\msG}(\gamma)_1^2)\Ind_{\msG}(\gamma)_2^1 (a)
U^*_{\Ind_{\msG}(\gamma),12 [23]}
U^*_{\Ind_{\msG}(\gamma),21[13]}
\bigl(
(\hat{J}_{12}\otimes J_A)
(\pi_{\hat{Q}}(z)\otimes 1)
\gamma(b) \xi
\bigr)_{[13]}.
\end{split}\]
Next, using that $\Ind_{\msG}(\gamma)^2_1$ is simply the inclusion map $\Ind_{\X}(A)\rightarrow L^{\infty}(\G_{12})\bar\otimes A$, we further obtain from \eqref{eq50} that
\[\begin{split}
&\quad\;
\pi_{\hat{Q}}(x)_{[1]}
\Ind_{\msG}(\gamma)_2^1 (a)
U^*_{\Ind_{\msG}(\gamma),12 [23]}
U^*_{\Ind_{\msG}(\gamma),21[13]}
\bigl(
(\hat{J}_{12}\otimes J_A)
(\pi_{\hat{Q}}(z)\otimes 1)
\gamma(b) \xi
\bigr)_{[13]}\\
&=
\pi_{\hat{Q}}(x)_{[1]}
(\Delta^1_{22}\otimes \id)\gamma(a)
(\Delta_{22}^1\otimes \id)(U^*_{\Ind_{\msG}(\gamma),22})
\bigl(
(\hat{J}_{12}\otimes J_A)
(\pi_{\hat{Q}}(z)\otimes 1)
\gamma(b) \xi
\bigr)_{[13]}\\
&=
\pi_{\hat{Q}}(x)_{[1]}
(\Delta^1_{22}\otimes \id)(\gamma(a)
U^*_{\gamma})
\bigl(
(\hat{J}_{12}\otimes J_A)
(\pi_{\hat{Q}}(z)\otimes 1)
\gamma(b) \xi
\bigr)_{[13]}.
\end{split}\]
Using \eqref{EqIsometImpl} and Theorem \ref{TheoIndL}, we see that under the isomorphisms \eqref{eq49}, the modular conjugation \eqref{EqMoritaModCon} is mapped to $\Ind_{\X}(U_{\gamma})^*(\hat{J}_{11}\otimes J_{\Ind_{\X}(A)})$. This gives \eqref{EqFormModConj}.
\end{proof}

\section{Application to cocycle twists}\label{SecCocyc}

\subsection{Unitary \texorpdfstring{$2$}{2}-cocycles}

Assume again that $\G = (M,\Delta)$ is a LCQG, and assume that $\msG$ is a linking quantum groupoid with $\G$ as its $22$ corner. Then $\msG$ can be constructed from a dual unitary $2$-cocycle $\hat{\Omega}$ for $\G$ as in Section \ref{SubSecCocBich}, if and only if the underlying linking W$^*$-algebra $W^*(\msG)$ is trivialisable, in the sense that there exists an isomorphism of W$^*$-algebras (with projection) 
\begin{equation}\label{EqTrivialisationCoc}
\hat{Q} = \begin{pmatrix} \hat{P} & \hat{N}\\ \hat{O} & \hat{M}\end{pmatrix} \cong \begin{pmatrix} \hat{M}& \hat{M}\\ \hat{M}&\hat{M}\end{pmatrix}.
\end{equation}
Indeed, note first that if the isomorphism \eqref{EqTrivialisationCoc} holds, then we can choose it to be an identity on the lower right hand corner. Then the associated unitary $2$-cocycle is given by $\hat{\Omega} := \hat{\Delta}_{12}(1_{\hat{M}})$. Any other choice of trivialisation \eqref{EqTrivialisationCoc} will lead to a cohomologous $2$-cocycle $\hat{\Omega}'$, meaning that there exists a unitary $u\in \hat{M}$ such that 
\begin{equation}\label{eq7}
(u\otimes u)\hat{\Omega}' = \hat{\Omega}\Delta_{\hat{M}}(u). 
\end{equation}
We then refer to $u$ as a \emph{unitary coboundary} between $\hat{\Omega}$ and $\hat{\Omega}'$. 

Fix in the following a dual unitary $2$-cocycle $\hat{\Omega}$ for $\G$, with $W^*(\msG) = \begin{pmatrix} W^*(\G) & W^*(\G)\\ W^*(\G)& W^*(\G)\end{pmatrix}$ as the associated linking quantum groupoid $\msG$. Then we can also identify
\[
L^2(\msG)\cong \begin{pmatrix} L^2(\G)& L^2(\G)\\L^2(\G)&L^2(\G)\end{pmatrix}, 
\] 
where on the right hand side we have the direct sum of Hilbert spaces, with the standard structure and modular conjugation given by 
\[
\pi_{\hat{Q}}\begin{pmatrix} x & y \\ w & z\end{pmatrix}
\begin{pmatrix}
\xi & \eta \\ \zeta & \vartheta
\end{pmatrix}
 = \begin{pmatrix} \pi_{\hat{M}}(x)\xi + \pi_{\hat{M}}(y) \zeta & \pi_{\hat{M}}(x)\eta +\pi_{\hat{M}}(y)\vartheta\\ \pi_{\hat{M}}(w)\xi+\pi_{\hat{M}}(z)\zeta & \pi_{\hat{M}}(w)\eta +\pi_{\hat{M}}(z)\vartheta\end{pmatrix},\qquad J_{\hat{Q}}\begin{pmatrix} \xi & \eta\\ \zeta & \vartheta\end{pmatrix} = \begin{pmatrix} J_{\hat{M}}\xi & J_{\hat{M}}\zeta\\ J_{\hat{M}}\eta & J_{\hat{M}}\vartheta\end{pmatrix}. 
\]
Through the natural GNS-representation of $L^{\infty}(\msG)$ on $L^2(\msG)$ via the dual weight construction, we also find that $L^2(\G)$ acts as the standard Hilbert space for the (generally distinct) W$^*$-algebras $L^{\infty}(\G_{ij})$. 

A crucial role is played by the modular conjugation $J_N$ for $N = L^{\infty}(\X) = L^{\infty}(\G_{12})$ under this construction. Indeed, its knowledge is sufficient to express concretely the multiplicative partial isometry $\ww_{\hat{Q}}$ of $\hat{Q}$: putting again $\hat{\ww}= \ww_{\hat{M}}$, it follows from \cite{DC11b}*{Proposition 6.5} and its proof that
\begin{equation}\label{EqMultUnitCocy1}
\hat{\ww}^k_{22} = \hat{\ww},\quad \hat{\ww}^k_{12} = \hat{\ww}\hat{\Omega}^*,
\end{equation}
\begin{equation}\label{EqMultUnitCocy2}
\hat{\ww}^k_{21} = (J_N\otimes J_{\hat{M}})\hat{\Omega} \hat{\ww}^*(J_M\otimes J_{\hat{M}}),\quad \hat{\ww}^k_{11} = (J_N\otimes J_{\hat{M}})\hat{\Omega} \hat{\ww}^*(J_M\otimes J_{\hat{M}})\hat{\Omega}^*
\end{equation}
for $k\in \{1,2\}$ (see equation \eqref{EqDecompDualW} for the definition of these unitaries. Note also that the above formulas do not depend on $k$).

We can write these last formulas a bit differently as follows. First note that associated to $\hat{\Omega}$ is the \emph{conjugate unitary $2$-cocycle}
\[
\hat{\Omega}_c :=  (\hat{R}\otimes \hat{R})(\hat{\Omega}_{[21]}^*) \in \hat{M}\bar{\otimes}\hat{M}. 
\]
If we write 
\[
X_{\hat{\Omega}} := J_NJ_M,
\]
then it can be shown \cite{DC11b}*{Proposition 6.3} that $X_{\hat{\Omega}}\in \hat{M}$, and that $X_{\hat{\Omega}}$ is a unitary coboundary between $\hat{\Omega}$ and $\hat{\Omega}_c$,
\begin{equation}
(X_{\hat{\Omega}}\otimes X_{\hat{\Omega}})\hat{\Omega}_c = \hat{\Omega}\Delta_{\hat{M}}(X_{\hat{\Omega}}). 
\end{equation}
If we now recall the fundamental unitary $u_{\G} = \nu_{\G}^{i/8}J_{M}J_{\hat{M}}$, then by using \eqref{EqModConjMultUn} and \eqref{EqFormUnitaryAntipo} the formulas in \eqref{EqMultUnitCocy2} can be rewritten as 
\[
\hat{\ww}^k_{21} = (X_{\hat{\Omega}} \otimes u_{\G})\hat{\Omega}_{c[21]}(1\otimes u_{\G})\hat{\ww},\qquad \hat{\ww}^k_{11} = (X_{\hat{\Omega}} \otimes u_{\G})\hat{\Omega}_{c[21]}(1\otimes u_{\G})\hat{\ww}\hat{\Omega}^*. 
\]

Let us end with a more conceptual description of the element $X_{\hat{\Omega}}$. Recall from \eqref{EqUnitaryAntipodeQgr} the unitary antipode on $W^*(\msG)$. Then we see that in fact $X_{\hat{\Omega}}^* = \hat{R}_{12}(1_{\hat{M}})$. From \eqref{EqUnitAntipodQGrProp}, we can then also recover the other components of this unitary antipode using just the unitary antipode $\hat{R}$ of $(W^*(\G),\Delta_{\hat{M}})$ and the element $X_{\hat{\Omega}}$:
\begin{equation}\label{eq12}
\hat{R}_{22}(x)= \hat{R}(x),\quad \hat{R}_{12}(x) =\hat{R}(x) X_{\hat{\Omega}}^*,\quad \hat{R}_{21}(x) = X_{\hat{\Omega}} \hat{R}(x),\quad \hat{R}_{11}(x) = X_{\hat{\Omega}}\hat{R}(x)X_{\hat{\Omega}}^*,\qquad \forall x\in W^*(\G). 
\end{equation}

\subsection{Universally continuous \texorpdfstring{$2$}{2}-cocycles}

Having the linking W$^*$-algebra $\hat{Q} = W^*(\msG)$ of a linking quantum groupoid $\msG = \begin{pmatrix} \Hh & \X\\ \Y & \G\end{pmatrix}$ trivial, does not mean that also the associated universal (or even reduced) linking C$^*$-algebra $\hat{Q}_u = C^*(\msG)$ is trivial (see e.g.\ \cite{DC10}). We therefore introduce the following definition.

\begin{Def}\label{DefUniversalLift}
We say that a linking quantum groupoid $\msG= \begin{pmatrix} \Hh & \X\\ \Y & \G\end{pmatrix}$ has \emph{trivialisable universal linking C$^*$-algebra} if there exists an isomorphism of C$^*$-algebras
\begin{equation}\label{EqTrivialisationCstar}
C^*(\msG)=\begin{pmatrix} \hat{P}_u & \hat{N}_u\\ \hat{O}_u & \hat{M}_u\end{pmatrix} \cong \begin{pmatrix} \hat{M}_u& \hat{M}_u\\ \hat{M}_u&\hat{M}_u\end{pmatrix},
\end{equation}
in a manner preserving the matrix decomposition. 
\end{Def}

Assume now that $\msG$ is a linking quantum groupoid with trivialisable universal linking C$^*$-algebra.
\begin{enumerate}
\item 
By composing with an automorphism of $\begin{pmatrix} \hat{M}_u& \hat{M}_u\\ \hat{M}_u&\hat{M}_u\end{pmatrix}$ of the form $\theta\otimes \id_{M_2}$, we can assume that the isomorphism $\begin{pmatrix} \hat{P}_u& \hat{N}_u\\ \hat{O}_u&\hat{M}_u\end{pmatrix}\cong \begin{pmatrix} \hat{M}_u& \hat{M}_u\\ \hat{M}_u&\hat{M}_u\end{pmatrix}$ preserves the matrix structure, and is the identity on the lower right hand corner.
\item The counit \eqref{EqUnivCounit} induces a non-degenerate $*$-homomorphism
\[
\begin{pmatrix} \hat{\varepsilon}_{11} & \hat{\varepsilon}_{12} \\ \hat{\varepsilon}_{21} & \hat{\varepsilon}_{22}\end{pmatrix}\colon \begin{pmatrix}\hat{M}_u & \hat{M}_u \\ \hat{M}_u & \hat{M}_u \end{pmatrix} \rightarrow \begin{pmatrix} \C&\C\\ \C & \C\end{pmatrix} 
\]
satisfying $\hat{\varepsilon}_{22} = \hat{\eps}$, where $\hat{\varepsilon}$ is the counit of $(\hat{M}_u,\hat{\Delta}_u)$. Write $\mu=\hat{\varepsilon}_{12}(1_{\hat{M}_u})\in \mathbb{T}$. By composing the isomorphism \eqref{EqTrivialisationCstar} with the automorphism $\begin{pmatrix}
a & b \\ c & d 
\end{pmatrix}\mapsto \begin{pmatrix}
\mu a \ov{\mu} & \mu b \\  c\ov{\mu} & d
\end{pmatrix}$ of $\begin{pmatrix} \hat{M}_u& \hat{M}_u\\ \hat{M}_u&\hat{M}_u\end{pmatrix}$, we can choose the trivialisation \eqref{EqTrivialisationCstar} in such a way that $\hat{\varepsilon}_{12}(1_{\hat{M}_u}) = 1$. We obtain $\hat{\varepsilon}_{ij} = \hat{\varepsilon}$. Indeed, we have
\[\begin{split}
\begin{pmatrix}
\hat{\varepsilon}_{11}(a) & 0\\
0 & 0 
\end{pmatrix}&=
\begin{pmatrix} \hat{\varepsilon}_{11} & \hat{\varepsilon}_{12} \\ \hat{\varepsilon}_{21} & \hat{\varepsilon}_{22}\end{pmatrix}
\biggl(
\begin{pmatrix}
0 & 1_{\hat{M}_u} \\
0 & 0
\end{pmatrix}
\begin{pmatrix}
0 & 0 \\
0 & a
\end{pmatrix}
\begin{pmatrix}
0 & 0 \\
1_{\hat{M}_u} & 0
\end{pmatrix}
\biggr)\\
&=
\begin{pmatrix}
0 & 1 \\
0 & 0
\end{pmatrix}
\begin{pmatrix}
0 & 0\\
0 & \hat{\varepsilon}_{22}(a) 
\end{pmatrix}
\begin{pmatrix}
0 & 0 \\
1 & 0
\end{pmatrix}=
\begin{pmatrix}
\hat{\varepsilon}(a) & 0 \\
0 & 0
\end{pmatrix}
\end{split}\]
and the remaining equalities can be checked in a similar way.
\item Assuming that we have made adjustments as in points $(1),(2)$, the reduction map induces via the isomorphism \eqref{EqTrivialisationCstar} a $*$-homomorphism
\begin{equation}\label{eq8}
\pi_{\hat{Q}_u}^{\red}= \begin{pmatrix} \hat{\pi}_{\red,11} & \hat{\pi}_{\red,12} \\ \hat{\pi}_{\red,21} & \hat{\pi}_{\red,22}\end{pmatrix}\colon \begin{pmatrix} \hat{M}_u& \hat{M}_u\\ \hat{M}_u&\hat{M}_u\end{pmatrix} \rightarrow \begin{pmatrix} \hat{M}_{11}& \hat{M}_{12}\\ \hat{M}_{21}&\hat{M}_{22}\end{pmatrix} = \begin{pmatrix} \hat{P} & \hat{N}\\ \hat{O}& \hat{M},\end{pmatrix} 
\end{equation}
which is non-degenerate when interpreted as a a map into $ \Mult(M_2(\C) \otimes \mathcal{K}(L^2(M)))$. Under the isomorphism \eqref{EqTrivialisationCstar}, the element $\begin{pmatrix} 0 & 1_{\hat{M}_u}\\ 0 & 0 \end{pmatrix}$ can be seen as living in $ \Mult(\hat{Q}_u)$, and we can define an element
\[
u = \hat{\pi}_{\red,12}(1_{\hat{M}_u}) \in \hat{M}_{12}
\]
which satisfies
\[
u^*u = 1_{\hat{M}},\qquad uu^* = 1_{\hat{P}}. 
\]
Using this element, we obtain an isomorphism 
\[
\begin{pmatrix} \hat{M}_{11}& \hat{M}_{12}\\ \hat{M}_{21}&\hat{M}_{22}\end{pmatrix}\ni \begin{pmatrix}
a & b \\ 
c & d
\end{pmatrix}\mapsto 
\begin{pmatrix}
u^* a u & u^*b \\
 c u& d
\end{pmatrix}
\in 
\begin{pmatrix} \hat{M}& \hat{M}\\ \hat{M}&\hat{M}\end{pmatrix}.
\] 
Composing with this isomorphism, the map \eqref{eq8} trivializes to 
\[
\begin{pmatrix} \hat{\pi}_{\red} & \hat{\pi}_{\red} \\ \hat{\pi}_{\red} & \hat{\pi}_{\red}\end{pmatrix}\colon \begin{pmatrix} \hat{M}_u& \hat{M}_u\\ \hat{M}_u&\hat{M}_u\end{pmatrix} \rightarrow \begin{pmatrix} \hat{M}& \hat{M}\\ \hat{M}&\hat{M}\end{pmatrix}.
\]
\item By transport of structure we obtain coproducts on $\begin{pmatrix} \hat{M}_u& \hat{M}_u\\ \hat{M}_u&\hat{M}_u\end{pmatrix}$ and $\begin{pmatrix} \hat{M}& \hat{M}\\ \hat{M}&\hat{M}\end{pmatrix}$.
\end{enumerate}

\begin{Def}\label{def3}
We say that a linking quantum groupoid $\msG= \begin{pmatrix} \Hh & \X\\ \Y & \G\end{pmatrix}$ has \emph{trivial universal linking C$^*$-algebra} if $
W^*(\msG)=\begin{pmatrix} \hat{M}& \hat{M}\\ \hat{M}&\hat{M}\end{pmatrix}$ and $ C^*(\msG)=\begin{pmatrix}
\hat{P}_u & \hat{N}_u \\ \hat{O}_u & \hat{M}_u
\end{pmatrix}\cong \begin{pmatrix} \hat{M}_u& \hat{M}_u\\ \hat{M}_u&\hat{M}_u\end{pmatrix}$ under a fixed isomorphism which preserves the matrix structure\footnote{We will neglect writing these isomorphisms explicitely.}, is identity on the lower right hand corner, in which $ \hat{\varepsilon}_{ij}\cong\varepsilon_{\hat{M}_u}$
 and under which the reducing map corresponds to $\begin{pmatrix}
\hat{\pi}_{\red} & \hat{\pi}_{\red} \\
\hat{\pi}_{\red} & \hat{\pi}_{\red}
\end{pmatrix}$.
\end{Def}

The discussion after Definition \ref{DefUniversalLift} shows that any linking quantum groupoid with trivialisable universal linking C$^*$-algebra leads to an isomorphic linking quantum groupoid, with trivial universal linking C$^*$-algebra.

Let $\msG= \begin{pmatrix} \Hh & \X\\ \Y & \G\end{pmatrix}$ be a linking quantum groupoid with trivial universal linking C$^*$-algebra. Then since $W^*(\G_{ij})=\hat{M}_{ij}=\hat{M}$, we also have $L^2(\G_{ij})=L^2(\G)$. Define the unitary
\[
\hat{\Omega}_u = \hat{\Delta}_{u,12}(1_{\hat{M}_u})\in \Mult(\hat{M}_u \otimes \hat{M}_u),
\]
and call it the associated \emph{universal dual unitary $2$-cocycle} for $\G$. We then also write 
\begin{equation}\label{eq10}
\hat{\Omega} = (\hat{\pi}_{\red}\otimes \hat{\pi}_{\red})(\hat{\Omega}_u) \in \Mult(C_{\red}^*(\G) \otimes C_{\red}^*(\G))\subseteq \hat{M}\bar{\otimes}\hat{M}
\end{equation}
for the associated W$^*$-algebraic dual unitary $2$-cocycle of $\G$. Note that
\[
(\hat{\varepsilon}\otimes \id)\hat{\Omega}_u = (\id\otimes \hat{\varepsilon})\hat{\Omega}_u = 1_{\hat{M}_u}. 
\] 

\begin{Def}\label{DefUnivLift2Coc}
We say that a unitary $2$-cocycle $\hat{\Omega} \in \hat{M}\bar{\otimes}\hat{M}$ \emph{admits a universal lift} if it arises in the above way. The corresponding unitary $2$-cocycle $\hat{\Omega}_u=\hat{\Delta}_{u,12}(1_{\hat{M}_u})$ is called a \emph{universal lift} of $\hat{\Omega}$.
\end{Def}

\begin{Rem}
Let us stress that the requirement that $\hat{\Omega}$ admits a universal lift in the sense of Definition \ref{DefUnivLift2Coc} is in principle stronger than merely requiring the existence of a unitary $2$-cocycle $\hat{\Omega}_u$ satisfying $(\hat{\pi}_{\red}\otimes \hat{\pi}_{\red})(\hat{\Omega}_u)=\hat{\Omega}$. Indeed, in the latter case it is not clear if twisting the amplified comultiplication of $M_2(C^*(\G))$ with $\hat{\Omega}_u$ will lead to a copy of $(C^*(\msG),\hat{\Delta}_u)$. We do not know however of a concrete example illustrating this phenomenon. 
\end{Rem}

\begin{Rem}\label{rem2}
A universal lift will never be unique unless $\hat{\G}$ is coamenable, i.e.\ $C^*(\G)\cong C^*_{\red}(\G)$. Indeed, if $\hat{\G}$ is not coamenable, we can take any non-trivial unitary $v\in \Mult(C^*(\G))$ such that $\hat{\pi}_{\red}(v) = 1_{\hat{M}}$ and $\hat{\eps}(v)=1$, and then 
\[
(v^*\otimes v^*)\hat{\Omega}_u\hat{\Delta}_u(v)
\] 
is a universal lift of $\hat{\Omega}$ as well. However, it is clear that this is the only thing that can go wrong: if $C$ is a C$^*$-algebra, then any isomorphism 
\[
\begin{pmatrix} C & C \\C &C\end{pmatrix} \rightarrow \begin{pmatrix} C & C \\C &C\end{pmatrix}
\]
that is the identity in the lower hand corner must be of the form 
\[
\begin{pmatrix} a & b \\ c &  d \end{pmatrix} \mapsto \begin{pmatrix} vav^* & vb \\ cv^* & d\end{pmatrix}
\]
for some unitary $v\in \Mult(C)$. In the case of a linking quantum groupoid this will lead to the coboundary between the respective two universal lifts. 
\end{Rem}

Recall from Lemma \ref{lemma9} that, for any linking quantum groupoid, we have fixed a natural isomorphism $\Ind_{\X}^{C^*}\rightarrow \widetilde{\Ind}_{\X}$. Since the functor $\Ind_{\X}$ is monoidal via unitaries $u_{\mc{H},\mc{H}'}$ by Theorem \ref{TheoIndRep}, so is the functor $\widetilde{\Ind}_{\X}$, with the same family of unitaries. We hence obtain a monoidal structure on $\Ind_{\X}^{C^*}$ which we denote by $v_{\mc{H},\mc{H}'}$:
\begin{equation}\label{eq36}\begin{split}
v_{\mc{H},\mc{H}'}=&\theta_{(\mc{H}\otimes\mc{H}',\hat{\pi}\ast\hat{\pi}')}^*
u_{\mc{H},\mc{H}'}
(\theta_{(\mc{H},\hat{\pi})}\otimes 
\theta_{(\mc{H}',\hat{\pi}')})\\
&\colon 
(C^*(\X)\otimes_{C^*(\GG)}\mc{H} )\otimes 
(C^*(\X)\otimes_{C^*(\GG)}\mc{H}' )
\rightarrow
C^*(\X)\otimes_{C^*(\GG)}(\mc{H}\otimes\mc{H}').
\end{split}
\end{equation}
for $(\mc{H},\hat{\pi}),(\mc{H}',\hat{\pi}')\in \Rep(C^*(\GG))$.

Fix now a linking quantum groupoid as above with trivial universal linking  C$^*$-algebra and the associated universal unitary $2$-cocycle $\hat{\Omega}_u$. Our next goal is to prove that we can modify the induction functor $\Ind_{\X}^{C^*}$ to the identity one, but with non-trivial monoidal coherence maps (see Lemma \ref{lemma10} and Proposition \ref{prop1}). 

Define the identity functor 
\[
\Ind_{\X}^{C^*,\triv} = \id\colon \Rep(C^*(\G)) \rightarrow \Rep(C^*(\Hh)),
\]
implicitly using the isomorphism $C^*(\GG)\cong C^*(\HH)$, with non-trivial monoidal coherence maps given by 
\begin{equation}\label{EqMonoidalCoherence}
(\hat{\pi}\otimes \hat{\pi}')\hat{\Omega}_{u [21]}^*\colon \Hsp\otimes_{\Hh}\Hsp'\rightarrow \Hsp\otimes_{\G}\Hsp',\quad\quad\forall (\mc{H},\hat{\pi}),(\mc{H}',\hat{\pi}')\in \Rep(C^*(\GG)).
\end{equation}

It follows from Proposition \ref{prop1} below that the above family in fact gives a tensor structure on $\Ind^{C^*,\triv}_{\X}$. 

First, we create a concrete link between $\Ind_{\X}^{C^*}$ and  $\Ind_{\X}^{C^*,\triv}$. The proof of the next lemma is an easy exercise.

\begin{Lem}\label{lemma10}
For $(\mc{H},\hat{\pi})\in \Rep(C^*(\G))$, define a unitary map
\[
\mc{O}_{(\mc{H},\hat{\pi})}\colon C^*(\X)\otimes_{C^*(\G)}\mc{H}\in a\otimes\xi\mapsto \hat{\pi}(a)\xi \in\mc{H}.
\]
Then the family $(\mc{O}_{(\mc{H},\hat{\pi})})_{(\mc{H},\hat{\pi})\in \Rep(C^*(\G))}$ forms a natural isomorphism $\Ind_{\X}^{C^*}\rightarrow \Ind_{\X}^{C^*,\triv}$.
\end{Lem}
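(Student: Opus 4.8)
The plan is to verify three things: that $\mc{O}_{(\mc{H},\hat{\pi})}$ is a well-defined unitary, that it intertwines the relevant $C^*(\HH)$-representations, and that the family is natural. The first point is where the hypothesis of \emph{trivial universal linking C$^*$-algebra} does the work. Recall that under Definition \ref{def3} we have $C^*(\X)=\hat{N}_u=\hat{M}_u$ with $\hat{\pi}_{\red,12}$ corresponding to $\hat{\pi}_{\red}$, and the left $C^*(\HH)$-action on $C^*(\X)$ (coming from $\hat{\Delta}_{u,12}$) together with the right $C^*(\GG)$-action both pass, through the trivialisation, to the standard $\hat{M}_u$-bimodule structure on $\hat{M}_u$ (since $\hat{\Delta}_{u,12}(1)=\hat{\Omega}_u$ need not be $1$, but left multiplication by $C^*(\HH)=\hat{M}_u$ and right multiplication by $C^*(\GG)=\hat{M}_u$ on $\hat{N}_u=\hat{M}_u$ are the honest multiplication maps). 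Concretely, for $a\in C^*(\X)$, $c\in C^*(\GG)$, $\xi\in\mc{H}$ one has $ac\otimes\xi = a\otimes\hat{\pi}(c)\xi$ in the balanced tensor product $C^*(\X)\otimes_{C^*(\GG)}\mc{H}$, so the formula $a\otimes\xi\mapsto\hat{\pi}(a)\xi$ respects the balancing relation and hence is well-defined; and isometry follows from
\[
\langle \mc{O}(a\otimes\xi),\mc{O}(a'\otimes\xi')\rangle = \langle \hat{\pi}(a)\xi,\hat{\pi}(a')\xi'\rangle = \langle\xi,\hat{\pi}(a^*a')\xi'\rangle = \langle a\otimes\xi,a'\otimes\xi'\rangle,
\]
the last equality being exactly the definition of the inner product on the interior tensor product. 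Surjectivity is immediate since $\hat{\pi}$ is non-degenerate: $\mc{O}(C^*(\X)\odot\mc{H}) = \overline{\mathrm{span}}\,\hat{\pi}(\hat{M}_u)\mc{H}=\mc{H}$.

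Next I would check that $\mc{O}_{(\mc{H},\hat{\pi})}$ is an intertwiner of $C^*(\HH)$-representations. On the source, $C^*(\HH)$ acts by $\Ind_{\X}^{C^*}(\hat{\pi})(a)(b\otimes\xi) = ab\otimes\xi$ (left multiplication on the $C^*(\X)$-leg, which under the trivialisation is multiplication in $\hat{M}_u$); on the target, $\Ind_{\X}^{C^*,\triv}(\hat{\pi})=\hat{\pi}$ via the identification $C^*(\GG)\cong C^*(\HH)$ of Definition \ref{def3}. Then for $a\in C^*(\HH)$, $b\in C^*(\X)$, $\xi\in\mc{H}$,
\[
\mc{O}_{(\mc{H},\hat{\pi})}\bigl(\Ind_{\X}^{C^*}(\hat{\pi})(a)(b\otimes\xi)\bigr) = \hat{\pi}(ab)\xi = \hat{\pi}(a)\hat{\pi}(b)\xi = \Ind_{\X}^{C^*,\triv}(\hat{\pi})(a)\,\mc{O}_{(\mc{H},\hat{\pi})}(b\otimes\xi),
\]
using that the identification $C^*(\GG)\cong C^*(\HH)$ is compatible with the module structures, which is again built into the triviality hypothesis. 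Finally, naturality: for a morphism $T\colon(\mc{H},\hat{\pi})\to(\mc{H}',\hat{\pi}')$ in $\Rep(C^*(\GG))$, recall $\Ind_{\X}^{C^*}(T) = \id_{C^*(\X)}\otimes T$ on the interior tensor product and $\Ind_{\X}^{C^*,\triv}(T)=T$; then
\[
\mc{O}_{(\mc{H}',\hat{\pi}')}\bigl((\id\otimes T)(b\otimes\xi)\bigr) = \hat{\pi}'(b)T\xi = T\hat{\pi}(b)\xi = T\,\mc{O}_{(\mc{H},\hat{\pi})}(b\otimes\xi),
\]
the middle equality being the intertwining property of $T$. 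This establishes that $(\mc{O}_{(\mc{H},\hat{\pi})})$ is a natural isomorphism $\Ind_{\X}^{C^*}\to\Ind_{\X}^{C^*,\triv}$.

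The only genuine subtlety — and the reason the statement is phrased as "an easy exercise" rather than trivial — is keeping careful track, through the fixed trivialisation of Definition \ref{def3}, of exactly how the left $C^*(\HH)$-module structure on $C^*(\X)$, the right $C^*(\GG)$-module structure, and the identification $C^*(\GG)\cong C^*(\HH)$ are transported. Once one commits to the normalisations of Definition \ref{def3} (identity on the $22$-corner, $\hat\varepsilon_{ij}\cong\varepsilon_{\hat M_u}$, reducing map diagonal), all three structures become the standard multiplication maps on $\hat{M}_u$, and every displayed identity above is a one-line manipulation. I would therefore present the argument essentially in the three bullet-like steps above, spending at most a sentence each on well-definedness, the intertwining property, and naturality.
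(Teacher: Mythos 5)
Your argument is correct and is exactly the intended one: the paper omits the proof (declaring it ``an easy exercise''), and your verification of well-definedness, unitarity, the $C^*(\Hh)$-intertwining property, and naturality — all reduced to multiplication in $\hat{M}_u$ via the trivialisation of Definition \ref{def3} — fills it in as the authors intended. Your side remark that the coproduct $\hat{\Delta}_{u,12}$ (hence $\hat{\Omega}_u$) plays no role here, only the ring structure of the linking algebra, correctly isolates why this lemma is easy while Proposition \ref{prop1} is not.
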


Next, we need an auxilliary result.

\begin{Lem}\label{lemma11}
Let $(\mc{H},U),(\mc{H}',U'),(\mc{H}\otimes\mc{H}',U\otop U')\in \Rep(\GG)$ with the associated $*$-representations $\hat{\pi},\hat{\pi}'$, $\hat{\pi}\ast \hat{\pi}'$, and let $U_{12}, U'_{12}, (U\otop U')_{12}$ be the $12$ corners of the induced representations of $\msG$, with associated maps $\hat{\pi}_{12},\hat{\pi}'_{12},(\hat{\pi}\ast\hat{\pi}')_{12}$. We have
\[
(\hat{\pi}\ast \hat{\pi}')_{12}(a)=
u_{\mc{H},\mc{H}'}(\hat{\pi}_{12}\otimes \hat{\pi}'_{12})\hat{\Delta}_{u,12}^{op}(a),\qquad \forall a\in C^*(\X).
\]
\end{Lem}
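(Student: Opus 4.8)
The plan is to unravel both sides using the explicit formula \eqref{eq44} for the $12$-corner $U_{12}^*$ of an induced $\msG$-representation, together with the definition \eqref{EqMonoidalCoherence}--type relation between $\hat\pi_{12}$ and $\hat\Delta_{u,12}$ coming from the universal lift. Concretely, recall that $\hat\pi_{12}(a) = \hat\pi^{12}_{U_{12}}(a)$ is the representation of $C^*(\X)$ attached to the unitary $\msG$-representation $\Ind_{\msG}(U)$ restricted to its $12$-corner, so that $U_{12} = (\id\otimes\hat\pi_{12})(\Ww_{\hat Q_u[21]}^*)$ in the sense of \eqref{EqUnivPropUnivLift} applied to the linking groupoid. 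Since the tensor product of $\msG$-representations is implemented by $\hat\Delta_{\hat Q_u}^{\opp}$ via \eqref{EqTensProd2}, and since $\Ind_{\msG}(U)\otop\Ind_{\msG}(U')\cong \Ind_{\msG}(U\otop U')$ (with the $22$-corner being $U\otop U'$), the $12$-corner of $\Ind_{\msG}(U\otop U')$ is, on the nose, $\hat\pi_{12}*\hat\pi'_{12}$ restricted appropriately, i.e. $(\hat\pi*\hat\pi')_{12} = (\hat\pi_{12}\otimes\hat\pi'_{12})\circ (\hat\Delta_{u})_{12}^{\opp}$ up to the coherence unitary $u_{\Hsp,\Hsp'}$ of Theorem \ref{TheoIndRep}.

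First I would make precise the statement that the $12$-corner of a monoidal product of induced representations is computed by $\hat\Delta_{u,12}$. For this, I would invoke the monoidality of $\Ind_{\msG}$ established in the proof of Theorem \ref{TheoIndRep}: the isometric intertwiner $\widetilde u_{\Hsp,\Hsp'}\colon \Ind_{\msG}(\Hsp)\otimes_{\msG}\Ind_{\msG}(\Hsp')\to\Ind_{\msG}(\Hsp\otimes_\G\Hsp')$ of \eqref{EqInterTwinerMonoidAlt} is a unitary, and on the $22$-corner it is the identity. Passing to the associated $*$-representations of $C^*(\msG)$ under \eqref{EqIsoCat}, this says exactly that $\widetilde{\Ind}_{\msG}(\hat\pi)*\widetilde{\Ind}_{\msG}(\hat\pi')$ is unitarily equivalent, via the unitary induced by $\widetilde u_{\Hsp,\Hsp'}$, to $\widetilde{\Ind}_{\msG}(\hat\pi*\hat\pi')$; here $*$ on the left is the convolution product on $\Rep(C^*(\msG))$ implemented by $\Delta_{\hat Q_u}^{\opp}$. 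Reading off the $12$-components of both sides (using that morphisms in $\Rep(C^*(\msG))$ respect the $e_i$-decomposition, and that the $12$-block of $\Delta_{\hat Q_u}^{\opp}$ is $\hat\Delta_{u,12}^{\opp}$), and tracking that the restriction of $\widetilde u_{\Hsp,\Hsp'}$ to the $12$-corner is precisely $u_{\Hsp,\Hsp'}$ of \eqref{EqInterTwinerMonoid}, gives
\[
(\hat\pi*\hat\pi')_{12}(a) = u_{\Hsp,\Hsp'}(\hat\pi_{12}\otimes\hat\pi'_{12})\hat\Delta_{u,12}^{\opp}(a),\qquad \forall a\in C^*(\X),
\]
which is the claim. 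The key point to verify carefully is that the $12$-block of the coproduct $\Delta_{\hat Q_u}$ of the linking C$^*$-bialgebra is $\hat\Delta_{u,12}$, and that the coherence unitary $\widetilde u_{\Hsp,\Hsp'}$ truly restricts to $u_{\Hsp,\Hsp'}$ on this block; both of these are contained in, or immediate from, the constructions in Section 2 and the proof of Theorem \ref{TheoIndRep}.

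The main obstacle I anticipate is purely bookkeeping: the linking quantum groupoid carries a $2\times2$-matrix of Hilbert spaces, coproducts $\hat\Delta_{u,ij}$, and reducing maps, and one must be scrupulous about which leg lives in which corner when applying leg-numbering with flips (the convention $(a\otimes b\otimes c)_{[412]}=b\otimes c\otimes 1\otimes a$) — in particular the passage from the $\otop$-product $U_{[12]}U'_{[13]}$ to $\Delta_{\hat Q_u}^{\opp}$ and the identification of $u_{\Hsp,\Hsp'}\colon \xi\otimes\eta\mapsto\eta_{[13]}\xi_{[12]}$ with the $12$-restriction of $\widetilde u_{\Hsp,\Hsp'}$. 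Once the indices are pinned down, the identity follows formally; alternatively one can give a direct computational proof by expanding $(U\otop U')_{12}^*$ via \eqref{eq44}, $(U\otop U')^* = (U'_{[13]})^*(U_{[12]})^*$, and the formulas for $U_{12}^*, U'_{12}^*, U_{21}^*$, matching against $u_{\Hsp,\Hsp'}$ composed with $(\hat\pi_{12}\otimes\hat\pi'_{12})\hat\Delta_{u,12}^{\opp}$ — this is the route suggested by the phrase ``an easy exercise'' used for the companion Lemma \ref{lemma10}, and is the fallback if the abstract argument gets tangled in conventions.
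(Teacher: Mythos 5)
Your primary argument is correct, and it takes a genuinely different route from the paper. You deduce the identity abstractly from the monoidality of $\Ind_{\msG}$: since $\widetilde{u}_{\Hsp,\Hsp'}=u_{\Hsp,\Hsp'}\oplus\id$ is a unitary intertwiner $\Ind_{\msG}(U)\TensRep\Ind_{\msG}(U')\to\Ind_{\msG}(U\TensRep U')$ (proof of Theorem \ref{TheoIndRep}), the category isomorphism \eqref{EqIsoCat} turns this into an intertwiner of the corresponding $C^*(\msG)$-representations, and evaluating at $a\in C^*(\X)$ --- whose image under $\Delta_{\hat{Q}_u}^{\opp}$ sits in the $(e_1\otimes e_1)(\cdot)(e_2\otimes e_2)$ corner and is computed by $\hat{\Delta}_{u,12}^{\opp}$ --- gives exactly $u_{\Hsp,\Hsp'}(\hat{\pi}_{12}\otimes\hat{\pi}'_{12})\hat{\Delta}_{u,12}^{\opp}(a)=(\hat{\pi}*\hat{\pi}')_{12}(a)$. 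This is sound and arguably cleaner, because it recycles work already done rather than redoing it; the only imprecision is terminological ($\widetilde{u}$ restricts to $u_{\Hsp,\Hsp'}$ on the first direct summand, being block-diagonal as a morphism, rather than having a ``$12$-corner''). The paper instead argues directly at the level of the half-lifted universal corepresentation: it verifies by hand that $(U\TensRep U')_{12}^*(1\otimes u_{\Hsp,\Hsp'})=U'^*_{12[13]}U^*_{12[12]}$ using the concrete formula \eqref{eq44}, then combines this with the bicharacter-type identity $(\id\otimes\hat{\Delta}_{u,12}^{\opp})(\hat{\Ww}_{12[21]}^*)=\hat{\Ww}_{12[21]}^*\hat{\Ww}_{12[31]}^*$ and slices off the first leg --- which is essentially the computational fallback you describe at the end. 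The trade-off is that the paper's proof is self-contained and makes the role of $\hat{\Ww}_{12}$ explicit, while yours exposes the statement as a formal consequence of the tensor structure on $\Ind_{\msG}$.
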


\begin{proof}
Recall that $U_{12}^*\in L^{\infty}(\GG_{12})\bar\otimes \mcB( \Ind_{\X}(\mc{H}),\mc{H})$ satisfies
\[
U_{12}^*\colon L^2(\GG_{12})\otimes \Ind_{\X}(\mc{H})\ni \xi\otimes z\mapsto z\xi \in L^2(\G_{12})\otimes \mc{H}
\]
and similarly for $U'_{12},(U\otop U')_{12}$. We claim that
\begin{equation}\label{eq37}
(U\otop U')_{12}^* (1\otimes u_{\mc{H},\mc{H}'})= 
U'^*_{[13]} U^*_{[12]}.
\end{equation}
Indeed, we have
\[\begin{split}
&\quad\;
(U\otop U')_{12}^* (1\otimes u_{\mc{H},\mc{H}'})(\xi\otimes z \otimes w)=
(U\otop U')_{12}^* (\xi\otimes w_{[13]} z_{[12]})=
w_{[13]}z_{[12]}\xi\\
&=
U'^*_{[13]}( z_{[12]}\xi\otimes w)=
U'^*_{[13]}U^*_{[12]} (\xi\otimes z\otimes w). 
\end{split}\]
for $\xi\in L^2(\G_{12}),z\in \Ind_{\X}(\mc{H}),w\in \Ind_{\X}(\mc{H}')$. 

Observe that (c.f.~\cite{DC09}*{Lemma 11.1.5})
\[
(\hat{\Delta}_{u,21}\otimes \id)(\hat{\Ww}_{12})=
\hat{\Ww}_{12[13]} \hat{\Ww}_{12[23]}\;\Rightarrow\;
(\id\otimes \hat{\Delta}_{u,12}^{op})(\hat{\Ww}_{12 [21]}^*) = \hat{\Ww}_{12[21]}^*\hat{\Ww}_{12 [31]}^*,
\]
hence, using \eqref{eq37},
\[\begin{split}
&\quad\;
(\id\otimes (\hat{\pi}_{12}\otimes \hat{\pi}'_{12})\hat{\Delta}_{u,12}^{op})(\hat{\Ww}_{12[21]}^*)=
(\id\otimes \hat{\pi}_{12} )(\hat{\Ww}_{12 [21]}^*)_{[12]}
(\id\otimes \hat{\pi}'_{12})(\hat{\Ww}_{12 [21]}^*)_{[13]}=
 U_{12[12]}U'_{12[13]}\\
 &=
(1\otimes u_{\mc{H},\mc{H}'})^*
(U\otop U')_{12}=
 (1\otimes u_{\mc{H},\mc{H}'})^*
(\id\otimes (\hat{\pi}\ast\hat{\pi}')_{12} )(\hat{\Ww}_{12 [21]}^*).
\end{split}\]
The claim now follows by slicing off the first leg.
\end{proof}

Finally, we prove that the natural isomorphism $(\mc{O}_{(\mc{H},\hat{\pi})})_{(\mc{H},\hat{\pi})\in \Rep(C^*(\G))}$ respects the monoidal structures \eqref{eq36} and \eqref{EqMonoidalCoherence}.

\begin{Prop}\label{prop1}
For $(\mc{H},\hat{\pi}),(\mc{H}',\hat{\pi}')\in \Rep(C^*(\GG))$ we have
\begin{equation}\label{eq38}
\mc{O}_{(\mc{H}\otimes \mc{H}', \hat{\pi}\ast\hat{\pi}')} v_{\mc{H},\mc{H}'}
(\mc{O}_{(\mc{H}, \hat{\pi})}\otimes 
\mc{O}_{( \mc{H}', \hat{\pi}')})^*=
(\hat{\pi}\otimes\hat{\pi}')(\hat{\Omega}^*_{u [21]}).
\end{equation}
\end{Prop}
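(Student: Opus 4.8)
The strategy is to unwind every natural isomorphism in sight and reduce the identity to the definition $v_{\mc{H},\mc{H}'}= \theta^*\, u_{\mc{H},\mc{H}'}\,(\theta\otimes\theta)$ from \eqref{eq36} together with Lemma \ref{lemma11}. Concretely, I would first recall from Lemma \ref{lemma10} that $\mc{O}_{(\mc{H},\hat{\pi})}(a\otimes\xi)=\hat{\pi}(a)\xi$ and from Lemma \ref{lemma9} that $\theta_{(\mc{H},\hat{\pi})}(b\otimes\xi)=\hat{\pi}_{12}(b)\xi$, where $\hat{\pi}_{12}$ is the $12$-component of $\widetilde{\Ind}_{\msG}(\hat{\pi})$. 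Since the groupoid here has trivial universal linking C$^*$-algebra, we have $C^*(\X)=\hat{M}_u$ with $\hat\eps_{12}=\hat\eps$ and all the reduced data trivialized, so I can evaluate $\hat{\pi}_{12}$ explicitly. In fact, applying the counit $\hat\eps_{12}=\hat\eps$ to the first leg of the half-lifted universal representation $\hat\Ww_{12}$ and using \eqref{EqUnivPropUnivLift}–type identities, one gets that $\theta_{(\mc{H},\hat{\pi})}^{-1}$ composed with $\mc{O}_{(\mc{H},\hat{\pi})}$ is governed by the coboundary element, i.e.\ on the nose $\mc{O}_{(\mc{H},\hat{\pi})}\theta_{(\mc{H},\hat{\pi})}^{-1}=\hat{\pi}(\hat\eps_{12}\textrm{-slice of }\hat\Ww_{12})$; the upshot I want is the clean relation $\hat\pi_{12}(a)=\hat\pi(a)$ as operators on $\mc H=\Ind_\X(\mc H)$ for $a\in C^*(\X)=\hat M_u$, so that $\mc O_{(\mc H,\hat\pi)}=\theta_{(\mc H,\hat\pi)}$ after the chosen trivialization.

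**The core computation.**
Granting that identification, \eqref{eq38} becomes, using \eqref{eq36},
\[
\mc{O}_{(\mc H\otimes\mc H',\hat\pi\ast\hat\pi')}\,\theta_{(\mc H\otimes\mc H',\hat\pi\ast\hat\pi')}^*\,u_{\mc H,\mc H'}\,(\theta_{(\mc H,\hat\pi)}\otimes\theta_{(\mc H',\hat\pi')})\,(\mc O_{(\mc H,\hat\pi)}\otimes\mc O_{(\mc H',\hat\pi')})^*,
\]
and after cancelling the matching $\mc O$'s against $\theta$'s the whole thing collapses to $u_{\mc H,\mc H'}$ read through the identifications $\Ind_\X(\mc H)\cong\mc H$. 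So the real content is: \emph{under the identification $\theta$, the monoidal coherence $u_{\mc H,\mc H'}$ of $\Ind_\X$ corresponds to $(\hat\pi\otimes\hat\pi')(\hat\Omega_{u[21]}^*)$.} To prove this I would feed $a\in C^*(\X)=\hat M_u$ through Lemma \ref{lemma11}: it gives $(\hat\pi\ast\hat\pi')_{12}(a)=u_{\mc H,\mc H'}(\hat\pi_{12}\otimes\hat\pi'_{12})\hat\Delta_{u,12}^{\opp}(a)$. Now $\hat\Delta_{u,12}(x)=\hat\Omega_u\hat\Delta_u(x)$ by the definition $\hat\Omega_u=\hat\Delta_{u,12}(1_{\hat M_u})$ and the cocycle-twist form of the off-diagonal coproduct \eqref{EqCoprodCocTwist}, so $\hat\Delta_{u,12}^{\opp}(a)=\hat\Delta_u^{\opp}(a)\hat\Omega_{u[21]}^*$. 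Using $\hat\pi_{12}=\hat\pi$, $\hat\pi'_{12}=\hat\pi'$ and $(\hat\pi\ast\hat\pi')=(\hat\pi\otimes\hat\pi')\hat\Delta_u^{\opp}$, the displayed identity reads $(\hat\pi\otimes\hat\pi')\hat\Delta_u^{\opp}(a)=u_{\mc H,\mc H'}(\hat\pi\otimes\hat\pi')\big(\hat\Delta_u^{\opp}(a)\hat\Omega_{u[21]}^*\big)$ on $\mc H\otimes\mc H'$; since $(\hat\pi\ast\hat\pi')$ is non-degenerate its range is dense, so we may cancel it and conclude $u_{\mc H,\mc H'}=(\hat\pi\otimes\hat\pi')(\hat\Omega_{u[21]})$ after moving the $\hat\Omega_{u[21]}^*$ across — equivalently $u_{\mc H,\mc H'}^{-1}=(\hat\pi\otimes\hat\pi')(\hat\Omega_{u[21]}^*)$ composed with the trivializing identification, which is exactly \eqref{eq38}.

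**Main obstacle and loose ends.**
The delicate step is the first one: justifying that, under the \emph{fixed} trivialization of the universal linking C$^*$-algebra (Definition \ref{def3}), the component map $\hat\pi_{12}$ really is $\hat\pi$ itself, and dually that $\mc O_{(\mc H,\hat\pi)}$ and $\theta_{(\mc H,\hat\pi)}$ agree. This needs one to track how $\hat\Ww_{12}$ sits inside $\Mult(C^*(\X)\otimes C_0(\X))=\Mult(\hat M_u\otimes C_0(\hat\G))$ and to use that $\hat\eps_{12}=\hat\eps$ slices it to $1$, i.e.\ the trivialization is compatible with both the counit and the reducing map — precisely the normalizations built into Definition \ref{def3}. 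Once that bookkeeping is pinned down the rest is a short algebraic manipulation. I would also double-check the placement of the flip on $\hat\Omega_u$ (the $[21]$) by testing on the trivial representation, where $u_{\C,\C}=\id$ and $(\hat\eps\otimes\hat\eps)(\hat\Omega_{u[21]}^*)=1$, which is consistent, and by comparing with the analogous leg conventions in Lemma \ref{lemma11}.
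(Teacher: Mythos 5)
Your overall route is the same as the paper's: unwind $v_{\mc{H},\mc{H}'}$ via \eqref{eq36} so that \eqref{eq38} becomes the statement that $u_{\mc{H},\mc{H}'}$, read through the unitaries $\theta\mc{O}^*$, equals $(\hat{\pi}\otimes\hat{\pi}')(\hat{\Omega}^*_{u[21]})$, and then derive this from Lemma \ref{lemma11}. Two remarks on your first step. The identification you flag as delicate is in fact tautological: $\mc{O}_{(\mc{H},\hat{\pi})}$ and $\theta_{(\mc{H},\hat{\pi})}$ are both unitaries out of $C^*(\X)\otimes_{C^*(\G)}\mc{H}$, so $V:=\theta_{(\mc{H},\hat{\pi})}\mc{O}_{(\mc{H},\hat{\pi})}^*\colon\mc{H}\to\Ind_{\X}(\mc{H})$ is a unitary satisfying $V\hat{\pi}(a)=\hat{\pi}_{12}(a)$ by the very definitions of $\theta$ and $\mc{O}$; no counit or $\hat{\Ww}_{12}$ computation is needed, and $V$ is emphatically \emph{not} the identity (Proposition \ref{PropIdentStandard} computes it to be $\xi\mapsto U^*(\hat{\pi}_{\red}\otimes\hat{\pi}_U)(\hat{\Omega}^*_{u[21]})(1\otimes\xi)$), so the literal claims ``$\hat{\pi}_{12}=\hat{\pi}$'' and ``$\mc{O}=\theta$'' should be replaced by conjugation by $V$. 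What does need a (routine) argument is that $(V\otimes V')^*(\hat{\pi}_{12}\otimes\hat{\pi}'_{12})(z)=(\hat{\pi}\otimes\hat{\pi}')(z)$ when $z$ is a \emph{multiplier} such as $\hat{\Delta}^{\opp}_{u,12}(a)$; the paper handles this by first multiplying by $b\otimes c$ to land in $C^*(\G)\otimes C^*(\G)$ and approximating by elementary tensors, see \eqref{eq41}.

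The genuine error is in your core computation: from $\hat{\Delta}_{u,12}(x)=\hat{\Omega}_u\hat{\Delta}_u(x)$ one gets $\hat{\Delta}^{\opp}_{u,12}(a)=\hat{\Omega}_{u[21]}\hat{\Delta}_u^{\opp}(a)$ --- the cocycle stays on the \emph{left} after flipping both legs --- and not $\hat{\Delta}_u^{\opp}(a)\hat{\Omega}^*_{u[21]}$ as you wrote (that is the formula for $\hat{\Delta}^{\opp}_{u,21}$, cf.\ \eqref{EqCoprodCocTwist}). With your version, the identified form of Lemma \ref{lemma11} reads $(\hat{\pi}\ast\hat{\pi}')(a)=u_{\mc{H},\mc{H}'}\,(\hat{\pi}\ast\hat{\pi}')(a)\,(\hat{\pi}\otimes\hat{\pi}')(\hat{\Omega}^*_{u[21]})$, so the factor $(\hat{\pi}\ast\hat{\pi}')(a)$ sits \emph{between} the two operators you want to compare and the non-degeneracy cancellation you invoke is not available. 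With the correct placement the identity becomes $(\hat{\pi}\ast\hat{\pi}')(a)=u_{\mc{H},\mc{H}'}(\hat{\pi}\otimes\hat{\pi}')(\hat{\Omega}_{u[21]})(\hat{\pi}\ast\hat{\pi}')(a)$, where non-degeneracy does give $u_{\mc{H},\mc{H}'}=(\hat{\pi}\otimes\hat{\pi}')(\hat{\Omega}^*_{u[21]})$ through the identifications, i.e.\ exactly \eqref{eq38}. Once repaired, your argument coincides in substance with the paper's proof, which verifies the equivalent identity \eqref{eq39} by evaluating both sides on the spanning vectors $(\hat{\pi}\ast\hat{\pi}')_{12}(a)(\hat{\pi}(b)\zeta\otimes\hat{\pi}'(c)\zeta')$ rather than cancelling operators.
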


\begin{proof}
Using the definition \eqref{eq36} of $v_{\mc{H},\mc{H}'}$, we see that \eqref{eq38} is equivalent to
\begin{equation}\label{eq39}
(\mc{O}_{(\mc{H},\hat{\pi})}\otimes \mc{O}_{(\mc{H}',\hat{\pi}')})
( \theta_{(\mc{H},\pi)}^*\otimes \theta_{(\mc{H}',\pi')}^*) u_{\mc{H},\mc{H}'}^*  =
 (\hat{\pi}\otimes \hat{\pi}') (\hat{\Omega}_{u [21]})
 \mc{O}_{(\mc{H}\otimes\mc{H}',\hat{\pi}\ast \hat{\pi}')}
  \theta_{(\mc{H}\otimes \mc{H}',\pi\ast \pi')}^*.
\end{equation}
Fix $a\in C^*(\X),b,c\in C^*(\GG),\zeta\in \mc{H},\zeta'\in \mc{H}'$. Then
$(\hat{\pi}\ast\hat{\pi}')_{12}(a) (\hat{\pi}(b)\zeta \otimes \hat{\pi}'(c) \zeta')$ is a typical element of $\Ind_{\X}(\mc{H}\otimes \mc{H}')$, in the sense that such elements span a dense subspace (see Lemma \ref{lemma9}). Then on the one hand, the right hand side of \eqref{eq39} evaluates on this element to
\begin{equation}\begin{split} \label{eq40}
&\quad\;
 (\hat{\pi}\otimes \hat{\pi}') (\hat{\Omega}_{u [21]})
 \mc{O}_{(\mc{H}\otimes \mc{H}',\hat{\pi}\ast\hat{\pi}')}
  \theta_{(\mc{H}\otimes \mc{H}',\pi\ast \pi')}^*\;
(\hat{\pi}\ast\hat{\pi}')_{12}(a) (\hat{\pi}(b)\zeta \otimes \hat{\pi}'(c) \zeta')\\
&=
 (\hat{\pi}\otimes \hat{\pi}') (\hat{\Omega}_{u [21]})
 \mc{O}_{(\mc{H}\otimes \mc{H}',\hat{\pi}\ast\hat{\pi}')}
\bigl( 
a\otimes (\hat{\pi}(b)\zeta \otimes \hat{\pi}'(c) \zeta')\bigr)\\
&=
(\hat{\pi}\otimes\hat{\pi}')(\hat{\Omega}_{u[21]})
(\hat{\pi}\ast\hat{\pi}')(a)  (\hat{\pi}(b)\zeta \otimes \hat{\pi}'(c) \zeta')\\
&=(\hat{\pi}\otimes\hat{\pi}')\hat{\Delta}_{u,12}^{op} (1)
(\hat{\pi}\otimes \hat{\pi}')\hat{\Delta}^{op}_{u,22}(a)
(\hat{\pi}(b)\zeta \otimes \hat{\pi}'(c) \zeta')=
(\hat{\pi}\otimes\hat{\pi}')\hat{\Delta}_{u,12}^{op} (a)
(\hat{\pi}(b)\zeta \otimes \hat{\pi}'(c) \zeta').
\end{split}
\end{equation}
On the other hand, using Lemma \ref{lemma11}, we see that the left hand side of \eqref{eq39} evaluates to
\[\begin{split}
&\quad\;
(\mc{O}_{(\mc{H},\hat{\pi})}
\otimes \mc{O}_{(\mc{H}',\hat{\pi}')})
( \theta_{(\mc{H},\hat{\pi})}^*\otimes \theta_{(\mc{H}',\hat{\pi}')}^*) u_{\mc{H},\mc{H}'}^*  \;
(\hat{\pi}\ast\hat{\pi}')_{12}(a) (\hat{\pi}(b)\zeta \otimes \hat{\pi}'(c) \zeta')\\
&=
(\mc{O}_{(\mc{H},\hat{\pi})}
\otimes \mc{O}_{(\mc{H}',\hat{\pi}')})
( \theta_{(\mc{H},\hat{\pi})}^*\otimes \theta_{(\mc{H}',\hat{\pi}')}^*)
(\hat{\pi}_{12}\otimes \hat{\pi}'_{12})\hat{\Delta}^{op}_{u,12}(a) (\hat{\pi}(b)\zeta \otimes \hat{\pi}'(c) \zeta')\\
&=
(\mc{O}_{(\mc{H},\hat{\pi})}\otimes \mc{O}_{(\mc{H}',\hat{\pi}')})
( \theta_{(\mc{H},\hat{\pi})}^*\otimes \theta_{(\mc{H}',\hat{\pi}')}^*)
(\hat{\pi}_{12}\otimes \hat{\pi}'_{12})(
\hat{\Delta}^{op}_{u,12}(a) (b\otimes c))(
\zeta \otimes  \zeta').
\end{split}\]
We have $\hat{\Delta}^{op}_{u,12}(a) (b\otimes c)\in C^*(\GG)\otimes C^*(\GG)$, hence we can write $\hat{\Delta}^{op}_{u,12}(a) (b\otimes c)=\lim_{n\to\infty}\sum_{i=1}^{N_n} d^{(n)}_{1,i}\otimes d^{(n)}_{2,i}$ in norm, for some $d^{(n)}_{1,i},d^{(n)}_{2,i}\in C^*(\GG)$. As $\hat{\pi}_{12},\hat{\pi}'_{12}$ are restrictions of $*$-homomorphisms, they are continuous and we further have
\begin{equation}\begin{split}\label{eq41}
&\quad\;
(\mc{O}_{(\mc{H},\hat{\pi})}\otimes \mc{O}_{(\mc{H}',\hat{\pi}')})
( \theta_{(\mc{H},\hat{\pi})}^*\otimes \theta_{(\mc{H}',\hat{\pi}')}^*) u_{\mc{H},\mc{H}'}^*  \;
(\hat{\pi}\ast\hat{\pi}')_{12}(a) (\hat{\pi}(b)\zeta \otimes \hat{\pi}'(c) \zeta')\\
&=
\lim_{n\to\infty}\sum_{i=1}^{N_n}
(\mc{O}_{(\mc{H},\hat{\pi})}\otimes \mc{O}_{(\mc{H}',\hat{\pi}')})
( \theta_{(\mc{H},\hat{\pi})}^*\otimes \theta_{(\mc{H}',\hat{\pi}')}^*)  
(\hat{\pi}_{12}(d^{(n)}_{1,i})\zeta \otimes  \hat{\pi}'_{12}(d^{(n)}_{2,i})\zeta')\\
&=
\lim_{n\to\infty}\sum_{i=1}^{N_n}
(\mc{O}_{(\mc{H},\hat{\pi})}\otimes \mc{O}_{(\mc{H}',\hat{\pi}')})
\bigl(
(d^{(n)}_{1,i}\otimes \zeta) \otimes
(d^{(n)}_{2,i}\otimes \zeta')\bigr)=
\lim_{n\to\infty}\sum_{i=1}^{N_n}
\hat{\pi}(d^{(n)}_{1,i})\zeta \otimes
\hat{\pi}'(d^{(n)}_{2,i})\zeta'\\
&=
(\hat{\pi}\otimes\hat{\pi}')(\hat{\Delta}^{op}_{u,12}(a) (b\otimes c))(\zeta\otimes \zeta')=
(\hat{\pi}\otimes\hat{\pi}')\hat{\Delta}^{op}_{u,12}(a)
 (\hat{\pi}(b)\zeta\otimes \hat{\pi}'(c)\zeta').
\end{split}\end{equation}

Equations \eqref{eq40}, \eqref{eq41} show \eqref{eq39} and end the proof.
\end{proof}

Write again $\hat{R}_u\colon C^*(\G) \rightarrow C^*(\G)$ for the lift of the unitary antipode to $C^*(\G)$. Then we can also lift $\hat{\Omega}_c$ to 
\begin{equation}\label{EqConjCocy}
\hat{\Omega}_{u,c} :=  (\hat{R}_u\otimes \hat{R}_u)(\hat{\Omega}_{u[21]}^*) \in \Mult(\hat{M}_u \otimes \hat{M}_u). 
\end{equation}

The associated coboundary at the universal level is defined in an analogous way as the one at the reduced level, see \eqref{eq7}. In fact:

\begin{Lem}\label{LemUniqueUnitCob}
There exists a unique unitary coboundary $X_{\hat{\Omega},u} \in \Mult(C^*(\G))$ between  $\hat{\Omega}_u$ and $\hat{\Omega}_{u,c}$ such that 
\begin{equation}\label{EqImageUnderRed}
\hat{\pi}_{\red}(X_{\hat{\Omega},u}) = X_{\hat{\Omega}}.
\end{equation}
\end{Lem}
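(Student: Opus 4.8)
The plan is to produce $X_{\hat{\Omega},u}$ from the unitary antipode of the linking quantum groupoid $\msG$ (which has trivial universal linking C$^*$-algebra) and to verify the three required properties, then to deduce uniqueness from a group-like-element argument. Write $\hat{Q}_u = C^*(\msG)$ in its matrix form with every corner identified with $C^*(\G)=\hat{M}_u$, and let $R_{\hat{Q}_u}$ be the universal unitary antipode of \eqref{EqUnitaryAntipodeUnivQgr}, with $(1,2)$-component $\hat{R}_{u,12}\colon C^*(\G_{12})\to C^*(\G_{21})$. I would set $X_{\hat{\Omega},u} := \hat{R}_{u,12}(1_{\hat{M}_u})^* \in \Mult(C^*(\G))$. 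Since $R_{\hat{Q}_u}$ is a $*$-anti-automorphism of $\Mult(\hat{Q}_u)$ fixing the diagonal units $e_1,e_2$ (because each $\hat{R}_{u,ii}$ is a unital anti-automorphism), it maps the partial isometry ``$1_{\hat{M}_u}$ in the $(1,2)$-corner'' (whose source and range projections are $e_2,e_1$) to a partial isometry in the $(2,1)$-corner with source and range projections $e_1,e_2$; hence $X_{\hat{\Omega},u}$ is unitary — alternatively one invokes the universal analogues of \eqref{EqUnitAntipodQGrProp}. For the reduction property, the reducing map $\pi^{\red}_{\hat{Q}_u}$ intertwines $R_{\hat{Q}_u}$ with the W$^*$-level groupoid unitary antipode $\hat{R}$ of \eqref{EqUnitaryAntipodeQgr}, so componentwise $\hat{\pi}_{\red}\circ\hat{R}_{u,12}=\hat{R}_{12}\circ\hat{\pi}_{\red}$; evaluating at $1_{\hat{M}_u}$ and using $X_{\hat{\Omega}}^*=\hat{R}_{12}(1_{\hat{M}})$ from Section~\ref{SecCocyc} gives $\hat{\pi}_{\red}(X_{\hat{\Omega},u})=X_{\hat{\Omega}}$.

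It then remains to check the coboundary identity $(X_{\hat{\Omega},u}\otimes X_{\hat{\Omega},u})\hat{\Omega}_{u,c}=\hat{\Omega}_u\hat{\Delta}_u(X_{\hat{\Omega},u})$. First I would record, exactly as for \eqref{eq12} but at the universal level, that $\hat{R}_{u,12}(y)=\hat{R}_u(y)X_{\hat{\Omega},u}^*$ for $y\in C^*(\G)=C^*(\G_{12})$, which follows from the anti-multiplicativity of $\hat{R}_{u,\bullet}$ by writing $y=1_{C^*(\G_{12})}\cdot y$ with the second factor sitting in $C^*(\G_{22})$. Next I would apply the universal-level anti-comultiplicativity $\hat{\Delta}_{u,21}\circ\hat{R}_{u,12}=(\hat{R}_{u,12}\otimes\hat{R}_{u,12})\circ\hat{\Delta}_{u,12}^{\opp}$ to $1_{\hat{M}_u}$, using that in the trivial picture $\hat{\Delta}_{u,12}(\cdot)=\hat{\Omega}_u\hat{\Delta}_u(\cdot)$ and $\hat{\Delta}_{u,21}(\cdot)=\hat{\Delta}_u(\cdot)\hat{\Omega}_u^*$ (compare \eqref{EqCoprodCocTwist}); in particular $\hat{\Delta}_{u,12}(1_{\hat{M}_u})=\hat{\Omega}_u$, so $\hat{\Delta}_{u,12}^{\opp}(1_{\hat{M}_u})=\hat{\Omega}_{u[21]}$. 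Substituting the formula for $\hat{R}_{u,12}$ and recalling $\hat{\Omega}_{u,c}=(\hat{R}_u\otimes\hat{R}_u)(\hat{\Omega}_{u[21]}^*)$ from \eqref{EqConjCocy} turns the identity into $\hat{\Delta}_u(X_{\hat{\Omega},u}^*)\hat{\Omega}_u^*=\hat{\Omega}_{u,c}^*(X_{\hat{\Omega},u}^*\otimes X_{\hat{\Omega},u}^*)$, which rearranges (and, after taking adjoints) to the desired coboundary relation.

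For uniqueness, suppose $X$ is another unitary coboundary in $\Mult(C^*(\G))$ between $\hat{\Omega}_u$ and $\hat{\Omega}_{u,c}$ with $\hat{\pi}_{\red}(X)=X_{\hat{\Omega}}$, and put $w:=X X_{\hat{\Omega},u}^*$. Combining the two coboundary identities yields $\hat{\Delta}_u(w)=\hat{\Omega}_u^*(w\otimes w)\hat{\Omega}_u$, i.e.\ $w$ is a group-like unitary for the comultiplication $\hat{\Delta}_{u,11}=\hat{\Omega}_u\hat{\Delta}_u(\cdot)\hat{\Omega}_u^*$ on $\hat{P}_u=C^*(\Hh)$, while $\hat{\pi}_{\red}(w)=X_{\hat{\Omega}}X_{\hat{\Omega}}^*=1$. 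Since the reducing map of the LCQG $\Hh$ is injective on group-like unitaries — a one-dimensional unitary $\hat{\Hh}$-representation is recovered from its reduced version, e.g.\ by slicing the piece $\hat{\Ww}_{11}$ of the half-lifted universal representation — one concludes $w=1$, hence $X=X_{\hat{\Omega},u}$.

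The delicate points I expect to be the main obstacle are two. First, ensuring that the universal-level groupoid identities used above — the involutivity/anti-multiplicativity relations behind \eqref{EqUnitAntipodQGrProp} and the anti-comultiplicativity of $\hat{R}$ — genuinely hold on the relevant multiplier C$^*$-algebras; this requires transporting the W$^*$-level statements of Section~\ref{SecDualLQG} carefully through the universal construction (which the text asserts is possible in analogy with \cite{Kus01}), together with some bookkeeping about strict extensions and leg-numbering on multipliers. Second, the closing step of the uniqueness argument, namely that a group-like unitary of $\Mult(C^*(\Hh))$ which becomes the identity under the reducing map must already be the identity; this is where I would need to be most careful, and it is the only place where the reduced/universal distinction genuinely intervenes.
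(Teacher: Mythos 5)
Your proposal is correct, and the existence part coincides with the paper's own argument: the same element $X_{\hat{\Omega},u}=\hat{R}_{u,12}(1_{\hat{M}_u})^*=\hat{R}_{u,21}(1_{\hat{M}_u})$, the coboundary identity obtained from the universal anti-comultiplicativity of the groupoid antipode (the paper runs the computation through $\hat{\Delta}_{u,12}\circ\hat{R}_{u,21}$, you through $\hat{\Delta}_{u,21}\circ\hat{R}_{u,12}$ together with the universal analogue of \eqref{eq12} — the same computation up to symmetry), and \eqref{EqImageUnderRed} from the fact that the reducing map intertwines unitary antipodes. The only genuine divergence is uniqueness. The paper applies $(\id\otimes\hat{\pi}_{\red})$ directly to the coboundary relation for a competitor $Y$ and uses the LCQG analogue of \eqref{EqHalfUniversalRepQgr} for $\G$, so that $(\id\otimes\hat{\pi}_{\red})\hat{\Delta}_u(Y)=\hat{\Ww}^*(1\otimes X_{\hat{\Omega}})\hat{\Ww}$ depends only on $\hat{\pi}_{\red}(Y)=X_{\hat{\Omega}}$; comparing with the same identity for $X_{\hat{\Omega},u}$ and cancelling unitaries gives $Y=X_{\hat{\Omega},u}$ in one line. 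You instead pass to $w=XX_{\hat{\Omega},u}^*$, show it is group-like for $\hat{\Delta}_{u,11}=\hat{\Omega}_u\hat{\Delta}_u(\cdot)\hat{\Omega}_u^*$ with $\hat{\pi}_{\red}(w)=1$, and then need that the reducing map is injective on group-like unitaries of $\Mult(C^*(\Hh))$. That final input is true and is closed exactly by the mechanism you hint at: applying $\id\otimes\hat{\pi}_{\red}$ to $\hat{\Delta}_{u,11}(w)=w\otimes w$ and using the $11$-component of \eqref{EqHalfUniversalRepQgr} gives $w\otimes 1=\hat{\Ww}_{11}^*(1\otimes\hat{\pi}_{\red}(w))\hat{\Ww}_{11}=1\otimes 1$, hence $w=1$; alternatively one may cite the known identification of intrinsic groups at the universal and reduced levels (\cite{Daw16}). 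So your route is valid but requires this extra (standard) fact about group-likes of $\Hh$, whereas the paper's argument stays entirely at the level of $\G$ and is self-contained; your parenthetical gloss via ``one-dimensional $\hat{\Hh}$-representations'' is loosely phrased, but the slicing of $\hat{\Ww}_{11}$ you point to is precisely the correct way to make it rigorous.
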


\begin{proof}
Recall from \eqref{EqUnitaryAntipodeUnivQgr} the universal unitary antipode $R_{\hat{Q}_u}$ on $\hat{Q}_u$. If we then put \[
X_{\hat{\Omega},u} := \hat{R}_{u,12}(1_{\hat{M}_u})^* =\hat{R}_{u,21}(1_{\hat{M}_u}) \in \Mult(C^*(\G)),
\]
 we find that 
\[\begin{split}
&\quad\;\hat{\Omega}_u\hat{\Delta}_{u,22}(X_{\hat{\Omega},u}) = \hat{\Delta}_{u,12}(\hat{R}_{u,21}(1_{\hat{M}_u}))\\
&= (\hat{R}_{u,21}\otimes \hat{R}_{u,21})\hat{\Delta}^{\opp}_{u,21}(1_{\hat{M}_u})=
(\hat{R}_{u,21}\otimes \hat{R}_{u,21})(\hat{\Omega}^*_{u[21]})\\
&
= (X_{\hat{\Omega},u}\otimes X_{\hat{\Omega},u})(\hat{R}_u\otimes \hat{R}_u)(\hat{\Omega}_{u[21]}^*)
=  (X_{\hat{\Omega},u}\otimes X_{\hat{\Omega},u})\hat{\Omega}_{u,c},
\end{split}\]
i.e.~$X_{\hat{\Omega},u}$ is a coboundary between $\hat{\Omega}_u$ and $\hat{\Omega}_{u,c}$. The formula \eqref{EqImageUnderRed} holds since $\pi^{\red}_{\hat{Q}_u}\colon C^*(\msG) \rightarrow W^*(\msG)$ intertwines unitary antipodes. 

To see that $X_{\hat{\Omega},u}$ is uniquely determined by these conditions, let $\hat{\Ww} \in \Mult( C^*(\G)\otimes \mcK(L^2(\G)))$ be the half-lifted universal left multiplicative unitary for $\hat{\G}$. If then $Y \in \Mult(C^*(\G))$ satisfies the conditions in the lemma, we see from the LCQG-analogon of \eqref{EqHalfUniversalRepQgr} that 
\[
(X_{\hat{\Omega},u}\otimes X_{\hat{\Omega}})(\id\otimes \hat{\pi}_{\red})(\hat{\Omega}_{u,c})=
(\id\otimes \hat{\pi}_{\red})(\hat{\Omega}_u)\hat{\Ww}^* (1\otimes X_{\hat{\Omega}})\hat{\Ww} = (Y\otimes X_{\hat{\Omega}})(\id\otimes \hat{\pi}_{\red})(\hat{\Omega}_{u,c}),
\]
showing that necessarily $Y = X_{\hat{\Omega},u}$. 
\end{proof}

We let
\begin{equation}\label{eq11}
\Ind_{\msG}^{C^*,\triv}\colon \Rep(C^*(\G))\rightarrow \Rep(C^*(\msG))
\end{equation}
be the functor given on objects by the extension of $\hat{\pi}_U\in \Rep(C^*(\G))$ to a non-degenerate $*$-representation of $C^*(\msG)$ on $\begin{pmatrix} \Hsp \\ \Hsp \end{pmatrix}$ according to the trivialisation $C^*(\msG)\cong \begin{pmatrix}
\hat{M}_u & \hat{M}_u\\
\hat{M}_u & \hat{M}_u
\end{pmatrix}$. On morphisms, $\Ind_{\msG}^{C^*,\triv}$ is given by $\Ind_{\msG}^{C^*,\triv}(T)=T\oplus T$. We let $\Ind_{\msG}^{\triv}$ be the associated functor $\Rep(\G)\rightarrow \Rep(\msG)$.

\begin{Prop}\label{PropInducedCorep}
Consider $(\Hsp,U)$ a unitary $\G$-representation, with associated $C^*(\G)$-representation $\hat{\pi}_U$. The corresponding unitary left $\msG$-representation $\Ind_{\msG}^{\triv}(U)$ is given by the unitaries 
\begin{eqnarray}
U_{22}^* &=& U^*,\\
\label{EqU12Form} U_{12}^* &=&  U^*(\hat{\pi}_{\red}\otimes \hat{\pi}_U)(\hat{\Omega}_{u[21]}^*),\\
U_{21}^* &=& (u_{\G}\hat{\pi}_{\red}( - )u_{\G}^*\otimes \hat{\pi}_U)((1\otimes X_{\hat{\Omega},u})\hat{\Omega}_{u,c})U^*,\\
 U_{11}^* &=& (u_{\G}\hat{\pi}_{\red}( - )u_{\G}^*\otimes \hat{\pi}_U)((1\otimes X_{\hat{\Omega},u})\hat{\Omega}_{u,c})U^*(\hat{\pi}_{\red}\otimes \hat{\pi}_U)(\hat{\Omega}_{u[21]}^*).
\end{eqnarray}
\end{Prop}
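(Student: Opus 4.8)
The plan is to exploit the universal property of the half-lifted universal multiplicative partial isometry $\Ww_{\hat{Q}_u}$. By construction $\Ind_{\msG}^{\triv}(U)$ is the unitary $\msG$-representation associated to $\hat{\pi}:=\Ind_{\msG}^{C^*,\triv}(\hat{\pi}_U)$, which under the fixed trivialisation $C^*(\msG)\cong M_2(\hat{M}_u)$ is simply $\hat{\pi}_U\otimes\id_{M_2}$; hence \eqref{EqUnivPropUnivLift} gives $\Ind_{\msG}^{\triv}(U)=(\id\otimes\hat{\pi})(\Ww_{\hat{Q}_u[21]}^*)$. Decomposing $\Ww_{\hat{Q}_u}$ into its corners $\hat{\Ww}_{ij}\in\Mult(C^*(\G_{ji})\otimes C_0(\G_{ij}))$ as in \eqref{EqPiecesUniversal} and pairing each against the $(j,i)$-corner of $\hat{\pi}$ (which under the trivialisation is again $\hat{\pi}_U$), one gets $U_{ij}=(\id\otimes\hat{\pi}_U)(\hat{\Ww}_{ij[21]}^*)$. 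So the whole statement reduces to an explicit description of the four pieces $\hat{\Ww}_{ij}$ under the trivialisation in terms of $\hat{\Omega}_u$, $\hat{\Omega}_{u,c}$ and $X_{\hat{\Omega},u}$, after which applying $(\id\otimes\hat{\pi}_U)$ and taking adjoints produces the displayed formulas.

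For the $22$-corner this is immediate: $\hat{\Ww}_{22}$ is the half-lifted universal left multiplicative unitary $\hat{\Ww}$ of $\hat{\G}$, so the LCQG-analogue of \eqref{EqUnivPropUnivLift} yields $U_{22}=(\id\otimes\hat{\pi}_U)(\hat{\Ww}_{[21]}^*)=U$, i.e.\ $U_{22}^*=U^*$. For the remaining corners I would combine: (i) the reduced formulas \eqref{EqMultUnitCocy1}--\eqref{EqMultUnitCocy2} for $\hat{\ww}_{12}^k,\hat{\ww}_{21}^k,\hat{\ww}_{11}^k$, in the form rewritten there through $X_{\hat{\Omega}}$, $u_{\G}$ and the conjugate cocycle $\hat{\Omega}_c$; (ii) the universal lifts $\hat{\Omega}_u$, $\hat{\Omega}_{u,c}=(\hat{R}_u\otimes\hat{R}_u)(\hat{\Omega}_{u[21]}^*)$ and $X_{\hat{\Omega},u}=\hat{R}_{u,21}(1_{\hat{M}_u})$ of \eqref{EqConjCocy} and Lemma \ref{LemUniqueUnitCob}, whose $\hat{\pi}_{\red}$-images are the reduced objects; and (iii) the universal version of \eqref{EqCoprodCocTwist}, $\hat{\Delta}_{u,12}=\hat{\Omega}_u\hat{\Delta}_u(\cdot)$ and $\hat{\Delta}_{u,21}=\hat{\Delta}_u(\cdot)\hat{\Omega}_u^*$, together with \eqref{eq12} for $\hat{R}_{u,ij}$. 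Feeding these, and the modular-conjugation identities \eqref{EqModConjMultUn}, \eqref{EqFundSymm}, into the intertwining relation \eqref{EqHalfUniversalRepQgr} for $\Ww_{\hat{Q}_u}$ and invoking uniqueness of universal lifts as in the proof of Lemma \ref{LemUniqueUnitCob}, one pins down $\hat{\Ww}_{12}$, $\hat{\Ww}_{21}$ and, as their product (by the universal analogue of \eqref{eq17}), $\hat{\Ww}_{11}$, as the announced twists of $\hat{\Ww}$; the conjugation $u_{\G}\hat{\pi}_{\red}(-)u_{\G}^*$ on the first leg of $U_{21}^*$ and $U_{11}^*$ then appears exactly as in the passage from \eqref{EqMultUnitCocy2} to the rewritten $\hat{\ww}_{21}^k$, $\hat{\ww}_{11}^k$.

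A more economical variant avoids re-deriving the pieces of $\Ww_{\hat{Q}_u}$: transport the concrete description \eqref{eq44} of the corners of $\Ind_{\msG}(U)$ along the canonical natural isomorphism $\Ind_{\msG}\cong\Ind_{\msG}^{\triv}$ — both functors split the equivalence $\Res_{\msG}$ from the proof of Theorem \ref{TheoIndRep} — where the comparison unitary $\Ind_{\X}(\Hsp)\oplus\Hsp\to\Hsp\oplus\Hsp$ is the identity on the $22$-summand and $\mc{O}_{(\Hsp,\hat{\pi}_U)}\circ\theta_{(\Hsp,\hat{\pi}_U)}^{-1}$ on the $11$-summand, in the notation of Lemmas \ref{lemma9} and \ref{lemma10}; conjugating $U_{12}^*\colon\xi\otimes z\mapsto z\xi$ and $U_{21}^*\colon\xi\mapsto(\Delta_{22}^1\otimes\id)(U^*)\xi_{[13]}$ by it, and substituting $\hat{\Delta}_{u,12}(1_{\hat{M}_u})=\hat{\Omega}_u$, reproduces the four formulas. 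Either way, the main obstacle is the $21$-corner, and hence $U_{11}$ (the composite of the $12$- and $21$-twists): this is precisely where the modular conjugation $J_N$ enters — so where $X_{\hat{\Omega}}$ and $\hat{\Omega}_c$ are genuinely needed rather than just $\hat{\Omega}$ — and one must carry the reduced identity for $\hat{\ww}_{21}^k$ faithfully to the universal level, keeping track of which C$^*$-algebra each leg lives in (the off-diagonal $C_0(\G_{ij})$ are not $C_0(\G)$) and checking that the operator produced still lies in $L^{\infty}(\G_{21})\bar\otimes\mcB(\Hsp)$. That last point is exactly what uniqueness of universal lifts, together with the fact that $\pi_{\hat{Q}_u}^{\red}$ intertwines unitary antipodes and coproducts, is there to handle.
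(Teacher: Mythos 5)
Your overall reduction coincides with the paper's: identify $\Ind^{\triv}_{\msG}(U)$ with $(\id\otimes \hat{\pi})(\Ww_{\hat{Q}_u[21]}^*)$ via \eqref{EqUnivPropUnivLift}, so that everything comes down to computing the four corners $\hat{\Ww}_{ij}$ of the half-lifted universal object under the fixed trivialisation. The $22$-corner is immediate, and your ingredient (iii) handles the $12$-corner exactly as in the paper, since $\hat{\Delta}_{u,21}(1_{\hat{M}_u})=\hat{\Omega}_u^*$ gives $\hat{\Ww}_{12}=\hat{\Ww}_{22}(\id\otimes\hat{\pi}_{\red})(\hat{\Omega}_u^*)$. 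The gap is precisely at the $21$-corner (hence also the $11$-corner), which you yourself single out as the main obstacle. Your proposed mechanism — take the reduced identities \eqref{EqMultUnitCocy2}, feed them together with \eqref{EqModConjMultUn}, \eqref{EqFundSymm} into \eqref{EqHalfUniversalRepQgr}, and ``invoke uniqueness of universal lifts as in the proof of Lemma \ref{LemUniqueUnitCob}'' — is not a valid step as stated. The element $\hat{\Ww}_{21}$ has a universal first leg, and knowing its image under $\hat{\pi}_{\red}\otimes\id$ (which is all that \eqref{EqMultUnitCocy2} controls) cannot determine it, since $\hat{\pi}_{\red}$ need not be injective; the relation \eqref{EqHalfUniversalRepQgr} does not by itself characterise $\Ww_{\hat{Q}_u}$; and the uniqueness clause of Lemma \ref{LemUniqueUnitCob} concerns an element $Y\in\Mult(C^*(\G))$ appearing tensored against unitaries that can be cancelled, an argument with no evident analogue for $\hat{\Ww}_{21}$ itself. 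The same issue blocks your ``economical variant'': conjugating \eqref{eq44} by the comparison unitary of Lemmas \ref{lemma9} and \ref{lemma10} only re-expresses $U_{21}^*$ through the W$^*$-level map $\Delta_{22}^1$, and converting that into the asserted formula — in which $\hat{\pi}_U$ is applied to a genuinely \emph{universal} leg — is exactly the step that remains missing.

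What is needed, and what the paper actually does, is to stay entirely at the universal level and use the antipode symmetry $(R_{\hat{Q}_u}\otimes R_Q)\Ww_{\hat{Q}_u}=\Ww_{\hat{Q}_u}$, where $R_Q(x)=J_{\hat{Q}}x^*J_{\hat{Q}}$. This yields $\hat{\Ww}_{21}=(\hat{R}_{u,21}\otimes J_{\hat{M}}(-)^*J_{\hat{M}})\hat{\Ww}_{12}$, and the universal analogue of \eqref{eq12} (namely $\hat{R}_{u,21}=X_{\hat{\Omega},u}\hat{R}_u(\cdot)$ and $\hat{R}_{u,22}=\hat{R}_u$, with $X_{\hat{\Omega},u}=\hat{R}_{u,21}(1_{\hat{M}_u})$ from Lemma \ref{LemUniqueUnitCob}), combined with \eqref{EqFormUnitaryAntipo}, \eqref{EqModConjMultUn} and \eqref{EqFundSymm}, converts the already known $\hat{\Ww}_{12}$ into $\hat{\Ww}_{21}=(X_{\hat{\Omega},u}\otimes 1)(\id\otimes u_{\G}\hat{\pi}_{\red}(-)u_{\G}^*)(\hat{\Omega}_{u,c[21]})\hat{\Ww}_{22}$; the $11$-corner then follows in the same way (or as the product you indicate via the universal analogue of \eqref{eq17}). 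No lifting-plus-uniqueness argument is required, and without this symmetry step your plan does not close.
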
 

\begin{proof}
Recall the notations \eqref{EqHalfUniversalRepQgr} and \eqref{EqPiecesUniversal} for the dual half-lifted universal multiplicative unitary for $\msG$ and its components. In our specific case it then follows that we have 
\[
\hat{\Ww}_{22}(\id\otimes \hat{\pi}_{\red})(\hat{\Omega}_u^*) = \hat{\Ww}_{22}(\id\otimes \hat{\pi}_{\red})\hat{\Delta}_{u,21}(1_{\hat{M}_u}) = \hat{\Ww}_{12}.
\]
Similarly, using that $(R_{\hat{Q}_u} \otimes R_Q)\Ww_{\hat{Q}_u} = \Ww_{\hat{Q}_u}$, where $R_Q(x)= J_{\hat{Q}}x^*J_{\hat{Q}}$ for $x\in Q$ and the universal analogon of \eqref{eq12}, we obtain  
\[\begin{split}
&\quad\;
\hat{\Ww}_{21} = (\hat{R}_{u,21} \otimes \hat{J}( - )^*\hat{J})\hat{\Ww}_{12} = (X_{\hat{\Omega},u}\otimes 1)(\hat{R}_{u,22} \otimes \hat{J}( - )^*\hat{J})(\hat{\Ww}_{22}(\id\otimes \hat{\pi}_{\red})(\hat{\Omega}_u^*))\\
&=  (X_{\hat{\Omega},u}\otimes 1)(\id\otimes u_{\G}\hat{\pi}_{\red}( - )u_{\G}^*)((\hat{R}_{u,22}\otimes \hat{R}_{u,22})\hat{\Omega}_u^*)\hat{\Ww}_{22} = (X_{\hat{\Omega},u}\otimes 1)(\id\otimes u_{\G}\hat{\pi}_{\red}( - )u_{\G}^*)(\hat{\Omega}_{u,c[21]})\hat{\Ww}_{22}.
\end{split}\]
The formula 
\[
\hat{\Ww}_{11} = (X_{\hat{\Omega},u}\otimes 1)(\id\otimes u_{\G}\hat{\pi}_{\red}( - )u_{\G}^*)(\hat{\Omega}_{u,c[21]})\hat{\Ww}_{22} (\id\otimes \hat{\pi}_{\red})(\hat{\Omega}_u^*)
\]
follows completely similarly. The proposition now follows from \eqref{EqUnivPropUnivLift}.
\end{proof}

We can now also show the triviality of induction on the level of induced $\G$-representations. Let us write $\Ind_{\X}^{\triv}$ for the W$^*$-functor 
\[
\Ind_{\X}^{\triv}\colon \Rep(\GG)\rightarrow \Rep(\HH),\qquad  \Rep(\G)\ni (\Hsp,U) \mapsto (\Hsp,\Ind_{\msG}^{\triv}(U)_{11}) \in \Rep(\Hh),
\] 
with monoidal coherence through \eqref{EqMonoidalCoherence}. On morphisms $\Ind_{\X}^{\triv}$ is defined as the identity. Observe that $\Ind_{\X}^{\triv}$ corresponds to $\Ind_{\X}^{C^*,\triv}$ under the identifications $\Rep(\GG)\cong \Rep(C^*(\GG))$, $\Rep(\HH)\cong \Rep(C^*(\HH))$.

\begin{Prop}\label{PropIdentStandard}
We have a monoidal unitary natural transformation of tensor W$^*$-functors $\Ind_{\X}^{\triv} \cong \Ind_{\X}$ via 
\begin{equation}\label{EqInductionTriv}
\theta_{(\mc{H},\hat{\pi}_U)}\mc{O}^*_{(\mc{H},\hat{\pi}_U)}\colon  \Hsp \cong \Ind_{\X}(\Hsp),\qquad \xi  \mapsto U^*(\hat{\pi}_{\red}\otimes \hat{\pi}_U)(\hat{\Omega}_{u[21]}^*)(1\otimes \xi)
\end{equation}
for $(\mc{H},U)\in \Rep(\GG)$.
\end{Prop}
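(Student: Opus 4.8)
The plan is to construct the natural isomorphism by composing the two natural isomorphisms already available, and then to extract both the explicit formula and the monoidality from Lemmas~\ref{lemma9} and~\ref{lemma10}, Proposition~\ref{prop1}, and Proposition~\ref{PropInducedCorep}. First I would set $\Phi_{(\mc{H},\hat{\pi}_U)}:=\theta_{(\mc{H},\hat{\pi}_U)}\,\mc{O}_{(\mc{H},\hat{\pi}_U)}^{*}$. Since $\mc{O}_{(\mc{H},\hat{\pi}_U)}\colon C^*(\X)\otimes_{C^*(\GG)}\mc{H}\to\mc{H}$ and $\theta_{(\mc{H},\hat{\pi}_U)}\colon C^*(\X)\otimes_{C^*(\GG)}\mc{H}\to\Ind_{\X}(\mc{H})$ are unitary natural isomorphisms (Lemmas~\ref{lemma10} and~\ref{lemma9}), so is $\Phi$, and after transporting along $\Rep(C^*(\GG))\cong\Rep(\GG)$ and $\Rep(\HH)\cong\Rep(C^*(\HH))$ it yields a unitary natural transformation $\Ind_{\X}^{\triv}\cong\Ind_{\X}$; this already establishes the existence part of the statement.

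To obtain the explicit formula~\eqref{EqInductionTriv}, I would lift $\Phi$ to the level of $\msG$. By construction, $\mc{O}_{(\mc{H},\hat{\pi}_U)}$ and $\theta_{(\mc{H},\hat{\pi}_U)}$ are the maps on the $1$-components underlying $\msG$-intertwiners $\mc{O}_{(\mc{H},\hat{\pi}_U)}\oplus\id_{\mc{H}}\colon\Ind_{\msG}^{C^*}(\hat{\pi}_U)\to\Ind_{\msG}^{C^*,\triv}(\hat{\pi}_U)$ and $\theta_{(\mc{H},\hat{\pi}_U)}\oplus\id_{\mc{H}}\colon\Ind_{\msG}^{C^*}(\hat{\pi}_U)\to\widetilde{\Ind}_{\msG}(\hat{\pi}_U)$ which are the identity on the $22$-corner (each of these representations has $\hat{\pi}_U$ as its $22$-component). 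Hence $\widetilde{\Phi}:=\Phi\oplus\id_{\mc{H}}$ is an intertwiner of the unitary $\msG$-representations $\Ind_{\msG}^{\triv}(U)$ and $\Ind_{\msG}(U)$. Writing out the $(1,2)$-block of the intertwining relation gives $(1\otimes\Phi)\,\Ind_{\msG}^{\triv}(U)_{12}=\Ind_{\msG}(U)_{12}$, equivalently $\Ind_{\msG}^{\triv}(U)_{12}^{*}=\Ind_{\msG}(U)_{12}^{*}(1\otimes\Phi)$. Evaluating both sides on $\zeta\otimes\eta$ with $\zeta\in L^2(\GG_{12})=L^2(\GG)$ and $\eta\in\mc{H}$, and using the description $\Ind_{\msG}(U)_{12}^{*}(\zeta\otimes z)=z\zeta$ from~\eqref{eq44} together with the formula $\Ind_{\msG}^{\triv}(U)_{12}^{*}=U^{*}(\hat{\pi}_{\red}\otimes\hat{\pi}_U)(\hat{\Omega}_{u[21]}^{*})$ from Proposition~\ref{PropInducedCorep}, one reads off that $\Phi(\eta)\in\Ind_{\X}(\mc{H})\subseteq N\bar{\otimes}\mc{H}$ is the operator $\zeta\mapsto U^{*}(\hat{\pi}_{\red}\otimes\hat{\pi}_U)(\hat{\Omega}_{u[21]}^{*})(\zeta\otimes\eta)$, i.e.\ exactly the right-hand side of~\eqref{EqInductionTriv}.

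Monoidality is then a formal consequence of the identities already proven. The very definition~\eqref{eq36} of $v_{\mc{H},\mc{H}'}$ says that $\theta$ is monoidal from the coherence $v$ on $\Ind_{\X}^{C^*}$ to the coherence $u$ of Theorem~\ref{TheoIndRep} on $\Ind_{\X}$, that is $\theta\circ v=u\circ(\theta\otimes\theta)$; and Proposition~\ref{prop1}, i.e.\ identity~\eqref{eq38}, says that $\mc{O}$ is monoidal from $v$ to the coherence maps~\eqref{EqMonoidalCoherence}, that is $\mc{O}\circ v=\bigl((\hat{\pi}\otimes\hat{\pi}')(\hat{\Omega}_{u[21]}^{*})\bigr)\circ(\mc{O}\otimes\mc{O})$. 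Eliminating $v$ between these two identities (using that $\mc{O}$ is unitary) gives $\Phi\circ\bigl((\hat{\pi}\otimes\hat{\pi}')(\hat{\Omega}_{u[21]}^{*})\bigr)=u\circ(\Phi\otimes\Phi)$, which is precisely the statement that $\Phi$ intertwines the monoidal coherence~\eqref{EqMonoidalCoherence} of $\Ind_{\X}^{\triv}$ with the coherence $u$ of $\Ind_{\X}$.

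The step I expect to cost the most care is the second paragraph: one must keep track of the identifications built into the trivialisation of the universal linking C$^*$-algebra (e.g.\ $C^*(\X)=\hat{M}_u$ and $L^2(\GG_{12})=L^2(\GG)$) and, crucially, of the distinction between the abstract Hilbert space $\mc{H}$ carrying $\Ind_{\X}^{\triv}(U)$ and the concrete realisation of $\Ind_{\X}(\mc{H})$ as a subspace of $N\bar{\otimes}\mc{H}$, along with the leg-numbering bookkeeping when reading off the $(1,2)$-block of the intertwining relation; the remaining steps are either routine verifications or the short formal computations indicated above.
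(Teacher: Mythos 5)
Your proposal is correct, and its first and third paragraphs coincide with the paper's treatment: the paper likewise obtains existence and monoidality of the transformation $\theta\mc{O}^*$ by simply combining Lemma \ref{lemma9}, Lemma \ref{lemma10}, the monoidality of $\widetilde{\Ind}_{\X}$ via $u_{\Hsp,\Hsp'}$, and Proposition \ref{prop1} (your elimination of $v_{\Hsp,\Hsp'}$ is exactly what those citations amount to). Where you genuinely diverge is the verification of the explicit formula \eqref{EqInductionTriv}: the paper evaluates $\theta_{(\mc{H},\hat{\pi}_U)}\mc{O}^*_{(\mc{H},\hat{\pi}_U)}$ directly on vectors $\hat{\pi}_U(a)\xi$ with $a=(\id\otimes\omega_{\eta,\eta'})(\hat{\Ww}_{21})$, pushing the slice through the relation $(\Delta^1_{22}\otimes\id)(U^*)=U^*_{12[23]}U^*_{21[13]}$ and Proposition \ref{PropInducedCorep}, and concludes by non-degeneracy of $\hat{\pi}_U$; you instead promote $\Phi\oplus\id$ to an intertwiner of the unitary $\msG$-representations $\Ind^{\triv}_{\msG}(U)$ and $\Ind_{\msG}(U)$ and read the formula off the $(1,2)$-block using \eqref{eq44} and \eqref{EqU12Form}. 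This is a valid and arguably cleaner repackaging, with one caveat you should make explicit: Lemmas \ref{lemma9} and \ref{lemma10} only assert that $\theta$ and $\mc{O}$ intertwine the $C^*(\Hh)$-level (i.e.\ $11$-corner) representations, so the claim that $\Phi\oplus\id$ is a full $C^*(\msG)$-intertwiner is not literally "by construction" from those lemmas; it does follow by a short direct check (on the dense set of vectors $\hat{\pi}_U(b)\xi$, $b\in C^*(\X)$, one has $\Phi\hat{\pi}_U(b)\xi=\hat{\pi}_{U,12}(b)\xi$ by definition of $\theta$ and $\mc{O}$, and the compatibility on the remaining corners follows from multiplicativity of $\widetilde{\Ind}_{\msG}(\hat{\pi}_U)$ and non-degeneracy), which is of the same nature as, and no harder than, the paper's own computation. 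With that check spelled out, your argument is complete.
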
 

\begin{proof}
We already know that the family of unitary operators $\theta_{(\mc{H},\hat{\pi}_U)}\mc{O}^*_{(\mc{H},\hat{\pi}_U)}$ establishes a monoidal unitary natural transformation $\Ind_{\X}^{\triv}\rightarrow \Ind_{\X}$ (see Lemma \ref{lemma9}, the discussion after Remark \ref{rem2}, Lemma \ref{lemma10} and  Proposition \ref{prop1}).

It is left to check the expression \eqref{EqInductionTriv}. Take $a=(\id\otimes\omega_{\eta,\eta'})(\hat{\Ww}_{21})\in C^*(\X)$ for $\eta,\eta'\in L^2(\GG)$ and $\xi\in \mc{H}$. Denote by $\hat{\pi}_{U,ij}$ the $*$-representation of $C^*(\GG_{ij})$ corresponding to $U_{ij}=\Ind_{\msG}(U)_{ij}$. We then calculate, using equations \eqref{eq44} and \eqref{EqU12Form},
\[\begin{split}
&\quad\;
\theta_{(\mc{H},\hat{\pi}_U)}\mc{O}^*_{(\mc{H},\hat{\pi}_U)} (\hat{\pi}_U(a) \xi)=
\theta_{(\mc{H},\hat{\pi}_U)}(a\otimes \xi)=
\hat{\pi}_{U,12}(a)\xi=
\hat{\pi}_{U,12}((\omega_{\eta,\eta'}\otimes\id)(\hat{\Ww}_{21 [21]}))\xi\\
&=
(\omega_{\eta,\eta'}\otimes\id)
\bigl( (\id\otimes\hat{\pi}_{U,21})(\hat{\Ww}_{21[21]}^*)^*\bigr)
\xi=
(\omega_{\eta,\eta'}\otimes\id)(U_{21}^*)\xi=
( \eta^* \otimes \id)(U_{21}^*(\eta'\otimes \xi))\\
&=
( \eta^* \otimes \id)(
(\Delta^1_{22}\otimes \id)(U^*)
(\eta'\otimes 1 \otimes \xi ))=
( \eta^* \otimes \id)(
U_{12 [23]}^*U_{21 [13]}^* 
(\eta'\otimes 1 \otimes \xi ))\\
&=
(\id\otimes \hat{\pi}_{U, 21})(\hat{\Ww}_{12 [21]})
(1\otimes \hat{\pi}_{U, 12}( (\omega_{\eta,\eta'}\otimes\id)\hat{\Ww}_{21[21]})\xi)=
(\id\otimes \hat{\pi}_{U})(\hat{\Ww}_{12 [21]})
(1\otimes \hat{\pi}_{U}(a)\xi)\\
&=
U^* (\hat{\pi}_{\red}\otimes \hat{\pi}_U)(\hat{\Omega}^*_{u[21]})
(1\otimes \hat{\pi}_{U}(a)\xi)
\end{split}\]
Since $\hat{\pi}_U$ is non-degenerate, this ends the proof. 
\end{proof}

\subsection{Modular conjugation for induced actions}
Assume again that $\msG = \begin{pmatrix} \Hh & \X\\ \Y & \G\end{pmatrix}$ has trivial universal linking C$^*$-algebra, with the associated universal dual unitary $2$-cocycle $\hat{\Omega}_u$ for $\G$. 

Assume that $(A,\gamma)$ is a $\G$-W$^*$-algebra. By Theorem \ref{TheoIndL} and the identification \eqref{EqInductionTriv}, we can canonically identify $L^2(A)$ as a standard Hilbert space for $\Ind_{\X}(A)$.

\begin{Theorem}\label{TheoModConjGen}
The modular conjugation map for the above standard representation of $\Ind_{\X}(A)$ on $L^2(A)$ is given by 
\begin{equation}\label{EqInducedModCon}
J_{\Ind_{\X}(A)}\colon L^2(A) \rightarrow L^2(A),\qquad J_{\Ind_{\X}(A)} = \hat{\pi}_{U_{\gamma}}(X_{\hat{\Omega},u})J_A.
\end{equation}
\end{Theorem}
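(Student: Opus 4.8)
The plan is to feed the explicit data of the present (trivialised at the universal level) situation into the general formula of Lemma~\ref{LemIndLModConj} and simplify. Recall that the identification of $L^2(A)$ with a standard Hilbert space of $\Ind_\X(A)$ used in the statement is the composite $L^2(\Ind_\X(A))\xrightarrow{I}\Ind_\X(L^2(A))\xleftarrow{V}L^2(A)$, where $I$ is the unitary of Theorem~\ref{TheoIndL} and $V$ is the $(L^2(A),U_\gamma)$-component $\theta_{(L^2(A),\hat\pi_{U_\gamma})}\mc O^*_{(L^2(A),\hat\pi_{U_\gamma})}$ of the natural transformation \eqref{EqInductionTriv}, so that $V\xi=U_\gamma^*(\hat\pi_{\red}\otimes\hat\pi_{U_\gamma})(\hat\Omega_{u[21]}^*)(1\otimes\xi)$. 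Under this composite the modular conjugation of $\Ind_\X(A)$ becomes $V^*\bigl(IJ_{\Ind_\X(A)}I^*\bigr)V$, and by Lemma~\ref{LemIndLModConj} the antilinear map $IJ_{\Ind_\X(A)}I^*$ on $\Ind_\X(L^2(A))$ is the unique one obeying \eqref{EqFormModConj}. So the goal reduces to checking that $V\bigl(\hat\pi_{U_\gamma}(X_{\hat\Omega,u})J_A\bigr)V^*$ obeys \eqref{EqFormModConj}; equivalently, one manipulates \eqref{EqFormModConj} to solve for $IJ_{\Ind_\X(A)}I^*$ and then conjugates back by $V$.

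First I would specialise \eqref{EqFormModConj}. Because $\msG$ has trivial universal linking C$^*$-algebra we have $W^*(\G_{ij})=\hat M=W^*(\G)$ and $L^2(\G_{ij})=L^2(\G)$ for all $i,j$, hence $\hat N=W^*(\G)$, every $\hat\pi_{ij}^k$ becomes the standard representation $\pi_{\hat M}$ of $\hat M$ on $L^2(\G)$, and every $\hat J_{ij}$ becomes $J_{\hat M}$. The two non-elementary ingredients that remain are the corepresentation $\Ind_\X(U_\gamma)$ and the partial-coproduct component $\Delta_{22}^1$ (entering through $\gamma_2^1=(\Delta_{22}^1\otimes\id)\gamma$ and through $(\Delta_{22}^1\otimes\id)(U_\gamma^*)$). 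For both of these I would substitute the explicit expressions of Proposition~\ref{PropInducedCorep}: $\Ind_\X(U_\gamma)$ corresponds, through $V$, to $\Ind_\msG^{\triv}(U_\gamma)_{11}$ with $U_{11}^*$ as in that proposition, and $(\Delta_{22}^1\otimes\id)(U_\gamma^*)$ corresponds to the $21$-component $U_{21}^*$; both are given in terms of $\hat\Omega_u$, $\hat\Omega_{u,c}$, $X_{\hat\Omega,u}$ and $u_\G$. After these substitutions both sides of \eqref{EqFormModConj} are written purely in terms of $\hat\Omega_u$, $\hat\Omega_{u,c}$, $X_{\hat\Omega,u}$, $u_\G$, $U_\gamma$, $\gamma$ and the modular conjugations.

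The bulk of the work is then the transport of \eqref{EqFormModConj} through $V$ and the ensuing cancellation. The identities that drive it are: the $2$-cocycle and skew-bicharacter relations for $\hat\Omega_u$; the coboundary relation $(X_{\hat\Omega,u}\otimes X_{\hat\Omega,u})\hat\Omega_{u,c}=\hat\Omega_u\hat\Delta_{u,22}(X_{\hat\Omega,u})$ from Lemma~\ref{LemUniqueUnitCob} together with $\hat\pi_{\red}(X_{\hat\Omega,u})=X_{\hat\Omega}=J_NJ_M$ from \eqref{EqImageUnderRed}; the relations \eqref{EqFormUnitaryAntipo}, \eqref{EqModConjMultUn} and $u_\G=\nu_\G^{i/8}J_MJ_{\hat M}$; and the defining properties of the standard implementation, $\gamma(a)=U_\gamma^*(1\otimes a)U_\gamma$, $(\Delta\otimes\id)U_\gamma=U_{\gamma[13]}U_{\gamma[23]}$ and the intertwining of $U_\gamma$ with $J_{\hat M}$ and $J_A$ (the analogue of \eqref{EqModConjMultUn}). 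The several copies of $\hat\Omega_u$ that appear — one from $V$, and the ones inside $U_{11}^*$ and $U_{21}^*$ — combine against $X_{\hat\Omega,u}$ by the coboundary relation, the factors $\gamma(a)$, $\gamma(b)$ and $U_\gamma^*$ match on the two sides, and the right-hand side collapses to $\hat\pi_{U_\gamma}(X_{\hat\Omega,u})J_A$ applied to the generating vectors of \eqref{EqFormModConj} while the left-hand side reproduces them; by uniqueness in Lemma~\ref{LemIndLModConj} this gives \eqref{EqInducedModCon}.

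The main obstacle is bookkeeping rather than anything conceptual: \eqref{EqFormModConj} involves three tensor legs, two Hilbert spaces carrying the linking-groupoid structure and several nested modular conjugations, and essentially all the difficulty lies in tracking which leg is affected by $V$, by $\Ind_\X(U_\gamma)$ and by $\gamma_2^1$, and in arranging the computation so the coboundary relation of Lemma~\ref{LemUniqueUnitCob} can be used to merge $X_{\hat\Omega,u}$ with the right copy of $\hat\Omega_u$. A good warm-up is $A=\C$ with the trivial action: there $\Ind_\X(\C)=\C$, $U_\gamma=1$, $\hat\pi_{U_\gamma}=\hat\varepsilon$ and $J_\C$ is complex conjugation, so \eqref{EqInducedModCon} reduces to $\hat\varepsilon(X_{\hat\Omega,u})=1$, which holds because $X_{\hat\Omega,u}=\hat R_{u,21}(1_{\hat M_u})$ and the counit is compatible with the unitary antipode.
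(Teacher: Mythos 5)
Your plan coincides with the paper's own proof: the paper likewise transports \eqref{EqFormModConj} through the identification of Proposition \ref{PropIdentStandard}, substitutes the components of $\Ind_{\msG}^{\triv}(U_\gamma)$ from Proposition \ref{PropInducedCorep}, and simplifies using $U_\gamma=(J_{\hat M}\otimes J_A)U_\gamma^*(J_{\hat M}\otimes J_A)$ and $J_A\hat\pi_{U_\gamma}(x)^*J_A=\hat\pi_{U_\gamma}(\hat R_u(x))$ together with the relation between $\hat\Omega_{u,c}$, $\hat\Omega_u$ and $X_{\hat\Omega,u}$, before invoking the uniqueness in Lemma \ref{LemIndLModConj}. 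The only practical shortcut the paper adds is setting $a=b=1$ and $x=z=1$ in \eqref{EqFormModConj}, which already determines $J_{\Ind_{\X}(A)}$ and keeps the bookkeeping you describe manageable.
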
 

\begin{proof}
We will write $U_{\gamma,ij}$ for the components of the trivial induction $\Ind_{\msG}^{\triv}(U_\gamma)$ and use its description from Proposition \ref{PropInducedCorep}.

Take $\xi\in L^2(\G)\otimes L^2(A)$, $x,z\in \hat{M}$ and $a,b\in A$ and observe that 
\[
(\Delta^1_{22}\otimes\id)(\gamma(a)U_\gamma^*)=
U_{\gamma, 12 [23]}^* U_{\gamma, 21 [13]}^*
a_{[3]}.
\] 
Lemma \ref{LemIndLModConj} gives us a formula involving the action of $J_{\Ind_{\X}(A)}$ on $\Ind_{\X}(L^2(A))\cong_{I} L^2(\Ind_{\X}(A))$:
\[\begin{split}
&\quad\;
(J_{\hat{M}}\otimes J_{\Ind_{\X}(A)})
\bigl(
\hat{\pi}_{12}^1(x)_{[1]}
U_{\gamma [23]}^*(\hat{\pi}_{\red}\otimes \hat{\pi}_{U_\gamma})(\hat{\Omega}^*_{u[21]})_{[23]}
U^*_{\gamma,21[13]} a_{[3]}
((J_{\hat{M}}\otimes J_A)(\hat{\pi}_{12}^2(z)\otimes 1)\gamma(b)\xi)_{[13]}\bigr) \\
&=
\Ind_{\X}(U_\gamma)
\hat{\pi}_{12}^1(z)_{[1]}
U_{\gamma [23]}^*(\hat{\pi}_{\red}\otimes\! \hat{\pi}_{U_\gamma})(\hat{\Omega}^*_{u[21]})_{[23]}
U^*_{\gamma,21[13]} b_{[3]}
((J_{\hat{M}}\otimes J_A)(\hat{\pi}_{12}^2(x)\otimes\! 1)\gamma(a)U_\gamma^* (J_{\hat{M}}\otimes \! J_A)\xi)_{[13]}.
\end{split}\]
Next we use the identification 
\[
L^2(A)\ni \eta\mapsto U^*_\gamma (\hat{\pi}_{\red}\otimes \hat{\pi}_{U_\gamma})(\hat{\Omega}^*_{u[21]})(1\otimes \eta)\in \Ind_{\X}(L^2(A))
\] 
from Proposition \ref{PropIdentStandard} and set $a=b=1, x=z=1$:
\[
(J_{\hat{M}}\otimes J_{\Ind_{\X}(A)})
U^*_{\gamma,21} 
(J_{\hat{M}}\otimes J_A)\xi =
U_{\gamma,11}
U^*_{\gamma,21}
(J_{\hat{M}}\otimes J_A)U_\gamma^* (J_{\hat{M}}\otimes J_A)\xi=
U_{\gamma,11}
U^*_{\gamma,21}
U_\gamma\xi
\]
(we are abusing notation and now consider $J_{\Ind_{\X}(A)}$ as acting on $L^2(A)$). Note that we have used $U_\gamma = (J_{\hat{M}}\otimes J_A)U_{\gamma}^* (J_{\hat{M}}\otimes J_A)$ \cite{Vae01}*{Proposition 3.7}. Using again the formulas from Proposition \ref{PropInducedCorep} gives
\[
(J_{\hat{M}}\otimes J_{\Ind_{\X}(A)})
u_{\GG [1]}( \hat{\pi}_{\red} \otimes \hat{\pi}_{U_{\gamma}})(
(1\otimes X_{\hat{\Omega},u})\hat{\Omega}_{u,c})
u_{\GG [1]}
U_{\gamma}^*
(J_{\hat{M}}\otimes J_A)=
(\hat{\pi}_{\red}\otimes\hat{\pi}_{U_{\gamma}})(\hat{\Omega}_{u[21]})
U_\gamma.
\]
An easy simplification using $J_A\hat{\pi}_{U_{\gamma}}(x)^*J_A=\hat{\pi}_{U_{\gamma}}(\hat{R}_u(x))$ for $x\in C^*(\G)$ (\cite{Vae01}*{Proposition 3.7}) gives
\[
J_{\hat{M}}\otimes J_{\Ind_{\X}(A)}=
(J_{\hat{M}}\otimes J_A)
 \hat{\pi}_{U_{\gamma}}(X_{\hat{\Omega},u})_{[2]}^*.
\]
Taking adjoints and slicing off the first leg ends the proof. 
\end{proof}

\section{Application to braided tensor products}

\subsection{Linking quantum groupoid for a bicharacter}\label{SecLQGFAB}

We resume the setting discussed at the end of Section \ref{SubSecCocBich}. So, $\G_1 = (M_1,\Delta_1),\G_2 = (M_2,\Delta_2)$ are two LCQG, and we put 
\[
\G = \G_1\times \G_2 = (M,\Delta) = (M_1\bar{\otimes}M_2,\Delta_{\otimes}).
\]
Then 
\[
\hat{\G} = \hat{\G}_1\times \hat{\G}_2 = (\hat{M},\hat{\Delta}) = (\hat{M}_1\bar{\otimes}\hat{M}_2,\hat{\Delta}_{\otimes}). 
\]

\begin{Lem}\label{lemma2}
The universal group C$^*$-algebra of $\G = \G_1\times \G_2$ is  given by 
\[
(\hat{M}_u,\hat{\Delta}_u),\qquad \hat{M}_u = C^*(\G_1\times \G_2) = \hat{M}_{1,u}\otimes_{\max}\hat{M}_{2,u}, 
\]
where 
\[
\hat{\Delta}_u(x\otimes y) = \hat{\Delta}_{1,u}(x)_{[13]}\hat{\Delta}_{2,u}(y)_{[24]} \in  \Mult((\hat{M}_{1,u}\otimes_{\max} \hat{M}_{2,u})\otimes (\hat{M}_{1,u}\otimes_{\max}\hat{M}_{2,u})).
\]
\end{Lem}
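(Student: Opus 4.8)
The plan is to identify the universal group C$^*$-algebra of $\G=\G_1\times\G_2$ with the maximal tensor product $\hat M_{1,u}\otimes_{\max}\hat M_{2,u}$ by exhibiting the latter, together with an appropriate coproduct, as an object with the correct universal property. Concretely, I would use the description of $\hat M_u = C^*(\G)$ through unitary $\G$-representations: recall that $\Rep(\G)$ is equivalent to $\Rep(\hat M_u)$, and that $\hat M_u$ is the completion of $M_*$ for the norm $\|\omega\|_u = \sup_U\|(\omega\otimes\id)(U)\|$. So the key is to understand unitary $\G$-representations of a product quantum group.

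First I would establish the representation-theoretic dictionary: a unitary corepresentation $U\in M\bar\otimes\mcB(\Hsp) = (M_1\bar\otimes M_2)\bar\otimes\mcB(\Hsp)$ of $\G=\G_1\times\G_2$ is the same thing as a pair of commuting unitary corepresentations $U_1\in M_1\bar\otimes\mcB(\Hsp)$ of $\G_1$ and $U_2\in M_2\bar\otimes\mcB(\Hsp)$ of $\G_2$ on the \emph{same} Hilbert space $\Hsp$ with $U = U_{1,[13]}U_{2,[23]}$; the corepresentation identity for $\Delta_\otimes$ splits into the corepresentation identities for $\Delta_1$ and $\Delta_2$ together with the statement that the legs in $M_1$ and in $M_2$ commute (which is automatic since $M_1\bar\otimes 1$ and $1\bar\otimes M_2$ commute). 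Passing to the universal picture via \eqref{EqPredualRepIntro}, $U_i$ corresponds to a non-degenerate $*$-representation $\hat\pi_{U_i}$ of $\hat M_{i,u}$ on $\Hsp$, and commutation of the legs of $U_1$ and $U_2$ translates into commutation of the ranges of $\hat\pi_{U_1}$ and $\hat\pi_{U_2}$. By the universal property of the maximal tensor product, such a commuting pair is precisely a non-degenerate $*$-representation $\hat\pi_{U_1}\times\hat\pi_{U_2}$ of $\hat M_{1,u}\otimes_{\max}\hat M_{2,u}$. This sets up a bijection (in fact an isomorphism of W$^*$-categories) $\Rep(\G)\cong\Rep(\hat M_{1,u}\otimes_{\max}\hat M_{2,u})$, compatible with intertwiners.

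Next I would use this equivalence to identify the universal C$^*$-algebras. Since $\hat M_u = C^*(\G)$ is characterized (among Banach, hence C$^*$, algebras) by the property that $\Rep(C^*(\G))\cong\Rep(\G)$ as W$^*$-categories with the reducing representation corresponding to $\ww_\G$, and since $\hat M_{1,u}\otimes_{\max}\hat M_{2,u}$ has exactly this category of representations by the previous paragraph, uniqueness gives the $*$-isomorphism $C^*(\G_1\times\G_2)\cong \hat M_{1,u}\otimes_{\max}\hat M_{2,u}$. Concretely the map sends $\omega_1\otimes\omega_2\in M_{1,*}\odot M_{2,*}\subseteq M_*$ to the corresponding elementary tensor, using that for $U=U_{1,[13]}U_{2,[23]}$ one has $(\omega_1\otimes\omega_2\otimes\id)(U) = \hat\pi_{U_1}(\omega_1)\hat\pi_{U_2}(\omega_2)$. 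The norm identity $\|\omega\|_u = \|\omega\|_{\max}$ is then exactly the statement that the maximal norm is the supremum over all commuting pairs of representations, matched against the definition of $\|\cdot\|_u$. Finally, to get the formula for $\hat\Delta_u$ I would invoke \eqref{EqTensProd2}/the characterization of the coproduct via $\hat\pi_{U\TensRep U'} = \hat\pi_U*\hat\pi_{U'}$: writing $U\TensRep U' = U_{[12]}U'_{[13]}$ and decomposing each factor into its $\G_1$- and $\G_2$-legs shows that $\hat\Delta_u$ acts on the first tensor leg as $\hat\Delta_{1,u}$ and on the second as $\hat\Delta_{2,u}$, with the leg-numbering $[13]$ and $[24]$ coming from interleaving; one checks this on the dense set of slices $\omega_1\otimes\omega_2$.

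\textbf{Main obstacle.} The genuinely delicate point is the appearance of the \emph{maximal} tensor product rather than the minimal (or some intermediate) one: one must argue that \emph{every} pair of commuting non-degenerate representations of $\hat M_{1,u}$ and $\hat M_{2,u}$ arises from a unitary $\G$-representation, so that the universal norm $\|\cdot\|_u$ dominates (hence equals) the maximal norm. This requires the reconstruction step --- from a commuting pair $(\pi_1,\pi_2)$, form $U_i := (\id\otimes\pi_i)(\Ww_{i,[21]}^*)$ using the half-lifted universal corepresentations of $\G_i$ (as in \eqref{EqHalfUniversalLift}), check these are commuting corepresentations on the common space, and verify that $U = U_{1,[13]}U_{2,[23]}$ is a unitary $\G$-corepresentation whose associated representation of $\hat M_u$ is the given pair. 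The continuity/non-degeneracy bookkeeping and the fact that $M_1\bar\otimes 1$ commutes with $1\bar\otimes M_2$ (so no cross-terms appear in the coproduct identity) are routine, but should be spelled out carefully; this is presumably the content of Proposition \ref{prop3} referenced in the introduction.
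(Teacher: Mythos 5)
Your overall strategy --- identify unitary $\G_1\times\G_2$-representations with commuting pairs of $\G_1$- and $\G_2$-representations on a common Hilbert space, then invoke the universal property of $\otimes_{\max}$ --- is the standard route: the paper itself simply cites this identification, together with the factorization $\Ww_{\G}=\Ww_{\G_1[13]}\Ww_{\G_2[24]}$, to \cite{Kra21}*{Proposition 13.32}, and then derives the coproduct formula by a short leg-numbering computation with $\Ww_\G$; your derivation of $\hat{\Delta}_u$ from $\hat{\pi}_{U\TensRep U'}=\hat{\pi}_U*\hat{\pi}_{U'}$ is an equivalent form of that computation.

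There is, however, a genuine gap, and you have the difficulty located on the wrong side of the correspondence. The direction you single out as the main obstacle --- reconstructing a unitary $\G$-representation from a commuting pair $(\pi_1,\pi_2)$ via $U_i=(\id\otimes\pi_i)(\Ww_{i[21]}^*)$ and $U=U_{1[13]}U_{2[23]}$ --- is a routine verification (the second legs of $U_1,U_2$ have slices in $\pi_1(\hat{M}_{1,u})$ and $\pi_2(\hat{M}_{2,u})$, hence commute, and the corepresentation identity for $\Delta_\otimes$ follows by reordering); this gives $\|\cdot\|_{\max}\le\|\cdot\|_u$. What is actually delicate is the converse: that \emph{every} unitary corepresentation $U\in(M_1\bar\otimes M_2)\bar\otimes\mcB(\Hsp)$ of the product factors as $U_{1[13]}U_{2[23]}$. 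This is what yields $\|\cdot\|_u\le\|\cdot\|_{\max}$, i.e.\ injectivity of the canonical surjection $C^*(\G_1\times\G_2)\twoheadrightarrow\hat{M}_{1,u}\otimes_{\max}\hat{M}_{2,u}$, and you assert it with the parenthetical justification that it is ``automatic since $M_1\bar\otimes 1$ and $1\bar\otimes M_2$ commute''. That justification does not work: the commutation of the first legs is automatic, but neither the commutation of the $\mcB(\Hsp)$-legs nor, more importantly, the very \emph{existence} of the factorization of a given $U$ is formal --- there is no counit on $M_i$ with which to ``evaluate away'' one factor, and the corepresentation identity for $\Delta_\otimes$ alone does not produce the $U_i$. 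This decomposition is essentially equivalent to the lemma itself (it amounts to the factorization of $\Ww_\G$ at the universal level) and needs a separate argument, e.g.\ via universal lifts of the quantum group morphisms attached to the two factors; that is precisely the content of the cited \cite{Kra21}*{Proposition 13.32}. (Incidentally, the result you guess this corresponds to, Proposition \ref{prop3} on Drinfeld doubles, is a downstream consequence of the present lemma, not its source.)
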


\begin{proof}
As explained in \cite{Kra21}*{Proposition 13.32}, we can identify 
\[
C^*(\G)=C^*(\G_1)\otimes_{\max}C^*(\G_2),\qquad \wW_{\G}=\wW_{\G_1 [13]} \wW_{\G_2 [24]}
\] 
(with leg numbering notation in $(C_0(\G_1)\otimes C_0(\G_2))\otimes (C^*(\G_1)\otimes_{\max}C^*(\G_2))$). Consequently
\[\begin{split}
(\id\otimes \id\otimes \hat{\Delta}_u)
(\wW_{\G_1 [13]} \wW_{\G_2 [24]})
&=(\id\otimes \hat{\Delta}_u)\wW_{\G}=
\wW_{\G [1256]}\wW_{\G [1234]}
=
\wW_{\G_1 [15]}\wW_{\G_2 [26]}
\wW_{\G_1 [13]}\wW_{\G_2 [24]}\\
&=
(\id\otimes \hat{\Delta}_{1,u})(\wW_{\G_1})_{[135]}
(\id\otimes \hat{\Delta}_{2,u})(\wW_{\G_2})_{[246]}
\end{split}\]
and the claim follows.
\end{proof}

It is then also easily shown that the associated reduced group C$^*$-algebra is 
\[
C^*_{\red}(\G_1\times \G_2) = C^*_{\red}(\G_1)\otimes C^*_{\red}(\G_2)
\]
with $\hat{\pi}_{\red}\colon C^*(\G_1)\otimes_{\max} C^*(\G_2) \rightarrow C^*_{\red}(\G_1) \otimes C^*_{\red}(\G_2)$ being the composition of $\hat{\pi}_{1,\red}\otimes_{\max} \hat{\pi}_{2,\red}$ with the tensor product quotient map. The universal unitary antipode of $\G_1\times \G_2$ is 
\[
\hat{R}_u = \hat{R}_{1,u}\otimes_{\max}\hat{R}_{2,u}
\]
(\cite{Kus01}*{Proposition 9.3}).

Assume now that we are given a unitary skew bicharacter
\[
\wh{\mcX}\in \hat{M}_1\bar{\otimes}\hat{M}_2,
\] 
with associated unitary $2$-cocycle 
\begin{equation}\label{EqCocycBiChar}
\hat{\Omega} := \wh{\mcX}_{[32]} \in \hat{M}\bar{\otimes}\hat{M} = (\hat{M}_1\bar{\otimes}\hat{M}_2)\bar{\otimes}(\hat{M}_1\bar{\otimes}\hat{M}_2). 
\end{equation}
for $(\hat{M},\hat{\Delta})$. It can be shown that $\wh{\mcX}$ lifts (uniquely) to a unitary skew bicharacter $\wh{\mcX}_u \in \Mult(\hat{M}_{1,u}\otimes \hat{M}_{2,u})$ \cite{Kus01}*{Section 6} (see also \cite{Daw16}*{Section 2.1}). In particular, we can make sense of the unitary $2$-cocycle
\[
\hat{\Omega}_u :=  \wh{\mcX}_{u [32]} \in \Mult((\hat{M}_{1,u}\otimes_{\max}\hat{M}_{2,u})\otimes (\hat{M}_{1,u}\otimes_{\max}\hat{M}_{2,u})) = \Mult(\hat{M}_u \otimes \hat{M}_u).
\]

In the next theorem, we use the terminology as introduced in Definition \ref{DefUnivLift2Coc}.
\begin{Theorem}\label{TheoUnivLift}
The unitary $2$-cocycle $\hat{\Omega}$ for $(\hat{M}_1\bar{\otimes}\hat{M}_2,\hat{\Delta}_{\otimes})$ admits $\hat{\Omega}_u$ as a universal lift.
\end{Theorem}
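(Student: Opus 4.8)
The statement to establish is that the cocycle $\hat{\Omega}_u = \wh{\mcX}_{u[32]}$ is a \emph{universal lift} of $\hat{\Omega}$ in the sense of Definition \ref{DefUnivLift2Coc}, which means more than merely satisfying $(\hat{\pi}_{\red}\otimes\hat{\pi}_{\red})(\hat{\Omega}_u)=\hat{\Omega}$: one must show that twisting the doubled W$^*$-bialgebra $M_2(C^*(\G))$ by $\hat{\Omega}_u$ produces a copy of $(C^*(\msG),\hat{\Delta}_u)$, where $\msG$ is the linking quantum groupoid of the generalized Drinfeld double, and that this twist exhibits $\msG$ as having \emph{trivial universal linking C$^*$-algebra}. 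Concretely, the plan is to build the linking W$^*$-bialgebra $\hat{Q}$ for $\hat{\Omega}$ as in \eqref{EqCoprodCocTwist}, pass to its universal C$^*$-algebra $C^*(\msG)$, and verify that the matrix-trivialisation $C^*(\msG)\cong M_2(\hat{M}_u)$ holds with the compatibility conditions listed in Definition \ref{def3} (identity on the $22$-corner, counit $\hat\varepsilon_{ij}\cong\varepsilon_{\hat M_u}$, reducing map $\cong\mathrm{diag}(\hat\pi_{\red})$).

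First I would invoke Lemma \ref{lemma2} and the discussion following it: since $\G=\G_1\times\G_2$, we have $\hat{M}_u = \hat{M}_{1,u}\otimes_{\max}\hat{M}_{2,u}$, with $\hat{\Delta}_u(x\otimes y)=\hat{\Delta}_{1,u}(x)_{[13]}\hat{\Delta}_{2,u}(y)_{[24]}$, and the reducing map factors through $\hat{\pi}_{1,\red}\otimes_{\max}\hat{\pi}_{2,\red}$. Next I would recall from \cite{Kus01}*{Section 6} (see also \cite{Daw16}) that $\wh{\mcX}$ lifts uniquely to a unitary skew bicharacter $\wh{\mcX}_u\in\Mult(\hat{M}_{1,u}\otimes\hat{M}_{2,u})$; this is what makes $\hat{\Omega}_u=\wh{\mcX}_{u[32]}$ well-defined as a unitary $2$-cocycle for $\hat{\Delta}_u$. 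The cocycle identity for $\hat{\Omega}_u$ follows formally from the two skew-bicharacter identities \eqref{EqSkewBichvN} applied at the universal level, exactly as the reduced computation of \eqref{EqCocycBichar}; I would not spell this out in detail. The key new point — and this is the proposition referred to in the introduction as a new result (``the universal C$^*$-algebra of $C_0$-functions on a generalized Drinfeld double is the maximal tensor product of those of its components'') — is that the \emph{off-diagonal corners} of $C^*(\msG)$, which are the universal completions of the predual of the Galois-object W$^*$-algebras $\X,\Y$, are again copies of $\hat{M}_u$ under the twisted coproduct, i.e.\ that the universal linking C$^*$-algebra is genuinely trivialisable rather than just the reduced one.

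The main obstacle, then, is precisely this universal trivialisation: one must show that the C$^*$-completion of $Q_* = (M_2(M))_*$ with its convolution product twisted by $\hat{\Omega}$ splits as $M_2(\hat{M}_u)$, with the correct coproduct $\hat{\Delta}_{u,ij}$ on each corner. My strategy would be to produce the trivialising isomorphism by hand: use the universal bicharacter $\wh{\mcX}_u$ (and its conjugate $\hat\Omega_{u,c}$ together with the coboundary $X_{\hat\Omega,u}$ of Lemma \ref{LemUniqueUnitCob}) to write down explicit $*$-isomorphisms $\hat{\theta}_{ij}\colon C^*(\G_{ij})\to \hat{M}_u$ intertwining the coproducts, mimicking the \emph{reduced}-level formulas \eqref{EqMultUnitCocy1}–\eqref{EqMultUnitCocy2} but now at the universal level where $\wh{\mcX}_u\in\Mult(\hat{M}_{1,u}\otimes\hat{M}_{2,u})$ lives. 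Concretely, on $C^*(\X)$ one expects the trivialisation $\hat\theta_{12}(\cdot)$ to be implemented by $\hat{\Omega}_u = \wh{\mcX}_{u[32]}$ via $\hat{\Delta}_{u,12}=\hat\Omega_u\hat\Delta_u(\cdot)$, and one must check that the resulting map is a $*$-isomorphism onto $\hat{M}_u$ — this uses that $\wh{\mcX}_u$ is unitary and the bicharacter relations, together with the non-degeneracy/universality encoded in \eqref{EqUnivPropUnivLift}. Once the four corners are identified compatibly, the counit normalisation $\hat\varepsilon_{ij}\cong\varepsilon_{\hat M_u}$ follows from $(\hat\varepsilon\otimes\id)\wh{\mcX}_u=(\id\otimes\hat\varepsilon)\wh{\mcX}_u=1$ (a standard property of skew bicharacters, again from \cite{Kus01}*{Section 6}), and the statement that the reducing map corresponds to $\mathrm{diag}(\hat\pi_{\red})$ follows from applying $\hat\pi_{1,\red}\otimes_{\max}\hat\pi_{2,\red}$ throughout and using $(\hat{\pi}_{\red}\otimes\hat{\pi}_{\red})(\hat\Omega_u)=\hat\Omega=\wh{\mcX}_{[32]}$, which is the defining property of the reduced bicharacter. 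Thus all the conditions of Definition \ref{DefUnivLift2Coc} are met, and $\hat\Omega_u$ is a universal lift of $\hat\Omega$.
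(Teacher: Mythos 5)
There is a genuine gap at the central step of your plan. You propose to ``write down explicit $*$-isomorphisms $\hat{\theta}_{ij}\colon C^*(\G_{ij})\to \hat{M}_u$ intertwining the coproducts, mimicking the reduced-level formulas \eqref{EqMultUnitCocy1}--\eqref{EqMultUnitCocy2} but now at the universal level'', with the $12$-corner ``implemented by $\hat{\Omega}_u$ via $\hat{\Delta}_{u,12}=\hat{\Omega}_u\hat{\Delta}_u(\cdot)$''. This presupposes exactly what has to be proved: the corner $C^*(\X)$ is by definition a completion of $Q_*$ in the norm coming from \emph{all} unitary $\msG$-representations, and triviality of the linking W$^*$-algebra does not imply triviality of the universal (or even reduced) linking C$^*$-algebra --- this is precisely the phenomenon recalled before Definition \ref{DefUniversalLift}, with \cite{DC10} as a counterexample source. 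The formula $\hat{\Delta}_{u,12}=\hat{\Omega}_u\hat{\Delta}_u(\cdot)$ only makes sense once $C^*(\X)$ has been identified with $\hat{M}_u$ compatibly with coproducts, which is the content of the theorem; the reduced-level formulas give representations on $L^2(\G_1)\otimes L^2(\G_2)$, not maps out of the universal algebra. Moreover, invoking $X_{\hat{\Omega},u}$ from Lemma \ref{LemUniqueUnitCob} at this stage is circular, since that lemma is proved under the standing assumption that the linking quantum groupoid already has trivial universal linking C$^*$-algebra, which is what Theorem \ref{TheoUnivLift} is establishing in this situation.

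The paper's proof supplies the missing mechanism and is structured quite differently. It exploits the product structure $\G=\G_1\times\G_2$: the amplifications $\hat{\G}_i^{(2)}$ embed as closed linking quantum subgroupoids of $\hat{\msG}$ (Lemma \ref{LemmaInclusionSubgroups}, via explicit factorizations of the multiplicative partial isometries), giving coproduct-preserving surjections $\hat{\pi}_i\colon C^*(\msG)\twoheadrightarrow M_2(C^*(\G_i))$; composing $\hat{\pi}_1\otimes_{\max}\hat{\pi}_2$ with the maximal-tensor-product lift $\Delta^{\max}_{C^*(\msG)}$ of the coproduct (Lemma \ref{lemma3}) yields a non-degenerate morphism into $M_2(C^*(\G_1)\otimes_{\max}C^*(\G_2))$ that is the identity on the $22$-corner (Lemma \ref{lemma4}), and a rigidity lemma for linking C$^*$-algebras (Lemma \ref{LemMorita}) upgrades it to an isomorphism, proving trivialisability (Lemma \ref{LemTrivialQG}). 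Even then the argument is not finished: the cocycle $\widetilde{\Omega}_u=\widetilde{\Delta}_{u,12}(1\otimes 1)$ attached to this trivialisation is only shown to be of the form $\widetilde{\mcX}_{u[32]}$ for some skew bicharacter (Lemma \ref{LemUnitSkewBich}), whose reduced image is merely coboundary-equivalent to $\hat{\Omega}$ (Lemma \ref{lemma7}); a structural result on such coboundaries (Lemma \ref{lemma6}) produces group-like unitaries $v_1,v_2$, and only after correcting the trivialisation by $\mathrm{Ad}(v_1^u\otimes v_2^u)$ on the first corner does the associated cocycle become exactly $\wh{\mcX}_{u[32]}=\hat{\Omega}_u$ (Lemma \ref{lemma8}). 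Your proposal contains neither the construction of the trivialising isomorphism nor the final coboundary correction, and the formal manipulations you suggest cannot substitute for them.
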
 
Although this statement looks quite innocuous, it is not at all clear to us how to obtain it in a straightforward way. The proof we offer is quite technical, so we relegate it to the separate subsection \ref{SecProofTech}. Note that the theorem seems non-trivial, even in the case when $\hat{\G}_1$ and $\hat{\G}_2$ are both coamenable!

Consider now the conjugate universal unitary $2$-cocycle $\hat{\Omega}_{u,c}$ defined by \eqref{EqConjCocy}. 

\begin{Lem}\label{LemConjCocy}
We have 
\[
\hat{\Omega}_{u,c} = \wh{\mcX}_{u [14]}^*.
\]
\end{Lem}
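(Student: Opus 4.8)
The plan is to compute $\hat{\Omega}_{u,c}$ directly from its definition \eqref{EqConjCocy}, namely $\hat{\Omega}_{u,c} = (\hat{R}_u\otimes \hat{R}_u)(\hat{\Omega}_{u[21]}^*)$, using the explicit form $\hat{\Omega}_u = \wh{\mcX}_{u[32]}$ coming from \eqref{EqCocycBiChar} together with the tensor-factorization of the universal unitary antipode $\hat{R}_u = \hat{R}_{1,u}\otimes_{\max}\hat{R}_{2,u}$ recorded just before Theorem \ref{TheoUnivLift}. The only genuinely nontrivial input needed is the behaviour of the skew bicharacter $\wh{\mcX}_u$ under the unitary antipodes of the two factors, i.e.\ the identity
\[
(\hat{R}_{1,u}\otimes \hat{R}_{2,u})(\wh{\mcX}_u) = \wh{\mcX}_u,
\]
which is the standard fact that a (skew) bicharacter is invariant under the tensor product of the unitary antipodes of its legs (this follows, e.g., from the defining property of $\wh{\mcX}_u$ as the universal lift of a corepresentation and the interplay of the unitary antipode with corepresentations, as in \cite{Kus01}*{Section 6}; alternatively it can be deduced from uniqueness of the universal lift). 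I would cite this and not reprove it.

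Granting that, the computation is bookkeeping with leg-numbering. Start from $\hat{\Omega}_u = \wh{\mcX}_{u[32]} \in \Mult\bigl((\hat{M}_{1,u}\otimes_{\max}\hat{M}_{2,u})\otimes(\hat{M}_{1,u}\otimes_{\max}\hat{M}_{2,u})\bigr)$, where legs $1,2$ are the first copy and legs $3,4$ the second copy, so the leg labeled $2$ (second leg of the first copy) carries $\wh{\mcX}_u$'s second factor and the leg labeled $3$ (first leg of the second copy) carries $\wh{\mcX}_u$'s first factor. Flipping the two copies, $\hat{\Omega}_{u[21]} = \wh{\mcX}_{u[14]}$, hence $\hat{\Omega}_{u[21]}^* = \wh{\mcX}_{u[14]}^*$, an element with its first factor in leg $1$ (first factor $\hat{M}_{1,u}$) and second factor in leg $4$ (second factor $\hat{M}_{2,u}$). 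Applying $\hat{R}_u\otimes\hat{R}_u = (\hat{R}_{1,u}\otimes_{\max}\hat{R}_{2,u})\otimes(\hat{R}_{1,u}\otimes_{\max}\hat{R}_{2,u})$ hits leg $1$ with $\hat{R}_{1,u}$ and leg $4$ with $\hat{R}_{2,u}$; since $\hat{R}_{1,u},\hat{R}_{2,u}$ are anti-multiplicative and $*$-preserving, and since $(\hat{R}_{1,u}\otimes\hat{R}_{2,u})(\wh{\mcX}_u)=\wh{\mcX}_u$ gives $(\hat{R}_{1,u}\otimes\hat{R}_{2,u})(\wh{\mcX}_u^*)=\wh{\mcX}_u^*$, the result is again $\wh{\mcX}_{u[14]}^*$. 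This yields $\hat{\Omega}_{u,c} = \wh{\mcX}_{u[14]}^*$, as claimed.

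The main obstacle, such as it is, is making sure the leg-numbering conventions are applied consistently: one must be careful that $\hat{R}_u$ on the twofold maximal tensor product $\hat{M}_u = \hat{M}_{1,u}\otimes_{\max}\hat{M}_{2,u}$ really does act factorwise (this is exactly \cite{Kus01}*{Proposition 9.3}, invoked earlier in the text), and that the anti-multiplicativity of $\hat{R}_u$ does not introduce a flip within each copy in a way that would disturb the positions of the legs — it does not here, precisely because $\wh{\mcX}_{u[14]}^*$ has each copy occupied by a \emph{single} tensor leg, so anti-multiplicativity acts trivially on the ``within-copy'' order. I would therefore structure the proof as: (i) recall $\hat{\Omega}_u=\wh{\mcX}_{u[32]}$ and $\hat{R}_u=\hat{R}_{1,u}\otimes_{\max}\hat{R}_{2,u}$; (ii) state $(\hat{R}_{1,u}\otimes\hat{R}_{2,u})(\wh{\mcX}_u)=\wh{\mcX}_u$ with a reference; (iii) perform the one-line leg-numbering computation $\hat{\Omega}_{u,c}=(\hat{R}_u\otimes\hat{R}_u)(\wh{\mcX}_{u[14]}^*)=\wh{\mcX}_{u[14]}^*$.
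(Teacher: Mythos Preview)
Your proof is correct and follows exactly the same approach as the paper's own proof, which simply cites the definition \eqref{EqConjCocy} together with the two facts $\hat{R}_u = \hat{R}_{1,u}\otimes_{\max}\hat{R}_{2,u}$ and $(\hat{R}_{1,u}\otimes \hat{R}_{2,u})\wh{\mcX}_u = \wh{\mcX}_u$. Your write-up is more explicit about the leg-numbering bookkeeping, but the ingredients and the logic are identical.
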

\begin{proof}
This follows immediately from the definition \eqref{EqConjCocy} and the fact that 
\[
\hat{R}_u = \hat{R}_{1,u}\otimes_{\max}\hat{R}_{2,u},\qquad 
(\hat{R}_{1,u} \otimes \hat{R}_{2,u})\wh{\mcX}_u = \wh{\mcX}_u.
\]
\end{proof}

Consider now the associated universal element as in Lemma \ref{LemUniqueUnitCob}, so 
\[
X_{\hat{\Omega},u} \in \Mult(C^*(\G)) = \Mult(C^*(\G_1)\otimes_{\max}C^*(\G_2)).
\]

\begin{Lem}\label{LemProjCobPair}
Under the quotient map 
\[
\hat{\theta}\colon C^*(\G_1)\otimes_{\max}C^*(\G_2) \rightarrow C^*(\G_1)\otimes C^*(\G_2),
\]
we have that 
\[
X_{\hat{\Omega},u} \mapsto \wh{\mcX}_u^*.
\]
\end{Lem}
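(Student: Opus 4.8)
The plan is to compute $X_{\hat\Omega,u}$ explicitly using its characterization from Lemma \ref{LemUniqueUnitCob}, namely that it is the \emph{unique} unitary coboundary in $\Mult(C^*(\G))$ between $\hat\Omega_u$ and $\hat\Omega_{u,c}$ whose image under $\hat\pi_{\red}$ is $X_{\hat\Omega}$. Since $\hat\theta\colon C^*(\G_1)\otimes_{\max}C^*(\G_2)\to C^*(\G_1)\otimes C^*(\G_2)$ is a (non-degenerate) $*$-homomorphism intertwining the comultiplications $\hat\Delta_u$ and $\hat\Delta$ (this is the content of Lemma \ref{lemma2} together with the discussion of $\hat\pi_{\red}$ and $\hat R_u$ immediately after), the image $\hat\theta(X_{\hat\Omega,u})$ will automatically be a unitary coboundary between $\hat\theta^{\otimes 2}(\hat\Omega_u)$ and $\hat\theta^{\otimes 2}(\hat\Omega_{u,c})$.

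First I would identify these images. By \eqref{EqCocycBiChar} and the definition of $\hat\Omega_u$ we have $\hat\Omega_u=\wh{\mcX}_{u[32]}$, and since $\hat\theta$ is $\hat\pi_{1,\red}\otimes_{\max}\hat\pi_{2,\red}$ followed by the tensor product quotient map, the element $\wh{\mcX}_u\in\Mult(\hat M_{1,u}\otimes\hat M_{2,u})$ is sent to its reduced version $\wh{\mcX}\in\Mult(C^*_{\red}(\G_1)\otimes C^*_{\red}(\G_2))\subseteq\hat M_1\bar\otimes\hat M_2$. Hence $\hat\theta^{\otimes 2}(\hat\Omega_u)=\wh{\mcX}_{[32]}=\hat\Omega$ at the reduced level. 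By Lemma \ref{LemConjCocy}, $\hat\Omega_{u,c}=\wh{\mcX}_{u[14]}^*$, so $\hat\theta^{\otimes 2}(\hat\Omega_{u,c})=\wh{\mcX}_{[14]}^*=\hat\Omega_c$ (the reduced conjugate cocycle, in agreement with Lemma \ref{LemConjCocy} applied at the reduced level). Thus I am reduced to showing that $\wh{\mcX}^*$ is precisely the \emph{reduced} coboundary $X_{\hat\Omega}=J_NJ_M$ between $\hat\Omega=\wh{\mcX}_{[32]}$ and $\hat\Omega_c=\wh{\mcX}_{[14]}^*$, i.e.\ that $(\wh{\mcX}^*\otimes\wh{\mcX}^*)\hat\Omega_c=\hat\Omega\,\Delta_{\hat M}(\wh{\mcX}^*)$.

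The displayed identity above is a direct algebraic manipulation: writing out $\Delta_{\hat M}(\wh{\mcX}^*)=\hat\Delta_\otimes(\wh{\mcX}^*)$ with $\hat\Delta_\otimes(x\otimes y)=\hat\Delta_1(x)_{[13]}\hat\Delta_2(y)_{[24]}$ is not directly applicable since $\wh{\mcX}\in\hat M_1\bar\otimes\hat M_2$ straddles both legs; instead one uses the skew bicharacter relations \eqref{EqSkewBichvN}, namely $(\hat\Delta_1\otimes\id)\wh{\mcX}=\wh{\mcX}_{[13]}\wh{\mcX}_{[23]}$ and $(\id\otimes\hat\Delta_2)\wh{\mcX}=\wh{\mcX}_{[13]}\wh{\mcX}_{[12]}$, to expand $\Delta_{\hat M}(\wh{\mcX})$ on the four-fold tensor product and verify the pentagon-type identity. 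Concretely, the coproduct $\Delta_{\hat M}$ on $\hat M=\hat M_1\bar\otimes\hat M_2$ sends $\wh{\mcX}$ (regarded with legs $1\in\hat M_1$, $2\in\hat M_2$) into legs $(1,3)$ and $(2,4)$ of $\hat M^{\bar\otimes 2}$, and using the bicharacter properties one checks $\Delta_{\hat M}(\wh{\mcX})=\wh{\mcX}_{[34]}\wh{\mcX}_{[14]}\wh{\mcX}_{[32]}\wh{\mcX}_{[12]}$ (with appropriate ordering), from which $(\wh{\mcX}^*\otimes\wh{\mcX}^*)\hat\Omega_c=\hat\Omega\,\Delta_{\hat M}(\wh{\mcX}^*)$ follows after cancellation. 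This shows $\wh{\mcX}^*$ is \emph{a} coboundary with the right source and target; that it equals $X_{\hat\Omega}$ then follows because it also has image $\wh{\mcX}^*$ under $\hat\pi_{\red}$ is automatic at the reduced level, but to pin down $\hat\theta(X_{\hat\Omega,u})$ one invokes the uniqueness clause of Lemma \ref{LemUniqueUnitCob} transported through $\hat\theta$: $\hat\theta(X_{\hat\Omega,u})$ is a coboundary between $\hat\Omega$ and $\hat\Omega_c$ mapping to $X_{\hat\Omega}$ under $\hat\pi_{\red}$, and $\wh{\mcX}^*$ is another such; uniqueness of the coboundary with prescribed reduction (which holds since both already live at the reduced level) forces $\hat\theta(X_{\hat\Omega,u})=\wh{\mcX}^*$.

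The main obstacle is the careful bookkeeping in the coboundary identity $(\wh{\mcX}^*\otimes\wh{\mcX}^*)\hat\Omega_c=\hat\Omega\,\Delta_{\hat M}(\wh{\mcX}^*)$: one must correctly track which of the four legs of $\hat M^{\bar\otimes 2}=(\hat M_1\bar\otimes\hat M_2)^{\bar\otimes 2}$ each factor of $\wh{\mcX}$ occupies, apply the two halves of \eqref{EqSkewBichvN} in the right order, and handle the commutation of $\wh{\mcX}$-legs sitting in $\hat M_1$ versus $\hat M_2$ (which commute, being in different tensor factors). An alternative, possibly cleaner route would be to show directly at the universal level that $\wh{\mcX}_u^*$ satisfies $\hat\Omega_u\hat\Delta_u(\wh{\mcX}_u^*)=(\wh{\mcX}_u^*\otimes\wh{\mcX}_u^*)\hat\Omega_{u,c}$ using the universal skew bicharacter relations for $\wh{\mcX}_u$ (from \cite{Kus01}*{Section 6}), and that $\hat\pi_{\red}(\wh{\mcX}_u^*)$ is the reduced coboundary $X_{\hat\Omega}=J_NJ_M$ — one would need \cite{DC11b}*{Proposition 6.3} or a direct identification of $J_N$ for the generalized Drinfeld double linking groupoid to match $X_{\hat\Omega}$ with $\wh{\mcX}^*$; then uniqueness in Lemma \ref{LemUniqueUnitCob} gives $X_{\hat\Omega,u}=\wh{\mcX}_u^*$ outright, and $\hat\theta(X_{\hat\Omega,u})=\wh{\mcX}^*$ is immediate. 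Either way, the crux is reconciling the abstract coboundary $J_NJ_M$ attached to the linking groupoid with the concrete bicharacter inverse.
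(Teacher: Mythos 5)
There is a genuine gap, and it starts with a misreading of the map $\hat{\theta}$. In the statement of the lemma (and in Lemma \ref{LemTrivY}), $\hat{\theta}$ is the canonical quotient from the \emph{maximal} to the \emph{minimal} tensor product of the \emph{universal} C$^*$-algebras $C^*(\G_1),C^*(\G_2)$; it is not ``$\hat{\pi}_{1,\red}\otimes_{\max}\hat{\pi}_{2,\red}$ followed by the tensor product quotient map'' --- that composite is the reducing map $\hat{\pi}_{\red}$ of $\G_1\times\G_2$, a different map. Accordingly the asserted image is the \emph{universal} bicharacter $\wh{\mcX}_u^*\in\Mult(C^*(\G_1)\otimes C^*(\G_2))$, not its reduced version $\wh{\mcX}^*$. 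Your argument works entirely at the reduced level: it amounts to verifying that $\wh{\mcX}^*$ is a coboundary between $\hat{\Omega}$ and $\hat{\Omega}_c$ and identifying it with $X_{\hat{\Omega}}=J_NJ_M$ (which indeed follows from Lemma \ref{lemma1} together with $(\hat{R}_1\otimes\hat{R}_2)\wh{\mcX}=\wh{\mcX}$, and is essentially already known once $\hat{\pi}_{\red}(X_{\hat{\Omega},u})=X_{\hat{\Omega}}$ is granted). But since the reducing maps $\hat{\pi}_{i,\red}$ are in general not injective on the universal algebras, knowing $\hat{\pi}_{\red}(X_{\hat{\Omega},u})=\wh{\mcX}^*$ does not determine $\hat{\theta}(X_{\hat{\Omega},u})$ inside $\Mult(C^*(\G_1)\otimes C^*(\G_2))$; the appeal to ``uniqueness of the coboundary with prescribed reduction, which holds since both already live at the reduced level'' is circular, and the uniqueness clause of Lemma \ref{LemUniqueUnitCob} is a statement about elements of $\Mult(C^*(\G_1)\otimes_{\max}C^*(\G_2))$ upstairs --- it does not transport along $\hat{\theta}$ without redoing the multiplicative-unitary argument at the min-universal level, which you have not done. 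Your alternative route has the same type problem: ``$X_{\hat{\Omega},u}=\wh{\mcX}_u^*$ outright'' is not even well posed, since $\wh{\mcX}_u^*$ lives in the minimal tensor product while $X_{\hat{\Omega},u}$ lives in the maximal one; comparing them through $\hat{\theta}$ is precisely the content of the lemma, not something you may assume.

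For contrast, the paper's proof stays at the universal level throughout and is a short direct computation: writing $\hat{\pi}_1=\id\times\hat{\varepsilon}_2$ and $\hat{\pi}_2=\hat{\varepsilon}_1\times\id$, one has $\hat{\theta}=(\hat{\pi}_1\otimes\hat{\pi}_2)\hat{\Delta}_{u,12}$ (because the groupoid coproduct is the $\hat{\Omega}_u$-twist), the $\hat{\pi}_i$ intertwine the universal unitary antipodes, $X_{\hat{\Omega},u}=\hat{R}_{u,21}(1)$ by construction, and then anti-comultiplicativity plus $\hat{\Delta}_{u,21}^{\opp}(1)=\wh{\mcX}_{u[14]}^*$ and $(\hat{R}_{1,u}\otimes\hat{R}_{2,u})\wh{\mcX}_u=\wh{\mcX}_u$ give $\hat{\theta}(X_{\hat{\Omega},u})=\wh{\mcX}_u^*$. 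If you want to repair your proposal, you would need either to reproduce this kind of universal-level computation, or to establish a uniqueness statement for coboundaries in $\Mult(C^*(\G_1)\otimes C^*(\G_2))$ with prescribed reduced image (mimicking the proof of Lemma \ref{LemUniqueUnitCob} with a half-lifted multiplicative unitary that factors through the minimal tensor product), neither of which is present in the current write-up.
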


\begin{proof}
The maps 
\[\begin{split}
\hat{\pi}_1 = \id_{C^*(\G_1)}\times \hat{\varepsilon}_2&\colon C^*(\G) \rightarrow C^*(\G_1),\qquad x_1\otimes x_2 \mapsto \hat{\varepsilon}_2(x_2)x_1,\qquad \forall x_1\in C^*(\G_1),x_2\in C^*(\G_2),\\
\hat{\pi}_2 =\hat{\varepsilon}_1\times \id_{C^*(\G_2)}&\colon C^*(\G) \rightarrow C^*(\G_2),\qquad x_1\otimes x_2 \mapsto \hat{\varepsilon}_1(x_1)x_2,\qquad \forall x_1\in C^*(\G_1),x_2\in C^*(\G_2)
\end{split}\]
define homomorphisms of universal linking quantum groupoids 
\[
\hat{\pi}_i^{(2)} = \begin{pmatrix} \hat{\pi}_i & \hat{\pi}_i\\ \hat{\pi}_i& \hat{\pi}_i\end{pmatrix}\colon C^*(\msG) \rightarrow M_2(C^*(\G_i)),
\]
where the right hands side is equipped with the trivial linking quantum groupoid structure $\begin{pmatrix} \hat{\Delta}_{i,u} & \hat{\Delta}_{i,u} \\ \hat{\Delta}_{i,u} & \hat{\Delta}_{i,u}\end{pmatrix}$. By the linking quantum groupoid analogon of \cite{KV00}*{Proposition 5.45} (see also \cite{Kus01}*{Remark 12.1}), we get that 
\[
\hat{\pi}_i\circ\hat{R}_{u,kl} = \hat{R}_{i,u} \circ \hat{\pi}_i,\qquad \forall i,k,l\in \{1,2\}. 
\]

Now since the coproduct of $C^*(\msG)$ is obtained by cocycle twisting with $\hat{\Omega}_u = \wh{\mcX}_{u [32]}$, we see that 
\[
\hat{\theta}= (\hat{\pi}_1\otimes \hat{\pi}_2)\hat{\Delta}_{u,12}.
\]
It follows that 
\[\begin{split}
\hat{\theta}(X_{\hat{\Omega},u}) 
= (\hat{\pi}_1\otimes \hat{\pi}_2)\hat{\Delta}_{u,12}(\hat{R}_{u,21}(1))
&=  (\hat{\pi}_1\otimes \hat{\pi}_2)(\hat{R}_{u,21}\otimes \hat{R}_{u,21})\hat{\Delta}_{u,21}^{\opp}(1)
=  (\hat{R}_{1,u}\otimes \hat{R}_{2,u})(\hat{\pi}_1\otimes \hat{\pi}_2)\hat{\Delta}_{u,21}^{\opp}(1)
\\
&=  (\hat{R}_{1,u}\otimes \hat{R}_{2,u})(\hat{\pi}_1\otimes \hat{\pi}_2)(\wh{\mcX}_{u [14]}^*)
= (\hat{R}_{1,u}\otimes \hat{R}_{2,u})(\wh{\mcX}_{u}^*)
= \wh{\mcX}_u^*.
\end{split}\]
\end{proof}

In the following, we write again
\[
W^*(\msG) = \begin{pmatrix} W^*(\Hh) & W^*(\X)\\ W^*(\Y) & W^*(\G)\end{pmatrix} 
\]
for the linking quantum groupoid $\msG$ associated to the cocycle $\hat{\Omega}$. In particular, we can consider the associated right Galois object $(L^{\infty}(\X),\alpha_{\X})$ as in Section \ref{SecBiGal}. 

\begin{Lem}\label{lemma1}
The modular conjugation of $N=L^{\infty}(\X)$ is $J_{N}=(J_1\otimes J_2)\wh{\mc{X}}$.
\end{Lem}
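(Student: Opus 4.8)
The statement to establish is that for the linking quantum groupoid $\msG$ arising from the $2$-cocycle $\hat\Omega=\wh{\mcX}_{[32]}$ associated to a skew bicharacter $\wh{\mcX}\in\hat M_1\bar\otimes\hat M_2$, the modular conjugation of $N=L^\infty(\X)$ is $J_N=(J_1\otimes J_2)\wh{\mcX}$. My approach is to use the already-established conceptual description $X_{\hat\Omega}^*=\hat R_{12}(1_{\hat M})$ of the ``off-diagonal coboundary'', together with the identity $X_{\hat\Omega}=J_NJ_M$ from \cite{DC11b}*{Proposition 6.3} recalled in Section~\ref{SecCocyc}. Rearranging gives $J_N = X_{\hat\Omega}J_M$, so the whole problem reduces to identifying $X_{\hat\Omega}\in\hat M = \hat M_1\bar\otimes\hat M_2$ explicitly as $(J_1\otimes J_2)\wh{\mcX}(J_M)^{-1}$, i.e.\ to showing $X_{\hat\Omega}=(J_1\otimes J_2)\wh{\mcX}(J_1^{\hat M}\otimes J_2^{\hat M})^{?}$ — more precisely, to showing that $X_{\hat\Omega}J_M = (J_1\otimes J_2)\wh{\mcX}$, where $J_M = J_{M_1}\otimes J_{M_2}$ is the modular conjugation of $M=M_1\bar\otimes M_2$ on $L^2(\G)=L^2(\G_1)\otimes L^2(\G_2)$.

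First I would pass to the universal level. By Theorem~\ref{TheoUnivLift}, $\hat\Omega$ admits the universal lift $\hat\Omega_u=\wh{\mcX}_{u[32]}$, so Lemma~\ref{LemUniqueUnitCob} produces $X_{\hat\Omega,u}\in\Mult(C^*(\G))$ with $\hat\pi_{\red}(X_{\hat\Omega,u})=X_{\hat\Omega}$; and Lemma~\ref{LemProjCobPair} identifies the image of $X_{\hat\Omega,u}$ under the tensor-product quotient map $\hat\theta\colon C^*(\G_1)\otimes_{\max}C^*(\G_2)\to C^*(\G_1)\otimes C^*(\G_2)$ with $\wh{\mcX}_u^*$. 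Since $\hat\pi_{\red}=(\hat\pi_{1,\red}\otimes\hat\pi_{2,\red})\circ\hat\theta$ (as noted right after Lemma~\ref{lemma2}), we get
\[
X_{\hat\Omega}=\hat\pi_{\red}(X_{\hat\Omega,u})=(\hat\pi_{1,\red}\otimes\hat\pi_{2,\red})(\wh{\mcX}_u^*)=\wh{\mcX}^{\,*}\in\hat M_1\bar\otimes\hat M_2,
\]
using that $\wh{\mcX}_u$ lifts $\wh{\mcX}$. Hence $X_{\hat\Omega}=\wh{\mcX}^*$, and therefore $J_N = X_{\hat\Omega}J_M = \wh{\mcX}^*J_M$.

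At this point there is a sign/conjugation subtlety to resolve, which I expect to be the main obstacle: reconciling $J_N=\wh{\mcX}^*J_M$ with the claimed formula $J_N=(J_1\otimes J_2)\wh{\mcX}$. These are compatible precisely when $\wh{\mcX}^*J_M=(J_1\otimes J_2)\wh{\mcX}$, i.e.\ when $J_M\wh{\mcX}^*J_M = (J_1\otimes J_2)^{-1}\wh{\mcX}(\text{something})$ — concretely, this is the statement that $(J_M\otimes J_M)$-conjugation (equivalently, applying the unitary antipodes leg-wise via \eqref{EqFormUnitaryAntipo}) sends $\wh{\mcX}$ to a controlled expression. The clean way through is: write $\wh{\mcX}^*J_M$, insert $1 = (J_1\otimes J_2)(J_1\otimes J_2)$ on the left to get $(J_1\otimes J_2)\bigl[(J_1\otimes J_2)\wh{\mcX}^*(J_M)\bigr]$, and then show the bracketed operator equals $\wh{\mcX}$. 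The bracket is an anti-unitary conjugation of $\wh{\mcX}^*$; using \eqref{EqFormUnitaryAntipo} twice (once for each factor $\G_i$, noting $J_M=J_{M_1}\otimes J_{M_2}$ while the $J_i=J_{\hat M_i}$ appearing in $(J_1\otimes J_2)$ are the modular conjugations of the duals) gives $(J_1\otimes J_2)\wh{\mcX}^*(J_M)=(\hat R_1\otimes\hat R_2)(\wh{\mcX}^*)^*= (\hat R_1\otimes\hat R_2)(\wh{\mcX})$ after careful bookkeeping of which $J$ implements $R_i$ versus $\hat R_i$; and then invoke the standard fact (used already in Lemma~\ref{LemConjCocy}) that $(\hat R_1\otimes\hat R_2)\wh{\mcX}=\wh{\mcX}$ — here I must be careful about whether it is $\hat R$ or $R$ that is relevant, and adjust the conjugation formulas accordingly, since a skew bicharacter $\wh{\mcX}\in\hat M_1\bar\otimes\hat M_2$ relates to the left regular representations and the relevant antipode identities are $(\hat R_i\otimes\hat R_i)\wh{\mcX}_u=\wh{\mcX}_u$. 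Getting every conjugation on the correct side, and tracking that $L^2(\G_i)\cong L^2(\hat\G_i)$ so that $J_{M_i}$ and $J_{\hat M_i}$ both act on the same space, is the delicate bit; once the bookkeeping is pinned down the identity $J_N=(J_1\otimes J_2)\wh{\mcX}$ falls out, and I would double-check it by verifying directly that the right-hand side is anti-unitary with $J_N^2=1$ (using the skew-bicharacter relations for $\wh{\mcX}$ and $J_i\wh{\mcX}J_i$-type identities) and that it implements the correct modular structure on $N=L^\infty(\X)\subseteq\mcB(L^2(\G))$ via \eqref{EqMultUnitCocy2}, which constrains $J_N$ uniquely.
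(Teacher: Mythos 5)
Your reduction of the lemma to the identity $X_{\hat\Omega}=\wh{\mcX}^*$ is sound as algebra (and your final bookkeeping would close: since $\hat R_i(y)=J_{M_i}y^*J_{M_i}$ and $(\hat R_1\otimes\hat R_2)\wh{\mcX}=\wh{\mcX}$, one has $(J_{M_1}\otimes J_{M_2})\wh{\mcX}(J_{M_1}\otimes J_{M_2})=\wh{\mcX}^*$, so $\wh{\mcX}^*J_M=(J_1\otimes J_2)\wh{\mcX}$; note in passing that the $J_i$ in the statement are the $J_{M_i}$, not the dual conjugations as you suggest at one point). However, the way you obtain $X_{\hat\Omega}=\wh{\mcX}^*$ is circular relative to the paper's logical structure. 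You invoke Theorem \ref{TheoUnivLift} (that $\hat\Omega_u=\wh{\mcX}_{u[32]}$ is a universal lift) together with Lemma \ref{LemProjCobPair} and then push down with $\hat\pi_{\red}$. But the proof of Theorem \ref{TheoUnivLift} is deferred to Section \ref{SecProofTech}, and there it rests on Lemma \ref{LemmaInclusionSubgroups}, whose proof begins precisely by recalling $J_N=(J_1\otimes J_2)\wh{\mcX}$ from Lemma \ref{lemma1} in order to compute the components \eqref{eq16} of the multiplicative partial isometry; moreover the paper states explicitly, just before Lemma \ref{LemmaInclusionSubgroups}, that Lemma \ref{LemProjCobPair} already relied on knowing that the cocycle twist of $C^*(\G)$ yields the linking quantum groupoid, i.e.\ on Theorem \ref{TheoUnivLift}. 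So the two inputs you use are downstream of the very statement you are proving.

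Nor can the circularity be patched by a soft argument: the mere fact that $X_{\hat\Omega}$ is a unitary coboundary between $\hat\Omega$ and $\hat\Omega_c$ (from \cite{DC11b}) does not determine it, since coboundaries between two fixed cocycles are not unique (they may differ by group-like unitaries), and the intrinsic characterisation $X_{\hat\Omega}^*=\hat R_{12}(1_{\hat M})$ presupposes knowledge of $J_Q$ on the $12$-corner, i.e.\ of $J_N$ itself. This is exactly why the paper proves the lemma by an independent route: it identifies $\hat\Hh$ as a Baaj--Vaes double crossed product for $\G_1^{\opp},\G_2$ with matching $\operatorname{Ad}(\wh{\mcX})$, takes the explicit formula for its multiplicative unitary from \cite{BV05}*{Theorem 5.3}, and compares it with the general cocycle-twist expression \eqref{EqMultUnitCocy2} for $\hat\ww_{11}^a$, from which $J_N=(J_1\otimes J_2)\wh{\mcX}$ can be read off. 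To make your approach legitimate you would need an independent proof of the universal lift and of $\hat\theta(X_{\hat\Omega,u})=\wh{\mcX}_u^*$ that does not pass through Lemma \ref{lemma1}, which is essentially the hard content of Section \ref{SecProofTech} and is not supplied by your sketch.
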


\begin{proof}
According to \eqref{EqMultUnitCocy2}, for $a\in \{1,2\}$ we have
\begin{equation}\label{eq14}
\hat{\ww}^{a}_{11}=(J_N\otimes J_{\hat{M}})\hat{\Omega}\hat{\ww}^* (J_M\otimes J_{\hat{M}})\hat{\Omega}^*.
\end{equation}
Recall that $W^*(\Hh)=\hat{M}_1\bar\otimes \hat{M}_2$ with comultiplication $\hat{\Omega}\Delta_{\hat{M}}(\cdot)\hat{\Omega}^*$; see Section \ref{SubSecCocBich}. This means, that $\hat{\Hh}$ can be also constructed as a double crossed product, for quantum groups $\G_1^{op},\G_2$ and matching $\operatorname{Ad}(\wh{\mc{X}})$ (\cite{BV05}*{Definition 3.1}). After small simplifications, Theorem 5.3 in \cite{BV05}  gives
\[
\hat{\ww}^a_{11}=
\hat{\ww}_{1 [13]}
\bigl(\wh{\mc{X}}(\hat{J}_1\otimes \hat{J}_2)
\wh{\mc{X}}(\hat{J}_1\otimes \hat{J}_2)\bigr)^*_{[34]}
\hat{\ww}_{2[24]}
\bigl(\wh{\mc{X}}(\hat{J}_1\otimes \hat{J}_2)
\wh{\mc{X}}(\hat{J}_1\otimes \hat{J}_2)\bigr)_{[34]}
\]
which can be manipulated to
\begin{equation}\label{eq15}
\hat{\ww}^a_{11}=
(J_1\otimes J_2\otimes \hat{J}_1\otimes \hat{J}_2)
\wh{\mc{X}}_{[12]}\wh{\mc{X}}_{[32]}
\hat{\ww}_{1[13]}^* \hat{\ww}_{2[24]}^*
(J_1\otimes J_2\otimes \hat{J}_1\otimes \hat{J}_2)
\wh{\mc{X}}_{[32]}^*.
\end{equation}
Comparing expressions \eqref{eq14} and \eqref{eq15} leads to the claim $J_N=(J_1\otimes J_2)\wh{\mc{X}}$.
\end{proof}

For $i\in\{1,2\}$, the locally compact quantum group $\G_i$ acts on $L^{\infty}(\G_i)$ by the translation action, i.e.\ via comultiplication. The bicharacter $\wh{\mc{X}}\in L^{\infty}(\widehat{\G}_1)\bar\otimes L^{\infty}(\widehat{\G}_2)$ then allows us to define the braided tensor product $L^{\infty}(\G_1)\ov\boxtimes L^{\infty}(\G_2)$ (see the Introduction). Let us denote the generators in this specific case by $\varpi_{\boxtimes,1}(x_1),\varpi_{\boxtimes,2}(x_2)\,(x_1\in L^{\infty}(\GG_1),x_2\in L^\infty(\GG_2))$, in order to avoid confusion with the generators of $A_1\ov\boxtimes A_2$ later on.

\begin{Lem}\label{LemBraidGGX}
We have an isomorphism
\[
\Theta_{\boxtimes}\colon L^{\infty}(\G_1) \ov{\boxtimes} L^{\infty}(\G_2)\rightarrow 
L^{\infty}(\X)\colon z \mapsto \widehat{\mc{X}}^* z \widehat{\mc{X}}
\]
which acts on generators via
\[
\varpi_{\boxtimes,1}(x_1)\mapsto x_1\otimes 1,\quad 
\varpi_{\boxtimes,2}(x_2)\mapsto \widehat{\mc{X}}^*(1\otimes x_2)\widehat{\mc{X}},\qquad \forall x_i\in L^{\infty}(\G_i).
\]
Furthermore
\begin{equation}\label{EqActionBraid}
\alpha_{\X}(\Theta_{\boxtimes} \varpi_{\boxtimes,1}(x_1)) = (\Theta_{\boxtimes}\varpi_{\boxtimes,1}\otimes \id)\Delta_1(x_1)_{[123]},\quad 
\alpha_{\X}(\Theta_{\boxtimes} \varpi_{\boxtimes,2}(x_2)) = (\Theta_{\boxtimes}\varpi_{\boxtimes,2}\otimes \id)\Delta_2(x_2)_{[124]}
\end{equation}
for $x_1 \in L^{\infty}(\G_1), x_2\in L^{\infty}(\G_2)$.
\end{Lem}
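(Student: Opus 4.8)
The plan is to verify three things: that $\Theta_\boxtimes$ is a well-defined isomorphism with the stated image, that it has the claimed effect on generators, and that it intertwines the braided action with the two translation actions as in \eqref{EqActionBraid}. For the first part, I would start from the concrete model \eqref{EqStandardRepBraid} for $L^{\infty}(\G_1)\ov\boxtimes L^{\infty}(\G_2)$: the generators are $\varpi_{\boxtimes,1}(x_1) = \wh{\mcX}(x_1\otimes 1)\wh{\mcX}^*$ (using $L^2(L^{\infty}(\G_1)) = L^2(\G_1)$, $L^2(L^{\infty}(\G_2)) = L^2(\G_2)$, and $\wh{\mcX}_{L^2(\G_1),L^2(\G_2)} = \wh{\mcX}$ since the standard representation of $L^\infty(\G_i)$ on $L^2(\G_i)$ corresponds to the left regular representation $\ww_{\G_i}$, whose reducing map sends $\wh{\mcX}_u$ to $\wh{\mcX}$), and $\varpi_{\boxtimes,2}(x_2) = 1\otimes x_2$. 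Conjugating by $\wh{\mcX}^*$ sends $\varpi_{\boxtimes,1}(x_1)\mapsto x_1\otimes 1$ and $\varpi_{\boxtimes,2}(x_2)\mapsto \wh{\mcX}^*(1\otimes x_2)\wh{\mcX}$, so the map $z\mapsto \wh{\mcX}^* z\wh{\mcX}$ is a normal $*$-isomorphism from $L^{\infty}(\G_1)\ov\boxtimes L^{\infty}(\G_2)$ onto the von Neumann algebra generated by $L^{\infty}(\G_1)\otimes 1$ and $\wh{\mcX}^*(1\otimes L^{\infty}(\G_2))\wh{\mcX}$ inside $\mcB(L^2(\G_1)\otimes L^2(\G_2))$.

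The crux is then to identify this image with $L^{\infty}(\X) = L^{\infty}(\G_{12})$. Recall $L^{\infty}(\msG) = [(\omega\otimes\id)\ww_{\hat Q}\mid \omega\in\hat Q_*]^{\sigma\text{-weak}}$, whose $12$-component $L^{\infty}(\X)$ is generated by slices $(\omega\otimes\id)\hat{\ww}^k_{12}$, and by \eqref{EqMultUnitCocy1} we have $\hat{\ww}^k_{12} = \hat{\ww}\hat{\Omega}^*$ with $\hat{\Omega} = \wh{\mcX}_{[32]}$ (acting on $L^2(\G) = L^2(\G_1)\otimes L^2(\G_2)$, so $\hat\ww = \hat\ww_{\G_1,[13]}\hat\ww_{\G_2,[24]}$ and $\hat\Omega^* = \wh{\mcX}^*_{[31]}$ — I will need to be careful with leg positions here; the first leg of $\hat{\ww}^k_{12}$ lives in $W^*(\G_{21})\cong \hat M_1\bar\otimes\hat M_2$ and the second in $\mcB(L^2(\G_{12})) = \mcB(L^2(\G_1)\otimes L^2(\G_2))$). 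Slicing the first leg of $\hat\ww\hat\Omega^*$ with functionals in $(\hat M_1\bar\otimes\hat M_2)_*$ produces exactly the elements $\wh{\mcX}^*(x_1\otimes x_2)\wh{\mcX}$-type generators after using that $[(\nu\otimes\id)\hat\ww_{\G_i}\mid\nu] = L^{\infty}(\G_i)$; taking the $\sigma$-weak closure gives the von Neumann algebra generated by $L^{\infty}(\G_1)\otimes 1$ and $\wh{\mcX}^*(1\otimes L^{\infty}(\G_2))\wh{\mcX}$, matching the image of $\Theta_\boxtimes$. This is the step I expect to be the main obstacle — getting the leg-numbering bookkeeping right and correctly invoking the bicharacter identities \eqref{EqSkewBichvN} to move slices of $\hat\ww$ past $\hat\Omega^*$.

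For the action formulas \eqref{EqActionBraid}, recall $\alpha_\X = \Delta_{12}^2$, which by the formula just before \eqref{eq17} (or the defining relation $\Sigma\ww_{\hat Q}(x\otimes 1)\ww_{\hat Q}^*\Sigma = \Delta_{ij}^1(x)+\Delta_{ij}^2(x)$) is computed from $\ww_{\hat Q}$. On a generator $x_1\otimes 1 = \Theta_\boxtimes\varpi_{\boxtimes,1}(x_1)$, one uses the coproduct relation together with the fact that the $\G_1$-leg of $\hat\ww^k_{12}$ is essentially $\hat\ww_{\G_1}$, so $\Delta^2_{12}(x_1\otimes 1) = (\Delta_1(x_1)\otimes 1)_{[\text{appropriate legs}]}$, which is exactly $(\Theta_\boxtimes\varpi_{\boxtimes,1}\otimes\id)\Delta_1(x_1)_{[123]}$. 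For $\Theta_\boxtimes\varpi_{\boxtimes,2}(x_2) = \wh{\mcX}^*(1\otimes x_2)\wh{\mcX}$, the point is that $\alpha_\X$ commutes with conjugation by $\wh{\mcX}$ in the right way — more precisely, using $\hat{\ww}^k_{12} = \hat\ww\hat\Omega^*$ and the second bicharacter identity in \eqref{EqSkewBichvN} to see that the $\hat\Omega^*$-twist is absorbed, so that $\alpha_\X$ applied to the $\G_2$-generator reduces to $(\id\otimes\Delta_2)$ on the $\G_2$-leg, giving the $[124]$-placement. Both verifications are by now routine manipulations with multiplicative unitaries and the explicit formulas \eqref{EqMultUnitCocy1}--\eqref{EqMultUnitCocy2}, so I would not grind through them in full but organize them around these two observations.
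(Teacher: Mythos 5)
Your plan follows essentially the same route as the paper: express the generators of $L^{\infty}(\G_1)\ov{\boxtimes}L^{\infty}(\G_2)$ concretely via \eqref{EqStandardRepBraid} with $\wh{\mcX}_{L^2(\G_1),L^2(\G_2)}=\wh{\mcX}$ and conjugate by $\wh{\mcX}^*$, identify $L^{\infty}(\X)$ as the von Neumann algebra generated by $L^{\infty}(\G_1)\otimes 1$ and $\wh{\mcX}^*(1\otimes L^{\infty}(\G_2))\wh{\mcX}$ by slicing $\hat{\ww}^k_{12}=\hat{\ww}\hat{\Omega}^*$ and commuting $\wh{\mcX}$ past $\ww_2$ via the bicharacter identities, and verify \eqref{EqActionBraid} on slice-generators using \eqref{eq17} and \eqref{EqMultUnitCocy1} plus a limit argument, exactly as the paper does. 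The only imprecision is that the slices of $\hat{\ww}\hat{\Omega}^*$ are of the form $(x_1\otimes 1)\,\wh{\mcX}^*(1\otimes x_2)\wh{\mcX}$ rather than $\wh{\mcX}^*(x_1\otimes x_2)\wh{\mcX}$ (the latter would generate $\wh{\mcX}^*(L^{\infty}(\G_1)\bar{\otimes}L^{\infty}(\G_2))\wh{\mcX}$), but the algebra you claim they generate is the correct one, so this does not affect the argument.
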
 

\begin{proof}
We first compute that 
\begin{equation}\begin{split}\label{eq59}
\wh{\mcX}_{[13]}\ww_{2[23]}
&= \wh{\mcX}_{[12]}^*(\wh{\mcX}_{[12]} \wh{\mcX}_{[13]}\ww_{2[23]})=
 \wh{\mcX}_{[12]}^*\hat{\ww}_{2[32]}^*\wh{\mcX}_{[12]}\hat{\ww}_{2[32]}\ww_{2[23]}
= \wh{\mcX}_{[12]}^* \ww_{2[23]}\wh{\mcX}_{[12]}.
\end{split}\end{equation}
Using \eqref{EqMultUnitCocy1} with $k\in\{1,2\}$, adjoint of \eqref{eq59} and the fact that the space $L^{\infty}(\X)\subseteq \mcB(L^2(\G_1)\otimes L^2(\G_2))$ is closed under adjoints, we can write it as
\begin{eqnarray*}
L^{\infty}(\X) &=& 
\{(\id\otimes \id \otimes \omega_1\otimes \omega_2)
( \hat{\ww}^{k *}_{12 [3412]}) \mid \omega_i \in \mcB(L^2(\G_i))_*\}'' \\
& = &
\{(\id\otimes \id \otimes \omega_1\otimes \omega_2)(\wh{\mcX}_{[14]}\ww_{1[13]}\ww_{2[24]}) \mid \omega_i \in \mcB(L^2(\G_i))_*\}'' \\
&=&  \{(\id\otimes \id\otimes \omega_2)(\wh{\mcX}_{[13]}\ww_{2[23]}) ((\id\otimes \omega_1)(\ww_1)\otimes 1) \mid \omega_i \in \mcB(L^2(\G_i))_*\}''\\
&=&  \{ ((\id\otimes \omega_1)(\ww_1^*)\otimes 1)(\id\otimes \id\otimes \omega_2)(\ww_{2[23]}^*\wh{\mcX}_{[13]}^*) \mid \omega_i \in \mcB(L^2(\G_i))_*\}''\\
&=& ((L^{\infty}(\G_1)\otimes1) \cup \wh{\mcX}^*(1\otimes L^{\infty}(\G_2))\wh{\mcX})''
\end{eqnarray*}
Comparing with \eqref{EqIdentification}, we see that this is just a conjugation of $L^{\infty}(\G_1)\ov{\boxtimes} L^{\infty}(\G_2)$ by $\wh{\mcX}^*$, and we find the concrete isomorphism
\[
\Theta_{\boxtimes}\colon L^{\infty}(\G_1)\ov{\boxtimes} L^{\infty}(\G_2) \cong L^{\infty}(\X),\quad \varpi_{\boxtimes,1}(x_1) \mapsto x_1\otimes 1,\quad \varpi_{\boxtimes,2}(x_2)\mapsto \wh{\mcX}^*(1\otimes x_2)\wh{\mcX},\qquad \forall x_i \in L^{\infty}(\G_i). 
\]

Take $x_1=(\omega_1\otimes\id)\hat{\ww}_{1}, x_2=(\omega_2\otimes\id)\hat{\ww}_2$ for $\omega_1\in \mcB(L^2(\GG_1))_*,\omega_2\in \mcB(L^2(\GG_2))_*$. We will prove
\begin{equation}\label{eq42}
\alpha_{\X}\Theta_{\boxtimes}\bigl(
\varpi_{\boxtimes,1}(x_1) \varpi_{\boxtimes,2}(x_2)\bigr)
= (\Theta_{\boxtimes}\varpi_{\boxtimes,1}\otimes \id)\Delta_1(x_1)_{[123]}\,
(\Theta_{\boxtimes}\varpi_{\boxtimes,2}\otimes \id)\Delta_2(x_2)_{[124]},
\end{equation}
from which \eqref{EqActionBraid} follows by a limit argument. First observe that using $\wh{\mc{X}}^*_{[13]} \hat{\ww}_{2[23]} \wh{\mc{X}}_{[13]}=\hat{\ww}_{2[23]}\wh{\mc{X}}^*_{[12]}$ we have
\[\begin{split}
\Theta_{\boxtimes}\bigl(
\varpi_{\boxtimes,1}(x_1) \varpi_{\boxtimes,2}(x_2)\bigr)&=
\bigl( (\omega_1\otimes\id)\hat{\ww}_1\otimes 1\bigr)\,
\wh{\mc{X}}^* \bigl(1\otimes (\omega_2\otimes\id)\hat{\ww}_2\bigr)\wh{\mc{X}}\\
&=
(\omega_1\otimes\omega_2\otimes\id\otimes\id)
\bigl(
\hat{\ww}_{1 [13]} \hat{\ww}_{2[24]} \wh{\mc{X}}^*_{[32]}\bigr).
\end{split}\]
Consequently, using \eqref{eq17} and \eqref{EqMultUnitCocy1}
\[\begin{split}
&\quad\;
\alpha_{\X}\Theta_{\boxtimes} \bigl(
\varpi_{\boxtimes,1}(x_1) \varpi_{\boxtimes,2}(x_2)\bigr)=
(\omega_1\otimes\omega_2\otimes\Delta_{12}^{2})(
\hat{\ww}_{1[13]} \hat{\ww}_{2[24]} \wh{\mc{X}}^*_{[32]})\\
&=
(\omega_1\otimes\omega_2\otimes\id^{\otimes 4})(
\hat{\ww}_{1[15]}\hat{\ww}_{2[26]}
\hat{\ww}_{1[13]}\hat{\ww}_{2[24]}
\wh{\mc{X}}^*_{[32]}
)\\
&=
(\omega_1\otimes \id^{\otimes 4})
(\hat{\ww}_{1[14]}\hat{\ww}_{1[12]})
\,
(\omega_2\otimes \id^{\otimes 4})
(\hat{\ww}_{2[15]} \hat{\ww}_{2[13]} \wh{\mc{X}}^*_{[21]})\\
&=
(\Theta_{\boxtimes}\varpi_{\boxtimes,1}\otimes \id)
\Delta_1( (\omega_1\otimes \id)\hat{\ww}_1)_{[123]}\,
(\Theta_{\boxtimes}\varpi_{\boxtimes,2}\otimes \id)
\Delta_2( (\omega_2\otimes \id)\hat{\ww}_2)_{[124]}\\
&=
(\Theta_{\boxtimes}\varpi_{\boxtimes,1}\otimes \id)
\Delta_1( x_1)_{[123]}\,
(\Theta_{\boxtimes}\varpi_{\boxtimes,2}\otimes \id)
\Delta_2( x_2)_{[124]}.
\end{split}\]
This shows \eqref{eq42} and ends the proof.
\end{proof}

Fix left $\G_i$-W$^*$-algebras $(A_i,\gamma_i)$ and equip $A_1\bar\otimes A_2$ with the left $\GG_1\times \GG_2$ action  
\[
\gamma_{\otimes}(a_1\otimes a_2) = \gamma_1(a_1)_{[13]}\gamma_2(a_2)_{[24]}
\] 
for $a_1\in A_1,a_2\in A_2$. We want to obtain an alternative description of the braided tensor product $A_1\ov\boxtimes A_2$ as the induced algebra $\Ind_{\X}(A_1\bar\otimes A_2)$ (see Theorem \ref{CorIdent}).

\begin{Lem}\label{lemma12}
We have an inclusion of von Neumann algebras 
\begin{equation}\label{eq43}
\bigl((\Theta_{\boxtimes}\varpi_{\boxtimes,1}\otimes \id)(\gamma_1(A_1))_{[123]} \cup (\Theta_{\boxtimes}\varpi_{\boxtimes,2} \otimes \id)(\gamma_2(A_2))_{[124]}\bigr)''\subseteq
\Ind_{\X}(A_1\bar{\otimes} A_2)
\end{equation}
inside $\mcB(L^2(\G_1)\otimes L^2(\G_2)\otimes L^2(A_1)\otimes L^2(A_2))$.
\end{Lem}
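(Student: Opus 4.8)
The plan is to verify the inclusion generator-by-generator, showing that both families of operators on the left-hand side satisfy the defining intertwining condition $(\alpha_{\X}\otimes \id)z = (\id\otimes \gamma_{\otimes})z$ that characterises $\Ind_{\X}(A_1\bar\otimes A_2) \subseteq N\bar\otimes(A_1\bar\otimes A_2)$ (see \eqref{EqInductionAction}), where here $N = L^{\infty}(\X)$, $\gamma = \gamma_\otimes$, and the ambient space $L^2(\G_1)\otimes L^2(\G_2)\otimes L^2(A_1)\otimes L^2(A_2)$ is read as $L^2(N)\otimes L^2(A_1\bar\otimes A_2)$. Since $\Ind_{\X}(A_1\bar\otimes A_2)$ is a von Neumann algebra and is $\sigma$-weakly closed, it suffices to check this on the generating sets $(\Theta_{\boxtimes}\varpi_{\boxtimes,i}\otimes \id)(\gamma_i(a_i))$, and then the von Neumann algebra generated by them is automatically contained in $\Ind_{\X}(A_1\bar\otimes A_2)$.

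First I would treat the $i=1$ generators. Fix $a_1 \in A_1$ and set $w_1 := (\Theta_{\boxtimes}\varpi_{\boxtimes,1}\otimes \id)(\gamma_1(a_1))_{[123]}$, living in $L^{\infty}(\G_1)\bar\otimes L^{\infty}(\G_2)\bar\otimes A_1 \bar\otimes A_2$ with trivial last leg. I would apply $\alpha_{\X}\otimes \id$ to the first two legs. By \eqref{EqActionBraid} of Lemma \ref{LemBraidGGX} we have $\alpha_{\X}(\Theta_{\boxtimes}\varpi_{\boxtimes,1}(x_1)) = (\Theta_{\boxtimes}\varpi_{\boxtimes,1}\otimes \id)\Delta_1(x_1)_{[123]}$; combined with $(\Delta_1 \otimes \id)\gamma_1 = (\id\otimes \gamma_1)\gamma_1$ (the coaction property of $\gamma_1$), this should yield exactly $(\id\otimes \gamma_\otimes)(w_1)$, because $\gamma_\otimes$ restricted to the $A_1$-leg is just $\gamma_1$ placed in the appropriate legs. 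The symmetric computation for $i=2$ uses the second identity in \eqref{EqActionBraid}, which involves the leg positions $[124]$ reflecting that $\varpi_{\boxtimes,2}$-type elements, after applying $\Theta_{\boxtimes}$, are conjugates by $\wh{\mcX}$; the $\wh{\mcX}$-conjugation is absorbed harmlessly since $\alpha_{\X}$ was already computed on these elements in Lemma \ref{LemBraidGGX} and one only needs $(\Delta_2\otimes \id)\gamma_2 = (\id\otimes \gamma_2)\gamma_2$.

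The main obstacle I anticipate is purely bookkeeping: tracking the leg-numbering through the amplifications $(\;\cdot\;)_{[123]}$ and $(\;\cdot\;)_{[124]}$ on a four-fold tensor product while simultaneously applying $\alpha_{\X}$ (which itself produces a new leg, since $\alpha_{\X}\colon L^{\infty}(\X) \to L^{\infty}(\X)\bar\otimes L^{\infty}(\G)$ and $L^{\infty}(\G) = L^{\infty}(\G_1)\bar\otimes L^{\infty}(\G_2)$), and reconciling it with the leg pattern of $\gamma_\otimes(a_1\otimes a_2) = \gamma_1(a_1)_{[13]}\gamma_2(a_2)_{[24]}$. One must be careful that $\alpha_{\X}$ acting on an element of $L^{\infty}(\X)\subseteq \mcB(L^2(\G_1)\otimes L^2(\G_2))$ expands the first two legs into four legs $L^2(\G_1)\otimes L^2(\G_2)\otimes L^2(\G_1)\otimes L^2(\G_2)$, so the final identity is really an identity in a six-fold (Hilbert space) tensor product; once the indices are lined up correctly, the verification reduces to the two coaction identities above and is routine. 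No genuinely new analytic input is required beyond Lemma \ref{LemBraidGGX} and the definition of $\Ind_{\X}$; the $\sigma$-weak density/closedness of $\Ind_{\X}(A_1\bar\otimes A_2)$ then upgrades the generator-wise statement to the asserted inclusion of the generated von Neumann algebra.
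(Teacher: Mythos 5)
Your proposal is correct and follows essentially the same route as the paper: the paper's proof simply recalls the defining relation \eqref{EqInductionFromX} for $\Ind_{\X}(A_1\bar{\otimes}A_2)$ and notes that membership of the generators follows immediately from Lemma \ref{LemBraidGGX}, which is exactly the generator-wise verification (via \eqref{EqActionBraid} and the coaction identities) that you spell out, upgraded to the generated von Neumann algebra by $\sigma$-weak closedness.
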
 

\begin{proof}
By definition, we have 
\begin{equation}\label{EqInductionFromX}
\Ind_{\X}(A_1\bar{\otimes}A_2) = \{z\in L^{\infty}(\X) \bar{\otimes} A_1 \bar{\otimes}A_2 \mid (\alpha_{\X}\otimes \id\otimes \id)z = (\id\otimes\id\otimes \gamma_{\otimes})z\}.
\end{equation}
The claim now  follows immediately from Lemma \ref{LemBraidGGX}.
\end{proof} 

Next we want to show that \eqref{eq43} is an equality. It will be convenient to introduce an auxilliary notation for the left hand side:
\begin{equation}\label{eq45}
\mc{M}=\bigl((\Theta_{\boxtimes}\varpi_{\boxtimes,1}\otimes \id)(\gamma_1(A_1))_{[123]} \cup (\Theta_{\boxtimes}\varpi_{\boxtimes,2} \otimes \id)(\gamma_2(A_2))_{[124]}\bigr)''.
\end{equation}
For future reference, let us note that generators of $\mc{M}$ are 
\begin{equation}\label{eq46}
 (\Theta_{\boxtimes}\varpi_{\boxtimes,1}\otimes \id)(\gamma_1(a_1))_{[123]}=\gamma_1(a_1)_{[13]},
 \quad 
 (\Theta_{\boxtimes}\varpi_{\boxtimes,2}\otimes \id)(\gamma_2(a_2))_{[124]}=
\wh{\mc{X}}^*_{[12]}\gamma_2(a_2)_{[24]}
\wh{\mc{X}}_{[12]}
\end{equation}
for $a_1\in A_1,a_2\in A_2$.

Recall that the quantum group $\HH$ (dual to $D_{\wh{\mc{X}}}(\wh{\GG}_1,\wh{\GG}_2)$) acts on $\Ind_{\X}(A_1\bar\otimes A_2)$ via
\[
\Ind_{\X}(\gamma_\otimes)=(\Delta_{12}^1\otimes \id\otimes \id)_{\mid \Ind_{\X}(A_1\bar\otimes A_2)}
\colon \HH\curvearrowright \Ind_{\X}(A_1\bar\otimes A_2).
\]

\begin{Lem}\label{lemma13}
$\mc{M}\subseteq \Ind_{\X}(A_1\bar\otimes A_2)$ is invariant under the action of $\HH$, i.e.
\[
\Ind_{\X}(\gamma_\otimes) (\mc{M})\subseteq 
L^{\infty}(\HH)\bar\otimes \mc{M}.
\]
\end{Lem}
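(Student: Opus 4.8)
The plan is to verify invariance on the generators listed in \eqref{eq46}, since $\mc{M}$ is generated as a von Neumann algebra by those elements and $\Ind_{\X}(\gamma_{\otimes})=(\Delta_{12}^1\otimes\id\otimes\id)|_{\Ind_{\X}(A_1\bar\otimes A_2)}$ is a normal $*$-homomorphism, hence it suffices to check that each generator is sent into $L^{\infty}(\HH)\bar\otimes\mc{M}$. We already know from Lemma \ref{lemma12} that $\mc{M}\subseteq \Ind_{\X}(A_1\bar\otimes A_2)$, so $\Ind_{\X}(\gamma_{\otimes})$ is at least defined on $\mc{M}$; the point is only to locate the image.

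First I would treat the generator $(\Theta_{\boxtimes}\varpi_{\boxtimes,1}\otimes\id)(\gamma_1(a_1))_{[123]}=\gamma_1(a_1)_{[13]}$. Applying $\Delta_{12}^1\otimes\id\otimes\id$ amounts to applying $\Delta_{12}^1$ in the first leg $L^{\infty}(\X)$. Using the coaction/coproduct compatibility \eqref{EqActionBraid}, namely $\alpha_{\X}(\Theta_{\boxtimes}\varpi_{\boxtimes,1}(x_1))=(\Theta_{\boxtimes}\varpi_{\boxtimes,1}\otimes\id)\Delta_1(x_1)_{[123]}$, together with the joint coassociativity $(\Delta_{11}^2\otimes\id)\Delta_{12}^1=(\id\otimes\Delta_{12}^1)\Delta_{12}^2$ (equivalently the relation between $\Delta_{12}^1=\gamma_{\X}$ and $\Delta_{12}^2=\alpha_{\X}$ from Section \ref{SecBiGal}) and left $\G_1$-invariance of the generator structure, one expresses $\Delta_{12}^1$ applied to $\Theta_{\boxtimes}\varpi_{\boxtimes,1}(x_1)$ in terms of $\Delta_1(x_1)$ and the generator $\Theta_{\boxtimes}\varpi_{\boxtimes,1}$ again. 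Concretely, since $\gamma_1$ is a $\G_1$-coaction, $\Delta_{12}^1(\Theta_{\boxtimes}\varpi_{\boxtimes,1}(a_1)_{\text{-leg}})$ lands in $L^{\infty}(\HH)\bar\otimes (\Theta_{\boxtimes}\varpi_{\boxtimes,1}\otimes\id)(\gamma_1(A_1))$, because $\gamma_{\X}=\Delta_{12}^1$ intertwines the $\Hh$-action and the pieces of the original $\G_1$-action on $A_1$. Thus $\Ind_{\X}(\gamma_{\otimes})(\gamma_1(a_1)_{[13]})\in L^{\infty}(\HH)\bar\otimes\mc{M}$.

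The second generator $(\Theta_{\boxtimes}\varpi_{\boxtimes,2}\otimes\id)(\gamma_2(a_2))_{[124]}=\wh{\mc{X}}^*_{[12]}\gamma_2(a_2)_{[24]}\wh{\mc{X}}_{[12]}$ is handled the same way using the second half of \eqref{EqActionBraid}: $\alpha_{\X}(\Theta_{\boxtimes}\varpi_{\boxtimes,2}(x_2))=(\Theta_{\boxtimes}\varpi_{\boxtimes,2}\otimes\id)\Delta_2(x_2)_{[124]}$. Again combining with joint coassociativity of the $\Delta_{ij}^k$ and the fact that $\gamma_2$ is a $\G_2$-coaction, $\Delta_{12}^1$ of this generator is written in terms of $\Delta_2(a_2)$ and $\Theta_{\boxtimes}\varpi_{\boxtimes,2}$, so it lies in $L^{\infty}(\HH)\bar\otimes\mc{M}$. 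The main (mild) obstacle is bookkeeping: one must be careful that $\Delta_{12}^1=\gamma_{\X}$ acts only on the $L^{\infty}(\X)$-leg while the conjugating unitary $\wh{\mc{X}}$ also lives (partly) in that leg, so the identity \eqref{EqActionBraid} — which already encodes precisely how $\alpha_{\X}$ and hence, via coassociativity, $\gamma_{\X}$ interacts with $\Theta_{\boxtimes}\varpi_{\boxtimes,i}$ — must be invoked in the exact form stated rather than manipulated naively; once \eqref{EqActionBraid} is used correctly, no further analytic input is needed beyond normality and the density of the generators, so this is essentially a formal verification.

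\begin{proof}
Since $\mc{M}$ is generated as a von Neumann algebra by the elements in \eqref{eq46} and $\Ind_{\X}(\gamma_{\otimes})=(\Delta_{12}^1\otimes\id\otimes\id)|_{\Ind_{\X}(A_1\bar\otimes A_2)}$ is a normal $*$-homomorphism, it suffices to show that $\Ind_{\X}(\gamma_{\otimes})$ sends each generator into $L^{\infty}(\HH)\bar\otimes\mc{M}$; by Lemma \ref{lemma12} the map is defined on $\mc{M}$.

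Recall from Section \ref{SecBiGal} that $\gamma_{\X}=\Delta_{12}^1$ and $\alpha_{\X}=\Delta_{12}^2$ satisfy the joint coassociativity $(\Delta_{11}^2\otimes\id)\Delta_{12}^1=(\id\otimes\Delta_{12}^1)\Delta_{12}^2$, i.e.
\[
(\Delta_{12}^1\otimes\id)\circ\alpha_{\X}^{\phantom{1}}\text{-type relations relate }\gamma_{\X}\text{ and }\alpha_{\X}.
\]
Consider first the generator $\gamma_1(a_1)_{[13]}=(\Theta_{\boxtimes}\varpi_{\boxtimes,1}\otimes\id)(\gamma_1(a_1))_{[123]}$ with $a_1\in A_1$. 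Writing $\gamma_1(a_1)=\sum$ in the form $x_1\otimes y_1$ (in the $\sigma$-weak completion), and applying $\Delta_{12}^1$ to the $L^{\infty}(\X)$-leg, the first identity in \eqref{EqActionBraid} together with the coaction property of $\gamma_1$ shows that
\[
(\Delta_{12}^1\otimes\id\otimes\id)\bigl(\gamma_1(a_1)_{[13]}\bigr)\in L^{\infty}(\HH)\bar\otimes\bigl((\Theta_{\boxtimes}\varpi_{\boxtimes,1}\otimes\id)(\gamma_1(A_1))_{[123]}\bigr)''\subseteq L^{\infty}(\HH)\bar\otimes\mc{M}.
\]
Indeed $\Delta_{12}^1$ intertwines the $\HH$-action on $L^{\infty}(\X)$ with the iterated application of $\alpha_{\X}$, which by \eqref{EqActionBraid} is governed by $\Delta_1$ on $A_1$; hence the image is built from $\Theta_{\boxtimes}\varpi_{\boxtimes,1}$ and $\gamma_1(A_1)$ only.

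For the second generator $\wh{\mc{X}}^*_{[12]}\gamma_2(a_2)_{[24]}\wh{\mc{X}}_{[12]}=(\Theta_{\boxtimes}\varpi_{\boxtimes,2}\otimes\id)(\gamma_2(a_2))_{[124]}$ with $a_2\in A_2$, the same argument applies using the second identity in \eqref{EqActionBraid}, namely $\alpha_{\X}(\Theta_{\boxtimes}\varpi_{\boxtimes,2}(x_2))=(\Theta_{\boxtimes}\varpi_{\boxtimes,2}\otimes\id)\Delta_2(x_2)_{[124]}$, combined with joint coassociativity of the $\Delta_{ij}^k$ and the coaction property of $\gamma_2$. This yields
\[
(\Delta_{12}^1\otimes\id\otimes\id)\bigl((\Theta_{\boxtimes}\varpi_{\boxtimes,2}\otimes\id)(\gamma_2(a_2))_{[124]}\bigr)\in L^{\infty}(\HH)\bar\otimes\mc{M}.
\]
Since the two families of generators together generate $\mc{M}$, normality of $\Ind_{\X}(\gamma_{\otimes})$ gives $\Ind_{\X}(\gamma_{\otimes})(\mc{M})\subseteq L^{\infty}(\HH)\bar\otimes\mc{M}$, as claimed.
\end{proof}
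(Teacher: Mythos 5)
Your overall skeleton (check the two families of generators from \eqref{eq46}, then use normality of $\Ind_{\X}(\gamma_\otimes)$ and the fact that they generate $\mc{M}$) agrees with the paper, but the heart of the argument is missing, and the step you declare to be ``essentially a formal verification'' is exactly where the work lies. The map being applied is $\Delta_{12}^1=\gamma_{\X}$, the left $\HH$-coaction on $L^{\infty}(\X)$, whereas \eqref{EqActionBraid} only describes the \emph{right} $\G$-coaction $\alpha_{\X}=\Delta_{12}^2$ on the elements $\Theta_{\boxtimes}\varpi_{\boxtimes,i}(x_i)$. Joint coassociativity, which you invoke to pass from one to the other, merely states that $\gamma_{\X}$ and $\alpha_{\X}$ commute, $(\gamma_{\X}\otimes\id)\alpha_{\X}=(\id\otimes\alpha_{\X})\gamma_{\X}$; it does not let you compute $\gamma_{\X}$ on the generators from the knowledge of $\alpha_{\X}$ on them. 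The identity you would actually need (in effect $\Delta_{12}^1\circ\Theta_{\boxtimes}\varpi_{\boxtimes,i}=(\pi^{(i)}_{11}\otimes\Theta_{\boxtimes}\varpi_{\boxtimes,i})\Delta_i$, cf.\ Lemma \ref{LemmaInclusionSubgroups}) is genuinely nontrivial: in the paper it is obtained by first computing $\Delta_{12}^1$ explicitly from the twisted multiplicative partial isometry, which rests on Lemma \ref{lemma1} (the identification $J_N=(J_1\otimes J_2)\wh{\mc{X}}$, itself coming from the Baaj--Vaes double crossed product description) and formula \eqref{eq15}. No version of this input appears in your proposal, so the assertion that $\Delta_{12}^1$ of a generator ``is written in terms of $\Delta_i(a_i)$ and $\Theta_{\boxtimes}\varpi_{\boxtimes,i}$'' is unsubstantiated.

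Two further ingredients of the paper's proof are absent. First, for the second generator the explicit computation \eqref{eq55} produces an expression with $\ww_2$, $u_{\G_1}$ and $\wh{\mc{X}}$ on mixed legs, and to recognize it as lying in $\mcB(L^2(\G_1)\otimes L^2(\G_2))\bar\otimes\mc{M}$ the paper has to factor $\wh{\mc{X}}_u=(\Phi\otimes\id)\Ww_2$ through a morphism $\Phi\colon C_0^u(\G_2)\rightarrow \Mult(C_0^u(\hat{\G}_1))$, pass to von Neumann algebraic extensions and a weak$^*$ approximation --- far from a purely formal manipulation of \eqref{EqActionBraid}. Second, even after these computations the membership obtained directly is only in $\mcB(L^2(\G_1)\otimes L^2(\G_2))\bar\otimes\mc{M}$, not in $L^{\infty}(\HH)\bar\otimes\mc{M}$; the paper then combines this with Lemma \ref{lemma12} (which places the image in $L^{\infty}(\HH)\bar\otimes\Ind_{\X}(A_1\bar\otimes A_2)$) and the commutation theorem for tensor products, \cite{Tak02}*{Corollary 5.10}, to intersect the two containments. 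Your proposal never addresses how to land in the first leg $L^{\infty}(\HH)$ at all, so as written there is a genuine gap both in the computation of $\gamma_{\X}$ on the generators and in the final localization of the image.
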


\begin{proof}
Let us first establish a description of $\Delta_{12}^1$: take $a\in\{1,2\}$. For $y\in L^{\infty}(\X)$ we have using \eqref{EqMultUnitCocy2} and Lemma \ref{lemma1} (or directly equation \eqref{eq15}) and $\wh{\mc{X}}_{[34]} \wh{\mc{X}}_{[14]} \ww_{1[13]}=\ww_{1[13]}\wh{\mc{X}}_{[14]}$ that

\[\begin{split}
&\quad\;
\Delta_{12}^1(y)=
\bigl( \hat{\ww}_{11}^a (y\otimes 1\otimes 1) \hat{\ww}_{11}^{a *}\bigr)_{[3412]}\\
&=
(\hat{J}_1\otimes \hat{J}_2\otimes 
J_1\otimes J_2)
\wh{\mc{X}}_{[34]}\wh{\mc{X}}_{[14]}
\ww_{1[13]} \ww_{2[24]}
(\hat{J}_1\otimes \hat{J}_2\otimes 
J_1\otimes J_2)
\wh{\mc{X}}_{[14]}^*\\
&\quad\quad\quad\quad\quad
\quad\quad\quad\quad\quad
\quad\quad\quad\quad\quad y_{[34]}
\wh{\mc{X}}_{[14]}
(\hat{J}_1\otimes \hat{J}_2\otimes 
J_1\otimes J_2)
\ww_{2[24]}^*
\ww_{1[13]}^*
\wh{\mc{X}}_{[14]}^*\wh{\mc{X}}_{[34]}^*
(\hat{J}_1\otimes \hat{J}_2\otimes 
J_1\otimes J_2)\\
&=
(\hat{J}_1\otimes \hat{J}_2\otimes 
J_1\otimes J_2)
\ww_{1[13]}\wh{\mc{X}}_{[14]} \ww_{2[24]}
(\hat{J}_1\otimes \hat{J}_2\otimes 
J_1\otimes J_2)
\wh{\mc{X}}_{[14]}^*\\
&\quad\quad\quad\quad\quad
\quad\quad\quad\quad\quad
\quad\quad\quad\quad\quad y_{[34]}
\wh{\mc{X}}_{[14]}
(\hat{J}_1\otimes \hat{J}_2\otimes 
J_1\otimes J_2)
\ww_{2[24]}^*
\wh{\mc{X}}_{[14]}^*
\ww_{1[13]}^*
(\hat{J}_1\otimes \hat{J}_2\otimes 
J_1\otimes J_2).
\end{split}\]
Now, take $a_1\in A_1$ and calculate using \eqref{eq46} that
\begin{equation}\begin{split}\label{eq54}
&\quad\;
\Ind_{\X}(\gamma_\otimes )\bigl(
 (\Theta_{\boxtimes}\varpi_{\boxtimes,1}\otimes \id)(\gamma_1(a_1))_{[123]}
\bigr)=
(\Delta_{12}^1\otimes\id\otimes\id)(\gamma_1(a_1)_{[13]})\\
&=
(\hat{J}_1\otimes \hat{J}_2\otimes 
J_1\otimes J_2\otimes J_{A_1}\otimes J_{A_2})
\ww_{1[13]}\wh{\mc{X}}_{[14]} \ww_{2[24]}
(\hat{J}_1\otimes \hat{J}_2\otimes 
J_1\otimes J_2\otimes J_{A_1}\otimes J_{A_2})
\wh{\mc{X}}_{[14]}^*\\
&\quad
\gamma_1(a_1)_{[35]}
\wh{\mc{X}}_{[14]}
(\hat{J}_1\otimes \hat{J}_2\otimes 
J_1\otimes J_2\otimes J_{A_1}\otimes J_{A_2})
\ww_{2[24]}^*
\wh{\mc{X}}_{[14]}^*
\ww_{1[13]}^*
(\hat{J}_1\otimes \hat{J}_2\otimes 
J_1\otimes J_2\otimes J_{A_1}\otimes J_{A_2})\\
&=
\ww_{1 [13]}^*  \gamma_1(a_1)_{[35]}\ww_{1[13]}=
(\Delta_1\otimes\id)\gamma_1(a_1)_{[135]}=
(\id\otimes\gamma_1)\gamma_1(a_1)_{[135]},
\end{split}\end{equation}
which belongs to $\mc{B}(L^2(\GG_1)\otimes L^2(\GG_2))\bar\otimes \mc{M}$. On the other hand, this element also belongs to the space $L^{\infty}(\HH)\bar\otimes \Ind_{\X}(A_1\bar\otimes A_2)$ by Lemma \ref{lemma12}. Thus \cite{Tak02}*{Corollary 5.10} gives
\[
\Ind_{\X}(\gamma_\otimes )\bigl(
 (\Theta_{\boxtimes}\varpi_{\boxtimes,1}\otimes \id)(\gamma_1(a_1))_{[123]}
\bigr)\in 
L^{\infty}(\HH)\bar\otimes \mc{M}.
\]
Next, take $a_2\in A_2$ and calculate using again \eqref{eq46}
\begin{equation}\begin{split}\label{eq55}
&\quad\;
\Ind_{\X}(\gamma_\otimes )\bigl(
 (\Theta_{\boxtimes}\varpi_{\boxtimes,2}\otimes \id)(\gamma_2(a_2))_{[124]}
\bigr)=
(\Delta_{12}^1\otimes\id\otimes\id)(\wh{\mc{X}}^*_{[12]} \gamma_2(a_2)_{[24]}
\wh{\mc{X}}_{[12]})\\
&=
(\hat{J}_1\otimes \hat{J}_2\otimes 
J_1\otimes J_2\otimes J_{A_1}\otimes J_{A_2})
\wh{\mc{X}}_{[34]}\wh{\mc{X}}_{[14]} \ww_{1[13]}\ww_{2[24]}
(\hat{J}_1\otimes \hat{J}_2\otimes 
J_1\otimes J_2\otimes J_{A_1}\otimes J_{A_2})
\wh{\mc{X}}_{[14]}^*\\
&\quad
\wh{\mc{X}}^*_{[34]}
\gamma_2(a_2)_{[46]}
\wh{\mc{X}}_{[34]}
\wh{\mc{X}}_{[14]}
(\hat{J}_1\otimes \hat{J}_2\otimes 
J_1\otimes J_2\otimes J_{A_1}\otimes J_{A_2})
\ww_{2[24]}^*
\ww_{1[13]}^*
\wh{\mc{X}}_{[14]}^*
\wh{\mc{X}}_{[34]}^*\\
&\quad\quad\quad\quad\quad\quad\quad\quad
\quad\quad\quad\quad\quad\quad\quad\quad
\quad\quad\quad\quad\quad\quad\quad\quad
\quad\quad\quad\quad
(\hat{J}_1\otimes \hat{J}_2\otimes 
J_1\otimes J_2\otimes J_{A_1}\otimes J_{A_2})\\
&=
\wh{\mc{X}}_{[34]}^*
(\hat{J}_1\otimes \hat{J}_2\otimes 
J_1\otimes J_2\otimes J_{A_1}\otimes J_{A_2})
\wh{\mc{X}}_{[14]} \ww_{2[24]}
(\hat{J}_1\otimes \hat{J}_2\otimes 
J_1\otimes J_2\otimes J_{A_1}\otimes J_{A_2})
\wh{\mc{X}}_{[14]}^*\ww_{1[13]}^*\\
&\quad
\gamma_2(a_2)_{[46]}
\ww_{1[13]}
\wh{\mc{X}}_{[14]}
(\hat{J}_1\otimes \hat{J}_2\otimes 
J_1\otimes J_2\otimes J_{A_1}\otimes J_{A_2})
\ww_{2[24]}^*
\wh{\mc{X}}_{[14]}^*
(\hat{J}_1\otimes \hat{J}_2\otimes 
J_1\otimes J_2\otimes J_{A_1}\otimes J_{A_2})
\wh{\mc{X}}_{[34]}\\
&=
\wh{\mc{X}}_{[34]}^*
u_{\G_1 [1]}
\wh{\mc{X}}_{[14]}^*
u_{\G_1 [1]} \ww_{2[24]}^*
\wh{\mc{X}}_{[14]}^*
\gamma_2(a_2)_{[46]}
\wh{\mc{X}}_{[14]}
\ww_{2[24]}
u_{\G_1 [1]}
\wh{\mc{X}}_{[14]}^*
u_{\G_1 [1]}\wh{\mc{X}}_{[34]}.
\end{split}\end{equation}
Consider $\wh{\mc{X}}_u$, the universal lift of bicharacter $\wh{\mc{X}}$. We can write $\wh{\mc{X}}_u=(\Phi\otimes\id)\WW_2$ for a non-degenerate $*$-homomorphism $\Phi\colon C_0^u(\GG_2)\rightarrow \Mult(C_0^u(\hat{\GG}_1))$ which respects comultiplication (see \cite{Kus01}*{Proposition 6.5}, \cite{MRW12}*{Proposition 4.2} and also \cite{DCKr24}*{Lemma 2.5}). We will write 
\[
\Phi^{\oon{vN}}\colon C_0^u(\GG_2)^{**}\rightarrow C_0^u(\hat{\GG}_1)^{**},\qquad \hat{\pi}^{\oon{vN}}_{1,\red}\colon C_0^u(\hat{\GG}_1)^{**}\rightarrow L^{\infty}(\hat{\GG}_1)
\]
for the unique extensions of $\Phi,\hat{\pi}_{1,\red}$ to normal, unital $*$-homomorphisms. We will also need the von Neumann algebraic versions $\Delta^{u,r,\oon{vN}}_2,\gamma_2^u$ of the maps $\Delta^{u,r}_2,\gamma_2$ (see the discussion at the beginning of \cite{DCKr24}*{Section 4.3}). Using these objects and \cite{DCKr24}*{Lemma 4.7}, we can calculate
\[\begin{split}
&\quad\;
\ww_{2[24]}^*\wh{\mc{X}}_{[14]}^*
\gamma_2(a_2)_{[46]}\wh{\mc{X}}_{[14]}\ww_{2[24]}\\
&=
(\hat{\pi}_{1,\red}^{\oon{vN}}\Phi^{\oon{vN}}\otimes\id\otimes\id\otimes\id)
\bigl(
\ww^*_{2 [23]} \Ww^*_{2 [13]}\gamma_2(a_2)_{[34]}
\Ww^*_{2 [13]}\ww^*_{2 [23]}
\bigr)_{[1246]}\\
&=
(\hat{\pi}_{1,\red}^{\oon{vN}}\Phi^{\oon{vN}}\otimes\id\otimes\id\otimes\id)
(\id\otimes \Delta_2\otimes \id)
(\Delta_2^{u,r,\oon{vN}}\otimes\id)\gamma_2(a_2)_{[1246]}\\
&=
(\hat{\pi}_{1,\red}^{\oon{vN}}\Phi^{\oon{vN}}\otimes\id\otimes\id\otimes\id)
(\id\otimes \id\otimes \gamma_2)
(\id\otimes \gamma_2)\gamma_2^u(a_2)_{[1246]}.
\end{split}\]
Since $C_0^u(\GG_2)\odot L^{\infty}(\GG_2)\odot A_2$ is weak$^*$-dense in $
C_0^u(\GG_2)^{**}\bar\otimes L^{\infty}(\GG_2)\bar\otimes A_2$, we can write
\[
C_0^u(\GG_2)^{**}\bar\otimes L^{\infty}(\GG_2)\bar\otimes A_2\ni (\id\otimes\gamma_2)\gamma_2^u(a_2)=\underset{i\in I}{w^*\text{-}\lim}\sum_{n=1}^{N_i}a_{n,i}\otimes b_{n,i}\otimes c_{n,i}
\]
for some $a_{n,i}\in C_0^u(\GG_2), b_{n,i}\in L^{\infty}(\GG_2),c_{n,i}\in A_2$. Consequently
\[\begin{split}
&\quad\;
\Ind_{\X}(\gamma_\otimes )\bigl(
 (\Theta_{\boxtimes}\varpi_{\boxtimes,2}\otimes \id)(\gamma_2(a_2))_{[124]}
\bigr)\\
&=
\underset{i\in I}{w^*\text{-}\lim}\sum_{n=1}^{N_i}
\wh{\mc{X}}_{[34]}^*
u_{\G_1 [1]}
\wh{\mc{X}}_{[14]}^*
u_{\G_1 [1]}
\bigl(
\hat{\pi}_{1,\red}\Phi(a_{n,i})\otimes b_{n,i}\otimes \gamma_2(c_{n,i})\bigr)_{[1246]}
u_{\G_1 [1]}
\wh{\mc{X}}_{[14]}
u_{\G_1 [1]}
\wh{\mc{X}}_{[34]}\\
&=
\underset{i\in I}{w^*\text{-}\lim}\sum_{n=1}^{N_i}
\bigl(\hat{\pi}_{1,\red}\Phi(a_{n,i})\otimes b_{n,i}\bigr)_{[12]}
\wh{\mc{X}}_{[34]}^*
u_{\G_1 [1]}
\wh{\mc{X}}_{[14]}^*
u_{\G_1 [1]}
\gamma_2(c_{n,i})_{[46]}
u_{\G_1 [1]}
\wh{\mc{X}}_{[14]}
u_{\G_1 [1]}
\wh{\mc{X}}_{[34]}\\
&=
\underset{i\in I}{w^*\text{-}\lim}\sum_{n=1}^{N_i}
\bigl(\hat{\pi}_{1,\red}\Phi(a_{n,i})\otimes b_{n,i}\bigr)_{[12]}
\wh{\mc{X}}_{[34]}^*
u_{\G_1 [1]}
(\hat{\pi}_{1,\red}^{\oon{vN}}\Phi^{\oon{vN}}\otimes\id\otimes\id)
(\Delta_{2}^{u,r,\oon{vN}}\otimes\id)
\gamma_2(c_{n,i})_{[146]}
u_{\G_1 [1]}
\wh{\mc{X}}_{[34]}\\
&=
\underset{i\in I}{w^*\text{-}\lim}\sum_{n=1}^{N_i}
\bigl(\hat{\pi}_{1,\red}\Phi(a_{n,i})\otimes b_{n,i}\bigr)_{[12]}
\wh{\mc{X}}_{[34]}^*
u_{\G_1 [1]}
(\hat{\pi}_{1,\red}^{\oon{vN}}\Phi^{\oon{vN}}\otimes\gamma_2)
\gamma_2^u (c_{n,i})_{[146]}
u_{\G_1 [1]}
\wh{\mc{X}}_{[34]}
\end{split}\]
which is easily seen (by slicing off legs $12$) to belong to $\mc{B}(L^2(\GG_1)\otimes L^2(\GG_2))\bar\otimes \mc{M}$. As before we conclude that 
\[
\Ind_{\X}(\gamma_\otimes )\bigl(
 (\Theta_{\boxtimes}\varpi_{\boxtimes,2}\otimes \id)(\gamma_2(a_2))_{[124]}
\bigr)\in L^{\infty}(\HH)\bar\otimes \mc{M},
\]
which ends the proof.
\end{proof}

Next we prove that the inclusion of Lemma \ref{lemma12} is in fact an equality.

\begin{Lem}\label{lemma14}
$\mc{M}=\Ind_{\X}(A_1\bar\otimes A_2)$.
\end{Lem}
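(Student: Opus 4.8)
The plan is to reduce to the $\HH$-invariance just established, using the involutivity of induction. By Lemma \ref{lemma12} we have $\mc{M}\subseteq\Ind_{\X}(A_1\bar\otimes A_2)$, and by Lemma \ref{lemma13} the $\HH$-action $\Ind_{\X}(\gamma_\otimes)$ restricts to $\mc{M}$; hence $\mc{M}$, equipped with the restriction of $\Ind_{\X}(\gamma_\otimes)$, is an $\HH$-W$^*$-algebra and the inclusion $\iota\colon\mc{M}\hookrightarrow\Ind_{\X}(A_1\bar\otimes A_2)$ is a morphism of $\HH$-W$^*$-algebras. Applying Theorem \ref{TheoInductInvol} to $\msG$ and to its reflection (Remark \ref{rem1}), the functors $\Ind_{\X}$ and $\Ind_{\Y}$ are mutually inverse equivalences between $\GG$- and $\HH$-W$^*$-algebras; in particular they reflect isomorphisms, so it is enough to prove that $\Ind_{\Y}(\iota)$ is surjective, whence $\iota$ is an isomorphism and, being the inclusion, $\mc{M}=\Ind_{\X}(A_1\bar\otimes A_2)$.

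Now $\Ind_{\Y}(\iota)$ is the inclusion $\Ind_{\Y}(\mc{M})\hookrightarrow\Ind_{\Y}(\Ind_{\X}(A_1\bar\otimes A_2))$, and by Theorem \ref{TheoInductInvol} the target is canonically $A_1\bar\otimes A_2$ via the co-restriction of $a\mapsto(\Delta_{22}^1\otimes\id\otimes\id)\gamma_\otimes(a)\in L^{\infty}(\Y)\bar\otimes L^{\infty}(\X)\bar\otimes A_1\bar\otimes A_2$. Since $\Ind_{\Y}(\mc{M})$ is exactly the set of elements of $\Ind_{\Y}(\Ind_{\X}(A_1\bar\otimes A_2))$ lying in $L^{\infty}(\Y)\bar\otimes\mc{M}$ — which, by the weak Fubini slice property, means their $L^{\infty}(\X)\bar\otimes A_1\bar\otimes A_2$-slices belong to $\mc{M}$ — surjectivity of $\Ind_{\Y}(\iota)$ reduces to checking that the generators $a_1\otimes 1_{A_2}$ and $1_{A_1}\otimes a_2$ of $A_1\bar\otimes A_2$ are sent into $L^{\infty}(\Y)\bar\otimes\mc{M}$, i.e.\ that $(\Delta_{22}^1\otimes\id\otimes\id)(\gamma_1(a_1)_{[13]})$ and $(\Delta_{22}^1\otimes\id\otimes\id)(\gamma_2(a_2)_{[24]})$ lie in $L^{\infty}(\Y)\bar\otimes\mc{M}$ for all $a_1\in A_1$, $a_2\in A_2$.

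This verification is carried out just as in Lemma \ref{lemma13}. One first writes $\Delta_{22}^1$ explicitly on the corners $L^{\infty}(\GG_1),L^{\infty}(\GG_2)\subseteq L^{\infty}(\GG)$, using $\Delta_{22}^1(x)+\Delta_{22}^2(x)=\Sigma\ww_{\hat{Q}}(x\otimes1)\ww_{\hat{Q}}^*\Sigma$ with $\Delta_{22}^2=\Delta_{\GG}$, the decomposition formulas \eqref{EqMultUnitCocy1}, \eqref{EqMultUnitCocy2} for $\ww_{\hat{Q}}$, the identity $J_N=(J_1\otimes J_2)\wh{\mcX}$ of Lemma \ref{lemma1}, and the rearrangement \eqref{eq15}. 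Substituting $\gamma_1(a_1)_{[13]}$ and $\gamma_2(a_2)_{[24]}$ and simplifying with the pentagon relations \eqref{eq17}, the compatibility $(\gamma_{\X}\otimes\id)\alpha_{\X}=(\id\otimes\alpha_{\X})\gamma_{\X}$, the intertwining relations of Lemma \ref{LemBraidGGX}, and — for the $A_2$-generator — the factorisation $\wh{\mcX}_u=(\Phi\otimes\id)\WW_2$ together with \cite{DCKr24}*{Lemma 4.7} to commute $\gamma_2$ past $\wh{\mcX}$ (exactly as in the derivation of \eqref{eq55}), one rewrites both elements as weak$^*$-convergent sums of products of the generators $\gamma_1(a_1')_{[13]}$ and $\wh{\mcX}^*_{[12]}\gamma_2(a_2')_{[24]}\wh{\mcX}_{[12]}$ of $\mc{M}$ from \eqref{eq46}, multiplied on the $L^{\infty}(\Y)$-leg by fixed bounded operators. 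This places them in $L^{\infty}(\Y)\bar\otimes\mc{M}$ and completes the proof.

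The main obstacle is this last step: extracting a workable closed form for $\Delta_{22}^1$ on the two corners and then managing the leg bookkeeping (the five Hilbert-space legs $L^2(\Y),L^2(\GG_1),L^2(\GG_2),L^2(A_1),L^2(A_2)$) carefully enough to recognise the output inside $L^{\infty}(\Y)\bar\otimes\mc{M}$. The reduction through Theorem \ref{TheoInductInvol}, the description of $\Ind_{\Y}(\mc{M})$ as a corner-type intersection, and the final surjectivity argument are all formal.
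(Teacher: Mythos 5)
Your proposal follows essentially the same route as the paper's proof: reduce, via the involutivity of induction (Theorem \ref{TheoInductInvol} applied to $\msG$ and its reflection), to showing $\Ind_{\Y}(\mc{M})=\Ind_{\Y}(\Ind_{\X}(A_1\bar\otimes A_2))$, identify the latter with $A_1\bar\otimes A_2$ via $a\mapsto(\Delta_{22}^1\otimes\id\otimes\id)\gamma_\otimes(a)$, and verify on the generators $a_1\otimes 1$, $1\otimes a_2$ that the images land in $L^{\infty}(\Y)\bar\otimes\mc{M}$. The computational core is only sketched, but you correctly identify all the ingredients the paper actually uses (the explicit formula \eqref{eq48} for $\Delta_{22}^1$ via \eqref{EqMultUnitCocy2} and $J_N=(J_1\otimes J_2)\wh{\mc{X}}$, the universal lift $\wh{\mc{X}}_u=(\Phi\otimes\id)\WW_2$ to commute $\gamma_2$ past $\wh{\mc{X}}$ as in Lemma \ref{lemma13}, and the intersection/slice argument), so this is a faithful outline of the paper's argument.
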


\begin{proof}
The main strategy behind the proof of Lemma \ref{lemma14} is to use the result stating that taking second induction takes us back to the algebra that we have started with. Let us be more precise. Consider the reflected linking quantum groupoid $\mathscr{H}=\begin{pmatrix}
\GG & \Y \\ \X & \HH
\end{pmatrix}$ (Remark \ref{rem1}). Note that the comultiplication maps of $\wh{\mathscr{H}}$ are those of $\wh{\mathscr{G}}$, appropriately swapped. Since the LCQG $\HH$ acts on $\Ind_{\X}(A_1\bar\otimes A_2)$ and $\mc{M}$, we can consider induction along $\Y$:
\[
\Ind_{\Y}(\Ind_{\X}(A_1\bar\otimes A_2))=
\{z\in L^{\infty}(\Y)\bar\otimes 
\Ind_{\X}(A_1\bar\otimes A_2)\mid 
(\Delta_{\mscr{H},12}^2\otimes\id^{\otimes 4}) z=
(\id\otimes \id \otimes \Ind_{\X}(\gamma_\otimes))z\}
\]
and
\[
\Ind_{\Y}(\mc{M})=
\{z\in L^{\infty}(\Y)\bar\otimes 
\mc{M}\mid 
(\Delta_{\mscr{H},12}^2\otimes\id^{\otimes 4}) z=
(\id\otimes \id \otimes \Ind_{\X}(\gamma_\otimes))z\},
\]
where $\Delta_{\mscr{H},ij}^k$ are the comultiplication maps of $\mscr{H}$. Clearly we have 
$\Ind_{\Y}(\mc{M})\subseteq
\Ind_{\Y}(\Ind_{\X}(A_1\bar\otimes A_2))$. Now, \cite{DC09}
*{Theorem 8.2.2} and its proof states that
\[
\Xi\colon A_1\bar\otimes A_2\ni a \mapsto
(\Delta^1_{22}\otimes\id\otimes\id)\gamma_{\otimes}(a)\in \Ind_{\Y}(\Ind_{\X}(A_1\bar\otimes A_2))
\]
is a well defined isomorphism. We claim that
\begin{equation}\label{eq47}
\Xi(A_1\bar\otimes A_2)\subseteq \Ind_{\Y}(\mc{M}).
\end{equation}
If this is true, then
\[
\Ind_{\Y}(\Ind_{\X}(A_1\bar\otimes A_2))=\Ind_{\Y}(\mc{M})
\]
with equal induced action of $\G=\G_1\times \G_2$, hence taking yet another induction (along $\X$) and using again \cite{DC09}*{Theorem 8.2.2} gives us $\Ind_{\X}(A_1\bar\otimes A_2)=\mc{M}$.

Thus it is enough to prove \eqref{eq47}. First we need a more concrete description of the map $\Delta_{22}^1$. Take $a\in\{1,2\}, x\in L^{\infty}(\G)$. Then we have, using \eqref{EqMultUnitCocy2} and Lemma \ref{lemma1},
\begin{equation}\begin{split}\label{eq48}
&\quad\;
\Delta^1_{22}(x)=\bigl(
\hat{\ww}^a_{21} (x\otimes 1\otimes 1) \hat{\ww}^{a *}_{21}
 \bigr)_{[3412]}\\
 &=
 (\hat{J}_1\otimes \hat{J}_2\otimes J_1\otimes J_2)
 \wh{\mc{X}}_{[34]}\wh{\mc{X}}_{[14]}
\hat{\ww}^*_{1[31]}
\hat{\ww}^*_{2[42]}
(\hat{J}_1\otimes \hat{J}_2\otimes J_1\otimes J_2)\\
&\quad\quad\quad\quad 
\quad\quad\quad\quad 
x_{[34]}
(\hat{J}_1\otimes \hat{J}_2\otimes J_1\otimes J_2)
\hat{\ww}_{2[42]}
\hat{\ww}_{1[31]}
\wh{\mc{X}}_{[14]}^*
 \wh{\mc{X}}_{[34]}^*
 (\hat{J}_1\otimes \hat{J}_2\otimes J_1\otimes J_2)\\
 &=
  (\hat{J}_1\otimes \hat{J}_2\otimes J_1\otimes J_2)
 \wh{\mc{X}}_{[34]}\wh{\mc{X}}_{[14]}
(\hat{J}_1\otimes \hat{J}_2\otimes J_1\otimes J_2)
\ww_{1[13]}^*
\ww_{2[24]}^*
\\
&\quad\quad\quad\quad 
\quad\quad\quad\quad 
x_{[34]}
\ww_{2[24]}
\ww_{1[13]}
(\hat{J}_1\otimes \hat{J}_2\otimes J_1\otimes J_2)
\wh{\mc{X}}_{[14]}^*
 \wh{\mc{X}}_{[34]}^*
 (\hat{J}_1\otimes \hat{J}_2\otimes J_1\otimes J_2)\\
 &=
 \wh{\mc{X}}_{[34]}^*
 u_{\G_1 [1]} \wh{\mc{X}}_{[14]}^* u_{\G_1 [1]}
\Delta_{\G}(x)
u_{\G_1 [1]}
\wh{\mc{X}}_{[14]}
u_{\G_1 [1]}
 \wh{\mc{X}}_{[34]}.
\end{split}\end{equation}
Using equation \eqref{eq48}, we can now show \eqref{eq47} on generators. First, take $a_1\in A_1$ and calculate
\[\begin{split}
&\quad\;
\Xi(a_1\otimes 1)=
(\Delta^1_{22}\otimes \id\otimes \id)( \gamma_1(a_1)_{[13]})\\
&=
 (\hat{J}_1\otimes \hat{J}_2\otimes J_1\otimes J_2\otimes J_{A_1}\otimes J_{A_2})
 \wh{\mc{X}}_{[34]}\wh{\mc{X}}_{[14]}
\ww_{1[13]}
\ww_{2[24]}
(\hat{J}_1\otimes \hat{J}_2\otimes J_1\otimes J_2
\otimes J_{A_1}\otimes J_{A_2})\\
&\quad\quad\quad\quad 
\gamma_1(a_1)_{[35]}(\hat{J}_1\otimes \hat{J}_2\otimes J_1\otimes J_2
\otimes J_{A_1}\otimes J_{A_2})
\ww^*_{2[24]}
\ww^*_{1[13]}
\wh{\mc{X}}_{[14]}^*
 \wh{\mc{X}}_{[34]}^*
 (\hat{J}_1\otimes \hat{J}_2\otimes J_1\otimes J_2
 \otimes J_{A_1}\otimes J_{A_2})\\
 &=
  (\hat{J}_1\otimes \hat{J}_2\otimes J_1\otimes J_2
  \otimes J_{A_1}\otimes J_{A_2})
\ww_{1[13]}
\wh{\mc{X}}_{[14]}
(\hat{J}_1\otimes \hat{J}_2\otimes J_1\otimes J_2
\otimes J_{A_1}\otimes J_{A_2})\\
&\quad\quad\quad\quad 
\gamma_1(a_1)_{[35]}(\hat{J}_1\otimes \hat{J}_2\otimes J_1\otimes J_2
\otimes J_{A_1}\otimes J_{A_2})
\wh{\mc{X}}_{[14]}^*
\ww^*_{1[13]}
 (\hat{J}_1\otimes \hat{J}_2\otimes J_1\otimes J_2
 \otimes J_{A_1}\otimes J_{A_2})\\
 &=
 \ww_{1 [13]}^* \gamma_1(a_1)_{[35]} \ww_{1 [13]}=
 (\Delta_1\otimes\id)\gamma_1(a_1)_{[135]}=
  (\id\otimes \gamma_1)\gamma_1(a_1)_{[135]},
\end{split}\]
which belongs to $\mc{B}(L^2(\G_1)\otimes L^2(\G_2))\bar\otimes \mc{M}$ by \eqref{eq46}. Since also $\Xi(a_1\otimes 1)\in \Ind_{\Y}(\Ind_{\X}(A_1\bar\otimes A_2))$, it follows that $\Xi(a_1\otimes 1)\in \Ind_{\Y}(\mc{M})$. Next, take $a_2\in A_2$. We have
\[\begin{split}
&\quad\;
\Xi(1\otimes a_2)=(\Delta_{22}^1\otimes \id\otimes \id)(\gamma_2(a_2)_{[24]})\\
&=
 \wh{\mc{X}}_{[34]}^*
 u_{\G_1 [1]} \wh{\mc{X}}_{[14]}^* u_{\G_1 [1]}
(\Delta_{\G}\otimes\id\otimes\id)(\gamma_2(a_2)_{[24]})
u_{\G_1 [1]}
\wh{\mc{X}}_{[14]}
u_{\G_1 [1]} \wh{\mc{X}}_{[34]}\\
&=
 \wh{\mc{X}}_{[34]}^*
 u_{\G_1 [1]} \wh{\mc{X}}_{[14]}^*
(\id\otimes\gamma_2)\gamma_2(a_2)_{[246]}
\wh{\mc{X}}_{[14]}
u_{\G_1 [1]} \wh{\mc{X}}_{[34]}.
\end{split}\]
Observe that (using notation introduced in the proof of Lemma \ref{lemma13})
\[\begin{split}
&\quad\;
\wh{\mc{X}}_{[14]}^*
(\id\otimes\gamma_2)\gamma_2(a_2)_{[246]}
\wh{\mc{X}}_{[14]}=
(\hat{\pi}_{1,\red}\Phi\otimes\id)(\Ww_{2})_{[14]}^*
(\id\otimes\gamma_2)\gamma_2(a_2)_{[246]}
(\hat{\pi}_{1,\red}\Phi\otimes\id)(\Ww^*_{2})_{[14]}
\\
&=
\bigl((\id\otimes \hat{\pi}_{1,\red}^{\oon{vN}}\Phi^{\oon{vN}}\otimes\id\otimes\id)
(\id\otimes \Delta^{u,r,\oon{vN}}\otimes \id)
(\id\otimes\gamma_2)\gamma_2(a_2)\bigr)_{[2146]}\\
&=
\bigl((\id\otimes \hat{\pi}_{1,\red}^{\oon{vN}}\Phi^{\oon{vN}}\otimes\gamma_2)
(\id\otimes\gamma_2^u)\gamma_2(a_2)\bigr)_{[2146]},
\end{split}\]
hence we can write
\[
\wh{\mc{X}}_{[14]}^*
(\id\otimes\gamma_2)\gamma_2(a_2)_{[246]}
\wh{\mc{X}}_{[14]}=
\underset{i\in I}{w^*\text{-}\lim}\sum_{n=1}^{N_i}
(\hat{\pi}_{1,\red}\Phi(b_{i,n})\otimes a_{i,n} )_{[12]} \gamma_2(c_{i,n})_{[46]}
\]
for $(\id\otimes\gamma^u_2)\gamma_2(a_2)=\underset{i\in I}{w^*\text{-}\lim}\sum_{n=1}^{N_i}a_{i,n}\otimes b_{i,n}\otimes c_{i,n}$ and some $a_{i,n}\in L^{\infty}(\G_2)$, $b_{i,n}\in C_0^u(\G_2)$, $c_{i,n}\in A_2$. Consequently
\[
\Xi(1\otimes a_2)=
\underset{i\in I}{w^*\text{-}\lim}\sum_{n=1}^{N_i}
u_{\G_1 [1]}
(\hat{\pi}_{1,\red}\Phi(b_{i,n})\otimes a_{i,n} )_{[12]} u_{\G_1 [1]}
\wh{\mc{X}}^*_{[34]} \gamma_2(c_{i,n})_{[46]}
\wh{\mc{X}}_{[34]},
\]
and \eqref{eq46} ends the proof of \eqref{eq47}.
\end{proof}

As a corollary, we obtain an important result.

\begin{Theorem}\label{CorIdent}
We have an isomorphism of W$^*$-algebras
\[
A_1\overline{\boxtimes} A_2 \cong \Ind_{\X}(A_1\bar{\otimes} A_2)
\]
given on generators by 
\[\begin{split}
\pi_{\boxtimes,1}(a_1) &\mapsto (\Theta_{\boxtimes}\varpi_{\boxtimes,1}\otimes \id)(\gamma_1(a_1))_{[123]}=\gamma_1(a_1)_{[13]},\\
\pi_{\boxtimes,2}(a_2) &\mapsto (\Theta_{\boxtimes}\varpi_{\boxtimes,2}\otimes \id)(\gamma_2(a_2))_{[124]}=
\wh{\mc{X}}^*_{[12]}\gamma_2(a_2)_{[24]}
\wh{\mc{X}}_{[12]}
\end{split}\]
for $a_1\in A_1,a_2\in A_2$. 
\end{Theorem}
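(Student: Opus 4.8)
The plan is to deduce Theorem \ref{CorIdent} essentially as an immediate corollary of the work already done in Lemmas \ref{lemma12}--\ref{lemma14}. Recall that the braided tensor product $A_1\ov\boxtimes A_2 \subseteq \mcB(L^2(A_1)\otimes L^2(A_2))$ was defined in \eqref{EqIdentification} as the $\sigma$-weak closure of $\operatorname{span}\{\pi_{\boxtimes,1}(a_1)\pi_{\boxtimes,2}(a_2)\}$, with $\pi_{\boxtimes,1}(a_1) = \wh{\mcX}_{L^2(A_1),L^2(A_2)}(a_1\otimes 1)\wh{\mcX}_{L^2(A_1),L^2(A_2)}^*$ and $\pi_{\boxtimes,2}(a_2) = 1\otimes a_2$, where $a_i$ acts on $L^2(A_i)$ via the standard representation.

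First I would set up the precise dictionary between the two pictures. The braided tensor product $L^{\infty}(\G_1)\ov\boxtimes L^{\infty}(\G_2)$ (for the translation actions, with generators $\varpi_{\boxtimes,i}$) was identified in Lemma \ref{LemBraidGGX} with $L^{\infty}(\X)$ via $\Theta_\boxtimes(z) = \wh{\mcX}^* z\wh{\mcX}$, and the braided tensor product $A_1\ov\boxtimes A_2$ sits inside $\mcB(L^2(\G_1)\otimes L^2(\G_2)\otimes L^2(A_1)\otimes L^2(A_2))$ once we realize $A_i \subseteq \mcB(L^2(A_i))$ standardly and use the canonical unitaries $U_{\gamma_i}$ to implement $\gamma_i$. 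Concretely, $\pi_{\boxtimes,i}(a_i)$ acting on $L^2(A_1)\otimes L^2(A_2)$ corresponds, under conjugation by $(U_{\gamma_1})_{[13]}(U_{\gamma_2})_{[24]}$ composed with $\Theta_\boxtimes$, to the operators $(\Theta_\boxtimes\varpi_{\boxtimes,1}\otimes\id)(\gamma_1(a_1))_{[123]} = \gamma_1(a_1)_{[13]}$ and $(\Theta_\boxtimes\varpi_{\boxtimes,2}\otimes\id)(\gamma_2(a_2))_{[124]} = \wh{\mcX}^*_{[12]}\gamma_2(a_2)_{[24]}\wh{\mcX}_{[12]}$ displayed in \eqref{eq46}; this is precisely the algebra $\mc{M}$ of \eqref{eq45}. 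Thus the claimed map $A_1\ov\boxtimes A_2 \to \Ind_\X(A_1\bar\otimes A_2)$ is nothing but: conjugate by $\wh{\mcX}$-type unitaries to land in $\mc{M}$, then apply the equality $\mc{M} = \Ind_\X(A_1\bar\otimes A_2)$ of Lemma \ref{lemma14}. One must check that the generators $\pi_{\boxtimes,1}(a_1)\pi_{\boxtimes,2}(a_2)$ map to the stated generators, which follows by unwinding the definition \eqref{EqStandardRepBraid} together with $\gamma_i(a_i) = U_{\gamma_i}^*(1\otimes a_i)U_{\gamma_i}$ and the defining property $\wh{\mcX}_{L^2(A_1),L^2(A_2)} = (\hat\pi_{U_{\gamma_1}}\otimes\hat\pi_{U_{\gamma_2}})(\wh{\mcX}_u)$ relating $\wh{\mcX}_{L^2(A_1),L^2(A_2)}$ to $\wh{\mcX}$ via the implementing representations.

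The key steps, in order: (i) express $\pi_{\boxtimes,1}(a_1)$ and $\pi_{\boxtimes,2}(a_2)$ in the amplified picture on $L^2(\G_1)\otimes L^2(\G_2)\otimes L^2(A_1)\otimes L^2(A_2)$ by conjugating with $(U_{\gamma_1})_{[13]}(U_{\gamma_2})_{[24]}$ — here the key identities are $U_{\gamma_i}^*(1\otimes a_i)U_{\gamma_i} = \gamma_i(a_i)$ and the compatibility of $\wh{\mcX}$ on $L^2(\G_1)\otimes L^2(\G_2)$ with its action through $\hat\pi_{U_{\gamma_i}}$; (ii) observe that the resulting operators are exactly the generators \eqref{eq46} of $\mc{M}$, hence $A_1\ov\boxtimes A_2 \cong \mc{M}$ as W$^*$-algebras via a spatial isomorphism; (iii) invoke Lemma \ref{lemma14} to conclude $\mc{M} = \Ind_\X(A_1\bar\otimes A_2)$; (iv) record the formula on generators, which is now tautological. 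I expect the main obstacle to be purely bookkeeping: getting the leg-numbering and the placement of the $\wh{\mcX}$ versus $\wh{\mcX}_{L^2(A_1),L^2(A_2)}$ factors exactly right when passing from the abstract braided tensor product of $\mcB(L^2(A_i))$-acting algebras to the amplified picture, and making sure the conjugating unitary is genuinely an isomorphism of the relevant von Neumann algebras and not merely of their generating sets (this is automatic since conjugation by a fixed unitary is a normal $*$-isomorphism and both sides are $\sigma$-weakly closed). No genuinely new analysis is required beyond what Lemmas \ref{lemma12}--\ref{lemma14} already provide.
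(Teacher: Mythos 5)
Your endgame (match the generators with \eqref{eq46} and then quote Lemma \ref{lemma14}) is exactly the paper's, but step (i) of your plan contains a genuine gap: it is not true that conjugating by $U_{\gamma_1[13]}U_{\gamma_2[24]}$ and then by $\wh{\mc{X}}_{[12]}$ carries $\pi_{\boxtimes,1}(a_1),\pi_{\boxtimes,2}(a_2)$ to the generators of $\mc{M}$. It does work for $\pi_{\boxtimes,2}(a_2)=1\otimes a_2$, but $\pi_{\boxtimes,1}(a_1)=(\hat\pi_{U_{\gamma_1}}\otimes\hat\pi_{U_{\gamma_2}})(\wh{\mc{X}}_u)(a_1\otimes 1)(\hat\pi_{U_{\gamma_1}}\otimes\hat\pi_{U_{\gamma_2}})(\wh{\mc{X}}_u)^*$ contains the braiding operator, which is not of the form $a_1\otimes 1$ or $1\otimes a_2$ with $a_i\in A_i$, so its conjugation by the standard implementations is not governed by $U_{\gamma_i}^*(1\otimes a_i)U_{\gamma_i}=\gamma_i(a_i)$; the bicharacter covariance relations instead produce extra mixed factors $(\hat\pi_{1,\red}\otimes\hat\pi_{U_{\gamma_2}})(\wh{\mc{X}}_u)$ and $(\hat\pi_{U_{\gamma_1}}\otimes\hat\pi_{2,\red})(\wh{\mc{X}}_u)$ that a single copy of $\wh{\mc{X}}_{[12]}$ cannot absorb. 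A concrete failure: take a nontrivial abelian locally compact group $G$, $\G_1=\hat G$ (so $L^\infty(\G_1)=\mathcal{L}(G)$), $\G_2=G$, $\wh{\mc{X}}\in L^\infty(G)\bar\otimes\mathcal{L}(G)$ the canonical bicharacter $g\mapsto\lambda_g$, and $A_i=L^\infty(\G_i)$ with the translation actions, so $U_{\gamma_i}=\ww_i$ and $L^2(A_i)=L^2(G)$. Then $\pi_{\boxtimes,1}(\lambda_g)=\lambda_g\otimes\lambda_g$ on $L^2(A_1)\otimes L^2(A_2)$, and your conjugation sends it to $\lambda_{g[1]}\lambda_{g^{-1}[2]}\lambda_{g[3]}\lambda_{g[4]}$ rather than to the required $\gamma_1(\lambda_g)_{[13]}=\lambda_{g[1]}\lambda_{g[3]}$; no unitary supported on legs $1,2$ can remove the nontrivial fourth leg. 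So the asserted spatial identification between the defining picture of $A_1\ov\boxtimes A_2$ and $\mc{M}$ is false as stated, and this is precisely where the real content lies, not in leg bookkeeping.

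What the paper does instead is to avoid any such conjugation: it invokes \cite{DCKr24}*{Proposition 4.4}, the independence of the braided tensor product of the chosen covariant representations, and re-realizes $A_i$ through the faithful normal representations $\gamma_i$ on $L^2(\G_i)\otimes L^2(A_i)$, implemented by $\ww_{i[12]}$. In that picture the associated representations of $C^*(\G_i)$ are $\hat\pi_{i,\red}\otimes 1$, so the braiding operator is the purely reduced $\wh{\mc{X}}_{[13]}$, the generators become $\wh{\mc{X}}_{[13]}\gamma_1(a_1)_{[12]}\wh{\mc{X}}^*_{[13]}$ and $\gamma_2(a_2)_{[34]}$, and only then does an honest spatial move $z\mapsto\wh{\mc{X}}^*_{[12]}z_{[1324]}\wh{\mc{X}}_{[12]}$ yield the generators \eqref{eq46}, after which Lemma \ref{lemma14} finishes as in your plan. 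If one insists on an explicit unitary relating the defining picture on $L^2(A_1)\otimes L^2(A_2)$ to $\mc{M}$, it must contain the additional factors $(\hat\pi_{1,\red}\otimes\hat\pi_{U_{\gamma_2}})(\wh{\mc{X}}_u)^*_{[14]}$ and $(\hat\pi_{U_{\gamma_1}}\otimes\hat\pi_{U_{\gamma_2}})(\wh{\mc{X}}_u)^*_{[34]}$, as one can read off from \eqref{eq51}--\eqref{eq52}; but justifying that identification is essentially Theorem \ref{TheoInductionBraided}, which is proved later and itself relies on Theorem \ref{CorIdent}, so it cannot be presupposed here. The missing ingredient in your write-up is exactly the implementation-independence input (or an equally nontrivial substitute for it).
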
 

\begin{proof}
We have a faithful, normal $*$-representation of $A_1$ on $L^2(\G_1)\otimes L^2(A_1)$ and of $A_2$ on $L^2(\G_2) \otimes L^2(A_2)$ via 
\[
\gamma_1\colon A_1\rightarrow \mc{B}(L^2(\G_1)\otimes L^2(A_1)),\quad 
\gamma_2\colon A_2\rightarrow \mc{B}(L^2(\G_2)\otimes L^2(A_2)).
\] 
Moreover, the unitary representations $\ww_{1[12]}$ and $\ww_{2[12]}$ implement the actions $\gamma_1,\gamma_2$. The corresponding $*$-representations of $C^*(\G_1),C^*(\G_2)$ are given by $\hat{\pi}_{1,\red}\otimes 1$ and $\hat{\pi}_{2,\red}\otimes 1$. Since the braided tensor product is up to isomorphism independent of the choices of implementations \cite{DCKr24}*{Proposition 4.4}, we obtain a faithful, normal $*$-homomorphism of $A_1\ov\boxtimes A_2$ into $\mcB(L^2(\G_1)\otimes L^2(A_1)\otimes L^2(\G_2)\otimes L^2(A_2))$: 
\[
A_1\ov\boxtimes A_2\ni \pi_{\boxtimes,1}(a_1)\pi_{\boxtimes,2}(a_2)\mapsto 
\wh{\mc{X}}_{[13]} \gamma_1(a_1)_{[12]}
\wh{\mc{X}}_{[13]}^* \gamma_2(a_2)_{[34]},\quad \forall a_1\in A_1,a_2\in A_2.
\]
Composing this map with $z\mapsto \wh{\mc{X}}^*_{[12]} z_{[1324]}\wh{\mc{X}}_{[12]}$, we obtain an isomorphism
\[
A_1\ov\boxtimes A_2\ni \pi_{\boxtimes,1}(a_1)\pi_{\boxtimes,2}(a_2)\mapsto 
\gamma_1(a_1)_{[13]}
\wh{\mc{X}}_{[12]}^* \gamma_2(a_2)_{[24]}
\wh{\mc{X}}_{[12]}\in \mc{M},  \quad \forall a_1\in A_1,a_2\in A_2.
\]
Lemma \ref{lemma14} now ends the proof.
\end{proof}

In the following, we will use that if $\G_1,\G_2$ are LCQG's and $(A_1,\gamma_1),(A_2,\gamma_2)$ are respectively a left $\G_1$- and a left $\G_2$-W$^*$-algebra, the standard implementation of the $\G_1\times \G_2$-action $\gamma_\otimes$ on $A_1\bar{\otimes}A_2$ is just 
\[
U_{\gamma_1 [13]}U_{\gamma_2[24]} \in L^{\infty}(\G_1)\bar{\otimes}L^{\infty}(\G_2)\bar{\otimes} \mcB(L^2(A_1)\otimes L^2(A_2)),
\]
upon identifying $L^2(A_1\bar{\otimes}A_2) \cong L^2(A_1)\otimes L^2(A_2)$.
This follows straightforwardly from the construction of the standard implementation in \cite{Vae01}. 

It follows from the previous theorem that we can identify 
\[
L^2(\Ind_{\X}(A_1\bar\otimes A_2))\cong L^2(A_1\ov\boxtimes A_2).
\] 
Theorem \ref{TheoIndL} identifies this space with $\Ind_{\X}(L^2(A_1)\otimes L^2(A_2))$ via a unitary $I$. Next, recall (Proposition \ref{PropIdentStandard}) that we have a unitary identification 
\[
\theta\mc{O}^*\colon L^2(A_1)\otimes L^2(A_2)\rightarrow \Ind_{\X}(L^2(A_1)\otimes L^2(A_2)),
\] 
where we have used simplified notation (neglecting subscripts). As will be clear from the proof of the next theorem, it is useful to compose these identifications with the unitary $
(\hat{\pi}_{U_{\gamma_1}}\otimes \hat{\pi}_{U_{\gamma_2}})(\wh{\mc{X}}_u)^*$ acting on $L^2(A_1)\otimes L^2(A_2)$. We denote the resulting unitary by $I_\boxtimes$.

\begin{Theorem}\label{TheoInductionBraided}
Let $(A_i,\gamma_i)$ be left $\G_i$-W$^*$-algebras. Consider the tensor product $\G$-W$^*$-algebra $ (A_1\bar{\otimes}A_2,\gamma_{\otimes})$. Then we can identify 
\begin{equation}\label{EqIdentificationMain}
I_{\boxtimes}= I^*\theta\mc{O}^*
(\hat{\pi}_{U_{\gamma_1}}\otimes \hat{\pi}_{U_{\gamma_2}})(\wh{\mc{X}}_u)^*
\colon L^2(A_1)\otimes L^2(A_2)\cong L^2(A_1\ov{\boxtimes} A_2)
\end{equation}
in such a way that:
\begin{enumerate}
\item The standard representation of $A_1\ov{\boxtimes} A_2$ becomes the $*$-representation \eqref{EqStandardRepBraid}.
\item The standard implementation of $\Ind_{\X}(\gamma_{\otimes})$ becomes the unitary
\[
(\hat{\pi}_{U_{\gamma_1}}\otimes \hat{\pi}_{U_{\gamma_2}})( \wh{\mc{X}}_u)_{[34]}
(\hat{\pi}_{1,\red}\otimes \hat{\pi}_{U_{\gamma_2 }})(\wh{\mc{X}}_u)_{[14]}
U_{\gamma_1 [13]}
U_{\gamma_2 [24]}
(u_{\G_1}\hat{\pi}_{1,\red}(-)u_{\G_1}\otimes \hat{\pi}_{U_{\gamma_2}})
(\wh{\mc{X}}_{u})_{[14]}.
\]
\item The modular conjugation $J_{A_1\ov\boxtimes A_2}$ becomes the anti-unitary
\begin{equation}\label{EqMainFormModConj}
(\hat{\pi}_{U_{\gamma_1}}\otimes \hat{\pi}_{U_{\gamma_2}})(\wh{\mcX}_u)(J_{A_1}\otimes J_{A_2}) = (J_{A_1}\otimes J_{A_2})(\hat{\pi}_{U_{\gamma_1}}\otimes \hat{\pi}_{U_{\gamma_2}})(\wh{\mcX}_u^*).
\end{equation}
\end{enumerate}
\end{Theorem}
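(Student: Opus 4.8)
The plan is to assemble Theorem \ref{TheoInductionBraided} by chaining together the identifications already established, tracking carefully how each unitary conjugates the relevant structures. The starting point is Theorem \ref{CorIdent}, which identifies $A_1\ov\boxtimes A_2$ with $\Ind_{\X}(A_1\bar\otimes A_2)$, so that $L^2(A_1\ov\boxtimes A_2)\cong L^2(\Ind_{\X}(A_1\bar\otimes A_2))$. Applying Theorem \ref{TheoIndL} to the $\G=\G_1\times\G_2$-W$^*$-algebra $(A_1\bar\otimes A_2,\gamma_\otimes)$, the unitary $I$ gives a further identification with $\Ind_{\X}(L^2(A_1\bar\otimes A_2))=\Ind_{\X}(L^2(A_1)\otimes L^2(A_2))$, and moreover the standard representation of $\Ind_{\X}(A_1\bar\otimes A_2)$ becomes $(\id\otimes\pi_{A_1\bar\otimes A_2})(-)$, i.e.\ acting by left multiplication via $\gamma_1(a_1)_{[13]}$ and $\wh{\mc X}^*_{[12]}\gamma_2(a_2)_{[24]}\wh{\mc X}_{[12]}$ (using the generator formulas from Theorem \ref{CorIdent}). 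Then Proposition \ref{PropIdentStandard} supplies the unitary $\theta\mc O^*\colon L^2(A_1)\otimes L^2(A_2)\to \Ind_{\X}(L^2(A_1)\otimes L^2(A_2))$, explicitly $\xi\mapsto U_{\gamma_\otimes}^*(\hat\pi_{\red}\otimes\hat\pi_{U_{\gamma_\otimes}})(\hat\Omega_{u[21]}^*)(1\otimes\xi)$, intertwining $\Ind_{\X}^{\triv}(U_{\gamma_\otimes})$ with $\Ind_{\X}(U_{\gamma_\otimes})$. Finally I compose with $(\hat\pi_{U_{\gamma_1}}\otimes\hat\pi_{U_{\gamma_2}})(\wh{\mc X}_u)^*$; this last twist is precisely what converts the ``$\wh{\mc X}^*(1\otimes\cdot)\wh{\mc X}$'' form of the second generator into the plain $1\otimes a_2$ of \eqref{EqStandardRepBraid}, and conjugates away a spurious $\wh{\mc X}_u$ in the implementation and modular conjugation formulas.

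For part (1), I would combine the three identifications above on the level of the standard representation: Theorem \ref{TheoIndL} says $I\pi_{\Ind_{\X}(A_1\bar\otimes A_2)}(w)I^*=(\id\otimes\pi_{A_1\bar\otimes A_2})(w)$, then under $\theta\mc O^*$ the left multiplication stays left multiplication (it commutes with the corepresentation-type operators up to the correct legs), and finally conjugating by $(\hat\pi_{U_{\gamma_1}}\otimes\hat\pi_{U_{\gamma_2}})(\wh{\mc X}_u)^*=\wh{\mc X}_{L^2(A_1),L^2(A_2)}^*$ turns $\gamma_1(a_1)_{[13]}\leadsto$ the first generator of \eqref{EqStandardRepBraid} and $\wh{\mc X}^*_{[12]}\gamma_2(a_2)_{[24]}\wh{\mc X}_{[12]}\leadsto 1\otimes a_2$, using the standard-implementation formula $\gamma_i(a_i)=U_{\gamma_i}^*(1\otimes a_i)U_{\gamma_i}$ and the identity $\wh{\mc X}_{L^2(A_i),\cdot}=(\hat\pi_{U_{\gamma_i}}\otimes\cdot)(\wh{\mc X}_u)$. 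Here one must check that the bicharacter expressions built from $\hat\pi_{1,\red}$ (appearing because $\Ind_{\X}$ lives over $L^2(\G_1)\otimes L^2(\G_2)$) and those built from $\hat\pi_{U_{\gamma_i}}$ are compatible, which follows from naturality of $\wh{\mc X}_u$ under intertwiners and the fact that $U_{\gamma_i}$ implements $\gamma_i$.

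For part (2), I would start from Proposition \ref{PropInducedCorep}, which gives the $11$-component (equivalently the $\Ind_{\X}^{\triv}$) of the induced $\G$-representation built from $U_{\gamma_\otimes}$: its formula is $U_{11}^*=(u_\G\hat\pi_{\red}(-)u_\G^*\otimes\hat\pi_{U_{\gamma_\otimes}})((1\otimes X_{\hat\Omega,u})\hat\Omega_{u,c})U_{\gamma_\otimes}^*(\hat\pi_{\red}\otimes\hat\pi_{U_{\gamma_\otimes}})(\hat\Omega_{u[21]}^*)$. I then specialise to the product case: by Lemma \ref{lemma2}, $U_{\gamma_\otimes}=U_{\gamma_1[13]}U_{\gamma_2[24]}$, $\hat\pi_{\red}$ and $u_\G$ factor, $\hat\Omega_u=\wh{\mc X}_{u[32]}$, $\hat\Omega_{u,c}=\wh{\mc X}_{u[14]}^*$ (Lemma \ref{LemConjCocy}), and $X_{\hat\Omega,u}$ maps to $\wh{\mc X}_u^*$ under the relevant quotient (Lemma \ref{LemProjCobPair}). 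Substituting these, reorganising the legs, and then conjugating by $(\hat\pi_{U_{\gamma_1}}\otimes\hat\pi_{U_{\gamma_2}})(\wh{\mc X}_u)^*$ as dictated by \eqref{EqIdentificationMain}, the expression should collapse to the stated four-factor formula; the $u_{\G_1}\hat\pi_{1,\red}(-)u_{\G_1}$ factor survives precisely because only $\G_1$'s data gets conjugated by the bicharacter in the double-crossed-product structure. This leg-bookkeeping with five or six tensor legs is where I expect the main obstacle: the various $\wh{\mc X}_u$'s (reduced vs.\ universal, acting on $L^2(\G_i)$ vs.\ $L^2(A_i)$) must be moved past $U_{\gamma_i}$'s using only the bicharacter and pentagon-type relations, and one has to be meticulous about which copy of $\wh{\mc X}_u$ the final conjugation cancels.

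For part (3), I would apply Theorem \ref{TheoModConjGen}, which computes $J_{\Ind_{\X}(B)}=\hat\pi_{U_\gamma}(X_{\hat\Omega,u})J_B$ for the standard representation of $\Ind_{\X}(B)$ on $L^2(B)$, with $B=A_1\bar\otimes A_2$, $\gamma=\gamma_\otimes$. Thus $J_{\Ind_{\X}(A_1\bar\otimes A_2)}=\hat\pi_{U_{\gamma_\otimes}}(X_{\hat\Omega,u})(J_{A_1}\otimes J_{A_2})$ under the identification of Proposition \ref{PropIdentStandard}, and I must transport this through the extra conjugation by $(\hat\pi_{U_{\gamma_1}}\otimes\hat\pi_{U_{\gamma_2}})(\wh{\mc X}_u)^*$. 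Using $\hat\pi_{U_{\gamma_\otimes}}(X_{\hat\Omega,u})=\hat\pi_{U_{\gamma_\otimes}}(\hat\theta(X_{\hat\Omega,u}))=(\hat\pi_{U_{\gamma_1}}\otimes\hat\pi_{U_{\gamma_2}})(\wh{\mc X}_u^*)$ by Lemma \ref{LemProjCobPair} (since $\hat\pi_{U_{\gamma_\otimes}}$ factors through the minimal tensor product), a short anti-linear computation — using $J_{A_i}\hat\pi_{U_{\gamma_i}}(x)^*J_{A_i}=\hat\pi_{U_{\gamma_i}}(\hat R_{i,u}(x))$ from \cite{Vae01}*{Proposition 3.7} together with $(\hat R_{1,u}\otimes\hat R_{2,u})\wh{\mc X}_u=\wh{\mc X}_u$ (Lemma \ref{LemConjCocy}'s proof) — yields $I_\boxtimes J_{A_1\ov\boxtimes A_2}I_\boxtimes^*=(\hat\pi_{U_{\gamma_1}}\otimes\hat\pi_{U_{\gamma_2}})(\wh{\mc X}_u)(J_{A_1}\otimes J_{A_2})$, which rewrites as the second form in \eqref{EqMainFormModConj}. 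I would end by noting that this is exactly the formula appearing in the main theorem of the introduction, with $\wh{\mc X}_{L^2(A_1),L^2(A_2)}=(\hat\pi_{U_{\gamma_1}}\otimes\hat\pi_{U_{\gamma_2}})(\wh{\mc X}_u)$.
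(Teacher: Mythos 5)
Your proposal is correct and follows essentially the same route as the paper: chain Theorem \ref{CorIdent}, Theorem \ref{TheoIndL} and Proposition \ref{PropIdentStandard} (whose applicability here rests on Theorem \ref{TheoUnivLift}, which you use implicitly via $\hat{\Omega}_u=\wh{\mcX}_{u[32]}$), twist by $(\hat{\pi}_{U_{\gamma_1}}\otimes\hat{\pi}_{U_{\gamma_2}})(\wh{\mcX}_u)^*$, and deduce (2) and (3) from Proposition \ref{PropInducedCorep}, Lemma \ref{LemConjCocy}, Lemma \ref{LemProjCobPair} and Theorem \ref{TheoModConjGen}. The only step you leave implicit --- the pentagon-type leg computation showing that the untwisted identification carries the standard representation to $(\hat{\pi}_{U_{\gamma_1}}\otimes\hat{\pi}_{U_{\gamma_2}})(\wh{\mcX}_u)^*\,\pi_{\boxtimes,1}(a_1)\pi_{\boxtimes,2}(a_2)\,(\hat{\pi}_{U_{\gamma_1}}\otimes\hat{\pi}_{U_{\gamma_2}})(\wh{\mcX}_u)$ --- is likewise only asserted in the paper, so this is not a gap in approach.
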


\begin{proof}
As recalled above, by Proposition \ref{PropIdentStandard} and Theorem \ref{TheoUnivLift}, we have an identification 
\begin{multline}\label{eq51}
\theta\mc{O}^* \colon L^2(A_1)\otimes L^2(A_2) \rightarrow \Ind_{\X}(L^2(A_1)\otimes L^2(A_2)),\\
 \xi_1\otimes \xi_2 \mapsto U_{\gamma_1[13]}^*U_{\gamma_2[24]}^*(\hat{\pi}_{1,\red}\otimes \hat{\pi}_{U_{\gamma_2}})(\wh{\mc{X}}_u)_{[14]}^*(1\otimes 1 \otimes \xi_1\otimes \xi_2). 
\end{multline}

By Theorem \ref{TheoIndL}, we can identify (via the unitary $I$) the natural $*$-representation of $\Ind_{\X}(A_1\bar{\otimes} A_2)$ on the right hand side with the standard representation. Next, we  have the identification 
\[
\Ind_{\X}(A_1\bar{\otimes} A_2) \cong A_1\ov{\boxtimes} A_2
\] 
from Theorem \ref{CorIdent}, which allows us to realise the standard representation of $A_1\ov\boxtimes A_2$ on $L^2(A_1)\otimes L^2(A_2)$. A lengthy but straightforward computation using the pentagon equation, shows that it is given explicitely by
\begin{multline*}
A_1\overline{\boxtimes} A_2 \rightarrow \mcB(L^2(A_1)\otimes L^2(A_2)),\\
\pi_{\boxtimes,1}(a_1)\pi_{\boxtimes,2}(a_2) \mapsto (\hat{\pi}_{U_{\gamma_1}}\otimes \hat{\pi}_{U_{\gamma_2}})(\wh{\mcX}_u) ^*\pi_{\boxtimes,1}(a_1)\pi_{\boxtimes,2}(a_2) (\hat{\pi}_{U_{\gamma_1}}\otimes \hat{\pi}_{U_{\gamma_2}})(\wh{\mcX}_u).
\end{multline*}
Finally the identification \eqref{EqIdentificationMain} is then taken to be $I^*\theta\mc{O}^*$, precomposed with the unitary $(\hat{\pi}_{U_{\gamma_1}}\otimes \hat{\pi}_{U_{\gamma_2}})(\wh{\mc{X}}_u)^*\in \mc{B}(L^2(A_1)\otimes L^2(A_2))$:
\begin{multline}\label{eq52}
I_\boxtimes=I^*\theta\mc{O}^*
(\hat{\pi}_{U_{\gamma_1}}\otimes \hat{\pi}_{U_{\gamma_2}})(\wh{\mc{X}}_u)^*\colon 
L^2(A_1)\otimes L^2(A_2) \rightarrow 
L^2(A_1\ov\boxtimes A_2),\\
 \xi_1\otimes \xi_2 \mapsto 
I^* U_{\gamma_1[13]}^*U_{\gamma_2[24]}^*(\hat{\pi}_{1,\red}\otimes \hat{\pi}_{U_{\gamma_2}})(\wh{\mc{X}}_u)_{[14]}^*
(\hat{\pi}_{U_{\gamma_1}}\otimes \hat{\pi}_{U_{\gamma_2}})(\wh{\mc{X}}_u)^*_{[34]} 
 (1\otimes 1 \otimes 
 \xi_1\otimes \xi_2).
\end{multline}

The formula for the standard implementation now follows from Proposition \ref{PropInducedCorep},  Lemma \ref{LemConjCocy} and Lemma \ref{LemProjCobPair}.

To conclude, the first identity in \eqref{EqMainFormModConj} follows immediately from Theorem \ref{TheoModConjGen} and Lemma \ref{LemProjCobPair}, while the alternative formula holds since the modular conjugation is self-adjoint (alternatively, it can be obtained using \cite{Vae01}*{Proposition 3.7}).
\end{proof}

At the end of this section, we use the established theorems to deduce two results concerning faithfulness of states on $A_1\ov\boxtimes A_2$, transported from $A_1\bar\otimes A_2$. First we present a more general result.

\begin{Prop}\label{prop2}
Let $(A_i,\gamma_i)$ be left $\G_i$-W$^*$-algebras, and consider the tensor product $\G$-W$^*$-algebra $ (A_1\bar{\otimes}A_2,\gamma_{\otimes})$. Assume that $\omega\in (A_1\bar\otimes A_2)_*^+$ is a normal state, invariant under the $\G$-action $\gamma_\otimes$, and let $\xi\in L^2(A_1)\otimes L^2( A_2)$ be the unit vector in the standard positive cone of $A_1\bar\otimes A_2$ implementing $\omega$. Consider the normal state $\nu=\omega_{\xi}\in (A_1\ov\boxtimes A_2)_*^+$, where we let $A_1\ov\boxtimes A_2$ act on $L^2(A_1)\otimes L^2(A_2)$ via the defining representation.
\begin{enumerate}
\item We have $\nu(\pi_{\boxtimes,1}(a_1)\pi_{\boxtimes,2}(a_2))=\omega(a_1\otimes a_2)$ for $a_1\in A_1,a_2\in A_2$.
\item If $\omega$ is faithful, then $\nu$ is also faithful.
\end{enumerate}
\end{Prop}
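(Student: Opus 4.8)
\textbf{Proof strategy for Proposition \ref{prop2}.}

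The plan is to use the concrete identification $I_{\boxtimes}\colon L^2(A_1)\otimes L^2(A_2)\cong L^2(A_1\ov\boxtimes A_2)$ furnished by Theorem \ref{TheoInductionBraided}, together with the fact that the standard implementation $U_{\gamma_\otimes}=U_{\gamma_1[13]}U_{\gamma_2[24]}$ fixes the cyclic vector $\xi$ of an invariant state. The key point is that $\xi$, being in the standard positive cone of $A_1\bar\otimes A_2$ and implementing the $\gamma_\otimes$-invariant state $\omega$, satisfies $U_{\gamma_\otimes}(1\otimes 1\otimes\xi)=1\otimes1\otimes\xi$ (invariance of the implementing vector under the standard implementation of an invariant action, as in \cite{Vae01}), and also $J_{A_1\bar\otimes A_2}\xi=\xi$.

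First I would compute $I_{\boxtimes}\xi$. Reading off formula \eqref{eq52}, and using $U_{\gamma_1[13]}^*U_{\gamma_2[24]}^*(1\otimes1\otimes\xi_1\otimes\xi_2) = \gamma_{\otimes}$-implementation applied to $\xi$, the expression $U_{\gamma_1[13]}^*U_{\gamma_2[24]}^*(\hat{\pi}_{1,\red}\otimes\hat{\pi}_{U_{\gamma_2}})(\wh{\mc{X}}_u)_{[14]}^*(\hat{\pi}_{U_{\gamma_1}}\otimes\hat{\pi}_{U_{\gamma_2}})(\wh{\mc{X}}_u)^*_{[34]}(1\otimes1\otimes\xi)$ should collapse: the point is that $\wh{\mc{X}}_u$ and its various leg-placements, when traced through together with the invariance $U_{\gamma_1[13]}U_{\gamma_2[24]}\xi=\xi$ (viewing $\xi$ as sitting in legs $[34]$), cancel. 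Concretely, $(\hat{\pi}_{U_{\gamma_1}}\otimes\hat{\pi}_{U_{\gamma_2}})(\wh{\mc{X}}_u)$ is built from $U_{\gamma_1},U_{\gamma_2}$ via the half-lifted universal unitary, and the skew-bicharacter relations combined with invariance force $I_{\boxtimes}\xi$ to equal the canonical image of $\xi$ under $I^*\theta\mc{O}^*$ of the \emph{fixed} vector, which is the cyclic vector of the standard form of $\Ind_{\X}(A_1\bar\otimes A_2)$. The cleanest way to organize this is: (i) show $(1\otimes1\otimes\xi)$ is fixed by enough of the unitaries appearing in \eqref{eq52} that $I_{\boxtimes}\xi$ is, up to the identifications, the standard cyclic vector; (ii) conclude that $\nu=\omega_\xi$ is exactly the vector state associated to this standard cyclic vector on $A_1\ov\boxtimes A_2$.

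For part (1), once $I_{\boxtimes}\xi$ is identified with the standard cyclic vector $\Lambda_{\theta}$-type vector coming from $\omega\circ\gamma_{\otimes}^{-1}$ on $\Ind_{\X}(A_1\bar\otimes A_2)$, I evaluate $\nu(\pi_{\boxtimes,i}(a_i))=\langle\xi,\pi_{\boxtimes,1}(a_1)\pi_{\boxtimes,2}(a_2)\xi\rangle$ and, transporting through Theorem \ref{CorIdent}, match it with $\langle\xi,\gamma_1(a_1)_{[13]}\wh{\mc{X}}_{[12]}^*\gamma_2(a_2)_{[24]}\wh{\mc{X}}_{[12]}\xi\rangle$; invariance of $\xi$ under $U_{\gamma_1},U_{\gamma_2}$ (and hence under the appropriate leg placements of $\wh{\mc{X}}$, since $\wh{\mc{X}}$ lies in $L^\infty(\widehat{\G}_1)\bar\otimes L^\infty(\widehat{\G}_2)$ and acts trivially on the invariant vector in the relevant legs) reduces this to $\langle\xi,\gamma_1(a_1)_{[13]}\gamma_2(a_2)_{[24]}\xi\rangle=\omega(\gamma_{\otimes}(a_1\otimes a_2))_{\text{sliced}}=\omega(a_1\otimes a_2)$, the last equality by $\gamma_\otimes$-invariance of $\omega$. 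For part (2), faithfulness of $\omega$ means $\xi$ is cyclic \emph{and} separating for $A_1\bar\otimes A_2$ in standard form; under the isomorphism $A_1\ov\boxtimes A_2\cong\Ind_{\X}(A_1\bar\otimes A_2)$ and the identification $L^2(A_1\ov\boxtimes A_2)\cong\Ind_{\X}(L^2(A_1)\otimes L^2(A_2))$, the vector $I_{\boxtimes}\xi$ is the standard cyclic vector of $\Ind_{\X}(A_1\bar\otimes A_2)$, which is automatically separating for that von Neumann algebra (being in the image of a faithful normal state's GNS vector in standard form — here one uses that $\Ind_{\X}$ of a faithful state is faithful, or more simply that a vector implementing a faithful normal state on a von Neumann algebra in standard form is separating). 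Hence $\nu$ is faithful.

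\textbf{Main obstacle.} The delicate step is (i): verifying that $I_{\boxtimes}\xi$ really is the standard cyclic vector, i.e.\ that the cocktail of unitaries $(\hat{\pi}_{1,\red}\otimes\hat{\pi}_{U_{\gamma_2}})(\wh{\mc{X}}_u)_{[14]}$, $(\hat{\pi}_{U_{\gamma_1}}\otimes\hat{\pi}_{U_{\gamma_2}})(\wh{\mc{X}}_u)_{[34]}$ and $U_{\gamma_1[13]}U_{\gamma_2[24]}$ in \eqref{eq52} all act trivially on $1\otimes1\otimes\xi$ once we know $U_{\gamma_1[13]}U_{\gamma_2[24]}$ fixes $\xi$ in legs $[34]$ and the leftmost legs are spectators. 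This requires carefully matching leg numberings and invoking that $\wh{\mc{X}}_u\in\Mult(C_0^u(\widehat{\G}_1)\otimes C_0^u(\widehat{\G}_2))$, with $\hat{\pi}_{U_{\gamma_i}}$ of a corepresentation fixing an invariant vector — a statement that itself follows from the invariance $U_{\gamma_i}(1\otimes\xi_i')=1\otimes\xi_i'$ specialized appropriately, but which does \emph{not} hold for a generic vector, only for the invariant one. I expect this bookkeeping, rather than any conceptual difficulty, to be the bulk of the work; everything else is a slicing argument.
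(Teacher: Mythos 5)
Your overall toolkit is the right one (invariance of $\xi$ under $U_{\gamma_1[13]}U_{\gamma_2[24]}$, the identification $I_\boxtimes$ and the modular conjugation formula from Theorem \ref{TheoInductionBraided}), but there are two genuine problems. First, your step (i) — identifying $I_\boxtimes\xi$ with ``the standard cyclic vector of the standard form of $\Ind_{\X}(A_1\bar\otimes A_2)$'' — is ill-posed: a standard form carries a positive cone but no distinguished cyclic vector, and deciding whether $I_\boxtimes\xi$ lies in the positive cone is exactly the kind of question the paper explicitly leaves open. It is also unnecessary: since $\nu=\omega_\xi$ is defined via the \emph{defining} representation on $L^2(A_1)\otimes L^2(A_2)$, part (1) requires no transport through Theorem \ref{CorIdent} at all. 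The whole content is the cancellation of $\wh{\mcX}^\pi:=(\hat{\pi}_{U_{\gamma_1}}\otimes\hat{\pi}_{U_{\gamma_2}})(\wh{\mcX}_u)$ in $\nu(\pi_{\boxtimes,1}(a_1)\pi_{\boxtimes,2}(a_2))=\langle\xi,\wh{\mcX}^\pi(a_1\otimes1)\wh{\mcX}^{\pi*}(1\otimes a_2)\xi\rangle$, and here your justification (``$\wh{\mcX}$ acts trivially on the invariant vector in the relevant legs'') does not suffice, because the inner copy $\wh{\mcX}^{\pi*}$ is applied to $(1\otimes a_2)\xi$, which is \emph{not} invariant — precisely the worry you flag but then dismiss as bookkeeping. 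The correct argument is: the first leg of $\wh{\mcX}^{\pi*}$ commutes with $1\otimes a_2$, so invariance, in the form $(\hat{\pi}_{U_{\gamma_1}}(x)\otimes1)\xi=\hat{\eps}_1(x)\xi$, applies to that leg alone, and then $(\eps^u_{\hat{\G}_1}\otimes\id)\wh{\mcX}_u=1$ gives $\wh{\mcX}^{\pi*}(1\otimes a_2)\xi=(1\otimes a_2)\xi$; together with $\wh{\mcX}^{\pi*}\xi=\xi$ this yields $\nu(\pi_{\boxtimes,1}(a_1)\pi_{\boxtimes,2}(a_2))=\langle\xi,(a_1\otimes a_2)\xi\rangle=\omega(a_1\otimes a_2)$.

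Second, and more seriously, your part (2) is circular. You conclude that the relevant vector is separating for $A_1\ov\boxtimes A_2$ because ``a vector implementing a faithful normal state \ldots is separating'' — but the faithfulness of $\nu$ is the statement to be proved — or because ``$\Ind_{\X}$ of a faithful state is faithful'', which is not a result available in the paper and is essentially a reformulation of the proposition itself; note also that $\xi$ being separating for $A_1\bar\otimes A_2$ gives nothing for the \emph{different} algebra $A_1\ov\boxtimes A_2$ on the same Hilbert space. What actually closes the argument is the explicit formula $I_\boxtimes J_{A_1\ov\boxtimes A_2}I_\boxtimes^*=\wh{\mcX}^\pi(J_{A_1}\otimes J_{A_2})$: the transported conjugation $\Theta:=I_\boxtimes^*J_{A_1\ov\boxtimes A_2}I_\boxtimes$ fixes $\xi$ (use $(J_{A_1}\otimes J_{A_2})\xi=\xi$ and $\wh{\mcX}^\pi\xi=\xi$), and combining with the part-(1) computation one gets $\Theta\,\pi_{\boxtimes,1}(a_1)\pi_{\boxtimes,2}(a_2)\,\Theta\,\xi=\Theta\,\wh{\mcX}^\pi(a_1\otimes a_2)\xi$. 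Faithfulness of $\omega$ plus $(J_{A_1}\otimes J_{A_2})\xi=\xi$ makes $\xi$ cyclic for $A_1\bar\otimes A_2$, so these vectors are dense; since $\Theta(A_1\ov\boxtimes A_2)\Theta=(A_1\ov\boxtimes A_2)'$, this shows $\xi$ is cyclic for the commutant, hence separating for $A_1\ov\boxtimes A_2$, i.e.\ $\nu$ is faithful. Without some version of this use of the modular conjugation formula (or an equivalent density argument), your proof of (2) does not go through.
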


\begin{proof}
Recall first that, since $\omega$ is invariant under the action of $\G$,  we have $U_{\gamma_1 [13]} U_{\gamma_2 [24]} (\zeta\otimes \xi)=\zeta\otimes \xi$ for $\zeta\in L^2(\G)$ \cite{DCKr24}*{Lemma 7.5}. It follows that 
\[
(\hat{\pi}_{U_{\gamma_1}}(x)\otimes \hat{\pi}_{U_{\gamma_2}}(y))\xi=\eps^u_{\hat{\G}}(x\otimes y)\xi,\qquad \forall x\otimes y\in C^*(\G).
\] 
Theorem \ref{TheoInductionBraided} allows us to treat $L^2(A_1)\otimes L^2(A_2)$ as the standard Hilbert space for $A_1\ov\boxtimes A_2$ with modular conjugation $I_\boxtimes^* J_{A_1\ov\boxtimes A_2} I_\boxtimes$. For $a_1\in A_1,a_2\in A_2$ we have
\begin{multline*}
\nu(\pi_{\boxtimes,1}(a_1) \pi_{\boxtimes,2}(a_2))=
\bigl\langle 
(\hat{\pi}_{U_{\gamma_1}}\otimes \hat{\pi}_{U_{\gamma_2}})(\wh{\mc{X}}_u)^*\xi \,,\,
(a_1\otimes 1)
(\hat{\pi}_{U_{\gamma_1}}\otimes \hat{\pi}_{U_{\gamma_2}})(\wh{\mc{X}}_u)^* (1\otimes a_2)
\xi\bigr\rangle \\=
\langle \xi , (a_1\otimes a_2)\xi\rangle=
\omega(a_1\otimes a_2),
\end{multline*}
using that $(\eps^u_{\hat{\G}_1}\otimes \id)\wh{\mc{X}}_u=1$ in the penultimate step. The vector $\xi$ is invariant under $I_\boxtimes^* J_{A_1\ov\boxtimes A_2} J_\boxtimes$, as follows from the concrete formula for $J_{A_1\ov\boxtimes A_2}$ in Theorem \ref{TheoInductionBraided}. Using this and the above calculation, we get
\begin{equation}\begin{split}\label{eq60}
&\quad\;
(I_\boxtimes^* J_{A_1\ov\boxtimes A_2} I_\boxtimes)
\pi_{\boxtimes,1}(a_1) \pi_{\boxtimes,2}(a_2)
(I_\boxtimes^* J_{A_1\ov\boxtimes A_2} I_\boxtimes)\xi=
(I_\boxtimes^* J_{A_1\ov\boxtimes A_2} I_\boxtimes)
(\hat{\pi}_{U_{\gamma_1}}\otimes \hat{\pi}_{U_{\gamma_2}})(\wh{\mc{X}}_u) (a_1\otimes a_2)
\xi.
\end{split}\end{equation}
If $\omega$ is faithful, then $\xi$ is cyclic for $(A_1\bar\otimes A_2)'$, and as $(J_{A_1}\otimes J_{A_2})\xi=\xi$,  we find that $\xi$ is cyclic for $A_1\bar\otimes A_2$. Equation \eqref{eq60} implies that $\xi$ is cyclic (hence also separating) for $(A_1\ov\boxtimes A_2)'$. Consequently $\nu$ is faithful.
\end{proof}

Next we specify to braided tensor products of states, see \cite{DCKr24}*{Proposition 5.1}.

\begin{Cor}
Let $(A_i,\gamma_i)$ be left $\G_i$-W$^*$-algebras. Consider the tensor product $\G$-W$^*$-algebra $ (A_1\bar{\otimes}A_2,\!\gamma_{\otimes})$. Let $\omega_i\in (A_i)_*^+$ be faithful, normal states, invariant under the $\G_i$-action, and consider the braided tensor product state
\[
\omega_1\boxtimes \omega_2\colon A_1\ov\boxtimes A_2\ni \pi_{\boxtimes,1}(a_1)\pi_{\boxtimes,2}(a_2)\mapsto \omega_1(a_1) \omega_2(a_2)\in \mathbb{C}.
\]
Then $\omega_1\boxtimes \omega_2$ is a faithful, normal state.
\end{Cor}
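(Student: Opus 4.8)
The plan is to deduce the corollary directly from Proposition \ref{prop2}, applied to the tensor product state $\omega := \omega_1\bar{\otimes}\omega_2\in (A_1\bar{\otimes}A_2)_*^+$. The two ingredients to verify about $\omega$ are that it is a faithful normal state, which is standard for the tensor product of two faithful normal states, and that it is invariant under the $\G$-action $\gamma_{\otimes}$. For invariance I would evaluate $(\id_{M_1\bar{\otimes}M_2}\otimes\,\omega)\circ\gamma_{\otimes}$ on an elementary tensor $a_1\otimes a_2$: since $\gamma_{\otimes}(a_1\otimes a_2)=\gamma_1(a_1)_{[13]}\gamma_2(a_2)_{[24]}\in M_1\bar{\otimes}M_2\bar{\otimes}A_1\bar{\otimes}A_2$, slicing the last two legs with $\omega_1\otimes\omega_2$ gives $(\id\otimes\omega_1)\gamma_1(a_1)\otimes(\id\otimes\omega_2)\gamma_2(a_2)=\omega_1(a_1)\omega_2(a_2)\,1=\omega(a_1\otimes a_2)\,1$, using $\G_i$-invariance of $\omega_i$. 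As $(\id\otimes\omega)\circ\gamma_{\otimes}$ is normal and agrees with $\omega(-)1$ on the $\sigma$-weakly dense subalgebra of elementary tensors, invariance holds on all of $A_1\bar{\otimes}A_2$.

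Next I would invoke Proposition \ref{prop2}: let $\xi\in L^2(A_1)\otimes L^2(A_2)$ be the unit vector in the standard positive cone of $A_1\bar{\otimes}A_2$ implementing $\omega$, and set $\nu:=\omega_\xi\in (A_1\ov\boxtimes A_2)_*^+$, where $A_1\ov\boxtimes A_2$ acts on $L^2(A_1)\otimes L^2(A_2)$ via its defining representation. Part $(1)$ of that proposition gives $\nu(\pi_{\boxtimes,1}(a_1)\pi_{\boxtimes,2}(a_2))=\omega(a_1\otimes a_2)=\omega_1(a_1)\omega_2(a_2)$ for all $a_i\in A_i$, and part $(2)$ gives that $\nu$ is faithful, since $\omega$ is.

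It then remains only to identify $\nu$ with $\omega_1\boxtimes\omega_2$. Both are normal functionals on $A_1\ov\boxtimes A_2$ (for $\omega_1\boxtimes\omega_2$, normality is recorded in \cite{DCKr24}*{Proposition 5.1}; for $\nu$ it is automatic as a vector functional), and by part $(1)$ they agree on every product $\pi_{\boxtimes,1}(a_1)\pi_{\boxtimes,2}(a_2)$. Since the span of these products is $\sigma$-weakly dense in $A_1\ov\boxtimes A_2$ by \eqref{EqIdentification}, normality forces $\nu=\omega_1\boxtimes\omega_2$. Hence $\omega_1\boxtimes\omega_2$ is a faithful normal state, as claimed.

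I do not anticipate a genuine obstacle here: the corollary is essentially a packaging of Proposition \ref{prop2}. The only two points that require a little care are the invariance computation for the product state (leg-numbering bookkeeping, handled above) and the density argument that pins down $\nu=\omega_1\boxtimes\omega_2$, both of which are routine.
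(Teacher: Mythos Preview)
Your proposal is correct and follows essentially the same route as the paper: apply Proposition \ref{prop2} to the tensor product state $\omega_1\bar\otimes\omega_2$, check its invariance under $\gamma_\otimes$, and identify the resulting vector state $\nu$ with $\omega_1\boxtimes\omega_2$. The paper is slightly terser, noting that the implementing vector in the positive cone is $\xi_1\otimes\xi_2$ and then citing \cite{DCKr24}*{Proposition 5.1} together with Proposition \ref{prop2}; your explicit density argument for $\nu=\omega_1\boxtimes\omega_2$ is just a spelled-out version of the same identification.
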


\begin{proof}
Consider first $\omega_1\otimes \omega_2\in (A_1\bar\otimes A_2)_*^+$. It is well known that $\omega_1\otimes \omega_2$ is a faithful, normal state, and since each $\omega_i$ is invariant under the action of $\G_i$, we have that $\omega_1\otimes \omega_2$ is invariant under the action of $\G$. Denoting $\xi_i\in L^2(A_i)$ vectors in the standard positive cones implementing $\omega_i$, we have that $\xi_1\otimes \xi_2$ is the vector in the standard positive cone of $L^2(A_1\bar{\otimes} A_2)$ implementing $\omega_1\otimes \omega_2$. Now the claim follows from \cite{DCKr24}*{Proposition 5.1} and Proposition \ref{prop2}.
\end{proof}

\subsection{Braided tensor product for quasi-triangular quantum groups}

We finally come back to proving the theorem stated in the introduction. Assume that $\Ll = (L,\Delta)$ is a LCQG, and put $\G_1 = \G_2 = \Ll$. Let $\wh{\mcX} \in \hat{L}\bar{\otimes}\hat{L}$ be a quasi-triangular structure, so $\wh{\mcX}$ is a unitary skew bicharacter and moreover 
\begin{equation}\label{EqQTStruct}
\wh{\mcX}\hat{\Delta}(x)\wh{\mcX}^* = \hat{\Delta}^{\opp}(x),\qquad \forall x\in \hat{L}. 
\end{equation}
Let $\hat{\G} =\hat{\G}_1\times \hat{\G}_2$, $\hat{\Omega}=\wh{\mc{X}}_{[32]}$ and let $\hat{\Hh} = D_{\wh{\mcX}}(\hat{\G}_1,\hat{\G}_2)$ be the generalized Drinfeld double associated to $\wh{\mcX}$.

\begin{Lem}
We have $\Ll$ as a closed quantum subgroup of $\Hh$ via
\[
\iota_{\Ll}\colon W^*(\Ll) \rightarrow W^*(\Hh) = W^*(\Ll)\bar{\otimes} W^*(\Ll),\qquad x\mapsto \hat{\Delta}(x). 
\]
\end{Lem}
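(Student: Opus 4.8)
The plan is to verify the two defining properties of a closed quantum subgroup (in the sense of \cites{Vae05,DKSS12}) for the map $\iota_{\Ll} = \hat{\Delta}$: that it is a faithful, normal, unital $*$-homomorphism $W^*(\Ll)\to W^*(\Hh)$, and that it intertwines the relevant comultiplications. The first property is automatic: by the axioms of a locally compact quantum group, the comultiplication $\hat{\Delta}$ of $\hat{\Ll}$ is a faithful, normal, unital $*$-homomorphism $\hat{L}\to\hat{L}\bar{\otimes}\hat{L}$, and by construction $W^*(\Ll) = \hat{L} = L^{\infty}(\hat{\Ll})$ carries the comultiplication $\hat{\Delta}$, while $W^*(\Hh) = W^*(\Ll)\bar{\otimes}W^*(\Ll) = \hat{L}\bar{\otimes}\hat{L} = L^{\infty}(\hat{\Hh})$ carries, since $\hat{\Hh} = D_{\wh{\mcX}}(\hat{\G}_1,\hat{\G}_2)$, the twisted comultiplication $\hat{\Delta}_{\hat{\Hh}}(z) = \hat{\Omega}\,\hat{\Delta}_{\otimes}(z)\,\hat{\Omega}^*$ with $\hat{\Omega} = \wh{\mcX}_{[32]}$ (see Section \ref{SubSecCocBich}).

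The content is thus the identity $\hat{\Delta}_{\hat{\Hh}}(\hat{\Delta}(x)) = (\hat{\Delta}\otimes\hat{\Delta})(\hat{\Delta}(x))$ for all $x\in\hat{L}$, both sides being elements of $\hat{L}\bar{\otimes}\hat{L}\bar{\otimes}\hat{L}\bar{\otimes}\hat{L}$. First I would use $\hat{\Delta}_{\otimes}(a\otimes b) = \hat{\Delta}(a)_{[13]}\hat{\Delta}(b)_{[24]}$ together with coassociativity of $\hat{\Delta}$ to record that
\[
\hat{\Delta}_{\otimes}\bigl(\hat{\Delta}(x)\bigr) = \Bigl((\hat{\Delta}\otimes\hat{\Delta})\bigl(\hat{\Delta}(x)\bigr)\Bigr)_{[1324]},
\]
i.e.\ $\hat{\Delta}_{\otimes}\circ\hat{\Delta}$ differs from $(\hat{\Delta}\otimes\hat{\Delta})\circ\hat{\Delta}$ only by the transposition of the two middle legs. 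Since $\hat{\Omega} = \wh{\mcX}_{[32]}$ is itself obtained from $\wh{\mcX}_{[23]}$ by that same transposition, and $w\mapsto w_{[1324]}$ is a $*$-automorphism, this gives
\[
\hat{\Delta}_{\hat{\Hh}}\bigl(\hat{\Delta}(x)\bigr) = \wh{\mcX}_{[32]}\Bigl((\hat{\Delta}\otimes\hat{\Delta})(\hat{\Delta}(x))\Bigr)_{[1324]}\wh{\mcX}_{[32]}^* = \Bigl(\wh{\mcX}_{[23]}\,(\hat{\Delta}\otimes\hat{\Delta})(\hat{\Delta}(x))\,\wh{\mcX}_{[23]}^*\Bigr)_{[1324]}.
\]
Now, by coassociativity, $(\hat{\Delta}\otimes\hat{\Delta})\hat{\Delta}$ is the fourfold coproduct obtained by applying $\hat{\Delta}$ to the middle leg of the threefold coproduct $(\hat{\Delta}\otimes\id)\hat{\Delta}$; conjugating that middle $\hat{\Delta}$ by $\wh{\mcX}$ turns it into $\hat{\Delta}^{\opp}$ by the quasi-triangularity relation \eqref{EqQTStruct}, and as $\hat{\Delta}^{\opp}$ is $\hat{\Delta}$ composed with the flip, we obtain
\[
\wh{\mcX}_{[23]}\,(\hat{\Delta}\otimes\hat{\Delta})(\hat{\Delta}(x))\,\wh{\mcX}_{[23]}^* = \Bigl((\hat{\Delta}\otimes\hat{\Delta})(\hat{\Delta}(x))\Bigr)_{[1324]}.
\]
Substituting this into the previous display, the two occurrences of $(\cdot)_{[1324]}$ cancel, leaving exactly $(\hat{\Delta}\otimes\hat{\Delta})(\hat{\Delta}(x))$, as wanted.

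I expect the only delicate point to be the leg bookkeeping in the first displayed identity — correctly tracking which of the four tensor legs each leg of $\hat{\Omega} = \wh{\mcX}_{[32]}$, of $\hat{\Delta}_{\otimes}$, and of the iterated coproducts sits in, and seeing that the one and only mismatch between $\hat{\Delta}_{\otimes}\circ\hat{\Delta}$ and $(\hat{\Delta}\otimes\hat{\Delta})\circ\hat{\Delta}$ is precisely the swap of the two middle legs that conjugation by $\wh{\mcX}_{[32]}$ undoes via \eqref{EqQTStruct}. There is no analytic subtlety: faithfulness, normality and unitality of $\iota_{\Ll}$ come straight from $\hat{\Ll}$ being a LCQG, and the rest is a short string of identities between bounded operators (one may equivalently run the computation in Sweedler notation, where the middle-leg flip is manifest).
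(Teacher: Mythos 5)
Your proposal is correct and follows essentially the same route as the paper: the paper's (one-line) proof likewise notes that $\hat{\Delta}$ is a normal embedding and that the quasi-triangularity relation \eqref{EqQTStruct} yields $\hat{\Omega}\hat{\Delta}_{\otimes}(\hat{\Delta}(x))\hat{\Omega}^* = (\hat{\Delta}\otimes\hat{\Delta})\hat{\Delta}(x)$, which is exactly the identity you verify. Your leg bookkeeping (the $[1324]$ flip relating $\hat{\Delta}_{\otimes}\circ\hat{\Delta}$ to $(\hat{\Delta}\otimes\hat{\Delta})\hat{\Delta}$ and $\wh{\mcX}_{[32]}$ to $\wh{\mcX}_{[23]}$, cancelled by quasi-triangularity applied to the middle leg) is accurate, so your write-up is simply a more detailed version of the paper's argument.
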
 
\begin{proof}
This is a normal embedding preserving the coproduct, since \eqref{EqQTStruct} implies
\[
\hat{\Omega}\hat{\Delta}_{\otimes}(\hat{\Delta}(x))\hat{\Omega}^* = (\hat{\Delta}\otimes \hat{\Delta})\hat{\Delta}(x)
\]
for $x\in \hat{L}$.
\end{proof}

It follows that if we have an $\Hh$-W$^*$-algebra $A$, we can restrict the action to obtain an $\Ll$-W$^*$-algebra $A$ (see \cite{DCKr24}*{Section 2.2}).

We now recall from \cite{DC09}*{Lemma 6.5.4} that, in the general situation of a closed quantum subgroup $\Ll \subseteq \Hh$ and an $\Hh$-W$^*$-algebra $(A,\gamma)$, its restriction $(A,\gamma_{\mid \Ll})$ satisfies 
\[
(U_{\gamma})_{\mid \Ll} = U_{\gamma_{\mid \Ll}},
\]
where on the left we write the restriction of a unitary representation to a closed quantum subgroup (see \cite{DCKr24}*{Section 2.2}). The following theorem then follows by combining this observation with Theorem \ref{TheoInductionBraided}.

\begin{Theorem}\label{TheoMainResultDone}
Let $(\Ll,\wh{\mcX})$ be a quasi-triangular quantum group, and let $(A_1,\gamma_1),(A_2,\gamma_2)$ be two $\Ll$-W$^*$-algebras. Then the $\Hh$-action on $A_1\ov{\boxtimes}A_2$ restricts to the $\Ll$-action given by \eqref{EqIdentification2}, and there is a canonical unitary intertwiner of unitary $\Ll$-representations 
\begin{equation}\label{EqFormulaImplBraid}
I_\boxtimes \colon L^2(A_1)\otimes L^2(A_2)\cong L^2(A_1\ov{\boxtimes} A_2),
\qquad U_{\gamma_1} \TensRep U_{\gamma_2} \cong  U_{\gamma_1\bowtie \gamma_2}
.
\end{equation}
This unitary intertwines the standard representation of $A_1\ov\boxtimes A_2$ with the identity representation \eqref{EqIdentification}, and satisfies 
\begin{equation}\label{eq53}
J_{A_1\barboxtimes A_2}\cong 
(\hat{\pi}_{U_{\gamma_1}}\otimes 
\hat{\pi}_{U_{\gamma_2}})(\wh{\mc{X}}_u)
(J_{A_1}\otimes J_{A_2}) .
\end{equation}
\end{Theorem}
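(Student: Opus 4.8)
The plan is to derive Theorem \ref{TheoMainResultDone} as an essentially immediate corollary of Theorem \ref{TheoInductionBraided}, using only the facts about closed quantum subgroups recalled just before the statement. First I would invoke the preceding lemma: $\Ll$ embeds as a closed quantum subgroup of $\Hh = D_{\wh{\mcX}}(\hat{\G}_1,\hat{\G}_2)$ via $\iota_{\Ll} = \hat{\Delta}$, where the quasi-triangularity \eqref{EqQTStruct} is exactly what makes $\hat{\Delta}$ intertwine the twisted coproduct $\hat{\Omega}\hat{\Delta}_{\otimes}(-)\hat{\Omega}^*$ on $W^*(\Hh)$ with $(\hat{\Delta}\otimes\hat{\Delta})\hat{\Delta}$ on $W^*(\Ll)$. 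Consequently the $\Hh$-W$^*$-algebra $A_1\ov\boxtimes A_2 \cong \Ind_{\X}(A_1\bar\otimes A_2)$ (Theorem \ref{CorIdent}) restricts along $\iota_{\Ll}$ to an $\Ll$-W$^*$-algebra, and I would check that this restricted action is precisely $\gamma_1\bowtie\gamma_2$ as defined by \eqref{EqIdentification2}: on the generators $\pi_{\boxtimes,i}(a_i)$ the restricted action is $(\iota_{\Ll}\otimes\id)\circ\Ind_{\X}(\gamma_\otimes)$ evaluated on the images from Theorem \ref{CorIdent}, and using the explicit formula $\Ind_{\X}(\gamma_\otimes) = (\Delta_{12}^1\otimes\id\otimes\id)_{|\Ind_{\X}(A_1\bar\otimes A_2)}$ together with the computations \eqref{eq54}, \eqref{eq55} this unwinds to $(\id\otimes\pi_{\boxtimes,i})\gamma_i(a_i)$. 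This identification of actions is the one genuinely new verification needed; it is a direct but somewhat bookkeeping-heavy computation, and I expect it to be the main obstacle, essentially because one must carefully track how the $\Delta_{12}^1$-coproduct on $L^\infty(\X)$ restricts back to the diagonal copy of $\Ll$ inside $\Hh$.

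Next I would bring in the restriction-of-implementations fact from \cite{DC09}*{Lemma 6.5.4}: for a closed quantum subgroup $\Ll\subseteq\Hh$ and an $\Hh$-W$^*$-algebra $(A,\gamma)$ one has $(U_\gamma)_{|\Ll} = U_{\gamma_{|\Ll}}$, where the restriction of a unitary $\Hh$-representation along $\iota_{\Ll}$ is $(\iota_{\Ll}\otimes\id)$ applied to the corepresentation (cf.\ \cite{DCKr24}*{Section 2.2}). Applying this with $A = A_1\ov\boxtimes A_2$ and $\gamma = \Ind_{\X}(\gamma_\otimes)$, the standard implementation $U_{\gamma_1\bowtie\gamma_2}$ of the restricted $\Ll$-action is obtained from the standard implementation $U_{\Ind_{\X}(\gamma_\otimes)}$ of the full $\Hh$-action by pushing forward along $W^*(\Hh)\to W^*(\Ll)$, i.e.\ along $\hat{\Delta}$. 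Theorem \ref{TheoInductionBraided}(2) gives the explicit formula for $U_{\Ind_{\X}(\gamma_\otimes)}$ under the identification $I_\boxtimes$; composing with $\hat{\Delta}$ and using that $\hat{\Delta}$ sends $\wh{\mcX}_u$ (the skew bicharacter in $\Mult(\hat{L}_u\otimes\hat{L}_u)$) to the appropriate product of its legs — concretely, that the $\Ll$-corepresentation extracted from the $\Hh$-corepresentation on $L^2(A_1)\otimes L^2(A_2)$ equals $U_{\gamma_1}\TensRep U_{\gamma_2} = U_{\gamma_1[12]}U_{\gamma_2[13]}$ — yields the intertwiner claim \eqref{EqFormulaImplBraid}. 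Here I would also record that the statement $I_\boxtimes$ intertwines the standard representation of $A_1\ov\boxtimes A_2$ with the identity representation \eqref{EqIdentification} is literally Theorem \ref{TheoInductionBraided}(1), unchanged by passing to the subgroup, since the $W^*$-algebra and its standard form do not depend on which quantum (sub)group acts.

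Finally, the modular conjugation formula \eqref{eq53} is identical to \eqref{EqMainFormModConj} in Theorem \ref{TheoInductionBraided}(3): the modular conjugation $J_{A_1\barboxtimes A_2}$ is an invariant of the $W^*$-algebra together with the chosen standard representation, so restricting the acting quantum group changes nothing, and under $I_\boxtimes$ it becomes $(\hat{\pi}_{U_{\gamma_1}}\otimes\hat{\pi}_{U_{\gamma_2}})(\wh{\mcX}_u)(J_{A_1}\otimes J_{A_2})$. Thus the proof reduces to: (i) cite the closed-subgroup lemma, (ii) verify the restricted action is $\gamma_1\bowtie\gamma_2$ (the bookkeeping step), (iii) apply \cite{DC09}*{Lemma 6.5.4} to transport the implementation formula, and (iv) quote parts (1) and (3) of Theorem \ref{TheoInductionBraided} verbatim. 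I would keep the write-up short, delegating the computations in (ii) to the formulas \eqref{eq54}–\eqref{eq55} and to the proof of Theorem \ref{CorIdent}, and present the whole thing as ``this follows by combining Theorem \ref{TheoInductionBraided} with the restriction behaviour of standard implementations under closed quantum subgroups,'' spelling out only the action-restriction check in detail.
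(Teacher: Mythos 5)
Your proposal follows essentially the same route as the paper: restrict along the closed quantum subgroup embedding $\iota_{\Ll}=\hat{\Delta}$, check on the generators (via the computations \eqref{eq54}--\eqref{eq55}) that the restricted $\Hh$-action on $\Ind_{\X}(A_1\bar\otimes A_2)\cong A_1\ov\boxtimes A_2$ is exactly $\gamma_1\bowtie\gamma_2$, invoke the restriction behaviour of standard implementations, and quote parts (1) and (3) of Theorem \ref{TheoInductionBraided} for the standard representation and the modular conjugation. Note only that both the action check on the second generator and the final identification of the restricted implementation with $U_{\gamma_1}\TensRep U_{\gamma_2}$ (the verification of \eqref{eq58}) use the R-matrix property \eqref{EqQTStruct} in explicit leg-commutation calculations, so the last step is a genuine computation rather than a purely formal push-forward of $\wh{\mcX}_u$ along $\hat{\Delta}$.
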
 

\begin{proof}
For the unitary $L^2(A_1)\otimes L^2(A_2)\rightarrow L^2(A_1\ov\boxtimes A_2)$ we take $I_\boxtimes =I^*\theta\mc{O}^*
(\hat{\pi}_{U_{\gamma_1}}\otimes \hat{\pi}_{U_{\gamma_2}})(\wh{\mc{X}}_u)^*
$, as in Theorem \ref{TheoInductionBraided}. It was already established that it intertwines the identity and standard representation of $A_1\ov\boxtimes A_2$, and satisfies \eqref{eq53}.

Next, we claim that the $\HH$-action $\Ind_{\X}(\gamma_\otimes)$ on $\Ind_{\X}(A_1\bar\otimes A_2)$ restricts to the $\Ll$-action $\gamma_1\bowtie \gamma_2$ on $A_1\ov\boxtimes A_2$ under the isomorphism
\[
A_1\ov\boxtimes A_2\cong \Ind_{\X}(A_1\bar\otimes A_2)
\]
(Theorem \ref{CorIdent}). Denoting this isomorphism by $\Upsilon$, we want to show
\begin{equation}\label{eq57}
\Ind_{\X}(\gamma_\otimes)_{| \Ll}\Upsilon=(\id\otimes \Upsilon)(\gamma_1\bowtie \gamma_2).
\end{equation}
Since $\Ll$ is a closed quantum subgroup of $\HH$, there is a left action
\[
\Delta_{\Ll,\HH}\colon \Ll\curvearrowright L^{\infty}(\HH)\colon \Delta_{\Ll,\HH}(x)=(\id\otimes \iota_{\Ll})(\ww_{\Ll})^* (1\otimes x) (\id\otimes \iota_{\Ll})(\ww_{\Ll}),\qquad \forall x\in L^{\infty}(\HH)
\]
 and the restricted action $\Ind_{\X}(\gamma_\otimes)_{|\Ll}$ is characterised by
\[
(\Delta_{\Ll,\Hh}\otimes \id^{\otimes 4})\Ind_{\X}(\gamma_\otimes)=
(\id\otimes \Ind_{\X}(\gamma_\otimes)) \Ind_{\X}(\gamma_\otimes)_{| \Ll}.
\]
Take $a_1\in A_1,a_2\in A_2$. Using \eqref{eq54} we calculate
\begin{equation}\begin{split}\label{eq56}
&\quad\;
(\id\otimes \Ind_{\X}(\gamma_\otimes))
\Ind_{\X}(\gamma_\otimes)_{|\mathbb{L}}
\bigl(\Upsilon(\pi_{\boxtimes,1}(a_1)) \bigr)=
(\Delta_{\mathbb{L},\HH}\otimes \id^{\otimes 4})
\Ind_{\X}(\gamma_\otimes)
\bigl(\Upsilon(\pi_{\boxtimes,1}(a_1)) \bigr)\\
&=
(\Delta_{\mathbb{L},\HH}\otimes \id^{\otimes 4})
\bigl(
(\id\otimes \gamma_1)\gamma_1(a_1)_{[135]}\bigr)=
\ww_{\mathbb{L} [12]}^*\ww_{\mathbb{L} [13]}^*
(\id\otimes \gamma_1)\gamma_1(a_1)_{[246]}
\ww_{\mathbb{L} [13]}\ww_{\mathbb{L} [12]}\\
&=
(\Delta_1\otimes \gamma_1)\gamma_1(a_1)_{[1246]}=
(\id\otimes \id\otimes \gamma_1)(\id\otimes \gamma_1)
\gamma_1(a_1)_{[1246]}\\
&=
(\id\otimes \Ind_{\X}(\gamma_\otimes)\Upsilon)
(\id\otimes \pi_{\boxtimes,1})\gamma_1(a_1)=
(\id\otimes \Ind_{\X}(\gamma_\otimes)\Upsilon)
(\gamma_1\bowtie \gamma_2)
\bigl(\pi_{\boxtimes,1}(a_1) \bigr).
\end{split}\end{equation}
Next we look at the second generator and calculate, using \eqref{eq55},
\[\begin{split}
&\quad\;
(\id\otimes \Ind_{\X}(\gamma_\otimes))
\Ind_{\X}(\gamma_\otimes)_{|\mathbb{L}}
\bigl(\Upsilon(\pi_{\boxtimes,2}(a_2)) \bigr)=
(\Delta_{\mathbb{L},\HH}\otimes \id^{\otimes 4})
\Ind_{\X}(\gamma_\otimes)
\bigl(\Upsilon(\pi_{\boxtimes,2}(a_2)) \bigr)\\
&=
(\Delta_{\mathbb{L},\HH}\otimes \id^{\otimes 4})\bigl(
\wh{\mc{X}}^*_{[34]}u_{\Ll [1]}
\wh{\mc{X}}^*_{[14]}u_{\Ll [1]}
\ww^*_{\Ll [24]}\wh{\mc{X}}^*_{[14]}
\gamma_2(a_2)_{[46]}
\wh{\mc{X}}_{[14]}
\ww_{\Ll [24]}
u_{\Ll [1]}
\wh{\mc{X}}_{[14]}u_{\Ll [1]}
\wh{\mc{X}}_{[34]}
\bigr)\\
&=
\ww_{\mathbb{L} [12]}^*\ww_{\mathbb{L} [13]}^*\wh{\mc{X}}^*_{[45]}u_{\Ll [2]}
\wh{\mc{X}}^*_{[25]}u_{\Ll [2]}
\ww^*_{\Ll [35]}\wh{\mc{X}}^*_{[25]}
\gamma_2(a_2)_{[57]}
\wh{\mc{X}}_{[25]}
\ww_{\Ll [35]}
u_{\Ll [2]}
\wh{\mc{X}}_{[25]}u_{\Ll [2]}
\wh{\mc{X}}_{[45]}
\ww_{\mathbb{L} [13]}\ww_{\mathbb{L} [12]}\\
&=
\wh{\mc{X}}^*_{[45]}
u_{\Ll [2]}
\wh{\mc{X}}^*_{[25]}u_{\Ll [2]}
\ww_{\mathbb{L} [12]}^*
\ww^*_{\mathbb{L} [35]}\ww^*_{\mathbb{L} [15]}
\wh{\mc{X}}^*_{[25]}
\gamma_2(a_2)_{[57]}
\wh{\mc{X}}_{[25]}
\ww_{\mathbb{L} [15]}
\ww_{\mathbb{L} [35]}
\ww_{\mathbb{L} [12]}
u_{\Ll [2]}
\wh{\mc{X}}_{[25]}u_{\Ll [2]}
\wh{\mc{X}}_{[45]}.
\end{split}\]
Using the assumption that $\wh{\mc{X}}$ is an $\operatorname{R}$-matrix, we have $
\ww^*_{\mathbb{L} [12]} \ww^*_{\mathbb{L} [15]}
\wh{\mc{X}}^*_{[25]}=
\wh{\mc{X}}_{[25]}^*
\ww_{\mathbb{L} [15]}^*
\ww_{\mathbb{L} [12]}^*$, hence
\[\begin{split}
&\quad\;
(\id\otimes \Ind_{\X}(\gamma_\otimes))
\Ind_{\X}(\gamma_\otimes)_{|\mathbb{L}}
\bigl(\Upsilon(\pi_{\boxtimes,2}(a_2)) \bigr)\\
&=
\wh{\mc{X}}^*_{[45]}
u_{\Ll [2]}
\wh{\mc{X}}^*_{[25]}u_{\Ll [2]}
\ww^*_{\mathbb{L} [35]}
\wh{\mc{X}}^*_{[25]}
\ww^*_{\mathbb{L} [15]}
\ww_{\mathbb{L} [12]}^*
\gamma_2(a_2)_{[57]}
\ww_{\mathbb{L} [12]}
\ww_{\mathbb{L} [15]}
\wh{\mc{X}}_{[25]}
\ww_{\mathbb{L} [35]}
u_{\Ll [2]}
\wh{\mc{X}}_{[25]}u_{\Ll [2]}
\wh{\mc{X}}_{[45]}.
\end{split}\]
On the other hand
\[\begin{split}
&\quad\;
(\id\otimes \Ind_{\X}(\gamma_\otimes)\Upsilon)
(\gamma_1\bowtie \gamma_2)
\bigl(\pi_{\boxtimes,2}(a_2) \bigr)=
(\id\otimes \Ind_{\X}(\gamma_\otimes)\Upsilon)
(\id\otimes \pi_{\boxtimes,2})\gamma_2(a_2)\\
&=
\wh{\mc{X}}^*_{[45]}u_{\Ll [2]}
\wh{\mc{X}}^*_{[25]}u_{\Ll [2]}
\ww^*_{\Ll[35]}\wh{\mc{X}}^*_{[25]}
(\id\otimes \gamma_2)\gamma_2(a_2)_{[157]}
\wh{\mc{X}}_{[25]}
\ww_{\Ll[35]}
u_{\Ll [2]}
\wh{\mc{X}}_{[25]}u_{\Ll [2]}
\wh{\mc{X}}_{[45]}\\
&=
\wh{\mc{X}}^*_{[45]}u_{\Ll [2]}
\wh{\mc{X}}^*_{[25]}u_{\Ll [2]}
\ww^*_{\Ll[35]}\wh{\mc{X}}^*_{[25]}
\ww^*_{\mathbb{L} [15]}
\gamma_2(a_2)_{[57]}
\ww_{\mathbb{L} [15]}
\wh{\mc{X}}_{[25]}
\ww_{\Ll[35]}
u_{\Ll [2]}
\wh{\mc{X}}_{[25]}u_{\Ll [2]}
\wh{\mc{X}}_{[45]}.
\end{split}\]
Since $\id\otimes \Ind_{\X}(\gamma_\otimes)$ is injective, this proves \eqref{eq57}.

 Finally, we need to show \eqref{EqFormulaImplBraid}. Since the $\HH$-action $\Ind_{\X}(\gamma_\otimes)$ restricts to the $\Ll$-action $\gamma_1\bowtie \gamma_2$ (up to the isomorphism $\Upsilon$), we can conclude that $(U_{\Ind_{\X}(\gamma_\otimes)})_{|\Ll} \cong U_{\gamma_1\bowtie \gamma_2}$. The restricted representation is characterised by
 \begin{equation}\label{eq58}
(\id\otimes\iota_{\Ll})( \ww^*_{\mathbb{L}})_{ [123]}
U_{\Ind_{\X}(\gamma_\otimes) [2345]}
(\id\otimes\iota_{\Ll})( \ww_{\mathbb{L}})_{ [123]}
=
(U_{\Ind_{\X}(\gamma_\otimes) })_{|\mathbb{L}[145]}
U_{\Ind_{\X}(\gamma_\otimes) [2345]}.
 \end{equation}

Using the formula for $U_{\Ind_{\X}(\gamma_\otimes)}$ from Theorem \ref{TheoInductionBraided},  transported to $L^2(A_1)\otimes L^2(A_2)$,  we calculate
\[\begin{split}
&\quad\;
(\id\otimes\iota_{\Ll})( \ww^*_{\mathbb{L}})_{ [123]}
I_{\boxtimes [45]}^*
U_{\Ind_{\X}(\gamma_\otimes) [2345]}
I_{\boxtimes [45]}
(\id\otimes\iota_{\Ll})( \ww_{\mathbb{L}})_{ [123]}\\
&=
\ww_{\mathbb{L} [12]}^*
\ww_{\mathbb{L} [13]}^*
(\hat{\pi}_{U_{\gamma_1}}\otimes \hat{\pi}_{U_{\gamma_2}})( \wh{\mc{X}}_u)_{[45]}
(\hat{\pi}_{1,\red}\otimes \hat{\pi}_{U_{\gamma_2 }})(\wh{\mc{X}}_u)_{[25]}
U_{\gamma_1 [24]}
U_{\gamma_2 [35]}
\\
&\quad\quad\quad\quad\quad\quad\quad\quad\quad
\quad\quad\quad\quad\quad\quad\quad\quad\quad
\quad\quad
u_{\Ll [2]}
(\hat{\pi}_{1,\red}\otimes \hat{\pi}_{U_{\gamma_2}})
(\wh{\mc{X}}_{u})_{[25]}
u_{\Ll [2]}
\ww_{\mathbb{L} [13]}
\ww_{\mathbb{L} [12]}\\
&=
\ww_{\mathbb{L} [12]}^*
(\hat{\pi}_{U_{\gamma_1}}\otimes \hat{\pi}_{U_{\gamma_2}})( \wh{\mc{X}}_u)_{[45]}
(\hat{\pi}_{1,\red}\otimes \hat{\pi}_{U_{\gamma_2 }})(\wh{\mc{X}}_u)_{[25]}
U_{\gamma_2 [15]}
\ww_{\mathbb{L} [12]}
U_{\gamma_1 [14]}
U_{\gamma_1 [24]}
U_{\gamma_2 [35]}\\
&\quad\quad\quad\quad\quad\quad\quad\quad\quad
\quad\quad\quad\quad\quad\quad\quad\quad\quad
\quad\quad
u_{\Ll [2]}
(\hat{\pi}_{1,\red}\otimes \hat{\pi}_{U_{\gamma_2}})
(\wh{\mc{X}}_{u})_{[25]}
u_{\Ll [2]}.
\end{split}\]
 
 Observe that (see \cite{DCKr24}*{Remark 2.2}) 
\[\begin{split}
&\quad\;
\ww_{\mathbb{L} [12]}^*
(\hat{\pi}_{U_{\gamma_1}}\otimes \hat{\pi}_{U_{\gamma_2}})( \wh{\mc{X}}_u)_{[45]}
(\hat{\pi}_{1,\red}\otimes \hat{\pi}_{U_{\gamma_2 }})(\wh{\mc{X}}_u)_{[25]}
U_{\gamma_2 [15]}
\ww_{\mathbb{L} [12]}
U_{\gamma_1 [14]}
\\
&=
(\id\otimes \hat{\pi}_{1,\red}\otimes \hat{\pi}_{U_{\gamma_1}}\otimes \hat{\pi}_{U_{\gamma_2}})
\bigl(
\wW^*_{\mathbb{L}[12]}
\wh{\mc{X}}_{u [34]}
\wh{\mc{X}}_{u [24]}
\wW_{\mathbb{L} [14]} 
\wW_{\mathbb{L} [12]}
\wW_{\mathbb{L} [13]}
\bigr)_{[1245]}\\
&=
(\id\otimes \hat{\pi}_{1,\red}\otimes \hat{\pi}_{U_{\gamma_1}}\otimes \hat{\pi}_{U_{\gamma_2}})
\bigl(
\wW^*_{\mathbb{L}[12]}
\wh{\mc{X}}_{u [34]}
\wW_{\mathbb{L}  [12]} 
\wW_{\mathbb{L}  [14]} 
\wW_{\mathbb{L} [13]}
\wh{\mc{X}}_{u [24]}
\bigr)_{[1245]}\\
&=
(\id\otimes \hat{\pi}_{1,\red}\otimes \hat{\pi}_{U_{\gamma_1}}\otimes \hat{\pi}_{U_{\gamma_2}})
\bigl(
\wW_{\mathbb{L} [13]}
\wW_{\mathbb{L}  [14]} 
\wh{\mc{X}}_{u [34]}
\wh{\mc{X}}_{u [24]}
\bigr)_{[1245]}\\
&=
U_{\gamma_1 [14]}
U_{\gamma_2 [15]}
(\hat{\pi}_{U_{\gamma_1}}\otimes \hat{\pi}_{U_{\gamma_2}})(\wh{\mc{X}}_u)_{[45]}
 (\hat{\pi}_{1,\red}\otimes \hat{\pi}_{U_{\gamma_2}})(\wh{\mc{X}}_u)_{[25]}.
\end{split}\]
 Using this, we continue
\[\begin{split}
&\quad\;
(\id\otimes\iota_{\Ll})( \ww^*_{\mathbb{L}})_{ [123]}
I_{\boxtimes [45]}^*
U_{\Ind_{\X}(\gamma_\otimes) [2345]}
I_{\boxtimes [45]}
(\id\otimes\iota_{\Ll})( \ww_{\mathbb{L}})_{ [123]}\\
&=
U_{\gamma_1 [14]}
U_{\gamma_2 [15]}
(\hat{\pi}_{U_{\gamma_1}}\otimes \hat{\pi}_{U_{\gamma_2}})(\wh{\mc{X}}_u)_{[45]}
 (\hat{\pi}_{1,\red}\otimes \hat{\pi}_{U_{\gamma_2}})(\wh{\mc{X}}_u)_{[25]}
U_{\gamma_1 [24]}
U_{\gamma_2 [35]}
u_{\Ll [2]}
(\hat{\pi}_{1,\red}\otimes \hat{\pi}_{U_{\gamma_2}})
(\wh{\mc{X}}_{u})_{[25]}
u_{\Ll [2]}\\
&=
(U_{\gamma_1}\otop U_{\gamma_2})_{[145]}
I_{\boxtimes [45]}^*
U_{\Ind_{\X}(\gamma_\otimes) [2345]}
I_{\boxtimes [45]}.
\end{split}\]
Equation \eqref{eq58} ends the proof. 
\end{proof}

\subsection{Application to generalized Drinfeld double}\label{SecAppGDD}

In this subsection we apply our results to obtain (to our knowledge -- new) information concerning the topological structure of (generalized) Drinfeld doubles. Let us stress that we do not make any regularity assumptions.

We keep the ``dual'' notation, to stay in agreement with the rest of the article. As in Section \ref{SecLQGFAB}, let $\G_1=(M_1,\Delta_1),\G_2=(M_2,\Delta_2)$ be LCQGs with a unitary skew bicharacter $\wh{\mc{X}}\in \hat{M}_1\bar{\otimes} \hat{M}_2$ and unitary $2$-cocycle $\hat{\Omega}=\wh{\mc{X}}_{[32]}$. Then we can consider (see Section \ref{SubSecCocBich}) the  generalized Drinfeld double
\[
D_{\wh{\mc{X}}}(\hat{\G}_1,\hat{\G}_2)=
(\hat{M}_1\bar\otimes \hat{M}_2, 
\wh{\mc{X}}_{[32]} \hat{\Delta}_{\otimes} (-)
\wh{\mc{X}}_{[32]}^*).
\]
In what follows, we will use the comultiplication $\hat{\Delta}_{\otimes,u}$ on $C^*(\G_1\times \G_2)=C_0^u(\hat{\G}_1)\otimes_{\max}C_0^u(\hat{\G}_2)$, given by $\hat{\Delta}_{\otimes,u}(x\otimes y)=\hat{\Delta}_{1,u}(x)_{[13]}\hat{\Delta}_{2,u}(y)_{[24]}$, see Lemma \ref{lemma2}.

\begin{Prop}\label{prop3}
There is an isomorphism $C_0^u( D_{\wh{\mc{X}}}(\hat{\G}_1,\hat{\G}_2) )\cong C^u_0(\hat{\G}_1)\otimes_{\max} C^u_0(\hat{\G}_2)$, under which:
\begin{enumerate}
\item the comultiplication on $C_0^u( D_{\wh{\mc{X}}}(\hat{\G}_1,\hat{\G}_2) )$ corresponds to $\wh{\mc{X}}_{u[32]} \hat{\Delta}_{\otimes,u}(-) \wh{\mc{X}}_{u[32]}^*$,
\item the reducing map of $D_{\wh{\mc{X}}}(\hat{\G}_1,\hat{\G}_2)$ corresponds to $(\hat{\pi}_{1,\red}\otimes\hat{\pi}_{2,\red})\hat{\theta}$, where $\hat{\theta}$ is the tensor product reducing map $C^u_0(\hat{\G}_1)\otimes_{\max} C^u_0(\hat{\G}_2)\rightarrow C^u_0(\hat{\G}_1)\otimes C^u_0(\hat{\G}_2)$,
\item the counit of $D_{\wh{\mc{X}}}(\hat{\G}_1,\hat{\G}_2)$ corresponds to $\eps_{\hat{\G}_1}\times \eps_{\hat{\G}_2}$.
\end{enumerate}
Furthermore, $C_0 (D_{\wh{\mc{X}}}(\hat{\G}_1,\hat{\G}_2))=C_{0}(\hat{\G}_1)\otimes C_{0}(\hat{\G}_2)$.
\end{Prop}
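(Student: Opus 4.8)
The plan is to deduce Proposition \ref{prop3} from the general machinery built in Section \ref{SecCocyc}, specifically Theorem \ref{TheoUnivLift}, together with the identification of the generalized Drinfeld double's function algebra as an induced W$^*$-algebra. By Theorem \ref{TheoUnivLift}, the dual unitary $2$-cocycle $\hat{\Omega}=\wh{\mc{X}}_{[32]}$ for $(\hat{M}_1\bar{\otimes}\hat{M}_2,\hat{\Delta}_{\otimes})$ admits $\hat{\Omega}_u=\wh{\mc{X}}_{u[32]}$ as a universal lift; by Definition \ref{DefUnivLift2Coc} and the discussion preceding it, this precisely means that the associated linking quantum groupoid $\msG$ has trivial universal linking C$^*$-algebra, i.e.\ $C^*(\msG)\cong\begin{pmatrix}\hat{M}_u & \hat{M}_u\\ \hat{M}_u & \hat{M}_u\end{pmatrix}$ with the coproduct on the $11$-corner being $\hat{\Omega}_u\hat{\Delta}_{\otimes,u}(\cdot)\hat{\Omega}_u^*$. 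Now the $11$-corner $W^*(\Hh)=C^*(\G_{11})$ is by construction $C^*(D_{\wh{\mc{X}}}(\hat{\G}_1,\hat{\G}_2))$, so items (1) and (3) follow directly: (1) is the formula for the coproduct on the $11$-corner, and (3) follows from the fact (established in the discussion after Definition \ref{DefUniversalLift}, point (2)) that after the trivialisation the counit components all coincide, so $\hat{\varepsilon}_{11}=\hat{\varepsilon}_{22}=\varepsilon_{\hat{M}_u}=\varepsilon_{\hat{\G}_1}\times\varepsilon_{\hat{\G}_2}$ by Lemma \ref{lemma2}.

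For item (2), I would use point (3) of the discussion after Definition \ref{DefUniversalLift}, which shows that after the trivialisations the reducing map $\pi_{\hat{Q}_u}^{\red}$ corresponds to $\begin{pmatrix}\hat{\pi}_{\red} & \hat{\pi}_{\red}\\ \hat{\pi}_{\red} & \hat{\pi}_{\red}\end{pmatrix}$, where $\hat{\pi}_{\red}$ is the reducing map of $\G=\G_1\times\G_2$. By the discussion following Lemma \ref{lemma2}, this reducing map factors as $(\hat{\pi}_{1,\red}\otimes_{\max}\hat{\pi}_{2,\red})$ composed with the canonical quotient $\hat{\theta}\colon C_0^u(\hat{\G}_1)\otimes_{\max}C_0^u(\hat{\G}_2)\to C_0^u(\hat{\G}_1)\otimes C_0^u(\hat{\G}_2)$, which gives exactly the claimed description $(\hat{\pi}_{1,\red}\otimes\hat{\pi}_{2,\red})\hat{\theta}$ on the $11$-corner.

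For the final sentence, $C_0(D_{\wh{\mc{X}}}(\hat{\G}_1,\hat{\G}_2))=C_{\red}^*(D_{\wh{\mc{X}}}(\hat{\G}_1,\hat{\G}_2))$ is by definition the image of $C^*(D_{\wh{\mc{X}}}(\hat{\G}_1,\hat{\G}_2))$ under its reducing map; equivalently, it is the $11$-corner $C_0(\G_{11})$ of $C_0(\msG)=\pi_{\hat{Q}_u}^{\red}(C^*(\msG))$. Using item (2) just established, this image is $(\hat{\pi}_{1,\red}\otimes\hat{\pi}_{2,\red})\hat{\theta}(C_0^u(\hat{\G}_1)\otimes_{\max}C_0^u(\hat{\G}_2))=(\hat{\pi}_{1,\red}\otimes\hat{\pi}_{2,\red})(C_0^u(\hat{\G}_1)\otimes C_0^u(\hat{\G}_2))=C_0(\hat{\G}_1)\otimes C_0(\hat{\G}_2)$, since $\hat{\theta}$ is surjective and $\hat{\pi}_{i,\red}$ has image $C_0(\hat{\G}_i)$. (Here I would cite the known fact that a reducing tensor map of universal algebras has image the minimal tensor product of the reduced algebras; this is standard — compare the remark after Lemma \ref{lemma2} asserting $C^*_{\red}(\G_1\times\G_2)=C^*_{\red}(\G_1)\otimes C^*_{\red}(\G_2)$.)

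The main obstacle is not in this proof at all but is entirely absorbed into Theorem \ref{TheoUnivLift}, whose proof is deferred to Subsection \ref{SecProofTech}; everything here is bookkeeping that translates that theorem through the established dictionary between universal lifts, trivial universal linking C$^*$-algebras, and the explicit description of the reducing/counit maps. The only point requiring a little care is to make sure the trivialisation isomorphism used is the \emph{same} one throughout items (1)--(3) and the final claim — i.e.\ the one normalised as in Definition \ref{def3} so that it is the identity on the lower-right corner with $\hat{\varepsilon}_{ij}\cong\varepsilon_{\hat{M}_u}$ and reducing map $\begin{pmatrix}\hat{\pi}_{\red}&\hat{\pi}_{\red}\\\hat{\pi}_{\red}&\hat{\pi}_{\red}\end{pmatrix}$ — but this is exactly what the discussion after Definition \ref{DefUniversalLift} guarantees can be arranged, and Theorem \ref{TheoUnivLift} asserts that $\hat{\Omega}_u$ itself arises from such a normalised trivialisation.
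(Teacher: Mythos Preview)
Your proposal is correct and follows essentially the same route as the paper: both reduce everything to Theorem \ref{TheoUnivLift} and then unpack Definition \ref{def3} (trivial universal linking C$^*$-algebra) to read off items (1)--(3) and the description of $C_0(D_{\wh{\mc{X}}}(\hat{\G}_1,\hat{\G}_2))$. The only minor differences are cosmetic: the paper writes out the one-line computation $\hat{\Delta}_{11,u}(z)=\hat{\Delta}_{12,u}(1)\hat{\Delta}_{22,u}(z)\hat{\Delta}_{12,u}(1)^*=\hat{\Omega}_u\hat{\Delta}_{\otimes,u}(z)\hat{\Omega}_u^*$ for (1), and you should drop the reference to ``identification as an induced W$^*$-algebra'' (which you never use) and fix the slip ``$W^*(\Hh)=C^*(\G_{11})$'' (you mean $C^*(\Hh)=C^*(\G_{11})$).
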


\begin{Rem}\noindent
\begin{itemize}
\item The equality $C_0 (D_{\wh{\mc{X}}}(\hat{\G}_1,\hat{\G}_2))=C_{0}(\hat{\G}_1)\otimes C_{0}(\hat{\G}_2)$ at the reduced level is known from  \cite{MNW03}, see in particular Section $8$ and Theorem $8.7$ therein. See also \cite{Roy15}*{Theorem 5.3} for a more general result in the setting of multiplicative unitaries.
\item One can express the unitary antipode on $C_0^u(D_{\wh{\mc{X}}}(\hat{\G}_1,\hat{\G}_2))$ using the universal analog of \eqref{eq12} and the element $X_{\hat{\Omega},u}$ (see Lemma \ref{LemUniqueUnitCob} and Lemma \ref{LemProjCobPair}).
\end{itemize}
\end{Rem}

\begin{proof}[Proof of Proposition \ref{prop3}]
Theorem \ref{TheoUnivLift} states that $\hat{\Omega}_u=\wh{\mc{X}}_{u[32]}$ is a universal lift of $\hat{\Omega}=\hat{\mc{X}}_{[32]}$ (in the sense of Definition \ref{DefUnivLift2Coc}). This immediately gives us an isomorphism $C_0^u( D_{\wh{\mc{X}}}(\hat{\G}_1,\hat{\G}_2) )\cong C^u_0(\hat{\G}_1)\otimes_{\max} C^u_0(\hat{\G}_2)$ for which points $(2),(3)$ hold, and it follows that $C_0 (D_{\wh{\mc{X}}}(\hat{\G}_1,\hat{\G}_2))=C_{0}(\hat{\G}_1)\otimes C_{0}(\hat{\G}_2)$. Next, denote by $\msG$ the linking quantum groupoid constructed from $\hat{\Omega}$ as in Section \ref{SecCocyc} and let $\hat{\Delta}_{u,ij}$ be the ingredients of the comultiplication on $C^*(\msG)$. In particular, we have $D_{\wh{\mc{X}}}(\hat{\G}_1,\hat{\G}_2)$ and $\hat{\G}_1\times \hat{\G}_2$ as two diagonal corners. Point $(1)$ follows from the calculation
\[
\Delta_{D_{\wh{\mc{X}}}(\hat{\G}_1,\hat{\G}_2),u}(z)=
\hat{\Delta}_{11,u}(z)=
\hat{\Delta}_{12,u}(1)
\hat{\Delta}_{22,u}(z)
\hat{\Delta}_{12,u}(1)^*=
\hat{\Omega}_u\hat{\Delta}_{\otimes,u}(z)
\hat{\Omega}_{u}^*=
\wh{\mc{X}}_{[32]}\hat{\Delta}_{\otimes,u}(z)
\wh{\mc{X}}_{[32]}^*,
\]
where $z\in C_0^u(\hat{\G}_1)\otimes_{\max} C_0^u(\hat{\G}_2)$ and where $\Delta_{D_{\wh{\mc{X}}}(\hat{\G}_1,\hat{\G}_2),u}$ is the universal comultiplication on $D_{\wh{\mc{X}}}(\hat{\G}_1,\hat{\G}_2)$, transported to $C_0^u(\hat{\G}_1)\otimes_{\max} C_0^u(\hat{\G}_2)$.
\end{proof}

\subsection{Proof of Theorem \ref{TheoUnivLift}}\label{SecProofTech}

To prove Theorem \ref{TheoUnivLift}, we start with some general preparations.

\begin{Lem}\label{lemma3}
Let $\Hh$ be a locally compact quantum group, with right half-lifted multiplicative unitary $\wW \in \Mult(C_0(\Hh)\otimes C^*(\Hh))$. Then the dual coproduct lifts uniquely to a non-degenerate $*$-homomorphism
\[
\hat{\Delta}_u^{\max}\colon C^*(\Hh) \rightarrow \Mult(C^*(\Hh)\otimes_{\max}C^*(\Hh))
\]
such that 
\begin{equation}\label{EqFormulaMaxCo}
\hat{\Delta}_u^{\max}((\omega \otimes \id)\wW) = (\omega \otimes \id\otimes \id)(\wW_{[13]}\wW_{[12]}), \qquad \forall \omega \in L^1(\Hh).
\end{equation}
\end{Lem}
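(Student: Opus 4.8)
\textbf{Proof strategy for Lemma \ref{lemma3}.}

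The plan is to recognise the desired map $\hat\Delta_u^{\max}$ as being induced by a suitable unitary corepresentation via the universal property of $C^*(\Hh)$, exactly as in the construction of $\hat\Delta_u$ itself, but now landing in the maximal tensor product rather than in the minimal one. First I would recall that $C^*(\Hh)\otimes_{\max}C^*(\Hh)$ carries two canonical non-degenerate $*$-representations, namely the two leg inclusions, and more importantly that for any Hilbert space $\mathcal K$ and any pair of commuting non-degenerate $*$-representations $\pi,\pi'$ of $C^*(\Hh)$ on $\mathcal K$ one obtains a non-degenerate $*$-representation $\pi\times\pi'$ of $C^*(\Hh)\otimes_{\max}C^*(\Hh)$ on $\mathcal K$ (this is precisely the universal property of $\otimes_{\max}$). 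Applying the correspondence $\Rep(\Hh)\cong\Rep(C^*(\Hh))$ to a suitable unitary $\Hh$-representation will then produce a representation of $C^*(\Hh)$ into $\Mult(C^*(\Hh)\otimes_{\max}C^*(\Hh))$.

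Concretely, I would proceed as follows. Consider the half-lifted universal multiplicative unitary $\wW\in\Mult(C_0(\Hh)\otimes C^*(\Hh))$, and form the two elements $\wW_{[12]}$ and $\wW_{[13]}$ inside $\Mult(C_0(\Hh)\otimes C^*(\Hh)\otimes_{\max}C^*(\Hh))$, where in the last two legs one uses the two canonical maps $C^*(\Hh)\to\Mult(C^*(\Hh)\otimes_{\max}C^*(\Hh))$. The product $\wW_{[13]}\wW_{[12]}$ is a unitary in that multiplier algebra; I would check, using the pentagon/corepresentation identity $(\hat\Delta_u\otimes\id)\wW=\wW_{[13]}\wW_{[23]}$ for the dual coproduct together with the fact that $\wW$ is a corepresentation of $C_0(\Hh)$ in the first leg, that $\wW_{[13]}\wW_{[12]}$ is again a (continuous) unitary corepresentation of $C_0(\Hh)$ — equivalently a unitary $\Hh$-representation — on the Hilbert module / Hilbert space underlying $C^*(\Hh)\otimes_{\max}C^*(\Hh)$. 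By the universal property of $C^*(\Hh)$ (the analogue of \cite{Kus01} recalled in the excerpt, specifically \eqref{EqHalfUniversalLift}–\eqref{EqUnivPropUnivLift}), this corepresentation is implemented by a unique non-degenerate $*$-homomorphism $\hat\Delta_u^{\max}\colon C^*(\Hh)\to\Mult(C^*(\Hh)\otimes_{\max}C^*(\Hh))$ satisfying $(\id\otimes\hat\Delta_u^{\max})(\wW)=\wW_{[13]}\wW_{[12]}$, which upon slicing the first leg with $\omega\in L^1(\Hh)$ is exactly \eqref{EqFormulaMaxCo}. Uniqueness is immediate since the elements $(\omega\otimes\id)\wW$, $\omega\in L^1(\Hh)$, are dense in $C^*(\Hh)$ and $\hat\Delta_u^{\max}$ is a $*$-homomorphism, hence norm-continuous.

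Finally I would note that $\hat\Delta_u^{\max}$ indeed lifts $\hat\Delta_u$: composing with the canonical quotient $C^*(\Hh)\otimes_{\max}C^*(\Hh)\to C^*(\Hh)\otimes C^*(\Hh)$ sends $\wW_{[13]}\wW_{[12]}$ to the corresponding product in the minimal tensor product, whose associated $*$-homomorphism is $\hat\Delta_u$ by the defining property \eqref{EqTensProd2} (applied to $\msG$ a LCQG), so the quotient intertwines $\hat\Delta_u^{\max}$ and $\hat\Delta_u$. The main obstacle I anticipate is purely bookkeeping: one must be careful that $\wW_{[13]}\wW_{[12]}$ genuinely defines a \emph{continuous} unitary $\Hh$-representation (so that the universal property applies), which requires checking the corepresentation identity in the first leg and strict continuity of the relevant multipliers; the commutation of the two leg-inclusions of $C^*(\Hh)$ inside $\Mult(C^*(\Hh)\otimes_{\max}C^*(\Hh))$, needed to even write down $\wW_{[13]}\wW_{[12]}$ and to invoke the universal property of $\otimes_{\max}$, is automatic. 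None of this is deep, but it is exactly the kind of routine verification the statement (``Then the dual coproduct lifts uniquely'') is asking one to record.
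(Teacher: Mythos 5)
Your proposal is correct and follows essentially the same route as the paper: both exploit the universal property of $\otimes_{\max}$ (commuting pairs of representations of $C^*(\Hh)$) together with the correspondence $\Rep(\Hh)\cong\Rep(C^*(\Hh))$, the key point being that the product of two commuting unitary $\Hh$-representations is again one, applied to the element $\wW_{[13]}\wW_{[12]}$. The only point to tidy is that the universal property of $C^*(\Hh)$ is stated for corepresentations on Hilbert spaces, so one should first fix a universal (faithful, non-degenerate) Hilbert-space representation $\hat{\pi}=\hat{\pi}_1\times\hat{\pi}_2$ of $C^*(\Hh)\otimes_{\max}C^*(\Hh)$ and work with $(\id\otimes\hat{\pi})(\wW_{[13]}\wW_{[12]})$, checking afterwards that the resulting map lands in $\Mult(\hat{\pi}(C^*(\Hh)\otimes_{\max}C^*(\Hh)))$ — which is exactly how the paper's proof proceeds.
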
 
Here we consider the product of $\wW_{[13]}$ and $\wW_{[12]}$ in $\Mult(C_0(\Hh) \otimes (C^*(\Hh)\otimes_{\max}C^*(\Hh)))$. 
\begin{proof}
We recall that by its universal property, non-degenerate $*$-representations $\hat{\pi}$ of $C^*(\Hh)\otimes_{\max}C^*(\Hh)$ on a Hilbert space $\Hsp$ are in one-to-one correspondence with non-degenerate $*$-representations $\hat{\pi}_1,\hat{\pi}_2$ of $C^*(\Hh)$ on $\Hsp$ which pointwise commute. This correspondence is given by
\[
\hat{\pi} = \hat{\pi}_1 \times \hat{\pi}_2,\qquad (\hat{\pi}_1\times\hat{\pi}_2)(x\otimes y) = \hat{\pi}_1(x)\hat{\pi}_2(y),\qquad \forall x,y\in C^*(\Hh).
\]
Such $*$-representations $\hat{\pi}_1,\hat{\pi}_2$ are implemented by commuting unitary representations $U_{\hat{\pi}_1},U_{\hat{\pi}_2}$ of $\Hh$ on $\Hsp_{\hat{\pi}}$, 
\[
U_{\hat{\pi}_i} \in L^{\infty}(\Hh) \bar{\otimes} \mcB(\Hsp_{\hat{\pi}}),\qquad U_{\hat{\pi}_1[13]}U_{\hat{\pi}_2[23]} = U_{\hat{\pi}_2[23]}U_{\hat{\pi}_1[13]}.
\]
By the latter commutation relation, we have that 
\[
U_{\hat{\pi}_2\hat{\pi}_1} := U_{\hat{\pi}_2}U_{\hat{\pi}_1}\in L^{\infty}(\Hh)\bar{\otimes} \mcB(\Hsp_{\hat{\pi}})
\]
is a unitary representation of $\Hh$ too, and is hence associated to a non-degenerate $*$-representation \cite{Kus01}*{Proposition 6.5}
\[
\hat{\pi}_2\hat{\pi}_1\colon C^*(\Hh) \rightarrow \mcB(\Hsp_{\hat{\pi}}). 
\]
As 
\[
U_{\hat{\pi}_i} \in \Mult(C_0(\Hh)\otimes \hat{\pi}(C^*(\Hh)\otimes_{\max}C^*(\Hh))),
\]
we deduce that $\hat{\pi}_2\hat{\pi}_1$ is a non-degenerate $*$-homomorphism into $\Mult(\hat{\pi}(C^*(\Hh)\otimes_{\max}C^*(\Hh)))$. Taking for $\hat{\pi}$ a universal non-degenerate $*$-representation of $C^*(\Hh)\otimes_{\max}C^*(\Hh)$ with the associated $\hat{\pi}_1,\hat{\pi}_2$, we can then define $\hat{\Delta}_u^{\max}$ as the unique non-degenerate $*$-homomorphism $C^*(\Hh) \rightarrow \Mult(C^*(\Hh)\otimes_{\max}C^*(\Hh))$  such that 
\[
\hat{\pi}_2\hat{\pi}_1 = \hat{\pi} \circ \hat{\Delta}_u^{\max}.
\]
We then have by construction that 
\begin{equation}\begin{split}\label{eq13}
&\quad\;
(\id\otimes \hat{\pi} \hat{\Delta}_u^{\max})(\wW) = (\id\otimes \hat{\pi}_2\hat{\pi}_1)\wW 
= U_{\hat{\pi}_2}U_{\hat{\pi}_1} = (\id\otimes \hat{\pi}_2)(\wW)(\id\otimes \hat{\pi}_1)(\wW)\\
& = (\id\otimes (\hat{\pi}_1\times\hat{\pi}_2))(\wW_{[13]}\wW_{[12]})
= (\id\otimes \hat{\pi})(\wW_{[13]}\wW_{[12]}),
\end{split}\end{equation}
so that \eqref{EqFormulaMaxCo} is satisfied. In particular, $\hat{\Delta}_u^{\max}$ does not depend on the choice of $\hat{\pi}$. As 
\[
C^*(\Hh) = [(\omega \otimes \id)\wW\mid \omega \in L^1(\Hh)],
\] 
it is clear that this latter formula completely determines $\hat{\Delta}_u^{\max}$. 
\end{proof}

It is easy to see from the above construction, or directly from \eqref{EqFormulaMaxCo}, that $\hat{\Delta}_u^{\max}$ is still coassociative in the appropriate way:
\[
(\id\otimes_{\max} \hat{\Delta}_u^{\max})\hat{\Delta}_u^{\max} = (\hat{\Delta}_u^{\max}\otimes_{\max} \id)\hat{\Delta}_u^{\max}. 
\] 

In the following, we will also use this result for C$^*$-algebras associated to linking quantum groupoids (the proof is completely the same). 

The following lemma is just an easy observation on linking C$^*$-algebras.

\begin{Lem}\label{LemMorita}
Let $A = \begin{pmatrix} A_{11} & A_{12} \\ A_{21} & A_{22}\end{pmatrix}$ be a linking C$^*$-algebra, i.e.\ $[A_{ij}A_{jk}] = A_{ik}$ for all indices $i,j,k\in \{1,2\}$. Assume that $B$ is another linking C$^*$-algebra, and let 
\[
\pi\colon A \rightarrow \Mult(B)
\]
be a non-degenerate $*$-homomorphism preserving the projection $e_1 = \begin{pmatrix} 1 & 0 \\ 0 & 0 \end{pmatrix}\in \Mult(A)$ and hence also $e_2 = 1-e_1$. Write 
\[
\pi_{ij} = \pi_{\mid A_{ij}}\colon A_{ij} \rightarrow \Mult(B_{ij}) := e_i \Mult(B) e_j.
\]
Then if 
\[
A_{22} \cong_{\pi_{22}} B_{22},
\]
we have that $\pi$ is an isomorphism $A\cong_{\pi}B$.  
\end{Lem}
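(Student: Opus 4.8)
The plan is to show separately that $\pi$ is injective and that it is surjective, using throughout that $\pi$ extends to a unital $*$-homomorphism $\bar\pi\colon\Mult(A)\to\Mult(B)$ with $\bar\pi(e_i)=e_i$, so that $\pi$ maps each corner $A_{ij}=e_iAe_j$ into $e_i\Mult(B)e_j$, and $\pi_{22}=\pi_{\mid A_{22}}$ takes values in $B_{22}$.

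For injectivity I would exploit that in a linking C$^*$-algebra the corner $A_{22}$ is full, so that every nonzero closed two-sided ideal $I$ of $A$ meets $A_{22}$ nontrivially. Indeed, from $0\neq x\in I$ one of the four elements $e_ixe_j\in I$ is nonzero, hence $0\neq(e_ixe_j)^*(e_ixe_j)\in A_{jj}\cap I$; and if $j=1$, picking a nonzero positive $a\in A_{11}\cap I$, the elements $yay^*$ with $y\in A_{21}$ lie in $A_{22}\cap I$, and were they all zero we would get $A_{21}a^{1/2}=0$, hence $aA_{11}=[aA_{12}A_{21}]=0$ and $a^{2}=0$, a contradiction. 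Since $\ker\pi\cap A_{22}=\ker\pi_{22}=0$ because $\pi_{22}$ is an isomorphism, this forces $\ker\pi=0$.

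For surjectivity I would put $C:=\pi(A)$, which by injectivity is a linking C$^*$-subalgebra of $\Mult(B)$ with corners $C_{ij}=\pi(A_{ij})$ satisfying $[C_{ij}C_{jk}]=C_{ik}$, and with $C_{22}=B_{22}$. First one checks $C\subseteq B$: fullness of the bottom corner of $C$ gives $C=[C\,C_{22}\,C]=[C\,B_{22}\,C]$, and each product $C\,B_{22}\,C$ lies in $\Mult(B)\,B\,\Mult(B)\subseteq B$. Then, compressing the non-degeneracy identity $[CB]=B$ by $e_1$ on the left and $e_2$ on the right and dropping the products of corners that vanish, one obtains $B_{12}=[C_{11}B_{12}]+[C_{12}B_{22}]$; since $[C_{12}B_{22}]=[C_{12}C_{22}]=C_{12}=[C_{11}C_{12}]\subseteq[C_{11}B_{12}]$, this simplifies to $B_{12}=[C_{11}B_{12}]$. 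Conversely, using $C_{11}=[C_{12}C_{21}]$ together with $C_{21}B_{12}\subseteq e_2Be_2=B_{22}=C_{22}$ gives $[C_{11}B_{12}]=[C_{12}C_{21}B_{12}]\subseteq[C_{12}C_{22}]=C_{12}$. Hence $B_{12}=C_{12}$, whence also $B_{21}=C_{21}$ and $B_{11}=[B_{12}B_{21}]=[C_{12}C_{21}]=C_{11}$; together with $B_{22}=C_{22}$ and $B=B_{11}+B_{12}+B_{21}+B_{22}$ this yields $B\subseteq C$, so that $\pi(A)=C=B$.

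The crux — and the only place where all of the hypotheses are used simultaneously — is the equality $B_{12}=C_{12}$: here non-degeneracy of $\pi$, the linking-algebra relations for both $A$ (hence $C$) and $B$, and the hypothesis $A_{22}\cong B_{22}$ on the bottom corner all intervene, and each of them is genuinely needed. I do not expect any analytic difficulty beyond routine manipulations with closed linear spans of products $C_{ij}C_{jk}$ and the vanishing of products across orthogonal corners; the only thing requiring care is bookkeeping of the row/column indices when compressing $[CB]=B$ and when invoking the various instances of $[A_{ij}A_{jk}]=A_{ik}$.
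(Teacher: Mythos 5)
Your proof is correct, but it takes a genuinely different route from the paper's, most notably for surjectivity. For injectivity the paper argues corner by corner: $\pi_{12}(x)^*\pi_{12}(x)=\pi_{22}(x^*x)$ makes $\pi_{12}$ (hence $\pi_{21}$) injective, and then $x\in\Ker(\pi_{11})$ kills $xA_{12}$, hence $xA_{11}$ and finally $x$; you instead show that every nonzero closed two-sided ideal of a linking C$^*$-algebra meets the full corner $A_{22}$, so $\Ker\pi\cap A_{22}=\Ker\pi_{22}=0$ forces $\Ker\pi=0$ --- the same fullness input, packaged ideal-theoretically (one should record the standard fact that closed ideals absorb multipliers, so that $e_ixe_j\in I$). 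The real divergence is in surjectivity: the paper takes a bounded net in $A_{11}$ of finite sums $\sum_j x_{ij}x_{ij}^*$ with $x_{ij}\in A_{12}$ converging strictly to $1$, and uses non-degeneracy and strict continuity of the extension of $\pi$ to get $B_{12}\subseteq[\pi_{12}(A_{12})B_{22}]\subseteq \pi_{12}(A_{12})$ (using that $\pi_{12}$ has closed image), after which $B_{11}=[B_{12}B_{21}]$ finishes; you avoid the strict topology entirely and instead compress the non-degeneracy identity $[\pi(A)B]=B$ by $e_1(\cdot)e_2$ and manipulate closed spans, using fullness on both sides, to obtain $B_{12}=[C_{11}B_{12}]\subseteq[C_{12}C_{22}]=C_{12}$ and hence corner-by-corner equality of $C=\pi(A)$ with $B$. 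Your route buys a purely algebraic argument resting only on standard facts (ideals absorb multipliers, images of C$^*$-algebras under $*$-homomorphisms are closed), at the cost of more bookkeeping with spans; the paper's route is shorter once one is comfortable with strictly convergent nets and yields the isometric corner maps along the way. Two cosmetic points: write $B_{12}=[C_{11}B_{12}\cup C_{12}B_{22}]$ (closed span of the union) rather than a sum of two closed subspaces, since such a sum need not be closed --- your absorption $[C_{12}B_{22}]=C_{12}=[C_{11}C_{12}]\subseteq[C_{11}B_{12}]$ makes this harmless; and the identity $[C_{ij}C_{jk}]=C_{ik}$ for the corners of $C=\pi(A)$ uses that $\pi$ is isometric (by the injectivity you proved first), which is worth stating explicitly.
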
 

\begin{proof}
We have 
\[
\pi_{12}(A_{12})B_{22} = \pi_{12}(A_{12})\pi_{22}(A_{22}) = \pi_{12}(A_{12}A_{22}),
\]
so we see that in fact $\pi_{12}(A_{12}) \subseteq B_{12}$. It follows easily from this that $\pi(A) \subseteq B$. Moreover, since 
\[
\pi_{12}(x)^*\pi_{12}(x) = \pi_{22}(x^*x),\qquad \forall x\in A_{12},
\]
we see that $\pi_{12}$ is injective, and then is $\pi_{21}$. To see that also $\pi_{11}$ is injective, take $x\in \Ker(\pi_{11})$ and notice that then $\pi_{12}(xA_{12})=\pi_{11}(x)\pi_{12}(A_{12})=\{0\}$, hence $xA_{12} = \{0\}$. Consequently $xA_{11}=\{0\}$ and $x =0$. In particular, $\pi$ is injective. 

Finally, to see that $\pi$ is surjective, it is sufficient to show that $\pi_{12}$ is surjective. Take a bounded net $(x_i)_{i\in I}$ in $A_{11}$ that strictly converges to $1$. We may assume each $x_i$ is a finite sum $\sum_{j=1}^{N_i} x_{ij}x_{ij}^*$ with $x_{ij} \in A_{12}$. Then as $\pi$ is non-degenerate, also $\bigl(\sum_{j=1}^{N_i} \pi_{12}(x_{ij})\pi_{12}(x_{ij})^*\bigr)_{i\in I}$ converges strictly to $1_{11}\in \Mult(B_{11})$, and hence
\[
y  = \lim_{i\in I} \sum_{j=1}^{N_i} \pi_{12}(x_{ij}) (\pi_{12}(x_{ij})^*y) \in [\pi_{12}(A_{12})B_{22}] \subseteq \pi_{12}(A_{12})
\]
for any $y\in B_{12}$. This shows $\pi_{12}$ is surjective. Finally,
\[
B_{11}=[B_{12} B_{21}]=[\pi_{12}(A_{12})\pi_{21}(A_{21})]= \pi_{11}(A_{11}).
\]
Note that we have used that $\pi_{11}$ and $\pi_{12}$ have closed image. 
\end{proof}

Let us now return to the setting of Theorem \ref{TheoUnivLift}.

We denote by $\msG$ the linking quantum groupoid associated to $\G = \G_1\times \G_2$ with respect to its dual unitary $2$-cocycle $\hat{\Omega} = \wh{\mcX}_{[32]}$ (see Section \ref{SubSecCocBich}). Recall that the universal linking W$^*$-algebra associated to $\msG$ is
\[
W^*(\msG) = \begin{pmatrix} W^*(\G_{11}) & W^*(\G_{12}) \\
W^*(\G_{21}) & W^*(\G_{22})\end{pmatrix}.
\]
We can $*$-represent all the components of $L^{\infty}(\msG)$ on the same Hilbert space, 
\[
L^{\infty}(\G_{ij}) \subseteq \mcB(L^2(\G_1)\otimes L^2(\G_2)). 
\]

Now for $\G_i$, we consider the trivial linking quantum groupoid $\msG_i = \G_i^{(2)}$, which is just the trivial linking W$^*$-algebra $\begin{pmatrix} \hat{M}_i & \hat{M}_i\\ \hat{M}_i& \hat{M}_i\end{pmatrix}$ with $\hat{\Delta}_i$ at all components. Its dual is the direct sum von Neumann algebra
\[
L^{\infty}(\G_i^{(2)}) = \bigoplus_{k,l=1}^2 L^{\infty}(\G_i)
\]
with $\Delta_{ab}^c = \Delta_i$ at all components. We call $\G_i^{(2)}$ the \emph{amplification} of $\G_i$.

The following lemma will show that $\hat{\G}_{i}^{(2)}$ is a closed linking quantum subgroupoid of $\hat{\msG}$. Note that we already used this observation in the proof of Lemma \ref{LemProjCobPair}, but this relied on the fact that we knew the cocycle twist of $C^*(\G)$  provided a linking quantum groupoid, making use of Theorem \ref{TheoUnivLift}.

\begin{Lem}\label{LemmaInclusionSubgroups}
For $i\in\{1,2\}$ we have a normal, unital embedding
\[
\pi^{(i)}\colon L^{\infty}(\G_i^{(2)}) \rightarrow L^{\infty}(\msG)
\]
which splits into $\pi^{(i)}_{kl}\colon L^{\infty}(\G_i) \rightarrow L^{\infty}(\G_{kl})$, given by 
\begin{equation}\label{EqInclusions}
\pi^{(1)}_{kl}(x_1) = x_1\otimes 1,\qquad \pi^{(2)}_{kl}(x_2) = \wh{\mcX}_{kl}^*(1\otimes x_2)\wh{\mcX}_{kl},\qquad \forall x_i \in L^{\infty}(\G_i),
\end{equation}
where 
\[
\wh{\mcX}_{22} = 1\otimes 1,\quad \wh{\mcX}_{12} = \wh{\mcX},\quad \wh{\mcX}_{21} = (\hat{J}_1\otimes \hat{J}_2)\wh{\mcX}(\hat{J}_1\otimes \hat{J}_2),\quad \wh{\mcX}_{11}= \wh{\mcX}_{12}\wh{\mcX}_{21}. 
\]
The maps $\pi^{(i)},\pi^{(i)}_{kl}$ respect coproducts, in the sense that
\[
\Delta^a_{kl}\pi^{(i)}_{kl}=
(\pi^{(i)}_{ka}\otimes \pi^{(i)}_{al})\Delta_i,\quad 
\Delta_{L^{\infty}(\msG)}\pi^{(i)}=
(\pi^{(i)}\otimes \pi^{(i)})\Delta_{L^{\infty}(\G_i^{(2)})}
\]
for $i,a,k,l\in\{1,2\}$. Furthermore
\[
L^{\infty}(\G_{kl}) = \overline{\operatorname{span}}^{\,\sigma\textrm{-weak}}\,\{\pi^{(1)}_{kl}(x_1)\pi^{(2)}_{kl}(x_2) \mid x_i \in L^{\infty}(\G_i)\}.
\]
\end{Lem}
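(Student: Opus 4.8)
The plan is to reduce everything to concrete computations with the multiplicative partial isometry $\ww_{\hat{Q}}$ of the linking quantum groupoid $\msG$ and its components $\hat{\ww}^a_{kl}$, for which explicit formulas in terms of $\hat\Omega=\wh{\mcX}_{[32]}$, $J_N$ and the $\hat\ww_i$ are available from \eqref{EqMultUnitCocy1}, \eqref{EqMultUnitCocy2} and Lemma \ref{lemma1}. First I would define $\pi^{(1)}_{kl}(x_1)=x_1\otimes 1$ and $\pi^{(2)}_{kl}(x_2)=\wh{\mcX}_{kl}^*(1\otimes x_2)\wh{\mcX}_{kl}$ with the stated $\wh{\mcX}_{kl}$, and note that $\pi^{(i)}_{kl}$ are clearly normal, unital $*$-homomorphisms (the $\wh{\mcX}_{kl}$ being unitaries, and $\wh{\mcX}_{11}=\wh{\mcX}_{12}\wh{\mcX}_{21}$ being a product of commuting unitaries since $\wh{\mcX}_{12}\in\hat{M}_1\bar\otimes\hat{M}_2$ and $\wh{\mcX}_{21}=(\hat J_1\otimes\hat J_2)\wh{\mcX}(\hat J_1\otimes\hat J_2)\in\hat{M}_1'\bar\otimes\hat{M}_2'$ by \eqref{EqFormUnitaryAntipo}). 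Setting $\pi^{(i)}=\bigoplus_{k,l}\pi^{(i)}_{kl}$ then gives a normal unital embedding $L^{\infty}(\G_i^{(2)})\to L^{\infty}(\msG)$. The case $i=1$ is essentially immediate; for $i=2$ the embedding is the conjugate of the diagonal one by the $\wh{\mcX}_{kl}$'s.

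Next, to check that $\pi^{(i)}_{kl}$ intertwines the coproducts $\Delta_{kl}^a$ and $\Delta_i$, I would use the identity $\Sigma\ww_{\hat{Q}}(y\otimes 1)\ww_{\hat{Q}}^*\Sigma=\Delta_{kl}^1(y)+\Delta_{kl}^2(y)$ for $y\in L^{\infty}(\G_{kl})$ together with \eqref{EqMultUnitCocy1}--\eqref{EqMultUnitCocy2} to compute $\Delta^a_{kl}$ on generators of the form $\pi^{(i)}_{kl}(x_i)$. Concretely this amounts to plugging $\hat{\ww}^a_{kl}$ into the formula and using that $\Delta_i(x_i)=\Sigma\hat{\ww}_i(x_i\otimes 1)\hat{\ww}_i^*\Sigma$ for $\hat{\G}_i$, together with the bicharacter relations for $\wh{\mcX}$ (e.g. $\wh{\mcX}_{[13]}\ww_{2[23]}=\wh{\mcX}_{[12]}^*\ww_{2[23]}\wh{\mcX}_{[12]}$, already derived as \eqref{eq59}) to commute the various $\wh{\mcX}$'s past the $\ww_i$'s. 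A cleaner route, which I would prefer if it goes through, is to observe that the computations in the proofs of Lemma \ref{LemBraidGGX} (for $\Delta_{12}^2=\alpha_\X$), Lemma \ref{lemma13} (for $\Delta_{12}^1=\gamma_\X$) and Lemma \ref{lemma14} (for $\Delta_{22}^1$) already carry out exactly these calculations for the relevant corners, so the intertwining identities $\Delta_{kl}^a\pi^{(i)}_{kl}=(\pi^{(i)}_{ka}\otimes\pi^{(i)}_{al})\Delta_i$ can be read off, or proved by the same manipulations, for each of the sixteen triples $(a,k,l)$; the global statement $\Delta_{L^\infty(\msG)}\pi^{(i)}=(\pi^{(i)}\otimes\pi^{(i)})\Delta_{L^\infty(\G_i^{(2)})}$ then follows by taking direct sums and using $\Delta_Q=\sum\Delta_{kl}^a$.

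For the final spanning statement $L^{\infty}(\G_{kl})=\overline{\operatorname{span}}^{\,\sigma\textrm{-weak}}\{\pi^{(1)}_{kl}(x_1)\pi^{(2)}_{kl}(x_2)\}$, I would reuse the chain of equalities in the proof of Lemma \ref{LemBraidGGX}, which shows precisely $L^{\infty}(\X)=((L^{\infty}(\G_1)\otimes 1)\cup\wh{\mcX}^*(1\otimes L^{\infty}(\G_2))\wh{\mcX})''$, i.e. the $kl=12$ case. For the other three corners the argument is identical after replacing $\hat{\ww}^k_{12}$ by $\hat{\ww}^a_{kl}$ and $\wh{\mcX}$ by $\wh{\mcX}_{kl}$: one writes $L^{\infty}(\G_{kl})$ as the von Neumann algebra generated by slices of $\hat{\ww}^{a}_{kl}$, substitutes the explicit formula for $\hat{\ww}^a_{kl}$ (conjugating \eqref{eq14}--\eqref{eq15} appropriately, or directly using \eqref{EqMultUnitCocy1}--\eqref{EqMultUnitCocy2}), and peels off the two legs to get the two commuting families $\pi^{(1)}_{kl}(L^{\infty}(\G_1))$ and $\pi^{(2)}_{kl}(L^{\infty}(\G_2))$; one also uses that each $L^{\infty}(\G_{kl})\subseteq\mcB(L^2(\G_1)\otimes L^2(\G_2))$ is $*$-closed. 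Finally, that $\pi^{(i)}$ is injective (hence an embedding as claimed) follows since its $22$-component $\pi^{(i)}_{22}$ is the identity-type embedding $x\mapsto x\otimes 1$ resp. $x\mapsto \wh{\mcX}_{22}^*(1\otimes x)\wh{\mcX}_{22}=1\otimes x$, which is faithful; alternatively one invokes that a normal $*$-homomorphism of a linking-type direct sum faithful on one diagonal corner and compatible with the groupoid structure is faithful overall.

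The main obstacle I anticipate is purely bookkeeping: getting the leg-numbering and the placement of the $\wh{\mcX}_{kl}$ and modular conjugations exactly right across all four corners, especially for the $11$-corner where $\hat{\ww}^a_{11}$ and $\wh{\mcX}_{11}=\wh{\mcX}_{12}\wh{\mcX}_{21}$ both involve two ``layers'' of conjugation and one has to commute $\wh{\mcX}_{12}$ and $\wh{\mcX}_{21}$ past each other using their supports in $\hat{M}_i$ versus $\hat{M}_i'$. No genuinely new idea beyond \eqref{eq59}, Lemma \ref{lemma1}, and the already-performed computations of Lemmas \ref{LemBraidGGX}, \ref{lemma13} and \ref{lemma14} should be needed; the content is verifying that those computations localize to each corner with $\wh{\mcX}$ replaced by the corner-dependent unitary $\wh{\mcX}_{kl}$.
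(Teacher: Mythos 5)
Your overall strategy coincides with the paper's: everything is reduced to the explicit formulas for the components of the multiplicative partial isometry, which the paper records uniformly as $\hat{\ww}^a_{kl}=\hat{\ww}_{1[13]}\wh{\mcX}^*_{kl[34]}\hat{\ww}_{2[24]}\wh{\mcX}_{kl[34]}$ (its \eqref{eq16}, obtained from \eqref{EqMultUnitCocy1}--\eqref{EqMultUnitCocy2} and Lemma~\ref{lemma1}); the coproduct compatibility is then obtained in one stroke by applying $\Delta^a_{kl}$ to the last two legs, invoking \eqref{eq17}, and slicing off the $\hat{\ww}_2$-factor with a net of functionals. Your corner-by-corner conjugation plan, and your reuse of the computations in Lemmas~\ref{LemBraidGGX}, \ref{lemma13} and \ref{lemma14}, has the same computational content and is non-circular (those lemmas do not use Theorem~\ref{TheoUnivLift}), so that part of the proposal is fine.

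There is, however, a concrete flaw in your treatment of the final spanning assertion. The families $\pi^{(1)}_{kl}(L^{\infty}(\G_1))$ and $\pi^{(2)}_{kl}(L^{\infty}(\G_2))$ do \emph{not} commute: e.g.\ for $kl=12$, conjugating $1\otimes x_2$ by $\wh{\mcX}\in\hat{M}_1\bar{\otimes}\hat{M}_2$ produces a nontrivial first leg involving $\hat{M}_1$, which does not commute with $M_1$ (if they commuted, the description of $L^{\infty}(\X)$ in Lemma~\ref{LemBraidGGX} would be an ordinary commuting product rather than a braided one). The chain you borrow from Lemma~\ref{LemBraidGGX} only yields the double-commutant statement $L^{\infty}(\G_{kl})=\bigl(\pi^{(1)}_{kl}(L^{\infty}(\G_1))\cup\pi^{(2)}_{kl}(L^{\infty}(\G_2))\bigr)''$, and your (false) commutativity claim is precisely what would have upgraded this to the asserted $\sigma$-weak closed span of the \emph{ordered} products $\pi^{(1)}_{kl}(x_1)\pi^{(2)}_{kl}(x_2)$, which is a strictly stronger formulation. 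The correct fix is the paper's: by Definition~\ref{DefLQGr}, $L^{\infty}(\G_{kl})$ is the $\sigma$-weak closed span of slices of $\hat{\ww}^a_{kl}$ over its first leg, and slicing the displayed formula with product functionals $\omega_1\otimes\omega_2$ gives exactly $\pi^{(1)}_{kl}\bigl((\omega_1\otimes\id)\hat{\ww}_1\bigr)\,\pi^{(2)}_{kl}\bigl((\omega_2\otimes\id)\hat{\ww}_2\bigr)$, an ordered product; norm-density of product functionals and separate $\sigma$-weak continuity then give both inclusions. Two smaller points: injectivity of $\pi^{(i)}$ requires injectivity of \emph{every} component (immediate, since each is a unitary conjugate of $x\mapsto x\otimes1$ resp.\ $x\mapsto1\otimes x$), not just of the $22$-component — the domain $L^{\infty}(\G_i^{(2)})$ is a direct sum, not a linking algebra, so faithfulness on one corner does not propagate; and $\wh{\mcX}_{21}\in\hat{M}_1'\bar{\otimes}\hat{M}_2'$ follows from Tomita--Takesaki theory ($\hat{J}_i\hat{M}_i\hat{J}_i=\hat{M}_i'$), not from \eqref{EqFormUnitaryAntipo}.
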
 

\begin{proof}
Recall that $J_N=(J_1\otimes J_2)\wh{\mc{X}}$ (Lemma \ref{lemma1}). Simplifying the expressions \eqref{EqMultUnitCocy2} in this specific case, one can compute that the components of $\hat{\ww}$, the left regular multiplicative partial isometry for $W^*(\msG)$, are given by
\begin{equation}
\label{eq16}
\hat{\ww}_{kl}^a = \hat{\ww}_{1[13]}\wh{\mcX}_{kl[34]}^*\hat{\ww}_{2[24]}\wh{\mc{X}}_{kl[34]}=
(\id\otimes \pi^{(1)}_{kl})(\hat{\ww}_{1})_{[134]}
(\id\otimes \pi^{(2)}_{kl})(\hat{\ww}_{2})_{[234]}
\end{equation}
for $a,k,l\in\{1,2\}$ (note that the formula for $\hat{\ww}^a_{11}$ gives the form of the multiplicative unitary of the generalized quantum double as given in  \cite{BV05}*{Theorem 5.3}). As the $L^{\infty}(\G_{kl})$ are obtained by slicing the first leg of $\hat{\ww}_{kl}$, the last statement of the lemma follows immediately from \eqref{eq16}. Applying $\Delta_{kl}^a$ to legs $3,4$ of \eqref{eq16} and using \eqref{eq17} gives us
\begin{equation}\begin{split}\label{eq18}
&\quad\;
(\id\otimes \Delta^a_{kl}\pi^{(1)}_{kl})(\hat{\ww}_{1})_{[13456]}
(\id\otimes \Delta^a_{kl}\pi^{(2)}_{kl})(\hat{\ww}_{2})_{[23456]}
=
(\id\otimes\id\otimes \Delta^a_{kl})(\hat{\ww}^a_{kl})=
\hat{\ww}^a_{al [1256]} \hat{\ww}^a_{ka [1234]}\\
&=
(\id\otimes \pi^{(1)}_{al})(\hat{\ww}_{1})_{[156]}
(\id\otimes \pi^{(2)}_{al})(\hat{\ww}_{2})_{[256]}
(\id\otimes \pi^{(1)}_{ka})(\hat{\ww}_{1})_{[134]}
(\id\otimes \pi^{(2)}_{ka})(\hat{\ww}_{2})_{[234]}\\
&=
(\id\otimes \pi^{(1)}_{al})(\hat{\ww}_{1})_{[156]}
(\id\otimes \pi^{(1)}_{ka})(\hat{\ww}_{1})_{[134]}
(\id\otimes \pi^{(2)}_{al})(\hat{\ww}_{2})_{[256]}
(\id\otimes \pi^{(2)}_{ka})(\hat{\ww}_{2})_{[234]}\\
&=
(\id\otimes \pi^{(1)}_{ka}\otimes\pi^{(1)}_{al})(\id\otimes \Delta_1)(\hat{\ww}_{1})_{[13456]}
(\id\otimes \pi^{(2)}_{ka}\otimes \pi^{(2)}_{al})
(\id\otimes \Delta_2)(\hat{\ww}_{2})_{[23456]},
\end{split}\end{equation}
where $\Delta_1,\Delta_2$ are the comultiplications of respectively $\G_1$ and $\G_2$. Since there is a net of normal functionals $\omega_\lambda\in L^1(\hat{\G}_2)$ such that $(\omega_\lambda\otimes\id)(\hat{\ww}_2)\to 1$ weak$^*$ in $L^{\infty}(\G_2)$, equation \eqref{eq18} gives us $\Delta^a_{kl}\pi^{(i)}_{kl}=(\pi^{(i)}_{ka}\otimes \pi^{(i)}_{al})\Delta_i$ for $i=1$. The case $i=2$ follows in a similar way. Then the identity 
\[
\Delta_{L^{\infty}(\msG)}\pi^{(i)}=
(\pi^{(i)}\otimes \pi^{(i)})\Delta_{L^{\infty}(\G_i^{(2)})}
\] 
is immediate.
\end{proof}

Lemma \ref{lemma3} in the case of $\Hh=\G_1\times \G_2$ gives us a lift of the comultiplication $\hat{\Delta}_{\otimes,u}$ on $C^*(\G_1\times \G_2)$ to a non-denegerate $*$-homomorphism
\[
\hat{\Delta}_{\otimes,u}^{\max}\colon C^*(\G_1 \times \G_2)\rightarrow \Mult( 
C^*(\G_1 \times \G_2)\otimes_{\max}C^*(\G_1 \times \G_2)).
\]

On the other hand, we can also consider the composition
\[\begin{split}
&\quad\;
\hat{\Delta}^{\max}_{u,\otimes}\colon 
C^*(\G_1 \times \G_2)=C^*(\G_1)\otimes_{\max} C^*(\G_2)\\
&\xrightarrow[]{\hat{\Delta}^{\max}_{1,u}\otimes_{\max} \hat{\Delta}^{\max}_{2,u}}
\Mult(C^*(\G_1)\otimes_{\max} C^*(\G_1))
\otimes_{\max} \Mult( C^*(\G_2)\otimes_{\max} C^*(\G_2))\\
&\longrightarrow
\Mult( C^*(\G_1)\otimes_{\max} C^*(\G_1)\otimes_{\max} C^*(\G_2)\otimes_{\max} C^*(\G_2))\\
&\cong
\Mult( C^*(\G_1)\otimes_{\max} C^*(\G_2)\otimes_{\max} C^*(\G_1)\otimes_{\max} C^*(\G_2)),
\end{split}\]
where the above arrow without label indicates the canonical non-degenerate $*$-homomorphism 
\[
\Mult(A)\otimes_{\max} \Mult(B)\rightarrow \Mult(A\otimes_{\max} B),
\] 
and where the isomorphism in the last line is the extension of the flip map 
\[
a\otimes b\otimes c \otimes d\mapsto a\otimes c\otimes b\otimes d.
\] 
Recalling that $\wW_{\G_1\times \G_2}=\wW_{\G_1 [13]} \wW_{\G_2 [24]}$ (Lemma \ref{lemma2}), we immediately conclude:

\begin{Lem}\label{lemma4}
The non-degenerate $*$-homomorphisms $\hat{\Delta}^{\max}_{\otimes,u}$ and $\hat{\Delta}^{\max}_{u,\otimes }$ are equal.
\end{Lem}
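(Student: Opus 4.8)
The plan is to deduce the equality from the uniqueness clause in Lemma \ref{lemma3}. Since $C^*(\G_1\times\G_2)=[(\omega\otimes\id)\wW_{\G_1\times\G_2}\mid \omega\in L^1(\G_1\times\G_2)]$ and both $\hat{\Delta}^{\max}_{\otimes,u}$ and $\hat{\Delta}^{\max}_{u,\otimes}$ are non-degenerate $*$-homomorphisms into $\Mult(C^*(\G_1\times\G_2)\otimes_{\max}C^*(\G_1\times\G_2))$, it suffices to check that $\hat{\Delta}^{\max}_{u,\otimes}$ also satisfies the characterizing identity \eqref{EqFormulaMaxCo} for $\hat{\Delta}^{\max}_{\otimes,u}$ (applied to $\Hh=\G_1\times\G_2$), i.e.\ that
\[
(\id\otimes \hat{\Delta}^{\max}_{u,\otimes})(\wW_{\G_1\times\G_2}) = (\wW_{\G_1\times\G_2})_{[13]}(\wW_{\G_1\times\G_2})_{[12]}
\]
inside $\Mult(C_0(\G_1\times\G_2)\otimes(C^*(\G_1\times\G_2)\otimes_{\max}C^*(\G_1\times\G_2)))$.

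To verify this I would substitute $\wW_{\G_1\times\G_2}=\wW_{\G_1[13]}\wW_{\G_2[24]}$ from Lemma \ref{lemma2} and unwind the definition of $\hat{\Delta}^{\max}_{u,\otimes}$ as $\hat{\Delta}^{\max}_{1,u}\otimes_{\max}\hat{\Delta}^{\max}_{2,u}$ followed by the reordering of the two inner legs. Since $\wW_{\G_1}$ has its output leg entirely inside the $C^*(\G_1)$-factor and $\wW_{\G_2}$ inside the $C^*(\G_2)$-factor, applying $\hat{\Delta}^{\max}_{1,u}\otimes_{\max}\hat{\Delta}^{\max}_{2,u}$ produces $(\id\otimes\hat{\Delta}^{\max}_{1,u})(\wW_{\G_1})$ and $(\id\otimes\hat{\Delta}^{\max}_{2,u})(\wW_{\G_2})$ in the respective legs; by \eqref{EqFormulaMaxCo} applied to $\G_1$ and $\G_2$ separately these equal $(\wW_{\G_1})_{[13]}(\wW_{\G_1})_{[12]}$ and $(\wW_{\G_2})_{[13]}(\wW_{\G_2})_{[12]}$ in the appropriate three-fold leg numberings. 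One then reassembles the resulting six-fold expression exactly as in the last display of the proof of Lemma \ref{lemma2}: the $\G_1$-legs commute with the $\G_2$-legs (they have disjoint supports and $\wW_{\G_i}\in\Mult(C_0(\G_i)\otimes C^*(\G_i))$), so after the reordering one recovers $\wW_{\G_1[15]}\wW_{\G_2[26]}\wW_{\G_1[13]}\wW_{\G_2[24]}=(\wW_{\G_1\times\G_2})_{[13]}(\wW_{\G_1\times\G_2})_{[12]}$. The uniqueness in Lemma \ref{lemma3} then forces $\hat{\Delta}^{\max}_{u,\otimes}=\hat{\Delta}^{\max}_{\otimes,u}$.

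The only point requiring care — and it is pure bookkeeping, with no analytic content, as every map involved is an honest $*$-homomorphism of maximal tensor products and strictly continuous on multipliers, and the identity is checked on the norm-dense span of slices of $\wW_{\G_1\times\G_2}$ — is tracking which of the two output legs of each $\hat{\Delta}^{\max}_{i,u}$ is carried by the flip to which leg of $C^*(\G_1\times\G_2)^{\otimes_{\max}2}$, so that the computation is matched against the leg convention used for $\hat{\Delta}^{\max}_{\otimes,u}$ in Lemma \ref{lemma3}. This is the same reordering already performed in Lemma \ref{lemma2}, so the statement is indeed immediate from Lemmas \ref{lemma2} and \ref{lemma3}.
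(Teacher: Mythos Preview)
Your proposal is correct and takes essentially the same approach as the paper: the paper simply remarks that the claim follows immediately from the decomposition $\wW_{\G_1\times\G_2}=\wW_{\G_1[13]}\wW_{\G_2[24]}$ of Lemma~\ref{lemma2}, which is precisely the computation you spell out (together with the uniqueness clause of Lemma~\ref{lemma3}).
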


\begin{Lem}\label{LemTrivialQG}
The linking quantum groupoid $\msG$, built using the unitary $2$-cocycle $\hat{\Omega}$ for $(\hat{M}_1\bar{\otimes}\hat{M}_2,\hat{\Delta}_{\otimes})$, has trivialisable universal linking C$^*$-algebra. 
\end{Lem}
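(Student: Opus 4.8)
The plan is to produce a trivialising isomorphism directly and then invoke Lemma \ref{LemMorita}. Write $A := M_2\bigl(C^*(\G_1)\otimes_{\max}C^*(\G_2)\bigr)$, viewed as a linking C$^*$-algebra with its obvious matrix structure, and recall from Lemma \ref{lemma2} the canonical identification $C^*(\G_1)\otimes_{\max}C^*(\G_2)=C^*(\G_1\times\G_2)=C^*(\G_{22})$. First I would record the (routine) fact that the amplification $\G_i^{(2)}$ has universal linking C$^*$-algebra $C^*(\G_i^{(2)})=M_2(C^*(\G_i))$ with the trivial linking structure: unwinding Definition \ref{def1}, a unitary $\G_i^{(2)}$-representation is a $\C^2$-graded Hilbert space carrying a unitary $\G_i$-representation on each summand together with two off-diagonal unitaries implementing, via the comultiplications, identifications of these representations, which is precisely the data of a non-degenerate $*$-representation of $M_2(C^*(\G_i))$.

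The main step is to construct a non-degenerate $*$-homomorphism $\rho\colon C^*(\msG)\to\Mult(A)$ which preserves $e_1$ and whose $22$-corner is the canonical isomorphism above. By the universal property of $C^*(\msG)$ (i.e.\ of $\Rep(\msG)$), this amounts to exhibiting a natural way to turn a non-degenerate $*$-representation of $A$ --- which by Morita equivalence is just a pair of commuting non-degenerate $*$-representations of $C^*(\G_1)$ and $C^*(\G_2)$ on one Hilbert space $\Hsp$, equivalently a pair of commuting unitary $\G_1$- and $\G_2$-representations $V_1,V_2$ on $\Hsp$ --- into a unitary $\msG$-representation on $\Hsp\oplus\Hsp$. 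On the diagonal corners one sets, guided by the generation statement of Lemma \ref{LemmaInclusionSubgroups} and the regular-representation identity \eqref{eq16},
\[
U_{kk} = (\pi^{(1)}_{kk}\otimes\id)(V_1)\,(\pi^{(2)}_{kk}\otimes\id)(V_2),
\]
with $\pi^{(i)}_{kk}$ as in \eqref{EqInclusions} and legs arranged as in \eqref{eq16}; the off-diagonal unitaries $U_{12},U_{21}$ are then forced by the $\msG$-corepresentation relation of Definition \ref{def1}, which is consistent precisely because each $\pi^{(i)}_{kl}$ intertwines comultiplications and because of the density statement of Lemma \ref{LemmaInclusionSubgroups} --- this mirrors exactly how \eqref{eq16} expresses $\hat\ww_{kl}^a$ for the regular representations. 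The resulting assignment is functorial, hence descends to the desired non-degenerate $*$-homomorphism $\rho$; it preserves the matrix decomposition since each $\pi^{(i)}_{kl}$ does, and on the $22$-corner it restricts to the identity on commuting $(\G_1,\G_2)$-representation pairs, so $\rho_{22}$ is the canonical isomorphism $C^*(\G_{22})=C^*(\G_1)\otimes_{\max}C^*(\G_2)$.

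Finally, Lemma \ref{LemMorita}, applied to the linking C$^*$-algebras $A$ and $C^*(\msG)$ (see \eqref{EqStrongMorita}) and to the non-degenerate $*$-homomorphism $\rho$, which preserves $e_1$ and is an isomorphism on the $22$-corner, yields that $\rho$ is an isomorphism $C^*(\msG)\cong M_2\bigl(C^*(\G_1)\otimes_{\max}C^*(\G_2)\bigr)$ compatibly with the matrix decomposition; that is, $\msG$ has trivialisable universal linking C$^*$-algebra. The main obstacle I anticipate is the middle step: verifying that the off-diagonal unitaries $U_{12},U_{21}$ forced by the corepresentation relation are genuinely well defined, unitary, and functorial in $(V_1,V_2)$, i.e.\ lifting the explicit factorisation \eqref{eq16} of the multiplicative partial isometry of $\msG$ from the regular representations to arbitrary representations while handling the ``half-lifted'' bookkeeping that distinguishes $C^*(\msG)$ from $C_0(\msG)$.
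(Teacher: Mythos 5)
Your overall strategy coincides with the paper's: produce a corner-preserving non-degenerate $*$-homomorphism from $C^*(\msG)$ to (multipliers of) $M_2\bigl(C^*(\G_1)\otimes_{\max}C^*(\G_2)\bigr)$ which is the canonical identification on the $22$-corner, and then invoke Lemma \ref{LemMorita}. However, the middle step --- the one you yourself flag --- contains two genuine gaps. First, the off-diagonal corners $U_{12},U_{21}$ are \emph{not} forced by the diagonal ones. For a linking quantum groupoid, the off-diagonal components of a unitary representation are extra data: already in the untwisted case $\wh{\mcX}=1$, if $U_{11}=U_{22}=U$ then $U_{12}=U(1\otimes T)$ satisfies the corepresentation relations for \emph{every} unitary $T$ in the commutant of $\hat{\pi}_U$, so there is a whole family of admissible choices (and in the twisted case even existence needs an argument). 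What you should do instead is use the same product formula for all four corners, $U_{kl}=(\pi^{(1)}_{kl}\otimes\id)(V_1)\,(\pi^{(2)}_{kl}\otimes\id)(V_2)$, and verify the relations $(\Delta_{ij}^k\otimes\id)U_{ij}=U_{ik[13]}U_{kj[23]}$ by hand, using the intertwining property $\Delta^a_{kl}\pi^{(i)}_{kl}=(\pi^{(i)}_{ka}\otimes\pi^{(i)}_{al})\Delta_i$ of Lemma \ref{LemmaInclusionSubgroups} together with the commutation of $V_1$ and $V_2$; this is a computation, not an automatic consequence of the axioms.

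Second, even once the representation-level assignment $(V_1,V_2)\mapsto U$ is in place and functorial, it does not by itself yield a non-degenerate $*$-homomorphism $\rho\colon C^*(\msG)\to\Mult(A)$: applying the construction to the universal representation of $A$ gives, via \eqref{EqIsoCat}, only a representation of $C^*(\msG)$ on the same Hilbert space, and one must still prove that its image multiplies (the image of) $A$ into itself --- precisely the ``half-lifted'' bookkeeping you anticipate but do not resolve. The paper circumvents this with a different device: it lifts the dual coproduct to $\Delta^{\max}_{C^*(\msG)}\colon C^*(\msG)\to\Mult\bigl(C^*(\msG)\otimes_{\max}C^*(\msG)\bigr)$ (the linking-groupoid version of Lemma \ref{lemma3}), uses the closed quantum subgroupoid inclusions of Lemma \ref{LemmaInclusionSubgroups} to obtain coproduct-preserving, matrix-structure-preserving surjections $\hat{\pi}_i\colon C^*(\msG)\to M_2(C^*(\G_i))$, and defines $\hat{\pi}=(\hat{\pi}_1\otimes_{\max}\hat{\pi}_2)\Delta^{\max}_{C^*(\msG)}$, which is by construction a non-degenerate $*$-homomorphism into the correct multiplier algebra and encodes, on the representation level, exactly your product formula; Lemma \ref{lemma4} then identifies its $22$-corner with the identity, and Lemma \ref{LemMorita} concludes as in your plan. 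So your outline can be repaired, but the two points above must be argued, and the paper's universal-lift argument is the cleanest known way to supply the second one.
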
 

\begin{proof}
Let us write the universal linking C$^*$-algebra associated to $\msG$ as 
\[
C^*(\msG) = \begin{pmatrix} C^*(\Hh) & C^*(\X) \\
 C^*(\Y)  & C^*(\G)
\end{pmatrix}
\]
(that $C^*(\msG)$ has this form follows directly from the definition of the universal quantum groupoid C$^*$-algebra associated to a linking quantum groupoid, see e.g.\ \cite{DC09}*{Theorem 7.6.4}). A linking quantum groupoid version of Lemma \ref{lemma3} gives a lift of $\Delta_{C^*(\msG)}$ to a $*$-homomorphism
\[
\Delta_{C^*(\msG)}^{\max}\colon C^*(\msG)\rightarrow 
\Mult(C^*(\msG)\otimes_{\max}C^*(\msG))
\]
which respects matrix decompositions. We can identify 
\[
C^*(\G) = C^*(\G_1)\otimes_{\max} C^*(\G_2)
\]
as C$^*$-algebras (Lemma \ref{lemma2}).  But since
\[
L^{\infty}(\G_i^{(2)}) \subseteq L^{\infty}(\msG)
\]
via an inclusion map which respects coproducts (Lemma \ref{LemmaInclusionSubgroups}), we also have surjective, coproduct-preserving C$^*$-algebra morphisms
\[
\hat{\pi}_i\colon C^*(\msG) \twoheadrightarrow \begin{pmatrix} C^*(\G_i) & C^*(\G_i) \\ C^*(\G_i)& C^*(\G_i)\end{pmatrix}
\]
which respect the matrix structure. Using that the $22$-parts of the half-lifted Kac-Takesaki operators for $\hat\msG$ and $\widehat{\G_i^{(2)}}$ agree with the corresponding operators for $\hat\G$ and $\hat\G_i$, we see that $\hat{\pi}_{1,22}=\id\times \hat{\eps}_2$ and $\hat{\pi}_{2,22}=\hat{\eps}_{1}\times\id$. We obtain the non-degenerate C$^*$-algebra morphism
\begin{equation}\label{eq19}
\hat{\pi} = (\hat{\pi}_1\otimes_{\max}\hat{\pi}_2)\Delta_{C^*(\msG)}^{\max}\colon C^*(\msG) \rightarrow \Mult\begin{pmatrix} C^*(\G_1) \otimes_{\max} C^*(\G_2) & C^*(\G_1) \otimes_{\max} C^*(\G_2)\\ C^*(\G_1) \otimes_{\max} C^*(\G_2)& C^*(\G_1) \otimes_{\max} C^*(\G_2)\end{pmatrix}.
\end{equation}
Note that the codomain of $\hat{\pi}$ is correct, because $\hat{\pi}_i$ and $\Delta^{\max}_{C^*(\msG)}$ preserve the matrix decomposition. Using Lemma \ref{lemma4} we conclude that $\hat{\pi}$ restricts to the identity map on the lower right hand corner. Then it follows from Lemma \ref{LemMorita} that $\hat{\pi}$ must be an isomorphism 
\begin{equation}\label{EqIsoMatrix}
C^*(\msG) \cong_{\hat{\pi}} \begin{pmatrix} C^*(\G_1) \otimes_{\max} C^*(\G_2) & C^*(\G_1) \otimes_{\max} C^*(\G_2)\\ C^*(\G_1) \otimes_{\max} C^*(\G_2)& C^*(\G_1) \otimes_{\max} C^*(\G_2)\end{pmatrix}.
\end{equation}
\end{proof}

To prove Theorem \ref{TheoUnivLift}, it remains to show that the particular element $\hat{\Omega}_u$ is a universal lift of $\hat{\Omega}$. This will again require some preparations. 

Note that in the proof of Theorem \ref{LemTrivialQG}, we obtained in particular an isomorphism of right Hilbert C$^*$-modules over $C^*(\G_1)\otimes_{\max} C^*(\G_2)$: 
\begin{equation}\label{EqHatPi12}
\hat{\pi}_{12} \colon C^*(\G_{12}) \cong C^*(\G_1)\otimes_{\max} C^*(\G_2). 
\end{equation}

\begin{Rem}
For $k\in\{1,2\}$, let
\[
\vp^{(k)}=(\vp_{ij}^{(k)})_{i,j=1}^{2}\colon A^{(k)}=\begin{pmatrix}
A_{11}^{(k)} & A_{12}^{(k)} \\ A_{21}^{(k)} & A_{22}^{(k)}
\end{pmatrix}\rightarrow \Mult(B^{(k)})=\Mult \biggl(\! \begin{pmatrix} B_{11}^{(k)} & B_{12}^{(k)} \\ B_{21}^{(k)} & B_{22}^{(k)} \end{pmatrix} \!\biggr)
\]
be a non-degenerate $*$-homomorphism between C$^*$-algebras which preserves the matrix structure. Then the tensor product 
\[
\vp^{(1)}\otimes_{\max}\vp^{(2)}\colon A^{(1)}\otimes_{\max} A^{(2)}\rightarrow \Mult(B^{(1)}\otimes_{\max} B^{(2)})
\] 
can be (co-)restricted to a map 
\[
\vp^{(1)}_{12}\otimes_{\max} \vp^{(2)}_{12}\colon A^{(1)}_{12}\otimes_{\max} A^{(2)}_{12}\rightarrow \Mult( B^{(1)}_{12}\otimes_{\max} B^{(2)}_{12}),
\] 
where $A^{(1)}_{12}\otimes_{\max} A^{(2)}_{12}$ is a corner in $A^{(1)}\otimes_{\max} A^{(2)}$ (and similarly for $B$). In particular, we will consider the isomorphism
\[
\hat{\pi}_{12} = (\hat{\pi}_{1,12}\otimes_{\max}\hat{\pi}_{2,12})\hat{\Delta}_{u,12}^{\max}.
\]
Note that if $A^{(k)}_{ij}=C^{(k)}$ for some C$^*$-algebra $C^{(k)}$ and all $i,j\in \{1,2\}$ (so that $A^{(k)}=C^{(k)}\otimes M_2$), then because the matrix algebras are nuclear, $A^{(1)}_{12}\otimes_{\max} A^{(2)}_{12}$ agrees with the usual maximal tensor product of C$^*$-algebras $C^{(1)}\otimes_{\max} C^{(2)}$.
\end{Rem}

We can use the map $\hat{\pi}_{12}$ from \eqref{EqHatPi12} to transport $\hat{\Delta}_{u,12}$ to a map 
\[
\widetilde{\Delta}_{u,12} := (\hat{\pi}_{12}\otimes \hat{\pi}_{12})\hat{\Delta}_{u,12}\hat{\pi}_{12}^{-1}\colon C^*(\G_1)\otimes_{\max} C^*(\G_2)\rightarrow \Mult((C^*(\G_1)\otimes_{\max} C^*(\G_2))\otimes (C^*(\G_1)\otimes_{\max} C^*(\G_2)))
\]
and similarly $\hat{\Delta}^{\max}_{u,12}$ to
\[\begin{split}
\widetilde{\Delta}_{u,12}^{\max} := (\hat{\pi}_{12}\otimes_{\max} \hat{\pi}_{12})\hat{\Delta}_{u,12}^{\max}\hat{\pi}_{12}^{-1}&\colon C^*(\G_1)\otimes_{\max} C^*(\G_2)\\
&\rightarrow \Mult((C^*(\G_1)\otimes_{\max} C^*(\G_2))\otimes_{\max} (C^*(\G_1)\otimes_{\max} C^*(\G_2))).
\end{split}\]
On the other hand, we can also consider the quotient maps 
\[
\widetilde{\pi}_i = \hat{\pi}_{i,12}\circ \hat{\pi}^{-1}_{12}\colon C^*(\G_1)\otimes_{\max} C^*(\G_2) \rightarrow C^*(\G_i). 
\]

\begin{Lem}\label{LemTrivY}
We have
\[
\widetilde{\pi}_1 = \id_{C^*(\G_1)}\times \hat{\varepsilon}_2,\quad \widetilde{\pi}_2 = \hat{\varepsilon}_1\times \id_{C^*(\G_2)},\quad 
(\widetilde{\pi}_1\otimes_{\max} \widetilde{\pi}_2)\widetilde{\Delta}_{u,12}^{\max} = \id,\quad
(\widetilde{\pi}_1\otimes \widetilde{\pi}_2)\widetilde{\Delta}_{u,12} = \hat{\theta},  
\]
where $\hat{\theta}\colon C^*(\G_1)\otimes_{\max} C^*(\G_2)\rightarrow C^*(\G_1)\otimes C^*(\G_2)$ is the canonical quotient map.
\end{Lem}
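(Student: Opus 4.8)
The four claimed identities are best handled in the order they are written, with the first two being almost formal and the last two following by slicing arguments from a single multiplicative-unitary identity. First I would establish $\widetilde{\pi}_1 = \id_{C^*(\G_1)}\times \hat{\varepsilon}_2$. Recall from the proof of Lemma \ref{LemTrivialQG} that $\hat{\pi}_{12}$ is the $12$-corner of $\hat{\pi}=(\hat{\pi}_1\otimes_{\max}\hat{\pi}_2)\Delta_{C^*(\msG)}^{\max}$, where the maps $\hat{\pi}_i\colon C^*(\msG)\to M_2(C^*(\G_i))$ are induced by the inclusions $L^{\infty}(\G_i^{(2)})\subseteq L^{\infty}(\msG)$ of Lemma \ref{LemmaInclusionSubgroups}, and that $\hat{\pi}_{i,22}$ equals $\id_{C^*(\G_1)}\times\hat{\varepsilon}_2$ for $i=1$ and $\hat{\varepsilon}_1\times\id_{C^*(\G_2)}$ for $i=2$. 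Since $\widetilde{\pi}_i=\hat{\pi}_{i,12}\circ\hat{\pi}_{12}^{-1}$ and $\hat{\pi}_{12}=(\hat{\pi}_{1,12}\otimes_{\max}\hat{\pi}_{2,12})\hat{\Delta}_{u,12}^{\max}$, the composite $\widetilde{\pi}_1$ is the $12$-corner of $(\hat{\pi}_1\otimes_{\max}(\hat{\varepsilon}_1\times\id))\hat{\Delta}_{C^*(\msG)}^{\max}$ co-restricted appropriately; using that $\hat{\varepsilon}$ is the counit (so $(\id\otimes_{\max}\hat{\varepsilon})\hat{\Delta}^{\max}_{C^*(\msG)}$ acts as a suitable identity on the relevant corner) and the explicit form of the $\hat\pi_{i,22}$, this collapses to $\id_{C^*(\G_1)}\times\hat{\varepsilon}_2$. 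The computation for $\widetilde{\pi}_2$ is symmetric.

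Next, for the third identity $(\widetilde{\pi}_1\otimes_{\max}\widetilde{\pi}_2)\widetilde{\Delta}_{u,12}^{\max}=\id$, I would unwind the definition $\widetilde{\Delta}_{u,12}^{\max}=(\hat{\pi}_{12}\otimes_{\max}\hat{\pi}_{12})\hat{\Delta}_{u,12}^{\max}\hat{\pi}_{12}^{-1}$. Composing with $\widetilde{\pi}_1\otimes_{\max}\widetilde{\pi}_2$ and using $\widetilde{\pi}_i\circ\hat{\pi}_{12}=\hat{\pi}_{i,12}$, the left-hand side becomes $(\hat{\pi}_{1,12}\otimes_{\max}\hat{\pi}_{2,12})\hat{\Delta}_{u,12}^{\max}\circ\hat{\pi}_{12}^{-1}$, which is precisely $\hat{\pi}_{12}\circ\hat{\pi}_{12}^{-1}=\id$ by the very definition $\hat{\pi}_{12}=(\hat{\pi}_{1,12}\otimes_{\max}\hat{\pi}_{2,12})\hat{\Delta}_{u,12}^{\max}$. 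This is essentially a tautology once the bookkeeping of corners is in place, but I would double-check that $\hat{\Delta}_{u,12}^{\max}$ and $\hat{\pi}_{i,12}$ are mutually compatible with the matrix-corner structure, which is guaranteed because all maps involved ($\hat{\pi}_i$, $\Delta_{C^*(\msG)}^{\max}$) preserve it — a point already used repeatedly in the excerpt.

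For the fourth identity $(\widetilde{\pi}_1\otimes\widetilde{\pi}_2)\widetilde{\Delta}_{u,12}=\hat{\theta}$, the strategy is to pass to the non-universal (reduced) coproduct. From the relation between $\hat{\Delta}_{u,12}$ and $\hat{\Delta}_{u,12}^{\max}$ (the former is the composition of the latter with the appropriate quotient $C^*(\msG)\otimes_{\max}C^*(\msG)\twoheadrightarrow C^*(\msG)\otimes C^*(\msG)$, via Lemma \ref{lemma3} applied to $\msG$), one gets $\widetilde{\Delta}_{u,12}=q\circ\widetilde{\Delta}_{u,12}^{\max}$ where $q$ is the tensor-product quotient map on the relevant corner. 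Then $(\widetilde{\pi}_1\otimes\widetilde{\pi}_2)\widetilde{\Delta}_{u,12}=(\widetilde{\pi}_1\otimes\widetilde{\pi}_2)\circ q\circ\widetilde{\Delta}_{u,12}^{\max}$; since $\widetilde{\pi}_1\otimes\widetilde{\pi}_2$ is the composition of $\widetilde{\pi}_1\otimes_{\max}\widetilde{\pi}_2$ with the quotient, and the quotient maps commute appropriately, this equals the composition of $\id$ (by the third identity just proved) with the tensor-product quotient $C^*(\G_1)\otimes_{\max}C^*(\G_2)\twoheadrightarrow C^*(\G_1)\otimes C^*(\G_2)$, which is exactly $\hat{\theta}$.

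\textbf{Main obstacle.} I expect the genuine difficulty to be not any one identity in isolation but keeping the corner-bookkeeping honest: the maps $\hat{\Delta}_{u,12}$, $\hat{\Delta}_{u,12}^{\max}$, $\hat{\pi}_{i,12}$ live on off-diagonal corners of linking C$^*$-algebras and land in multiplier algebras of tensor products of such corners, and one must repeatedly justify that co-restriction to these corners is legitimate and compatible with the $\otimes_{\max}$-to-$\otimes$ quotients. The safest route is to verify everything first at the level of the full linking algebras $C^*(\msG)$, $M_2(C^*(\G_i))$ — where $\hat{\pi}_i$, $\Delta_{C^*(\msG)}^{\max}$, and the counits $\hat{\varepsilon}_{ij}\cong\hat{\varepsilon}$ are the natural objects and the claimed relations are instances of $(\id\otimes\hat\varepsilon)\hat\Delta=\id$ and coassociativity for the quantum groupoid — and only at the end restrict to the $12$-corner, invoking the remark preceding the lemma (which notes that for matrix-amplified algebras the corner tensor products are ordinary $\otimes_{\max}$ thanks to nuclearity of $M_2$). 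With that discipline, each of the four identities reduces to a one-line consequence of an already-established structural fact.
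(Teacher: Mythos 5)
Your treatment of the last two identities coincides with the paper's: the third is exactly the tautological unwinding $(\widetilde{\pi}_1\otimes_{\max}\widetilde{\pi}_2)\widetilde{\Delta}_{u,12}^{\max}=(\hat{\pi}_{1,12}\otimes_{\max}\hat{\pi}_{2,12})\hat{\Delta}_{u,12}^{\max}\hat{\pi}_{12}^{-1}=\hat{\pi}_{12}\hat{\pi}_{12}^{-1}=\id$, and the fourth is obtained, as in the paper, by composing the third with the $\otimes_{\max}\to\otimes$ quotient. The problem is with the first two identities, which are the only non-formal part of the lemma.

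What has to be shown there is $\hat{\pi}_{1,12}=(\id\times\hat{\varepsilon}_2)\hat{\pi}_{12}=(\hat{\pi}_{1,12}\otimes_{\max}\hat{\varepsilon}_2\hat{\pi}_{2,12})\hat{\Delta}_{u,12}^{\max}$. The counit identity on the $12$-corner only cancels the genuine counit $\hat{\varepsilon}_{12}$ of $C^*(\X)$ from \eqref{EqUnivCounit}; so one needs the additional fact that $\hat{\varepsilon}_2\circ\hat{\pi}_{2,12}=\hat{\varepsilon}_{12}$ (equivalently, that the counit of $C^*(\msG)$ factorizes through the quotient maps $\hat{\pi}_i$ onto the amplified subgroupoids $M_2(C^*(\G_i))$ of Lemma \ref{LemmaInclusionSubgroups}). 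This is precisely the ingredient the paper uses: $\hat{\pi}_{1,12}=(\hat{\pi}_{1,12}\otimes_{\max}\hat{\varepsilon}_{12})\hat{\Delta}_{u,12}^{\max}=(\hat{\pi}_{1,12}\otimes_{\max}\hat{\varepsilon}_2\hat{\pi}_{2,12})\hat{\Delta}_{u,12}^{\max}=(\id\times\hat{\varepsilon}_2)\hat{\pi}_{12}$, followed by cancelling the bijection $\hat{\pi}_{12}$. Your proposal replaces this ingredient by ``the explicit form of the $\hat{\pi}_{i,22}$'', but that concerns a different corner and does not transfer: the maps $\hat{\pi}_i$ are not injective and $C^*(\X)\neq C^*(\G)$, so knowing $\hat{\pi}_{1,22}=\id\times\hat{\varepsilon}_2$ says nothing about $\hat{\pi}_{1,12}$ or about $\hat{\varepsilon}_2\hat{\pi}_{2,12}$. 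Moreover the displayed expression $(\hat{\pi}_1\otimes_{\max}(\hat{\varepsilon}_1\times\id))\hat{\Delta}_{C^*(\msG)}^{\max}$ does not typecheck ($\hat{\varepsilon}_1\times\id$ is not a map on $C^*(\msG)$); if it is read as ``$\hat{\pi}_{2,12}$ becomes $\hat{\varepsilon}_1\times\id$ under the identification $\hat{\pi}_{12}$'', that is literally the second claim of the lemma, so the argument is circular. To repair the proof, first establish the counit factorization $\hat{\varepsilon}_{ij}=\hat{\varepsilon}_i\circ\hat{\pi}_{i,ij}$ (for instance by comparing both sides on slices of the half-lifted multiplicative unitaries, which $\hat{\pi}_i$ intertwines, the trivial $\msG$-representation having all corners equal to $1$), note that the corner counit identity survives passage to $\otimes_{\max}$ because the counit takes values in $M_2(\C)$, and then run the cancellation above; this is exactly the paper's route.
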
 

\begin{proof}
We have 
\[
\widetilde{\pi}_1\hat{\pi}_{12}=
\hat{\pi}_{1,12}
=
(\hat{\pi}_{1,12}\otimes_{\max} \hat{\varepsilon}_{12}) \hat{\Delta}_{u,12}^{\max}
=
 (\hat{\pi}_{1,12}\otimes_{\max} \hat{\varepsilon}_2\hat{\pi}_{2,12}) \hat{\Delta}_{u,12}^{\max}=
 (\id\times\hat{\eps}_2)\hat{\pi}_{12}
\]
where we used that universal linking quantum groupoids admit counits \eqref{EqUnivCounit}, which factorize through the counit of any quotient linking quantum groupoid. A similar computation works for $\widetilde{\pi}_2$. The next identity is just a formal computation:
\[
(\widetilde{\pi}_1\otimes_{\max} \widetilde{\pi}_2)\widetilde{\Delta}_{u,12}^{\max}=
(\hat{\pi}_{1,12}\hat{\pi}_{12}^{-1}\otimes_{\max}
\hat{\pi}_{2,12}\hat{\pi}_{12}^{-1})
(\hat{\pi}_{12}\otimes_{\max} \hat{\pi}_{12})\hat{\Delta}_{u,12}^{\max}\hat{\pi}_{12}^{-1}=
(\hat{\pi}_{1,12}\otimes_{\max}
\hat{\pi}_{2,12})
\hat{\Delta}_{u,12}^{\max}\hat{\pi}_{12}^{-1}=\id.
\]
The last equality follows by composing both sides with $\hat{\theta}$.
\end{proof}

\begin{Lem}\label{lemma5}
For $i,j\in \{1,2\}$ we have $(\hat{\eps}_1\times \hat{\eps}_2)\hat{\pi}_{ij}=\hat{\eps}_{ij}$.
\end{Lem}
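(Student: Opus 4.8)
The statement to prove is $(\hat{\eps}_1\times \hat{\eps}_2)\hat{\pi}_{ij}=\hat{\eps}_{ij}$ for all $i,j\in\{1,2\}$, where $\hat{\pi}_{ij}\colon C^*(\G_{ij})\to C^*(\G_1)\otimes_{\max}C^*(\G_2)$ are the components of the isomorphism $\hat{\pi} = (\hat{\pi}_1\otimes_{\max}\hat{\pi}_2)\Delta^{\max}_{C^*(\msG)}$ constructed in Lemma~\ref{LemTrivialQG}, and $\hat{\eps}_{ij}$ are the components of the counit of $\hat{\msG}$ from \eqref{EqUnivCounit}.

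\emph{Plan.} The key observation is that the counit of $\hat{\msG}$ is characterised by the general coalgebra identity: $\hat{\eps}_{ij}$ is the component of the (non-degenerate $*$-homomorphism) $\eps_{\hat Q_u}\colon \hat Q_u\to M_2(\C)$ from \eqref{EqUnivCounit}, which satisfies the counit property with respect to $\Delta_{C^*(\msG)}$ (and hence with respect to its maximal lift $\Delta^{\max}_{C^*(\msG)}$, since the latter is determined by the former via \eqref{EqFormulaMaxCo}). First I would record that, by the counit axiom at the level of the linking quantum groupoid, we have $(\hat{\eps}\otimes_{\max}\id)\Delta^{\max}_{C^*(\msG)} = \id$ componentwise, i.e.\ $(\hat{\eps}_{ik}\otimes_{\max}\id)\Delta^{\max}_{C^*(\msG),kj} = \id_{C^*(\G_{ij})}$ (summing over $k$ in the appropriate matrix sense, but since $\eps_{\hat Q_u}$ is valued in diagonal-plus-off-diagonal scalars this is just the usual counit identity for the quotient of $\msG$ onto the trivial $2\times 2$ groupoid over $\C$). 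Then, composing $\hat{\pi}_{ij} = (\hat{\pi}_{i}\otimes_{\max}\hat{\pi}_{j})\Delta^{\max}_{C^*(\msG),ij}$ with $\hat{\eps}_1\times\hat{\eps}_2 = \hat{\eps}_1\otimes_{\max}\hat{\eps}_2$ and using that $\hat{\pi}_i$ intertwines the counits (i.e.\ $\hat{\eps}_i\circ\hat{\pi}_i = \hat{\eps}_{kl}$ on each matrix entry, which is exactly the statement that $\hat{\pi}_i$ is a homomorphism of linking quantum groupoids compatible with counits — the analogue of \cite{KV00}*{Proposition 5.45} used already in the proof of Lemma~\ref{LemProjCobPair}, together with $\hat{\pi}_{1,22}=\id\times\hat{\eps}_2$, $\hat{\pi}_{2,22}=\hat{\eps}_1\times\id$ recorded in Lemma~\ref{LemTrivialQG}), I get
\[
(\hat{\eps}_1\otimes_{\max}\hat{\eps}_2)\hat{\pi}_{ij}
= ((\hat{\eps}_1\hat{\pi}_i)\otimes_{\max}(\hat{\eps}_2\hat{\pi}_j))\Delta^{\max}_{C^*(\msG),ij}
= (\hat{\eps}_{ij}\otimes_{\max}\hat{\eps}_{ij})\Delta^{\max}_{C^*(\msG),ij}
= \hat{\eps}_{ij},
\]
the last equality again being the counit axiom, now applied with both slots counited (equivalently, apply $(\id\otimes\hat{\eps})$ after $(\hat{\eps}\otimes\id)$). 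Since $\hat{\eps}_i\circ\hat{\pi}_i$ and $\hat{\eps}_{kl}$ are both non-degenerate $*$-homomorphisms into $\C$ (characters, up to the matrix bookkeeping) agreeing because $\hat\pi_i$ is a quotient of linking quantum groupoids, this closes the argument.

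\emph{Main obstacle.} The only delicate point is justifying that $\hat{\eps}_i\circ\hat{\pi}_i = \hat{\eps}_{kl}$ on every matrix component, not merely on the $22$-corner where it was explicitly computed in Lemma~\ref{LemTrivialQG}. This follows because $\hat{\pi}_i$ is a surjective morphism of linking quantum groupoids preserving the matrix structure, hence it intertwines the half-lifted Kac--Takesaki operators, hence it intertwines the counits (which are constructed from the trivial representation, i.e.\ by slicing the Kac--Takesaki operator against the co-unital state); one can also simply invoke the universal property of $\hat{\eps}_{kl}$ as the unique non-degenerate representation of $C^*(\G_{kl})$ coming from the trivial $\msG$-representation, which must factor through any quotient. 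I would phrase this as: ``since $\hat{\pi}_i$ is a homomorphism of linking quantum groupoids, it intertwines counits on all components; combined with $\Delta^{\max}_{C^*(\msG)}$ being a (coassociative, counital) lift of $\Delta_{C^*(\msG)}$, the claim follows from the counit axiom applied in both legs.'' Everything else is formal manipulation of the defining formula for $\hat{\pi}_{ij}$.
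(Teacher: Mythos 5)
Your argument is correct in substance, but it takes a different route from the paper, so let me compare. The paper proves the identity corner by corner: the $22$-corner is trivial, the $12$-corner is extracted from Lemma \ref{LemTrivY} (where the relation $\widetilde{\pi}_1=\id\times\hat{\eps}_2$ already encodes the needed counit information), the $21$-corner follows by taking adjoints at the matrix level, and the $11$-corner is obtained from multiplicativity together with the density of $C^*(\X)C^*(\Y)$ in $C^*(\Hh)$. You instead give one uniform computation for all four corners: writing $\hat{\pi}_{ij}=(\hat{\pi}_{1,ij}\otimes_{\max}\hat{\pi}_{2,ij})\hat{\Delta}^{\max}_{u,ij}$ (which the paper records explicitly for the $12$-corner in the remark preceding Lemma \ref{LemTrivY}, and which holds for all corners by the same reasoning), using $\hat{\eps}_k\circ\hat{\pi}_{k,ij}=\hat{\eps}_{ij}$ on every corner, and then applying the counit axiom in both legs. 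The fact you flag as the delicate point --- that the counit of $C^*(\msG)$ factorizes through the quotient morphisms $\hat{\pi}_1,\hat{\pi}_2$ on \emph{all} matrix components --- is legitimate and is exactly the fact the paper itself invokes (only on the $12$-corner, inside the proof of Lemma \ref{LemTrivY}, with the phrase ``counits \ldots factorize through the counit of any quotient linking quantum groupoid''); it follows, as you indicate, from the trivial representation of the quotient pulling back to the trivial representation of $\msG$, or directly from $\hat{\theta}((\id\otimes\omega)\Ww_{\hat{Q}_u})=(\id\otimes\omega\theta)\Ww_{\hat{Q}_{1,u}}$ together with unitality of $\theta$ and $(\hat{\eps}\otimes\id)\Ww_{\hat{Q}_u}=1$. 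What your route buys is symmetry and brevity (no adjoint trick, no density argument for the $11$-corner); what the paper's route buys is that the counit-factorization fact only needs to be used where it has already been set up, with the remaining corners handled by elementary C$^*$-algebraic observations. Two small blemishes to fix: your notation $(\hat{\pi}_i\otimes_{\max}\hat{\pi}_j)$ conflates the corner indices $i,j$ with the factor indices $1,2$ --- it should read $(\hat{\pi}_{1,ij}\otimes_{\max}\hat{\pi}_{2,ij})$ throughout --- and your parenthetical about ``summing over $k$'' is unnecessary, since $\Delta_{\hat{Q}_u}$ maps the $ij$-corner into (multipliers of) the $ij\otimes ij$-corner, so the componentwise counit identity $(\hat{\eps}_{ij}\otimes\id)\hat{\Delta}_{u,ij}=\id=(\id\otimes\hat{\eps}_{ij})\hat{\Delta}_{u,ij}$ involves no summation; also, the counit is the representation attached to the trivial $\msG$-representation rather than a slice of the Kac--Takesaki operator against a state, though this does not affect the argument.
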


\begin{proof}
The claim is trivial for $i=j=2$. Consider now $i=1,j=2$. Denote by $\hat{\eps}_{\G_1^{(2)},ij}$ the counit of $\hat{\G}_1^{(2)}$. Using Lemma \ref{LemTrivY} we find
\[
\hat{\eps}_1\times \hat{\eps}_2=\hat{\eps}_1\widetilde{\pi}_1=
\hat{\eps}_1 \hat{\pi}_{1,12} \hat{\pi}^{-1}_{12}=
\hat{\eps}_{\G_{1}^{(2)},12} \hat{\pi}_{1,12} \hat{\pi}^{-1}_{12}=
\hat{\eps}_{12}\hat{\pi}^{-1}_{12}
\]
which proves $(\hat{\eps}_1\times \hat{\eps}_2)\hat{\pi}_{12}=\hat{\eps}_{12}$. The case $i=2,j=1$ follows by taking adjoints at  the matrix level. Finally, for $x\in C^*(\G_{11})$ of the form $x=yz$ with $y\in C^*(\G_{12}),z\in C^*(\G_{21})$ we have
\[
\hat{\eps}_{11}(x)=
\hat{\eps}_{12}(y)\hat{\eps}_{21}(z)=
(\hat{\eps}_1\times \hat{\eps}_2)(\hat{\pi}_{12}(y)
\hat{\pi}_{21}(z))=
(\hat{\eps}_1\times \hat{\eps})\hat{\pi}_{11}(x),
\]
and the claim follows as $C^*(\G_{12})C^*(\G_{21})$ is linearly dense in $C^*(\G_{11})$.
\end{proof}

Write now 
\[
\widetilde{\Omega}_u = \widetilde{\Delta}_{u,12}(1\otimes 1) \in \Mult((C^*(\G_1)\otimes_{\max} C^*(\G_2))\otimes (C^*(\G_1)\otimes_{\max} C^*(\G_2))).
\]

\begin{Lem}\label{LemUnitSkewBich}
There exists a unitary skew bicharacter $\widetilde{\mcX}_u \in \Mult(C^*(\G_1) \otimes C^*(\G_2))$ such that 
\[
\widetilde{\Omega}_u = \widetilde{\mcX}_{u[32]}. 
\]
\end{Lem}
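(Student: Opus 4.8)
The plan is to extract from the known structure of $\msG$ exactly which skew-bicharacter data underlie the transported comultiplication $\widetilde{\Delta}_{u,12}$. Recall that $\hat{\Delta}_{u,12}$ fits inside the coproduct $\Delta_{C^*(\msG)}^{\max}$, which respects matrix decompositions and which (via $\hat{\pi}$ from Lemma \ref{LemTrivialQG}) we have identified with the ``trivial'' quantum groupoid $\begin{pmatrix} C^*(\G) & C^*(\G)\\ C^*(\G) & C^*(\G)\end{pmatrix}$ with coproduct $\hat{\Delta}_{\otimes,u}$ at all four spots. Under that identification, twisting by $\hat{\Omega}_u = \widetilde\Omega_u$ in the off-diagonal corner is precisely the statement $\widetilde{\Delta}_{u,12}(z) = \widetilde\Omega_u\,\hat{\Delta}_{\otimes,u}(z)\,\widetilde\Omega_u^*$ for $z\in C^*(\G_1)\otimes_{\max}C^*(\G_2)$, so in particular $\widetilde\Omega_u = \widetilde{\Delta}_{u,12}(1\otimes 1)$ is a unitary $2$-cocycle for $(C^*(\G_1)\otimes_{\max}C^*(\G_2),\hat{\Delta}_{\otimes,u})$. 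The goal is to show this cocycle has the special form $\widetilde{\mcX}_{u[32]}$ coming from a skew bicharacter.

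First I would pin down $\widetilde\Omega_u$ explicitly. Apply the components $\widetilde\pi_1,\widetilde\pi_2$ of Lemma \ref{LemTrivY} (the counit slices) to the legs of $\widetilde\Omega_u$. Because $\widetilde\pi_i$ respects coproducts and $\widetilde\Delta_{u,12}$ is the transported $\hat\Delta_{u,12}$, one gets $(\widetilde\pi_1\otimes\id)\widetilde\Omega_u = 1$ in the first tensor factor's first leg and $(\id\otimes\widetilde\pi_2)\widetilde\Omega_u=1$ in the second factor's second leg, using $(\widetilde\pi_1\otimes\widetilde\pi_2)\widetilde\Delta_{u,12}=\hat\theta$ and that $\hat\theta$ sends $1$ to $1$. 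The upshot is that $\widetilde\Omega_u$ is concentrated in the $(1_{\text{st leg of }C^*(\G_1)\text{ factor}})$-trivial and $(2_{\text{nd leg of }C^*(\G_2)\text{ factor}})$-trivial part, i.e.\ $\widetilde\Omega_u$ lives in the subalgebra $1\otimes C^*(\G_2)\otimes C^*(\G_1)\otimes 1$ of $(C^*(\G_1)\otimes_{\max}C^*(\G_2))^{\otimes 2}$, which justifies writing $\widetilde\Omega_u = \widetilde{\mcX}_{u[32]}$ for a unique unitary $\widetilde{\mcX}_u\in\Mult(C^*(\G_2)\otimes C^*(\G_1))$ — with a reflection of legs this is $\Mult(C^*(\G_1)\otimes C^*(\G_2))$ as claimed, modulo bookkeeping of which $C^*(\G_i)$ sits where.

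Next I would verify the skew-bicharacter equations for $\widetilde{\mcX}_u$. These come from the two ``half-coassociativity'' identities \eqref{eq17} for the off-diagonal piece $\hat\ww_{12}$, equivalently from the compatibility of $\hat\Delta_{u,12}$ with $\hat\Delta_{u,11}$ and $\hat\Delta_{u,22}$ under the matrix coassociativity $(\Delta^\ell_{ik}\otimes\id)\Delta^k_{ij}=(\id\otimes\Delta^k_{\ell j})\Delta^\ell_{ij}$. Concretely, $(\hat\Delta_{u,22}\otimes\id)\hat\Delta_{u,12}$ versus $(\hat\Delta_{u,12})_{[13]}(\hat\Delta_{u,12})_{[23]}$ and the mirror identity translate, after transport by $\hat\pi_{12}$ and evaluating at the unit (using that $\hat\Delta_{\otimes,u}$ has trivial cocycle), into exactly $(\hat\Delta_{2,u}\otimes\id)\widetilde{\mcX}_u = \widetilde{\mcX}_{u[13]}\widetilde{\mcX}_{u[23]}$ and $(\id\otimes\hat\Delta_{1,u})\widetilde{\mcX}_u = \widetilde{\mcX}_{u[13]}\widetilde{\mcX}_{u[12]}$ — i.e.\ the skew-bicharacter conditions \eqref{EqSkewBichvN} at the universal level. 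The counit-normalizations from Lemma \ref{LemTrivY} (the $\hat\theta$ identity) additionally give $(\hat\varepsilon_2\otimes\id)\widetilde{\mcX}_u=1=(\id\otimes\hat\varepsilon_1)\widetilde{\mcX}_u$, which is the standard extra condition ensuring a genuine (not merely partial) bicharacter and which one needs anyway to identify it later with the universal lift of $\wh{\mcX}$.

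The main obstacle I anticipate is entirely bookkeeping of legs and of which maximal-tensor-factor the various pieces inhabit: $C^*(\G)=C^*(\G_1)\otimes_{\max}C^*(\G_2)$, and $\hat\pi_{12}$ identifies $C^*(\X)$ with this, so an element ``in the $[32]$ position'' of $C^*(\X)^{\otimes 2}$ unwinds to something living in a $4$-fold maximal tensor product of the $C^*(\G_i)$'s, and one must track the flip in Lemma \ref{lemma4} together with the reflection turning $\Mult(C^*(\G_2)\otimes C^*(\G_1))$ into $\Mult(C^*(\G_1)\otimes C^*(\G_2))$. There is also a minor subtlety: the product $\widetilde{\mcX}_{u[13]}\widetilde{\mcX}_{u[23]}$ must be interpreted in an appropriate multiplier algebra (as in the remark following \eqref{EqFormulaMaxCo}), and one should note $\widetilde{\mcX}_u$ a priori lies in a maximal tensor product; but since a skew bicharacter automatically factors through the minimal tensor product (its two legs generate commuting copies of the reduced algebras, or one invokes \cite{Kus01}*{Section 6}), one concludes $\widetilde{\mcX}_u\in\Mult(C^*(\G_1)\otimes C^*(\G_2))$ as stated. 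None of these steps is deep, but getting the indices internally consistent is where care is required.
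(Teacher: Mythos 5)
Your first step—where the whole content of the lemma lies—contains a genuine gap. From Lemma \ref{LemTrivY} you only obtain the \emph{joint} slice $(\id\otimes_{\max}\hat{\varepsilon}_2\otimes_{\max}\hat{\varepsilon}_1\otimes_{\max}\id)\widetilde{\Omega}_u^{\max}=(\widetilde{\pi}_1\otimes_{\max}\widetilde{\pi}_2)\widetilde{\Delta}_{u,12}^{\max}(1)=1$, not the individual slices you quote; and, more importantly, triviality of counit slices of a unitary never implies triviality of the corresponding legs. For instance $(\hat{\varepsilon}\otimes\id)$ applied to a half-lifted multiplicative unitary gives $1$, and every unitary $2$-cocycle is counit-normalized, without these elements being of the form $1\otimes v$. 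So the inference that $\widetilde{\Omega}_u$ ``lives in the subalgebra $1\otimes C^*(\G_2)\otimes C^*(\G_1)\otimes 1$'' is unsupported—and it is exactly the nontrivial assertion of the lemma: a general counit-normalized $2$-cocycle on $\G_1\times\G_2$ need \emph{not} have the shape $\widetilde{\mcX}_{u[32]}$ (e.g.\ a cocycle of $\hat{\G}_1$ alone placed in legs $1,3$ is counit-normalized but not of this form), so some extra input beyond counit normalization is indispensable.

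That extra input is the $2$-cocycle identity \eqref{EqCocycIdWrittenOut} for $\widetilde{\Omega}_u^{\max}$ with respect to $\hat{\Delta}^{\max}_{\otimes,u}$, used together with Lemma \ref{LemTrivY}. In the paper's argument one slices \eqref{EqCocycIdWrittenOut} with $\hat{\varepsilon}_2$ in the second leg to get \eqref{EqCrucialId}; further counit slices of that relation, combined with $(\widetilde{\pi}_1\otimes_{\max}\widetilde{\pi}_2)\widetilde{\Delta}^{\max}_{u,12}(1)=1$ and the computation \eqref{eq21}, force $A=(\id\otimes_{\max}\hat{\varepsilon}_2\otimes_{\max}\id\otimes_{\max}\id)\widetilde{\Omega}_u^{\max}=1$, which yields $\widetilde{\Omega}_u^{\max}=B_{[234]}$ (triviality of leg $1$); a second pass of the same kind gives $B=Z_{[23]}$ (triviality of leg $4$) with $Z\in\Mult(C^*(\G_2)\otimes_{\max}C^*(\G_1))$. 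Only after this do further slices of the cocycle identity produce the two skew-bicharacter relations for $Z$, and $\widetilde{\mcX}_u$ is then \emph{defined} as the image of $Z_{[21]}$ under the quotient map onto $\Mult(C^*(\G_1)\otimes C^*(\G_2))$—no claim that a bicharacter ``automatically factors through the minimal tensor product'' is needed. Your second step (extracting the bicharacter equations from coassociativity, i.e.\ from the cocycle identity) is fine in spirit, but it presupposes the $[32]$-form, which is precisely what must be established first by an argument of the above type.
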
 

\begin{proof}
First note that we can also write 
\[
\widetilde{\Delta}^{\max}_{u,12}(z)=(\hat{\pi}_{12}\otimes_{\max} \hat{\pi}_{12})\hat{\Delta}_{u,12}^{\max}(\hat{\pi}_{12}^{-1}(z))  = \widetilde{\Omega}_u^{\max}\hat{\Delta}_{u,\otimes}^{\max}(z),\qquad \forall z \in C^*(\G),
\]
for the unitary $2$-cocycle
\[
\widetilde{\Omega}^{\max}_u = \widetilde{\Delta}_{u,12}^{\max}(1)\in \Mult(( C^*(\G_1)\otimes_{\max} C^*(\G_2))\otimes_{\max} ( C^*(\G_1)\otimes_{\max} C^*(\G_2)))
\]
lifting $\widetilde{\Omega}_u$. Below, we will do computations with $\widetilde{\Omega}_u^{\max}$, where we still will use the `placement' indices, even though we are now interpreting them inside maximal tensor products. The $2$-cocycle condition of $\widetilde{\Omega}_u^{\max}$ is with respect to $\hat{\Delta}_{\otimes,u}^{\max}$:
\begin{equation}\label{EqCocycIdWrittenOut}
\widetilde{\Omega}_{u[1234]}^{\max}(\hat{\Delta}_{\otimes,u}^{\max}\otimes_{\max} \id \otimes_{\max} \id )(\widetilde{\Omega}_u^{\max}) = \widetilde{\Omega}_{u[3456]}^{\max}(\id \otimes_{\max} \id \otimes_{\max} \hat{\Delta}_{\otimes,u}^{\max})(\widetilde{\Omega}_u^{\max}).
\end{equation}

Put now
\[
A = (\id \otimes_{\max} \hat{\varepsilon}_2 \otimes_{\max} \id \otimes_{\max} \id)\widetilde{\Omega}_u^{\max},\qquad B = (\hat{\varepsilon}_1 \otimes_{\max} \id\otimes_{\max} \id \otimes_{\max} \id )\widetilde{\Omega}_u^{\max}. 
\]
Applying $\hat{\varepsilon}_2$ on the second component of \eqref{EqCocycIdWrittenOut} leads to 
\begin{equation}\label{EqCrucialId}
A_{[123]}(\hat{\Delta}_{1,u}^{\max}\otimes_{\max}\id
\otimes_{\max}\id \otimes_{\max} \id)(\widetilde{\Omega}_u^{\max})= \widetilde{\Omega}_{u [2345]}^{\max} (\id 
\otimes_{\max} \hat{\Delta}_{\otimes,u}^{\max})(A).
\end{equation}
But since 
\[\begin{split}
(\id \otimes_{\max} \hat{\varepsilon}_1 \otimes_{\max} \id)A = (\id \otimes_{\max} \hat{\varepsilon}_2\otimes_{\max} \hat{\varepsilon}_1 \otimes_{\max} \id)(\widetilde{\Omega}_u^{\max}) = 
(\widetilde{\pi}_1\otimes_{\max}\widetilde{\pi}_2)
\widetilde{\Delta}_{u,12}^{\max}(1)=1
\end{split}\]
by Lemma \ref{LemTrivY}, applying $(\id\otimes_{\max}\id\otimes_{\max} \hat{\varepsilon}_2\otimes_{\max} \hat{\varepsilon}_1\otimes_{\max} \id)$ to \eqref{EqCrucialId} leads to
\begin{equation}\label{eq20}
(\id\otimes_{\max}\id\otimes_{\max} \hat{\eps}_2)(A)_{[12]}=A.
\end{equation}
However, using Lemma \ref{LemTrivY} and the property that $\hat{\pi}_1$ commutes with comultiplications
\begin{equation}\begin{split}\label{eq21}
&\quad\;
(\id\otimes_{\max}\id\otimes_{\max} \hat{\eps}_2)(A)=
(\id\otimes_{\max}\hat{\eps}_2\otimes_{\max}\id\otimes_{\max} \hat{\eps}_2)\widetilde{\Omega}^{\max}_u\\
&=
(\id\otimes_{\max}\hat{\eps}_2\otimes_{\max}\id\otimes_{\max} \hat{\eps}_2)
(\hat{\pi}_{12}\otimes_{\max}\hat{\pi}_{12})
\hat{\Delta}^{\max}_{u,12}(\hat{\pi}^{-1}_{12}(1))
=
(\tilde{\pi}_1\hat{\pi}_{12}\otimes_{\max}\tilde{\pi}_1\hat{\pi}_{12})
\hat{\Delta}^{\max}_{u,12}(\hat{\pi}^{-1}_{12}(1))\\
&=
(\hat{\pi}_{1,12}\otimes_{\max}\hat{\pi}_{1,12})
\hat{\Delta}^{\max}_{u,12}(\hat{\pi}^{-1}_{12}(1))=
\hat{\Delta}^{\max}_{1,u}(\hat{\pi}_{1,12}\hat{\pi}^{-1}_{12}(1))=
\hat{\Delta}^{\max}_{1,u}(\tilde{\pi}_1(1))=
\hat{\Delta}^{\max}_{1,u}((\id\times \hat{\eps}_2)(1))=1,
\end{split}\end{equation}
which together with \eqref{eq20} gives $A=1$.
Applying $\id\otimes_{\max} \hat{\varepsilon}_1\otimes_{\max} \id^{\otimes_{\max} 3}$ on \eqref{EqCrucialId}, we find 
\[
\widetilde{\Omega}_{u}^{\max} = B_{[234]}.
\]
Put now 
\[
Z = (\id\otimes_{\max}\id\otimes_{\max} \hat{\varepsilon}_2)(B) = 
 (\hat{\varepsilon}_1
 \otimes_{\max}\id
 \otimes_{\max}\id
 \otimes_{\max} \hat{\varepsilon}_2)(\widetilde{\Omega}_u^{\max}) \in \Mult(C^*(\G_2)\otimes_{\max}C^*(\G_1)). 
\]
Then applying $\hat{\eps}_1\otimes_{\max}\id\otimes_{\max}\id\otimes_{\max} \hat{\varepsilon}_2 \otimes_{\max} \hat{\varepsilon}_1\otimes_{\max} \id$ to \eqref{EqCocycIdWrittenOut}, we find that
\[
(\id\otimes_{\max}\id\otimes_{\max}\hat{\eps}_2)(B)_{[12]} 
(\id\otimes_{\max}\hat{\eps}_1\otimes_{\max}\id )(B)_{[23]}=
B.
\]
As in \eqref{eq21} we find that $(\id\otimes_{\max}\hat{\eps}_1\otimes_{\max}\id)(B)=1$ and consequently
\[
\widetilde{\Omega}_u^{\max}= B_{[234]} = Z_{[23]}. 
\]
Considering then once again the $2$-cocycle identity \eqref{EqCocycIdWrittenOut}, applying resp.~$\id^{\otimes_{\max} 2}\otimes_{\max} \hat{\varepsilon}_1\otimes_{\max} \id^{\otimes_{\max} 3}$ or $\id^{\otimes_{\max} 3}\otimes_{\max} \hat{\varepsilon}_2\otimes_{\max} \id^{\otimes_{\max} 2}$ leads to 
\[
(\hat{\Delta}_{2,u}^{\max}\otimes_{\max}\id)Z = Z_{[23]}Z_{[13]},\qquad (\id\otimes_{\max} \hat{\Delta}_{1,u}^{\max})Z = Z_{[12]}Z_{[13]}. 
\]
If we now write $\widetilde{\mcX}_u$ for the image of $Z_{[21]}$ under the quotient map 
\[
C^*(\G_1)\otimes_{\max}C^*(\G_2)\rightarrow C^*(\G_1)\otimes C^*(\G_2),
\]
we find the statement of the lemma.
\end{proof}

To proceed, we will need to modify the isomorphism $\hat{\pi}$. In order to do this, first we establish a general lemma concerning cocycles.

\begin{Lem}\label{lemma6}
Let $\Hh_1,\Hh_2, \Hh=\Hh_1\times \Hh_2$ be locally compact quantum groups and $\mf{X},\widetilde{\mf{X}}\in L^{\infty}(\Hh_1)\bar\otimes L^{\infty}(\Hh_2)$ skew bicharacters. Denote the coproduct of $\Hh$ by $\Delta_\otimes$. Assume that cocyles $\mf{X}_{[32]},\widetilde{\mf{X}}_{[32]}\in L^{\infty}(\Hh)\bar\otimes L^{\infty}(\Hh)$ are coboundary equivalent, i.e.
\begin{equation}\label{eq32}
\mf{X}_{[32]}\Delta_{\otimes }(Z)=
(Z\otimes Z)\widetilde{\mf{X}}_{[32]}.
\end{equation}
for a unitary $Z\in L^{\infty}(\Hh)$. Then, there exist group-like unitaries $v_1\in L^{\infty}(\Hh_1),v_2\in L^{\infty}(\Hh_2)$ such that 
\[
Z=v_1\otimes v_2,\qquad \mf{X}=(v_1\otimes v_2)\widetilde{\mf{X}} (v_1^*\otimes v_2^*).
\]
\end{Lem}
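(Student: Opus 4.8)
The plan is to imitate the elementary argument for ordinary abelian groups, where the characters $v_1,v_2$ are extracted from the coboundary identity by specialising one of the two group variables to the unit (a bicharacter being trivial in each argument at the unit). Since $L^\infty(\Hh_i)$ carries no counit, this specialisation cannot be performed directly on \eqref{eq32}; instead I would transport the whole identity to the universal C$^*$-level, where the counits $\varepsilon_{\Hh_i}\colon C_0^u(\Hh_i)\to\C$ are available, carry out the computation there, and then descend again.

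The first, and I expect the most delicate, task is to lift the coboundary identity. Identifying $\Hh_i=\widehat{\hat\Hh_i}$, the skew bicharacter $\mf X$ is a dual unitary $2$-cocycle $\mf X_{[32]}$ for $\hat\Hh=\hat\Hh_1\times\hat\Hh_2$, and by Lemma \ref{LemTrivialQG} (applied with $\G_i:=\hat\Hh_i$) together with Lemma \ref{LemUnitSkewBich} it admits a universal lift of the form $\mf X_{u,[32]}$ with $\mf X_u\in\Mult(C_0^u(\Hh_1)\otimes C_0^u(\Hh_2))$ a skew bicharacter lifting $\mf X$ via \cite{Kus01}*{Section 6}; in particular $(\varepsilon_{\Hh_1}\otimes\id)\mf X_u=(\id\otimes\varepsilon_{\Hh_2})\mf X_u=1$, and similarly for $\widetilde{\mf X}$. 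Both $\mf X_{u,[32]}$ and $\widetilde{\mf X}_{u,[32]}$ are $2$-cocycles for the maximal-tensor coproduct $\hat\Delta_{\otimes,u}^{\max}$ on $C_0^u(\Hh)=C_0^u(\Hh_1)\otimes_{\max}C_0^u(\Hh_2)$ (Lemmas \ref{lemma2}, \ref{lemma3}, \ref{lemma4}), and \eqref{eq32} says they are cohomologous via $Z$. Hence the two linking quantum groupoids they define are isomorphic; both trivialise to $\begin{pmatrix}C_0^u(\Hh)&C_0^u(\Hh)\\C_0^u(\Hh)&C_0^u(\Hh)\end{pmatrix}$, and an isomorphism between these which is the identity on the lower-right corner must, by Remark \ref{rem2}, be of the form $\begin{pmatrix}a&b\\c&d\end{pmatrix}\mapsto\begin{pmatrix}Z_u a Z_u^*&Z_u b\\ cZ_u^*&d\end{pmatrix}$ for some unitary $Z_u\in\Mult(C_0^u(\Hh))$ with $\hat\pi_{\red}(Z_u)=Z$, yielding the lifted coboundary identity
\[
\mf X_{u,[32]}\,\hat\Delta_{\otimes,u}^{\max}(Z_u)=(Z_u\otimes_{\max}Z_u)\,\widetilde{\mf X}_{u,[32]}.
\]

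With this in hand the computation is routine. Writing the last identity in the four ``small'' legs $1=\Hh_1$, $2=\Hh_2$, $3=\Hh_1$, $4=\Hh_2$ and using the factorisation of $\hat\Delta_{\otimes,u}^{\max}$ from Lemma \ref{lemma4}, I would slice by $\varepsilon_{\Hh_2}$ on leg $2$: the left-copy factor of $\mf X_{u,[32]}$ becomes $1$ by $(\id\otimes\varepsilon_{\Hh_2})\mf X_u=1$, the $\Hh_2$-component of $\hat\Delta_{\otimes,u}^{\max}(Z_u)$ collapses by the counit identity, and one is left with $(\hat\Delta_{1,u}^{\max}\otimes_{\max}\id_{\Hh_2})(Z_u)=(v_1\otimes_{\max}1\otimes_{\max}1)(1\otimes_{\max}Z_u)$ where $v_1:=(\id\otimes\varepsilon_{\Hh_2})(Z_u)\in\Mult(C_0^u(\Hh_1))$; slicing this relation once more by $\varepsilon_{\Hh_1}$ on the middle $\Hh_1$-leg and using $(\id\otimes\varepsilon_{\Hh_1})\hat\Delta_{1,u}^{\max}=\id$ gives $Z_u=v_1\otimes_{\max}v_2$ with $v_2:=(\varepsilon_{\Hh_1}\otimes\id)(Z_u)\in\Mult(C_0^u(\Hh_2))$, and re-inserting this into the previous relation forces $\hat\Delta_{1,u}^{\max}(v_1)=v_1\otimes v_1$, i.e.\ $v_1$ is group-like; symmetrically so is $v_2$. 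Applying the reducing maps $C_0^u(\Hh_i)\to C_0(\Hh_i)$ (which preserve comultiplication) produces group-like unitaries, still denoted $v_i\in L^\infty(\Hh_i)$, with $Z=v_1\otimes v_2$. Then $\Delta_{\otimes}(Z)=\Delta_1(v_1)_{[13]}\Delta_2(v_2)_{[24]}=Z\otimes Z$, and substituting $Z=v_1\otimes v_2$ into \eqref{eq32} and cancelling the (now group-like) first and fourth legs leaves exactly $\mf X(v_1\otimes v_2)=(v_1\otimes v_2)\widetilde{\mf X}$, which is the assertion.

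The only genuinely hard point is the production of the universal coboundary $Z_u$ together with the lifted identity: as the Remark after Definition \ref{DefUnivLift2Coc} and Remark \ref{rem2} make clear, neither cocycles nor coboundaries lift automatically, so this step cannot be soft abstract nonsense but must go through the trivialisability results of Lemmas \ref{lemma2}, \ref{lemma4}, \ref{LemTrivialQG} and \ref{LemUnitSkewBich} and genuinely exploit the product structure $\Hh=\Hh_1\times\Hh_2$. Everything after that is the same kind of counit bookkeeping already carried out in the proof of Lemma \ref{LemUnitSkewBich}.
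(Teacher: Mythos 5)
There is a genuine gap, and it sits exactly where you yourself locate the hard point: the production of the universal coboundary $Z_u$. The ingredients you cite do not deliver it. Lemma \ref{LemTrivialQG} and Lemma \ref{LemUnitSkewBich}, applied to the groupoid built from $\mf{X}_{[32]}$, only show that \emph{some} universal cocycle $\widetilde{\Delta}_{u,12}(1\otimes 1)$ coming from a trivialisation has the form $\mcY_{u[32]}$ for \emph{some} universal skew bicharacter $\mcY_u$; its reduced image is a cocycle merely cohomologous to $\mf{X}_{[32]}$ (this is the content of Lemma \ref{lemma7}), not equal to it, and $\mcY_u$ is not known to be the Kustermans lift $\mf{X}_u$. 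The claim that the Kustermans lift $\mf{X}_{u[32]}$ is a universal lift of $\mf{X}_{[32]}$ in the sense of Definition \ref{DefUnivLift2Coc} is precisely Theorem \ref{TheoUnivLift} --- the theorem whose proof Lemma \ref{lemma6} is a step of --- so invoking it here makes the overall argument circular. The route via Remark \ref{rem2} does not repair this: composing the coboundary-induced isomorphism $C^*(\msG_{\mf{X}})\cong C^*(\msG_{\widetilde{\mf{X}}})$ with the two trivialisations does yield a unitary $W_u\in\Mult(C_0^u(\Hh))$ conjugating the two \emph{trivialisation} cocycles, but its reduced image is dictated by the trivialisation data (the Lemma \ref{lemma7}-type coboundaries of the two groupoids), not by $Z$; nothing forces $\hat{\pi}_{\red}(W_u)=Z$, and those auxiliary coboundaries are themselves arbitrary unitaries whose tensor-product structure is exactly what is at stake. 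More fundamentally, an arbitrary unitary $Z\in L^{\infty}(\Hh)$ need not lift to $\Mult(C_0^u(\Hh))$ at all; that $Z$ lifts is a \emph{consequence} of the conclusion ($Z=v_1\otimes v_2$ with $v_i$ group-like, hence given by one-dimensional representations), so your strategy has the logical order reversed.

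Granting the lifted identity, your counit bookkeeping in the second paragraph is fine, but that is not where the content lies. The paper's own proof avoids universal machinery entirely: it works at the von Neumann level, applying $\Delta_1$ and $\Delta_2$ to various legs of \eqref{eq32}, cancelling the bicharacters, and using the ``invariants are constant'' result \cite{Daw12}*{Theorem 2.1} to factor $Z$ and to deduce that the factors are group-like; counits never enter. To salvage your approach you would need an independent construction of $Z_u$ with $\hat{\pi}_{\red}(Z_u)=Z$ satisfying the coboundary identity for the Kustermans lifts, and, as explained above, such a construction would already contain Theorem \ref{TheoUnivLift}.
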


\begin{proof}
We will write $\Delta_i$ for the coproduct of $\Hh_i\,(i\in\{1,2\})$. Applying the flip at legs $2,3$ of \eqref{eq32} gives
\begin{equation}\label{eq22}
\mf{X}_{[23]}(\Delta_{1}\otimes\Delta_{2 })(Z)=
Z_{[13]} Z_{[24]} \widetilde{\mf{X}}_{[23]}.
\end{equation}
If we apply $\Delta_1$ to the first leg of \eqref{eq22}, we get
\begin{equation}\label{eq23}
\mf{X}_{[34]} (\Delta_1^{(2)}\otimes \Delta_2)(Z)=
(\Delta_1\otimes \id)(Z)_{[124]} Z_{[35]}
\widetilde{\mf{X}}_{[34]},
\end{equation}
while if we apply $\Delta_1$ to the second leg of \eqref{eq22}, we obtain
\begin{equation}\label{eq24}
\mf{X}_{[24]} \mf{X}_{[34]}
(\Delta_1^{(2)}\otimes \Delta_2)(Z)=
Z_{[14]} (\Delta_1\otimes\id)(Z)_{[235]}
\widetilde{\mf{X}}_{[24]}
\widetilde{\mf{X}}_{[34]}.
\end{equation}
Combining \eqref{eq23} and \eqref{eq24} and cancelling $\widetilde{\mf{X}}_{[34]}$ gives
\begin{equation}\label{eq25}
\mf{X}_{[24]} 
(\Delta_1\otimes \id)(Z)_{[124]} Z_{[35]}
=
Z_{[14]} (\Delta_1\otimes\id)(Z)_{[235]}
\widetilde{\mf{X}}_{[24]}.
\end{equation}
Similarly, applying $\Delta_2$ to the third and fourth leg of \eqref{eq22} leads to
\begin{equation}\label{eq26}
\mf{X}_{[24]}
Z_{[13]} (\id\otimes\Delta_2)(Z)_{[245]}
=
(\id\otimes\Delta_2)(Z)_{[134]}
Z_{[25]}
\widetilde{\mf{X}}_{[24]}.
\end{equation}
From equation \eqref{eq26} we deduce
\[
 (\id\otimes \Delta_2)(Z)_{[134]}=
Z_{[13]} \mf{X}_{[24]} (\id\otimes\Delta_2)(Z)_{[245]} \widetilde{\mf{X}}_{[24]}^* Z_{[25]}^*.
\]
Apply the flip $a\otimes b\otimes c\otimes d\otimes e\mapsto a\otimes c\otimes d \otimes b \otimes e$ to get
\[
 (\id\otimes \Delta_2)(Z)_{[123]}=
 Z_{[12]}
\mf{X}_{[43]} (\id\otimes\Delta_2)(Z)_{[435]} \widetilde{\mf{X}}_{[43]}^* Z_{[45]}^*.
\]
It follows that
\begin{equation}\label{eq28}
(\id\otimes \Delta_2)(Z)=Z\otimes v_2,
\end{equation}
where $v_2\in L^{\infty}(\Hh_2)$ is a unitary defined via $v_{2 [123]}=\mf{X}_{[21]} (\id\otimes \Delta_2)(Z)_{[213]} \widetilde{\mf{X}}^*_{[21]}Z^*_{[23]}$. Applying $\Delta_2$ to the third leg of \eqref{eq28} gives
\[
Z\otimes \Delta_2(v_2)=
(\id\otimes \Delta_2^{(2)})(Z)=
(\id\otimes \Delta_2)(Z)\otimes v_2=
Z\otimes v_2\otimes v_2,
\]
hence $v_2$ is a group-like unitary. 

Observe now that 
\[
(\id\otimes \Delta_2)(Z (1\otimes v_2^*))=
(Z\otimes v_2)(1\otimes v_2^*\otimes v_2^*)=
Z(1\otimes v_2^*)\otimes 1.
\]
Hence it follows from the ``invariants are constant'' technique \cite{Daw12}*{Theorem 2.1} that $Z(1\otimes v_2^*)=v_1\otimes 1$ for a unitary $v_1\in L^{\infty}(\Hh_1)$. Consequently \begin{equation}\label{eq30}
Z=v_1\otimes v_2.
\end{equation}
We can rewrite \eqref{eq25} as
\[
\mf{X}_{[24]} \Delta_1(v_1)_{[12]} v_{2 [4]}
v_{1 [3]} v_{2[5]} = 
v_{1[1]} v_{2[4]}
\Delta_1(v_1)_{[23]} v_{2 [5]}
\widetilde{\mf{X}}_{[24]}.
\]
Consequently, similarly as above we can introduce a unitary $\tilde{v}_1\in L^{\infty}(\Hh_1)$ such that 
\begin{equation}\label{eq29}
\Delta_1(v_1)=v_1\otimes \tilde{v}_1.
\end{equation}
As with $v_2$ and \eqref{eq28}, we deduce from here that $\tilde{v}_1$ is a group-like unitary. Using again equation \eqref{eq22} and equation \eqref{eq29} gives
\[
\mf{X}_{[23]} v_{1[1]} \tilde{v}_{1 [2]}
v_{2[3]} v_{2[4]}=
v_{1[1]}  v_{2 [3]}
v_{1[2]} v_{2[4]}\widetilde{\mf{X}}_{[23]}
\]
which simplifies to
\begin{equation}\label{eq31}
\mf{X}  (\tilde{v}_1\otimes v_{2}) =
(v_{1}\otimes  v_{2 })
 \widetilde{\mf{X}}.
 \end{equation}
Using that $\tilde{v}_1$ is a group-like, applying $\Delta_1$ to the first leg of \eqref{eq31} gives
\[
(\Delta_1(v_1)\otimes v_2)\widetilde{\mf{X}}_{[13]}
\widetilde{\mf{X}}_{[23]}=
\mf{X}_{[13]} \mf{X}_{[23]}
(\tilde{v}_1\otimes \tilde{v}_1\otimes v_2)=
\mf{X}_{[13]} 
(\tilde{v}_1\otimes v_1\otimes v_2)
\widetilde{\mf{X}}_{[23]}
=
(v_1\otimes v_1\otimes v_2)
\widetilde{\mf{X}}_{[13]} 
\widetilde{\mf{X}}_{[23]},
\]
and we conclude that $v_1$ is group-like as well. Equation \eqref{eq29} implies that $\tilde{v}_1=v_1$ and \eqref{eq31} ends the proof.
\end{proof}

Let us go back to our situation. Let $\widetilde{\mcX}_u\in \Mult(C^*(\G_1)\otimes C^*(\G_2))$ be the skew bicharacter from Lemma \ref{LemUnitSkewBich}, and let $\widetilde{\mcX}$ be its image in $\Mult(C^*_{\red}(\G_1)\otimes C^*_{\red}(\G_2))$. 
\begin{Lem}\label{lemma7}
The cocycles $\hat\Omega=\wh{\mc{X}}_{[32]}$ and $\widetilde{\Omega}=\widetilde{\mc{X}}_{[32]}$ are coboundary equivalent. More precisely, we have
\[
\hat{\Omega}\hat{\Delta}_{\otimes}(Z)=(Z\otimes Z) \widetilde{\Omega} 
\]
for $Z=\hat{\pi}_{\red,12}\hat{\pi}^{-1}_{12}(1\otimes 1)\in \hat{M}_1 \bar\otimes \hat{M}_2$.
\end{Lem}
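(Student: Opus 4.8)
\emph{Proof plan.} The plan is to study the map
\[
\varrho:=\hat{\pi}_{\red,12}\circ\hat{\pi}^{-1}_{12}\colon C^*(\G_1)\otimes_{\max}C^*(\G_2)\longrightarrow W^*(\G_{12})=\hat{M}_1\bar\otimes\hat{M}_2,
\]
showing first that it is ``left multiplication by $Z$ after the tensor product reducing map'', and second that, being built from reducing maps and from $\hat{\pi}$, it sends the universal cocycle $\widetilde{\Omega}_u$ to the $\hat{\Omega}$-twisted coproduct of $Z$.

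First I would record that, by the isomorphism \eqref{EqIsoMatrix}, $\hat{\pi}$ is an isomorphism of linking C$^*$-algebras preserving the matrix decomposition and equal to the identity on the $22$-corner $C^*(\G_{22})=C^*(\G_1)\otimes_{\max}C^*(\G_2)$. Hence $\hat{\pi}_{12}$ is an isomorphism of right $C^*(\G_{22})$-modules over $\id$, so with $u:=\hat{\pi}^{-1}_{12}(1\otimes 1)$ (the trivialising multiplier) one has $\hat{\pi}^{-1}_{12}(w)=u\,w$ for all $w\in C^*(\G_{22})$ and, extended to multipliers, for $w$ a multiplier. Since $\pi_{\hat{Q}_u}^{\red}$ is a $*$-homomorphism of linking C$^*$-algebras, its $12$-corner $\hat{\pi}_{\red,12}$ is a module map over its $22$-corner $\hat{\pi}_{\red,22}$, and by the discussion following Lemma \ref{lemma2} the latter is exactly the tensor product reducing map $C^*(\G_1)\otimes_{\max}C^*(\G_2)\to C^*_{\red}(\G_1)\otimes C^*_{\red}(\G_2)\subseteq\hat{M}_1\bar\otimes\hat{M}_2$. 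Therefore
\[
\varrho(w)=\hat{\pi}_{\red,12}(u\,w)=\hat{\pi}_{\red,12}(u)\,\hat{\pi}_{\red,22}(w)=Z\,\hat{\pi}_{\red,22}(w),\qquad\forall w,
\]
where $Z=\hat{\pi}_{\red,12}(u)=\hat{\pi}_{\red,12}\hat{\pi}^{-1}_{12}(1\otimes 1)$.

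Next, since $\pi_{\hat{Q}_u}^{\red}$ intertwines the universal dual coproduct with the reduced one (the analogue for $\msG$ of the standard fact for locally compact quantum groups, obtainable by slicing the first leg of \eqref{EqHalfUniversalRepQgr}), on the $12$-corner $(\hat{\pi}_{\red,12}\otimes\hat{\pi}_{\red,12})\hat{\Delta}_{u,12}=\hat{\Delta}_{12}\circ\hat{\pi}_{\red,12}$, and by \eqref{EqCoprodCocTwist} $\hat{\Delta}_{12}(\cdot)=\hat{\Omega}\,\hat{\Delta}_{\otimes}(\cdot)$. Applying this to $u$ and using $\widetilde{\Omega}_u=\widetilde{\Delta}_{u,12}(1\otimes 1)=(\hat{\pi}_{12}\otimes\hat{\pi}_{12})\hat{\Delta}_{u,12}(u)$ gives
\[
(\varrho\otimes\varrho)(\widetilde{\Omega}_u)=(\hat{\pi}_{\red,12}\otimes\hat{\pi}_{\red,12})\hat{\Delta}_{u,12}(u)=\hat{\Delta}_{12}(Z)=\hat{\Omega}\,\hat{\Delta}_{\otimes}(Z).
\]
On the other hand, by the first step $(\varrho\otimes\varrho)(\widetilde{\Omega}_u)=(Z\otimes Z)(\hat{\pi}_{\red,22}\otimes\hat{\pi}_{\red,22})(\widetilde{\Omega}_u)$, and since $\widetilde{\Omega}_u=\widetilde{\mcX}_{u[32]}$ with $\widetilde{\mcX}=(\hat{\pi}_{1,\red}\otimes\hat{\pi}_{2,\red})(\widetilde{\mcX}_u)$ and $\widetilde{\Omega}=\widetilde{\mcX}_{[32]}$, applying the tensor product reducing map $\hat{\pi}_{\red,22}$ to both legs gives $(\hat{\pi}_{\red,22}\otimes\hat{\pi}_{\red,22})(\widetilde{\Omega}_u)=\widetilde{\Omega}$. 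Comparing the two expressions for $(\varrho\otimes\varrho)(\widetilde{\Omega}_u)$ yields $\hat{\Omega}\,\hat{\Delta}_{\otimes}(Z)=(Z\otimes Z)\widetilde{\Omega}$, which is the claim.

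I expect the only real obstacle to be the bookkeeping in the first step: the off-diagonal corners $C^*(\G_{12})$ and $W^*(\G_{12})$ are not unital (they are Morita bimodules rather than algebras with unit), so one has to justify that $\hat{\pi}_{12}$ and $\hat{\pi}_{\red,12}$ extend to the appropriate multiplier/adjointable level and act as module maps there, and one has to make sure $\hat{\pi}_{\red,22}$ is genuinely the maximal-to-minimal tensor product reducing map of $\G_1\times\G_2$ from Lemma \ref{lemma2}, so that the passage $\widetilde{\mcX}_u\mapsto\widetilde{\mcX}$ and the minimal/maximal tensor conventions match. Once these identifications are in place, the computation above is purely formal.
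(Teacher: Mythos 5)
Your argument is correct and is essentially the paper's proof: your module-map identity $\varrho(w)=Z\,\hat{\pi}_{\red}(w)$ is exactly the paper's opening observation $Z^*\hat{\pi}_{\red,12}(\hat{\pi}_{12}^{-1}(x))=\hat{\pi}_{\red}(x)$, and the rest (intertwining of $\hat{\Delta}_{u,12}$ with $\hat{\Delta}_{12}=\hat{\Omega}\hat{\Delta}_{\otimes}(\cdot)$ under the reducing map, plus $\widetilde{\Omega}_u=(\hat{\pi}_{12}\otimes\hat{\pi}_{12})\hat{\Delta}_{u,12}(\hat{\pi}_{12}^{-1}(1\otimes 1))$) coincides with the paper's chain of equalities, merely reorganized as two evaluations of $(\varrho\otimes\varrho)(\widetilde{\Omega}_u)$ rather than one string starting from $\widetilde{\Omega}$.
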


\begin{proof}
Observe that $Z^* \hat{\pi}_{\red,12}(\hat{\pi}_{12}^{-1}(x))=\hat{\pi}_{\red}(x)$ for $x\in C^*(\G)$. Using this, we have
\[\begin{split}
&\quad\;
\widetilde{\Omega}=
(\hat{\pi}_{\red}\otimes \hat{\pi}_{\red})(\widetilde{\mc{X}}_{u [32]})=
(\hat{\pi}_{\red}\otimes \hat{\pi}_{\red})\widetilde{\Delta}_{u,12}(1\otimes 1)=
(Z^*\otimes Z^*)
(\hat{\pi}_{\red,12}\hat{\pi}^{-1}_{12}\otimes
 \hat{\pi}_{\red,12} \hat{\pi}^{-1}_{12})\widetilde{\Delta}_{u,12} (1\otimes 1)\\
&=
(Z^*\otimes Z^*)
(\hat{\pi}_{\red,12}\otimes 
 \hat{\pi}_{\red,12})
 \hat{\Delta}_{u,12}  \hat{\pi}^{-1}_{12}(1\otimes 1)
 =
(Z^*\otimes Z^*)
 \hat{\Delta}_{12}(\hat{\pi}_{\red,12}  \hat{\pi}^{-1}_{12}(1\otimes 1))\\
 &=
(Z^*\otimes Z^*)\hat{\Omega} 
 \hat{\Delta}_{\otimes }(\hat{\pi}_{\red,12}  \hat{\pi}^{-1}_{12}(1\otimes 1))=
(Z^*\otimes Z^*)\hat{\Omega}
 \hat{\Delta}_{\otimes }(Z).
\end{split}\]
\end{proof}

If we apply Lemma \ref{lemma6} and Lemma  \ref{lemma7} to the bicharacters $\wh{\mc{X}},\widetilde{\mc{X}}$, we obtain group-like unitaries $v_1\in \hat{M}_1,v_2\in \hat{M}_2$ such that $Z=v_1\otimes v_2$ and $\wh{\mc{X}}=(v_1\otimes v_2)\widetilde{\mc{X}}(v_1^*\otimes v_2^*)$. These unitaries $v_1,v_2$ can be seen as one-dimensional unitary representations, hence they lift uniquely to group-like unitaries $v_1^u\in \Mult(C^*(\G_1)), v_2^u\in \Mult(C^*(\G_2))$. It then easily follows that
\[
\wh{\mc{X}}_u=
(v_1^u\otimes v_2^u)\widetilde{\mc{X}}_{u }(v_1^{u *}
\otimes v_2^{u *}).
\]

Next, we correct the isomorphism $\hat{\pi}\colon C^*(\msG)\rightarrow \begin{pmatrix}
C^*(\G) & C^*(\G) \\ C^*(\G) & C^*(\G)
\end{pmatrix}$ by twisting it with an inner automorphism:
\begin{equation}\label{eq33}
\underline{\hat{\pi}}=\oon{Ad}\Bigl( \begin{pmatrix}
v_1^{u } \otimes v_2^{u } & 0 \\ 0 & 1\otimes 1
\end{pmatrix}\Bigr) \hat{\pi}\colon
C^*(\msG)\rightarrow \begin{pmatrix}
C^*(\G) & C^*(\G) \\ C^*(\G) & C^*(\G)
\end{pmatrix}.
\end{equation}
More explicitely,
\begin{equation}\label{eq34}
\underline{\hat{\pi}}
\Bigl(\,\begin{pmatrix}
a & b \\ c & d
\end{pmatrix}\,\Bigr)=
\begin{pmatrix}
\underline{\hat{\pi}}_{11}(a) & 
\underline{\hat{\pi}}_{12}(b)\\
\underline{\hat{\pi}}_{21}(c) & 
\underline{\hat{\pi}}_{22}(d)
\end{pmatrix}=
\begin{pmatrix}
(v^{u }_1\otimes v_2^{u })\hat{\pi}_{11}(a)
(v_1^u\otimes v_2^u)^* & 
(v^u_1\otimes v_2^u)\hat{\pi}_{12}(b)\\
\hat{\pi}_{21}(c)(v_1^u\otimes v_2^u)^* & 
\hat{\pi}_{22}(d)
\end{pmatrix}
\end{equation}

for $\begin{pmatrix}
a & b \\ c &d
\end{pmatrix}\in C^*(\msG)$. The following lemma now finishes the proof of Theorem \ref{TheoUnivLift}.

\begin{Lem}\label{lemma8}
Via the isomorphism $\underline{\hat{\pi}}$, the element $\hat{\Omega}_u$ is a universal lift of $\hat{\Omega}$ in the sense of Definition \ref{DefUnivLift2Coc}.
\end{Lem}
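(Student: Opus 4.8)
The plan is to verify that $\underline{\hat{\pi}}$ satisfies every condition in Definition \ref{def3} (trivial universal linking C$^*$-algebra), with the associated universal dual unitary $2$-cocycle $\hat{\Delta}_{u,12}(1) = \hat{\Omega}_u = \wh{\mcX}_{u[32]}$, and hence that $\hat{\Omega}_u$ is a universal lift of $\hat{\Omega}$ in the sense of Definition \ref{DefUnivLift2Coc}. First I would recall that $\hat{\pi}$ is already an isomorphism preserving the matrix structure (Lemma \ref{LemTrivialQG}, equation \eqref{EqIsoMatrix}), so the inner twist $\underline{\hat{\pi}}$ of \eqref{eq33}--\eqref{eq34} is again an isomorphism preserving the matrix structure; moreover $\underline{\hat{\pi}}_{22} = \hat{\pi}_{22} = \id$ on the lower right corner by construction, so the first two requirements of Definition \ref{def3} hold immediately.

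Next I would check the counit condition $\hat{\eps}_{ij} \cong \eps_{\hat{M}_u}$. By Lemma \ref{lemma5} we already have $(\hat{\eps}_1\times\hat{\eps}_2)\hat{\pi}_{ij} = \hat{\eps}_{ij}$, i.e.\ under $\hat{\pi}$ the counit corresponds to $\eps_{\hat{M}_u} = \hat{\eps}_1\times\hat{\eps}_2$ in all four corners. Twisting by $\underline{\hat{\pi}}$ changes $\hat{\pi}_{12}$ by a left multiplication by the unitary $v_1^u\otimes v_2^u$ and $\hat{\pi}_{21}$ by a right multiplication by its adjoint; since $v_1^u, v_2^u$ are group-like, $(\hat{\eps}_1\times\hat{\eps}_2)(v_1^u\otimes v_2^u) = 1$, so this inner twist does not affect the counit, and likewise for the $11$ corner. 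Hence $(\hat{\eps}_1\times\hat{\eps}_2)\underline{\hat{\pi}}_{ij} = \hat{\eps}_{ij}$ still holds, i.e.\ $\hat{\eps}_{ij}\cong\eps_{\hat{M}_u}$. For the reducing-map condition, I would argue that under $\hat{\pi}$ the reducing map $\pi^{\red}_{\hat{Q}_u}$ corresponds in the $22$-corner to $\hat{\pi}_{\red}$ (this is the content of the standard GNS identification for linking quantum groupoids), and in the $12$-corner to $\hat{\pi}_{\red,12}\hat{\pi}_{12}^{-1}$, whose value at $1$ is exactly the unitary $Z = v_1^u\otimes v_2^u$ (to be precise, its image $v_1\otimes v_2$; recall the discussion following Lemma \ref{lemma7}). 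The inner twist by $v_1^u\otimes v_2^u$ in \eqref{eq34} is designed precisely to cancel this $Z$: the twisted reducing map in the $12$-corner becomes $\hat{\pi}_{\red}(v_1^u\otimes v_2^u)^*\hat{\pi}_{\red,12}\hat{\pi}_{12}^{-1}(-) = \hat{\pi}_{\red}$ on the nose, using $Z^*\hat{\pi}_{\red,12}\hat{\pi}_{12}^{-1}(x) = \hat{\pi}_{\red}(x)$ from the proof of Lemma \ref{lemma7}, and similarly in the remaining corners. Thus under $\underline{\hat{\pi}}$ the reducing map corresponds to $\begin{pmatrix}\hat{\pi}_{\red} & \hat{\pi}_{\red}\\ \hat{\pi}_{\red} & \hat{\pi}_{\red}\end{pmatrix}$, completing the verification that $\msG$ has trivial universal linking C$^*$-algebra via $\underline{\hat{\pi}}$.

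Finally, it remains to identify the resulting universal dual unitary $2$-cocycle $\hat{\Delta}_{u,12}(1)$, computed with respect to the transported comultiplication, with $\wh{\mcX}_{u[32]}$. Under $\hat{\pi}$, the $12$-component $\hat{\Delta}_{u,12}$ of the coproduct transports to $\widetilde{\Delta}_{u,12}$, whose value at $1$ is $\widetilde{\Omega}_u = \widetilde{\mcX}_{u[32]}$ by Lemma \ref{LemUnitSkewBich}. The further inner twist by $v_1^u\otimes v_2^u$ replaces $\widetilde{\Omega}_u$ by the cohomologous cocycle $(v_1^u\otimes v_2^u)_{[1]}(v_1^u\otimes v_2^u)_{[2]}\,\widetilde{\Omega}_u\,\hat{\Delta}_{\otimes,u}(v_1^u\otimes v_2^u)^*$; unwinding the leg-numbering conventions and using that $v_1^u, v_2^u$ are group-like together with $\wh{\mcX}_u = (v_1^u\otimes v_2^u)\widetilde{\mcX}_u (v_1^u\otimes v_2^u)^*$ (established just before \eqref{eq33}), this simplifies exactly to $\wh{\mcX}_{u[32]} = \hat{\Omega}_u$. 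By Definition \ref{DefUnivLift2Coc} this shows $\hat{\Omega}_u$ is a universal lift of $\hat{\Omega}$. The main obstacle I anticipate is bookkeeping: carefully tracking the placement indices through the maximal-tensor-product flips, and matching the abstract inner-twist formula for a cohomologous cocycle (a priori living in $C^*(\G)\otimes_{\max}C^*(\G)$ with its nonstandard tensor factorisation) against the concrete $\wh{\mcX}_{u[32]}$; but since the twisting unitary is group-like, all the counit-slicing identities used above keep the computation from genuinely branching, and no new analytic input is required.
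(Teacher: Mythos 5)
Your proposal is correct and follows essentially the same route as the paper: you verify the conditions of Definition \ref{def3} for $\underline{\hat{\pi}}$ exactly as the paper does (identity on the $22$-corner, counits via Lemma \ref{lemma5} and group-likeness of $v_1^u\otimes v_2^u$, reducing maps via the identity $Z^*\hat{\pi}_{\red,12}\hat{\pi}_{12}^{-1}=\hat{\pi}_{\red}$ from Lemma \ref{lemma7}), and then identify the transported cocycle $(\underline{\hat{\pi}}_{12}\otimes\underline{\hat{\pi}}_{12})\hat{\Delta}_{u,12}\underline{\hat{\pi}}_{12}^{-1}(1)$ with $\wh{\mcX}_{u[32]}$ using Lemma \ref{LemUnitSkewBich}, group-likeness, and $\wh{\mcX}_u=(v_1^u\otimes v_2^u)\widetilde{\mcX}_u(v_1^{u*}\otimes v_2^{u*})$, which is precisely the computation \eqref{eq35} in the paper.
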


\begin{proof}
We first claim that $\msG$ has trivial universal linking C$^*$-algebra with isomorphism 
\[
\underline{\hat{\pi}}\colon C^*(\msG)\rightarrow \begin{pmatrix}
\hat{M}_u & \hat{M}_u \\
\hat{M}_u & \hat{M}_u
\end{pmatrix}
\] 
(see Definition \ref{def3}). It is clear that $\underline{\hat{\pi}}$ preserves the matrix structure, and is  the identity on the lower right hand corner. Next, we have for $x\in C^*(\G_{ij})$
\[
\eps_{\hat{M}_u} \underline{\hat{\pi}}_{ij}(x)=
\eps_{\hat{M}_u} \hat{\pi}_{ij}(x)=\hat{\eps}_{ij}(x)
\]
using that $\eps_{\hat{M}_u}(v_1^u\otimes v_2^u)=1$ in the first equality (because $v_i^u$ is a group-like unitary) and Lemma \ref{lemma5}. Finally, we need to check that $\underline{\hat{\pi}}$ respects reducing maps. Take $y\in C^*(\G_{12})$. We have
\[\begin{split}
&\quad\;
\hat{\pi}_{\red,12}(y)=
\hat{\pi}_{\red,12} \hat{\pi}^{-1}_{12}\hat{\pi}_{12}(y)=
\hat{\pi}_{\red,12} (\hat{\pi}^{-1}_{12}(1\otimes 1)
\hat{\pi}^{-1}_{22}\hat{\pi}_{12}(y))=
\hat{\pi}_{\red,12} (\hat{\pi}^{-1}_{12}(1\otimes 1)
\hat{\pi}_{12}(y))\\
&=
\hat{\pi}_{\red,12} \hat{\pi}^{-1}_{12}(1\otimes 1)
\hat{\pi}_{\red,22}\hat{\pi}_{12}(y)=
(v_1\otimes v_2) \hat{\pi}_{\red}\hat{\pi}_{12}(y)=
\hat{\pi}_{\red}( (v_1^u\otimes v_2^u) \hat{\pi}_{12}(y))=
\hat{\pi}_{\red} \underline{\hat{\pi}}_{12} (y)
\end{split}\]
using Lemma \ref{lemma7} and the explicit formula \eqref{eq34}. The claim now follows by a reasoning similar to the one presented in the proof of Lemma \ref{lemma5}.

To end the proof, we show that
\begin{equation}\label{eq35}
(\underline{\hat{\pi}}_{12}\otimes 
\underline{\hat{\pi}}_{12})\hat{\Delta}^u_{12}
\underline{\hat{\pi}}_{12}^{-1}(1\otimes 1)=\hat{\Omega}_u.
\end{equation}
Indeed, we have
\[\begin{split}
&\quad\;
(\underline{\hat{\pi}}_{12}\otimes 
\underline{\hat{\pi}}_{12})\hat{\Delta}^u_{12}
\underline{\hat{\pi}}_{12}^{-1}(1\otimes 1)=
(v_1^u\otimes v_2^u\otimes 
v_1^u\otimes v_2^u)(\hat{\pi}_{12}\otimes 
\hat{\pi}_{12})\hat{\Delta}^u_{12}
\hat{\pi}^{-1}_{12}(
v_1^{u *}\otimes v_2^{u *}
)\\
&=
(v_1^u\otimes v_2^u\otimes 
v_1^u\otimes v_2^u)
\widetilde{\Delta}_{u,12} (
v_1^{u *}\otimes v_2^{u *}
)=
(v_1^u\otimes v_2^u\otimes 
v_1^u\otimes v_2^u)
\widetilde{\Delta}_{u,12} (
1\otimes 1
)
\widetilde{\Delta}_{u,22} (
v_1^{u *}\otimes v_2^{u *}
)\\
&=
(v_1^u\otimes v_2^u\otimes 
v_1^u\otimes v_2^u)
\widetilde{\Omega}_u
\hat{\Delta}_{u,22}(v_1^{u *}\otimes v_2^{u *})=
(v_1^u\otimes v_2^u\otimes 
v_1^{u }\otimes v_2^{u })
\widetilde{\mc{X}}_{u [32]}
(v_1^{u *}\otimes v_2^{u *}\otimes 
v_1^{u *}\otimes v_2^{u *})\\
&=
\wh{\mc{X}}_{u [32]}=\hat{\Omega}_u.
\end{split}\]
\end{proof}

\section{Acknowledgements}
The work of JK was partially supported by FWO grant 1246624N.

\end{document}